\pgfplotsset{compat=1.10}
\newtheorem{theorem}{Theorem}
\newtheorem{definition}[theorem]{Definition}
\newtheorem{proposition}[theorem]{Proposition}
\newtheorem{lemma}[theorem]{Lemma}
\newtheorem{corollary}[theorem]{Corollary}
\theoremstyle{remark}
\newtheorem{remark}[theorem]{Remark}
\newcommand{\ls}{\lesssim}
\newcommand{\gs}{\gtrsim}
\newcommand{\la}{\langle}
\newcommand{\ra}{\rangle}
\newcommand{\R}{\mathbb{R}}
\newcommand{\T}{\mathbb{T}}
\newcommand{\Z}{\mathbb{Z}}
\newcommand{\pa}{\partial}
\def\norm#1{\left\|#1\right\|}
\definecolor{light-gray1}{gray}{0.90}
\definecolor{light-gray2}{gray}{0.80}
\definecolor{light-gray3}{gray}{0.60}
\numberwithin{equation}{section}
\numberwithin{theorem}{section}
\numberwithin{table}{section}
\numberwithin{figure}{section}
\title[KdV limit for FPU system]{Korteweg--de Vries limit for the Fermi--Pasta--Ulam system}
\author[Y. Hong]{Younghun Hong}
\address{Department of Mathematics, Chung-Ang University, Seoul 06974, Korea}
\email{yhhong@cau.ac.kr}
\author[C. Kwak]{Chulkwang Kwak}
\address{Facultad de Matem\'aticas, Pontificia Universidad Cat\'olica de Chile and Institute of Pure and Applied Mathematics, Jeonbuk National University}
\email{chkwak@mat.uc.cl}
\author[C. Yang]{Changhun Yang}
\address{Korea Institute for Advanced Study, Seoul 20455 and Institute of Pure and Applied Mathematics, Jeonbuk National University, Jeonju 54896, Korea}
\email{maticionych@kias.re.kr}
\begin{document}

\maketitle


\begin{abstract}
In this paper, we develop dispersive PDE techniques for the Fermi--Pasta--Ulam (FPU) system with infinitely many oscillators, and we show that general solutions to the infinite FPU system can be approximated by counter-propagating waves governed by the Korteweg--de Vries (KdV) equation as the lattice spacing approaches zero. Our result not only simplifies the hypotheses but also reduces the regularity requirement in the previous study \cite{SW-2000}.
\end{abstract}

\section{Introduction}\label{sec:intro}

The Fermi--Pasta--Ulam (FPU) system is a simple nonlinear dynamical lattice model describing a long one-dimensional chain of vibrating strings with nearest neighbor interactions. This model was first introduced by Fermi, Pasta, and Ulam in the original Los Alamos report \cite{FPUT-1955} in 1955 with regard to their numerical studies on nonlinear dynamics. At that time, it was anticipated that the energy initially given to only the lowest frequency mode would be shared by chaotic nonlinear interactions and it would be eventually thermalized to equilibrium. However, numerical simulations showed the opposite behavior. The energy is shared among only a few low-frequency modes and it exhibits quasi-periodic behavior. This phenomenon is known as the FPU paradox. Since then, the FPU paradox has emerged as one of the central topics in various fields, and it has stimulated extensive studies on nonlinear chaos. 

Among the various important studies in this regard, the most remarkable one is the fundamental work of Zabusky and Kruskal \cite{ZK-1965}, in which the problem was solved for the first time by discovering a connection to the Korteweg--de Vries (KdV) equation. The authors showed that the FPU system is \textit{formally} approximated by the KdV equation and its quasi-periodic dynamics is thus explained in  connection with solitary waves for the KdV equation. From an analysis perspective, Friesecke and Wattis proved that the FPU system has solitary waves \cite{FW-1994}, confirming the numerical observation \cite{E-1991}, whereas Friesecke and Pego established their convergence to the soliton solutions to the KdV equation \cite{FP-1999}. Moreover, various qualitative properties have been proved for the FPU solitary waves \cite{FP-2002, FP-2004, FP-2005, M-2013}.

The KdV approximation problem has also been investigated for general states without restriction to solitary waves. For an infinite chain, Schneider and Wayne showed that the FPU flow can be approximated by counter-propagating KdV flows (see \eqref{two wave approx} below) via the multi-scale method \cite{SW-2000}. This approach has been applied to a periodic setting \cite{PB-2005} as well as to generalized discrete models \cite{M-2006,CCPS-2012,GMWZ-2014}. Furthermore, with a different scaling, the cubic nonlinear Schr\"odinger equation is derived from the FPU system \cite{S-2010} (see also \cite{GM-2004,GM-2006}). 

In contrast, the FPU paradox can be explained in a completely different manner, i.e., by the approach of Izrailev and Chirikov \cite{IC-1965}, which involves the Kolmogorov--Arnold--Moser (KAM) theory:  quasi-periodicity occurs because the FPU system can be approximated by a finite-dimensional integrable system (see \cite{N-1971,R-2001,R-2006} for this direction). We also note that the quasi-periodic dynamics vanishes after a sufficiently long time-scale as predicted originally \cite{FMMPPRV-1982}. This phenomenon is called \textit{metastability}, and it has been investigated rigorously (e.g., \cite{BGG-2004,BP-2006}). Overall, the dynamics problem for the FPU system has garnered considerable research attention and it has been explored from various perspectives. We refer to the surveys  -\cite{Z-2005,G-2007} and the references therein for a more detailed history and an overview of the problem. 


In this article, we follow the approach of Zabusky and Kruskal \cite{ZK-1965}. Our objective is to provide a  rigorous justification of the KdV approximation for general solutions, including solitary waves, to infinite FPU chains. Let us begin with introducing the setup of the problem. Consider the FPU Hamiltonian 
\begin{equation}\label{original Hamiltonian}
H(q,p):=\sum_{x\in\mathbb{Z}}\frac{p(x)^2}{2}+V\big(q(x+1)-q(x)\big)
\end{equation}
for a function $(q,p)=\left(q(x),p(x)\right):\mathbb{Z}\to\mathbb{R}\times\mathbb{R}$. Here, $(q(x),p(x))$ denotes for the position and momentum of the $x$-th string, and the potential function $V:\mathbb{R}\to\mathbb{R}$ determines the potential energy from nearest-neighbor interactions. We assume that
\begin{equation}\label{V assumption}
V\in C^5,\quad V(0)=V'(0)=0,\quad V''(0)=:a>0\quad\textup{and}\quad V'''(0)=:b\neq 0.
\end{equation}
Such potentials include the cubic FPU potential $\frac{1}{2}ar^2+\frac{1}{6}br^3$, a more general polynomial potential $\sum_{k=2}^N \frac{c_k}{k!} r^k$, the Lennard-Jones potential $e[(1+\frac{r}{d})^{-12} - 2(1+\frac{r}{d})^{-6}+1]$ and the Toda potential $\alpha(e^{\beta r} - \beta r-1)$. 

The above-mentioned Hamiltonian generates the FPU system\begin{equation}\label{FPU before rescaling}
\left\{
\begin{aligned}
\partial_t q(t,x)&=p(t,x),\\
\partial_tp(t,x)&=V'\big(q(t,x+1)-q(t,x)\big)-V'\big(q(t,x)-q(t,x-1)\big),
\end{aligned}\right.
\end{equation}
where $(q,p)=\left(q(t,x), p(t,x)\right):\mathbb{R}\times\mathbb{Z}\to\mathbb{R}\times\mathbb{R}$. By combining the two equations in the system and then rewriting them for the relative displacement between two adjacent points, $r(t,x)=q(t,x+1)-q(t,x)$, we can simplify the system as 
\begin{equation}\label{FPU before scaling}
\boxed{\quad\partial_t^2 r=\Delta_1\big(V'(r)\big)\quad}
\end{equation}
where $\Delta_1 u=u(\cdot+1)+u(\cdot-1)-2u$. Next, by rescaling with
\begin{equation}\label{scaling}
\tilde{r}_h(t,x):=\frac{1}{h^2}r\Big(\frac{t}{h^3},\frac{x}{h}\Big): \mathbb{R}\times h\mathbb{Z}\to\mathbb{R}
\end{equation}
for small $h>0$, we obtain
\begin{equation}\label{scaled FPU}
h^6\partial_t^2 \tilde{r}_h=\Delta_h\big( V'(h^2\tilde{r}_h)\big),
\end{equation}
where $\Delta_h$ is a discrete Laplacian on $h\mathbb{Z}$, i.e.,
$$\Delta_hu=\frac{u(\cdot+h)+u(\cdot-h)-2u}{h^2}.$$
Finally, by extracting the linear term from the right-hand side of \eqref{scaled FPU}, we derive a discrete nonlinear wave equation, which we refer to hereafter as the FPU system 
\begin{equation}\label{FPU}
\boxed{\quad\textup{(FPU)}\quad\left\{\begin{aligned}
\partial_t^2 \tilde{r}_h-\frac{a}{h^4}\Delta_h\tilde{r}_h&=\frac{1}{h^6}\Delta_h\Big\{V'(h^2\tilde{r}_h)-ah^2\tilde{r}_h\Big\},\\
\tilde{r}_h(0)&=\tilde{r}_{h,0},\\
\partial_t\tilde{r}_h(0)&=\tilde{r}_{h,1},
\end{aligned}\right.\quad}
\end{equation}
where $\tilde{r}_h=\tilde{r}_h(t,x):\mathbb{R}\times h\mathbb{Z}\to\mathbb{R}$. This reformulated equation is still a Hamiltonian equation with the Hamiltonian\footnote{It is derived from \eqref{original Hamiltonian}.}
\begin{equation}\label{Hamiltonian}
H_h(\tilde{r}_h)=h\sum_{x\in h\mathbb{Z}}\Bigg\{\frac{1}{2}\bigg(\frac{h^2}{\sqrt{-\Delta_h}}\partial_t\tilde{r}_h\bigg)^2+\frac{1}{h^4}V(h^2\tilde{r}_h)\Bigg\}.
\end{equation}
Through the formal analysis described in Section \ref{sec: outline}, one would expect that the solutions to FPU \eqref{FPU} are approximated by counter-propagating waves
\begin{equation}\label{two wave approx}
\tilde{r}_h(t,x)\underset{h\to0}\approx w_h^+(t, x-\tfrac{t}{h^2})+ w_h^-(t, x+\tfrac{t}{h^2}),
\end{equation}
where each $w_h^\pm=w_h^\pm(t,x):\mathbb{R}\times\mathbb{R}\to\mathbb{R}$ is a solution to the KdV equation
\begin{equation}\label{KdV0}
\boxed{\quad\textup{(KdV)}\quad\left\{\begin{aligned}
\pa_t w_\pm \pm \frac{\sqrt{a}}{24} \pa_x^3 w_\pm \mp \frac{b}{4\sqrt{a}}\pa_x(w_\pm^2)&=0,\\
w_\pm(0)&=w_{\pm,0}.
\end{aligned}\right.\quad}
\end{equation}
This method of deriving the two KdV flows can be regarded as an infinite-lattice version of the method of Zabusky and Kruskal \cite{ZK-1965}.


In this study, we revisit the KdV limit problem for general solutions, albeit through a rather different approach. Indeed, in a broad sense, a dynamical system approach was adopted in all the aforementioned studies \cite{SW-2000, GM-2004, PB-2005, M-2006, GM-2006, S-2010, CCPS-2012,GMWZ-2014}. By regarding the FPU system \eqref{FPU} as a nonlinear dispersive equation, we exploit its dispersive and smoothing properties, and we then employ them to justify the KdV approximation. This approach enables us to not only simplify the assumptions on the initial data in the previous study but also reduce the regularity requirement.

For the statement of the main theorem, we introduce the basic definitions of function spaces, the Fourier transform and differentials on a lattice domain, and the linear interpolation operator. For $1\leq p\leq \infty$, the Lebesgue space $L^p(h\mathbb{Z})$ is defined by the collection of real-valued functions on a lattice domain $h\mathbb{Z}$ equipped with the $L^p$-norm
$$\|f_h\|_{L^p(h\Z)}:=
\left\{\begin{aligned}
&\left\{  h \sum_{x\in h\mathbb{Z}} |f_h(x)|^p \right\}^\frac1p&&\textup{if }1\leq p<\infty,\\
&\sup_{x\in h\mathbb{Z}} |f_h(x)|&&\textup{if }p=\infty.
\end{aligned}\right.$$
For $f_h\in L^1(h\mathbb{Z})$, we define its (discrete) Fourier transform by
$$(\mathcal{F}_h f_h)(\xi):=h\sum_{x\in h\mathbb{Z}}  f_h(x)e^{-ix\xi},\quad\forall\xi\in \R/(\tfrac{2\pi}{h}\Z) = [-\tfrac{\pi}{h},\tfrac{\pi}{h}).$$
Meanwhile, for a periodic function $f\in L^1([-\tfrac{\pi}{h},\tfrac{\pi}{h}))$, its inverse Fourier transform is given by
$$(\mathcal{F}_h^{-1} f)(x):=\frac{1}{2\pi}\int_{-\frac{\pi}{h}}^{\frac{\pi}{h}} f(\xi) e^{ix\xi}d\xi,\quad\forall x\in h\mathbb{Z}.$$
Then, Parseval's identity,
\begin{equation}\label{eq:parseval}
h\sum_{x\in h\mathbb{Z}} f(x)\overline{g(x)}=\frac{1}{2\pi}\int_{-\frac{\pi}{h}}^{\frac{\pi}{h}}(\mathcal{F}_hf)(\xi)\overline{(\mathcal{F}_hg)(\xi)}d\xi,
\end{equation}
extends the discrete Fourier transform (resp., its inversion) to $L^2(h\mathbb{Z})$ (resp., $L^2([-\frac{\pi}{h},\frac{\pi}{h}))$). 

There are several ways to define differentials on a lattice domain $h\mathbb{Z}$. Throughout the paper, we use the following different types of differentials, all of which are consistent with differentiation on the real line as the Fourier multiplier of the symbol $i\xi$ as $h\to0$.
\begin{definition}[Differentials on $h\mathbb{Z}$]\label{def: discrete differentials}
$(i)$ $\nabla_h$ (resp., $|\nabla_h|$, $\langle \nabla_h\rangle$) denotes the discrete Fourier multiplier of the symbol $\frac{2i}{h}\sin(\frac{h\xi}{2})$ (resp., $|\frac{2}{h}\sin(\frac{h\xi}{2})|$, $\langle\frac{2}{h}\sin(\frac{h\xi}{2})\rangle$), where $\langle \cdot \rangle=(1 + |\cdot|^2)^{\frac12}$.\footnote{These definitions are consistent with the discrete Laplacian $\Delta_h$, because $(-\Delta_h)$ is the Fourier multiplier of the symbol $\frac{4}{h^2}\sin^2(\frac{h\xi}{2})$; thus, $|\nabla_h|=\sqrt{-\Delta_h}$ and $\langle\nabla_h\rangle=\sqrt{1-\Delta_h}$.}\\
$(ii)$ $\partial_h$ (resp., $|\partial_h|$, $\langle \partial_h\rangle$) denotes the discrete Fourier multiplier of the symbol $i\xi$ (resp., $|\xi|$, $\langle\xi\rangle$).\\
$(iii)$ $\partial_h^+$ denotes the discrete right-hand side derivative naturally defined by
\[(\partial_h^+ f_h)(x) := \frac{f_h(x+h)-f_h(x)}{h}, \quad \forall x \in h\Z.\]
\end{definition}

For $s\in\mathbb{R}$, we define the Sobolev space $W^{s,p}(h\mathbb{Z})$ (resp., $\dot{W}^{s,p}(h\mathbb{Z})$) by the Banach space equipped with the norm
\begin{equation}\label{def:W}
\|f_h\|_{W^{s,p}(h\mathbb{Z})} := \|\langle\pa_h\rangle^sf_h\|_{L^p(h\mathbb{Z})}\quad\Big(\textup{resp., }\|f_h\|_{\dot{W}^{s,p}(h\mathbb{Z})} := \||\pa_h|^sf_h\|_{L^p(h\mathbb{Z})}\Big).
\end{equation}
In particular, when $p=2$, we denote
$$H^s(h\mathbb{Z}):=W^{s,2}(h\mathbb{Z})\quad\Big(\textup{resp., }\dot{H}^s(h\mathbb{Z}):=\dot{W}^{s,2}(h\mathbb{Z})\Big).$$

To compare functions on different domains, we introduce the linear interpolation
\begin{equation}\label{def:  linear interpolation}
\begin{aligned}
(l_h f_h)(x):&= f_h(hm)+(\partial_h^+ f_h)(hm)\cdot(x-hm)\\
&= f_h(hm)+\frac{f_h(hm+h)-f_h(hm)}{h}(x-hm)
\end{aligned}
\end{equation}
for all $x\in [hm,hm+h)$ with $m\in\mathbb{Z}$. Note that the linear interpolation converts a function $f_h:h\mathbb{Z}\rightarrow \R$ on a lattice into a continuous function on the real line.

Now, we are ready to state our main result.

\begin{theorem}[KdV limit for FPU]\label{main theorem}
If $V$ satisfies \eqref{V assumption}, then for any $R>0$, there exists $T(R)>0$ such that the following holds. Suppose that for some $s\in(\frac{3}{4},1]$,
\begin{equation}\label{our initial data assumption}
\sup_{h\in (0,1]}\big\|\big(\tilde{r}_{h,0}, h^2\nabla_h^{-1}\tilde{r}_{h,1}\big)\big\|_{H^{s}(h\mathbb{Z})\times H^{s}(h\mathbb{Z})}\leq R.
\end{equation}
Let $\tilde{r}_h(t)\in C_t([-T,T]; H_x^s(h\mathbb{Z}))$ be the solution to FPU \eqref{FPU} with initial data $(\tilde{r}_{h,0},\tilde{r}_{h,1})$, and let $w_h^\pm(t)\in C_t([-T,T]; H_x^s(\mathbb{R}))$ be the solution to KdV \eqref{KdV0} with interpolated initial data $\frac12 l_h(\tilde{r}_{h,0}\mp h^2\nabla_h^{-1} \tilde{r}_{h,1})$.\\
$(i)$ (Continuum limit)
\begin{equation}\label{continuum limit}
\sup_{t \in [-T,T]} \big\|(l_h\tilde{r}_h)(t,x)- w_h^+(t, x-\tfrac{t}{h^2})- w_h^-(t, x+\tfrac{t}{h^2})\big\|_{L_x^2(\mathbb{R})}\lesssim h^{\frac{2s}{5}}.
\end{equation}
\\
$(ii)$ (Small amplitude limit) Scaling back, 
$$r(t,x)=h^2\tilde{r}_h(h^3t,hx):\mathbb{R}\times\mathbb{Z}\to\mathbb{R}$$
is a solution to FPU \eqref{FPU before scaling}. Moreover, it satisfies
\begin{equation}\label{small amplitude limit}
\sup_{t \in [-\frac{T}{h^3},\frac{T}{h^3}]}\big\|(l_1r)(t,x)- h^2w_h^+(h^3t, h(x-t))- h^2w_h^-(h^3t, h(x+t))\big\|_{L_x^2(\mathbb{R})}\lesssim h^{\frac{3}{2}+\frac{2s}{5}}.
\end{equation}
\end{theorem}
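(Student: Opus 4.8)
The plan is to view FPU \eqref{FPU} as a first-order dispersive system, extract two slowly varying profiles corresponding to the counter-propagating waves, and then compare these profiles with the genuine KdV solutions $w_h^\pm$ using dispersive and smoothing estimates. Writing the linear frequency as $\omega_h:=\frac{\sqrt a}{h^2}|\nabla_h|$, whose symbol $\frac{2\sqrt a}{h^3}|\sin(\tfrac{h\xi}{2})|$ expands as $\frac{\sqrt a|\xi|}{h^2}-\frac{\sqrt a|\xi|^3}{24}+O(h^2|\xi|^5)$, one sees that the leading part generates the fast translation of \eqref{two wave approx} while the cubic correction is precisely the Airy dispersion of \eqref{KdV0}. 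Since $\tilde r_h$ is real, the substitution $u_h^\pm:=\tfrac12\big(\tilde r_h\mp i\omega_h^{-1}\pa_t\tilde r_h\big)$ satisfies $u_h^-=\overline{u_h^+}$ and diagonalizes the equation into
\begin{equation*}
i\pa_t u_h^\pm=\mp\omega_h u_h^\pm\pm\frac{1}{2\omega_h}\cdot\frac{1}{h^6}\Delta_h\big\{V'(h^2\tilde r_h)-ah^2\tilde r_h\big\},\qquad \tilde r_h=u_h^++u_h^-.
\end{equation*}
The low-frequency singularity of $\omega_h^{-1}$ is exactly the quantity controlled by the hypothesis \eqref{our initial data assumption} through $h^2\nabla_h^{-1}\tilde r_{h,1}$. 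Expanding $V'$ via \eqref{V assumption} as $V'(y)=ay+\tfrac b2 y^2+O(y^3)$, the quadratic contribution $\tfrac{1}{2\omega_h}h^{-6}\Delta_h(\tfrac b2 h^4\tilde r_h^2)$ has all powers of $h$ cancel and reduces, via $\omega_h^{-1}\sim\tfrac{h^2}{\sqrt a}|\pa_h|^{-1}$ and $\Delta_h\sim\pa_h^2$, to $\mp\tfrac{b}{4\sqrt a}|\pa_h|(\tilde r_h^2)$, while the cubic and higher terms retain explicit positive powers of $h$.

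I would then pass to the interaction representation removing the fast translation, and split each branch by frequency sign so that, after using $\tilde r_h=u_h^++u_h^-$ and $u_h^-=\overline{u_h^+}$, the right- and left-moving real profiles $v_h^\pm$ are isolated. On each branch the quadratic interaction decomposes into a resonant self-interaction---which, under the expansion of $\omega_h$, produces exactly the Airy flow $\pm\frac{\sqrt a}{24}\pa_x^3$ together with the nonlinearity $\mp\frac{b}{4\sqrt a}\pa_x\big((v_h^\pm)^2\big)$---and non-resonant cross terms of the form $u_h^+u_h^-$, whose combined phase oscillates at frequency $\sim h^{-2}$. These non-resonant terms are removed by a normal-form (integration-by-parts-in-time) transformation, each application gaining a factor $h^2$; the same manipulation absorbs the discrepancy between $\omega_h$ and its cubic Taylor polynomial. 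The outcome is that each $v_h^\pm$ solves KdV \eqref{KdV0} up to a forcing error that is quantitatively $O(h^\theta)$ in an appropriate space, and whose initial data agree with the interpolated data $\tfrac12 l_h\big(\tilde r_{h,0}\mp h^2\nabla_h^{-1}\tilde r_{h,1}\big)$ prescribed for $w_h^\pm$.

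With the profiles in hand I would set $z_h^\pm:=v_h^\pm-w_h^\pm$, write the difference equation, and close a continuity/bootstrap argument on $[-T,T]$ with $T=T(R)$ fixed by the uniform bound \eqref{our initial data assumption}. The estimates required are uniform-in-$h$ Strichartz and local-smoothing bounds for the discrete propagator $e^{it\omega_h}$ (the half-derivative gain of local smoothing is what allows the derivative nonlinearity $\pa_x(w^2)$ to be handled at regularity $s>\tfrac34$), together with the bilinear estimates for the KdV nonlinearity and the discrete--continuum comparison lemmas for $l_h$ and for the symbols. Introducing a frequency cutoff at $|\xi|\sim N$ separates the regime where $\omega_h$ is well approximated by the Airy symbol from the high-frequency tail; balancing the low-frequency dispersion error, which grows with $N$, against the high-frequency truncation error, which is controlled by the $H^s$ norm and decays in $N$, and optimizing $N=N(h)$ yields the rate $h^{\frac{2s}{5}}$. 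Transferring the resulting difference bound to the real line through the interpolation estimates gives \eqref{continuum limit}; statement $(ii)$ then follows immediately from $(i)$ by undoing the scaling \eqref{scaling}, since the change of variables $r(t,x)=h^2\tilde r_h(h^3t,hx)$ multiplies the $L_x^2(\R)$ norm by the extra factor $h^{3/2}$, producing $h^{\frac{3}{2}+\frac{2s}{5}}$.

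The principal obstacle is the tension between the low regularity $s>\tfrac34$ and the single derivative in the nonlinearity: closing the difference estimate demands genuine smoothing for the discrete flow that is uniform as $h\to0$ and that degrades in a controlled way near the edge of the Brillouin zone $|\xi|\sim h^{-1}$, where $\omega_h'$ vanishes and both the group velocity and the smoothing gain deteriorate. Executing the normal-form removal of the non-resonant $u_h^+u_h^-$ interactions without losing derivatives at this regularity, while tracking every remainder as an explicit power of $h$ so as to reach the stated convergence rate, is where the bulk of the technical work will lie.
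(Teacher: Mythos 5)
Your overall scheme---splitting into counter-propagating components, removing the fast $h^{-2}$ transport, decoupling the cross-interactions, comparing the decoupled flow to KdV after a frequency cutoff whose optimization gives $h^{2s/5}$, and deducing $(ii)$ from $(i)$ by scaling (the factor $h^{3/2}$ is correct)---is the same as the paper's. The genuine gap is in your decoupling step. You claim the non-resonant cross terms oscillate ``at frequency $\sim h^{-2}$'' so that each normal-form integration by parts gains a factor $h^2$. This is false: in the paper's variables, the interaction of $u_h^+$ at frequency $\xi_1$ with $e^{\frac{2t}{h^2}\partial_h}u_h^-$ at frequency $\xi_2$, measured against the $+$ characteristic $\tau=s_h(\xi_1+\xi_2)$ with $s_h(\xi)=\frac{1}{h^2}\big(\xi-\frac{2}{h}\sin(\frac{h\xi}{2})\big)$, has resonance function
\begin{equation*}
\Omega=s_h(\xi_1)+\tfrac{2\xi_2}{h^2}-s_h(\xi_2)-s_h(\xi_1+\xi_2)
=\frac{8}{h^3}\sin\Big(\frac{h\xi_2}{4}\Big)\cos\Big(\frac{h\xi_1}{4}\Big)\cos\Big(\frac{h(\xi_1+\xi_2)}{4}\Big),
\end{equation*}
because the $O(h^{-2})$ translation phase cancels \emph{identically} against the linear-in-$\xi$ parts of $s_h$. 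Hence $|\Omega|\ls |\xi_2|h^{-2}$, and $\Omega\to0$ as $\xi_2\to0$: the low-frequency part of the counter-propagating wave interacts \emph{resonantly} (at $\xi_2=0$ the cross term does not oscillate at all), and since $P_{|\xi|\le\delta}u_h^-$ can carry $O(1)$ mass in $L^2$, no power of $h$ can be extracted there by oscillation alone. The smallness in that region must come from the measure of the frequency support (Cauchy--Schwarz/Bernstein), which is precisely what the paper's bilinear estimates (Lemmas \ref{lem:bi2} and \ref{lem:bi3}) encode; note that their gain is only $h^{s'-s}$ with $s\le s'\le s+1$, paid for with regularity of the inputs, which is why Proposition \ref{convergence to decoupled FPU} yields the rate $h^{s}$ rather than $h^2$. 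As written, your normal-form step therefore fails; it could be repaired either by invoking such bilinear estimates, or by splitting $|\xi_2|\lessgtr h^{\theta}$ and using Bernstein on the low-frequency piece, at the cost of a worse (but still sufficient, since only $h^{2s/5}$ is needed) rate.

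Two secondary points. First, the normal form does not ``absorb the discrepancy between $\omega_h$ and its cubic Taylor polynomial'': that discrepancy is $O(h^2\xi^5)$, which is enormous near the zone edge $|\xi|\sim h^{-1}$; it is the frequency cutoff $N=h^{-2/5}$ (which you do include) that controls it, and it is this balance---not the nonlinear analysis---that caps the final rate at $h^{2s/5}$. Second, passing from the profile comparison to \eqref{continuum limit} is not merely ``transferring through the interpolation estimates'': $l_h$ commutes with the fast translation $e^{\pm\frac{t}{h^2}\partial_h}$ only at the discrete times $t=h^3k$, and the paper needs a separate argument (Section \ref{sec:main proof}) to control the solution between these times; your proposal leaves this step unaddressed.
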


We remark that the assumption on the initial data is simplified compared to the previous work \cite{SW-2000}. We assume only a uniform bound on the size of the initial data (see \eqref{our initial data assumption}) in a natural Sobolev norm (without any weight), and the mean-zero momentum condition $\sum_{x\in h\mathbb{Z}}\tilde{r}_{h,1}(x)=0$ is not imposed. Furthermore, the regularity requirement is reduced to $s>\frac{3}{4}$.

As for the regularity issue, we emphasize that reducing the regularity is not only a matter of mathematical curiosity but it may also lead to a significant improvement in the continuum limit \eqref{continuum limit}. As stated in our main theorems, the KdV approximation is stated in the form of either a continuum limit or a small amplitude limit. Mathematically, they are equivalent; however, the continuum limit \eqref{continuum limit} seems rather weaker because it holds only in a short time interval $[-T,T]$, whereas the small amplitude limit \eqref{small amplitude limit} is valid almost globally in time $[-\frac{T}{h^3},\frac{T}{h^3}]\to (-\infty, \infty)$ as $h\to 0$. Thus, it would be desirable to extend the time interval $[-T,T]$ arbitrarily for the continuum limit. For comparison, we state that for discrete nonlinear Schr\"odinger equations (DNLS), the continuum limit is established in a compact time interval of any size \cite{HY-2019SIAM}, and an exponential-in-time bound is obtained. In the proof, conservation laws obviously play a crucial role. However, unlike DNLS, the FPU system does not have a conservation law controlling a higher regularity norm, say the $H^1$ norm. Only an $L^2$-type quantity is controlled by its Hamiltonian \eqref{Hamiltonian}. Therefore, it would be desirable to establish the continuum limit for $L^2$-data. If such a low regularity convergence is achieved, then one may try to employ the conservation law to extend the size of the interval to be arbitrarily large. Although the regularity is significantly reduced in this study, our assumption that $s>\frac{3}{4}$ is still far from the desired case of $s=0$. At the end of this section, we mention the technical obstacle that prevents us from going below  $s=\frac{3}{4}$. Instead of sharpening the estimates, a new idea seems necessary to reduce the regularity.

The main contribution of this article is to present a new approach to the KdV limit problem from the perspective of the theory of nonlinear dispersive PDEs. In spite of the dispersive nature of the FPU system, which is clear from its connection to the KdV equation, to the best of authors' knowledge, there has been no attempt to tackle the problem using dispersive PDE techniques thus far.

Our approach is achieved on the basis of the following two observations. First, as outlined in Section \ref{sec: outline}, we reformulate the FPU  system \eqref{FPU} by separating its Duhamel formula into two coupled equations \eqref{coupled FPU}, which we refer to as the coupled FPU. Indeed, this is a standard method to deal with inhomogeneous wave equations; however after implementing it, we realized that it is much easier to understand the limit procedure by analyzing the symbols of the linear propagators and their asymptotics (see Remark \ref{coupled remark}). By this refomulation, we introduce a different convergence scheme to the KdV equation via the decoupled FPU \eqref{decoupled FPU}. It makes the problem more suitable and clearer for analysis by dispersive PDE techniques.

Second, we discover that the linear propagators $S_h^\pm(t)=e^{\mp\frac{t}{h^2}(\nabla_h-\partial_h)}$ for the coupled and decoupled FPUs exhibit  properties similar to those of the Airy flows $S^\pm(t)=e^{\mp\frac{t}{24}\partial_x^3}$ in many aspects. A technical but crucial feature of our analysis is that the phase functions of the linear FPU propagators are comparable with those of the Airy propagators at different derivative levels. Indeed, direct calculations show that
$$\begin{aligned}
&\tfrac{1}{h^2}(\xi-\tfrac{2}{h}\sin(\tfrac{h\xi}{2}))\sim \xi^3,&&\big\{\tfrac{1}{h^2}(\xi-\tfrac{2}{h}\sin(\tfrac{h\xi}{2}))\big\}'=\tfrac{1}{h^2}(1-\cos(\tfrac{h\xi}{2}))\sim \xi^2,\\
&\big\{\tfrac{1}{h^2}(\xi-\tfrac{2}{h}\sin(\tfrac{h\xi}{2}))\big\}''=\tfrac{1}{2h}\sin(\tfrac{h\xi}{2})\sim \xi,
\end{aligned}$$
on the frequency domain $[-\frac{\pi}{h},\frac{\pi}{h})$ for the discrete Fourier transform (see Figure \ref{Fig: FPU symbol}). This allows us to recover the Strichartz estimates, the local smoothing and maximal function estimates (Proposition \ref{prop:Strichartz}), and the bilinear estimates (Lemma \ref{lem:bi1}) for the linear FPU flows owing to the ``magical'' property of Zabusky and Kruskal's transformation of the FPU system in their original study \cite{ZK-1965}. Indeed, dispersive equations on a lattice domain do not enjoy smoothing in general. For instance, the phase function for the linear Schr\"odinger flow $e^{it\Delta_h}$ is comparable with that for the linear Schr\"odinger flow on $\mathbb{R}$, i.e., $-\frac{2t}{h^2}(1-\cos(h\xi))\sim-t\xi^2$ on $[-\frac{\pi}{h},\frac{\pi}{h})$; however, its derivative $-\frac{2t}{h}\sin(h\xi)$ is far from $-t\xi$ near the high frequency edge $\xi=\pm\frac{\pi}{h}$  (see Figure \ref{Fig: Schrodinger symbol}). Therefore, the discrete linear Schr\"odinger flow does not enjoy local smoothing at all (see \cite{IZ-2009}). With various dispersive and smoothing estimates for the linear FPU flows, we follow a general strategy (see \cite{HY-2019SIAM} for instance) to prove the convergence from the coupled to the decoupled FPU and the convergence from the decoupled FPU to the KdV equation. First, we employ the linear and bilinear estimates to obtain $h$-uniform bounds for solutions to the coupled and decoupled FPUs. Then, using the uniform bounds, we directly measure the differences to prove the convergences.

\begin{figure}[h!]
\begin{center}
\begin{tikzpicture}[scale=1]
\draw[line width=1.5] (0,0) -- (3,0);
\draw[->] (3,0) -- (5,0) node[below] {$\xi$};
\draw[->] (0,0) -- (0,4) node[right] {};
\draw[gray] (0,0) parabola (2,4);
\draw[gray] (0,0) parabola (3.4641,4);
\node at (2,3.6){\color{gray}$\xi^2$};
\node at (3.4,3.5){\color{gray}$\frac{4\xi^2}{\pi^2}$};
\draw[thick] (0,0) cos (1.5,1.5) sin (3,3) cos (4.5,1.5);
\node at (4.5,1.2){$\frac{2}{h^2}(1-\cos(h\xi))$};
\draw[dashed] (0,3)--(3,3);
\draw[dashed] (3,0)--(3,3);
\node at (-0.1,-0.1){$0$};
\node at (3,-0.3){$\frac{\pi}{h}$};
\node at (-0.3,3){$\frac{4}{h^2}$};
\node at (1.5,-0.3){\tiny (frequency domain)};
\node at (1.5,-1){\boxed{\text{symbols}}};
\end{tikzpicture}
\qquad\qquad
\begin{tikzpicture}[scale=1]
\draw[line width=1.5] (0,0) -- (3,0);
\draw[->] (3,0) -- (5,0) node[below] {$\xi$};
\draw[->] (0,0) -- (0,4) node[right] {};
\draw[gray] (0,0) -- (2.4317,4);
\node at (1.4,2){\color{gray}$2\xi$};
\draw[thick] (0,0) sin (1.5,1.5708) cos (3,0);
\node at (3.8,0.5){$\frac{2}{h}\sin(h\xi)$};
\draw[dashed] (3,0)--(3,4);
\node at (-0.1,-0.1){$0$};
\node at (3,-0.3){$\frac{\pi}{h}$};
\node at (1.5,-0.3){\tiny (frequency domain)};
\node at (2.5,-1){\boxed{\text{derivatives of symbols}}};
\end{tikzpicture}
\end{center}
\caption{Comparison between the discrete and the continuous Schr\"odinger flows: The symbol $\frac{2}{h^2}(1-\cos(h\xi))$ is comparable with the symbol $\xi^2$ on the frequency domain $[-\frac{\pi}{h},\frac{\pi}{h})$.  However, their derivatives are not comparable particularly near the endpoints $\xi=\pm\frac{\pi}{h}$.}\label{Fig: Schrodinger symbol}
\end{figure}
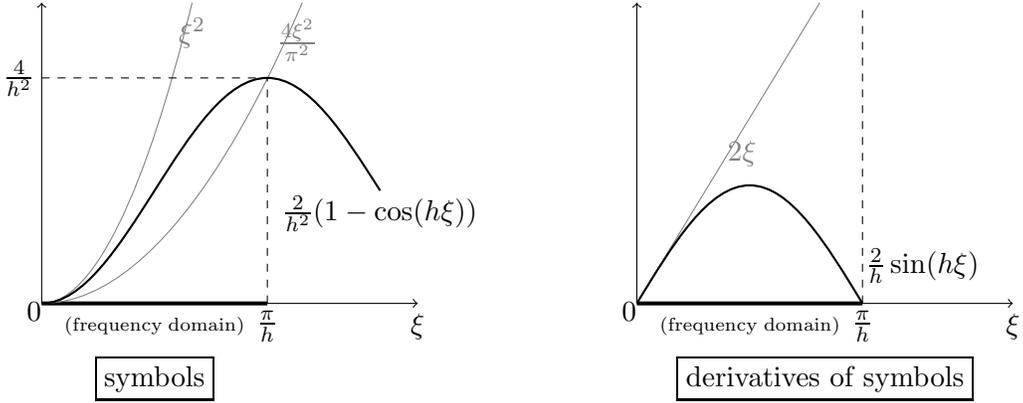

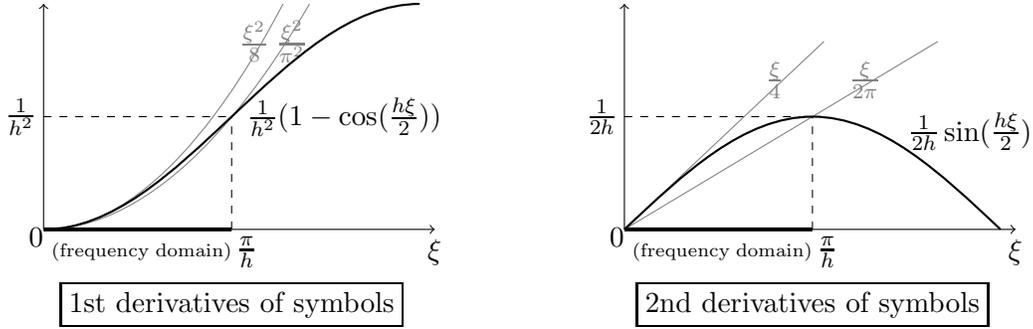
\begin{figure}[h!]
\begin{center}
\begin{tikzpicture}[scale=1]
\draw[line width=1.5] (0,0) -- (2.5,0);
\draw[->] (2.5,0) -- (5.2,0) node[below] {$\xi$};
\draw[->] (0,0) -- (0,3) node[right] {};
\draw[gray] (0,0) parabola (3.1831,3);
\draw[gray] (0,0) parabola (3.5355,3);
\node at (2.8,2.5){\color{gray}$\frac{\xi^2}{8}$};
\node at (3.3,2.5){\color{gray}$\frac{\xi^2}{\pi^2}$};
\draw[thick] (0,0) cos (2.5,1.5) sin (5,3);
\node at (4,1.5){$\frac{1}{h^2}(1-\cos(\frac{h\xi}{2}))$};
\draw[dashed] (2.5,0)--(2.5,1.5);
\draw[dashed] (0,1.5)--(2.5,1.5);
\node at (-0.3,1.5){$\frac{1}{h^2}$};
\node at (-0.1,-0.1){$0$};
\node at (2.7,-0.3){$\frac{\pi}{h}$};
\node at (1.3,-0.3){\tiny (frequency domain)};
\node at (2.5,-1){\boxed{\text{1st derivatives of symbols}}};
\end{tikzpicture}
\qquad\qquad
\begin{tikzpicture}[scale=1]
\draw[line width=1.5] (0,0) -- (2.5,0);
\draw[->] (2.5,0) -- (5.2,0) node[below] {$\xi$};
\draw[->] (0,0) -- (0,3) node[right] {};
\draw[gray] (0,0) -- (2.6526,2.5);
\draw[gray] (0,0) -- (4.1667,2.5);
\node at (2,2){\color{gray}$\frac{\xi}{4}$};
\node at (3.2,2){\color{gray}$\frac{\xi}{2\pi}$};
\draw[thick] (0,0) sin (2.5,1.5) cos (5,0);
\node at (4.6,1.3){$\frac{1}{2h}\sin(\frac{h\xi}{2})$};
\draw[dashed] (2.5,0)--(2.5,1.5);
\draw[dashed] (0,1.5)--(2.5,1.5);
\node at (-0.3,1.5){$\frac{1}{2h}$};
\node at (-0.1,-0.1){$0$};
\node at (2.7,-0.3){$\frac{\pi}{h}$};
\node at (1.3,-0.3){\tiny (frequency domain)};
\node at (2.5,-1){\boxed{\text{2nd derivatives of symbols}}};
\end{tikzpicture}
\end{center}
\caption{Comparison between the linear FPU flow and the Airy flow: The derivatives of the symbol $\frac{1}{h^2}(\xi-\tfrac{2}{h}\sin(\tfrac{h\xi}{2}))$ are comparable with those of the symbol $\xi^3$ on the frequency domain $[-\frac{\pi}{h},\frac{\pi}{h})$.}\label{Fig: FPU symbol}
\end{figure}

We conclude the introduction with a comment on the obstacle to reducing the regularity below $s=\frac{3}{4}$. As mentioned above, our strategy heavily employs uniform bounds for the coupled and decoupled FPUs, and their proofs resemble those of the local well-posedness of the KdV equation. The $X^{s,b}$-norm (see Section \ref{sec: Xsb space}) is well known as a powerful tool for the low regularity theory. Indeed, Kenig, Ponce, and Vega established the local well-posedness of KdV in $H^s(\mathbb{R})$ for $s>-\frac{3}{4}$ using the bilinear estimates in $X^{s,b}$ \cite{KPV-1996}. Thus, one may attempt to employ the $X^{s,b}$- norm for the KdV limit problem. However, this norm is too sensitive to the linear propagator, because a certain weight is imposed away from the characteristic curve on the space-time Fourier side. Hence, it is not suitable to measure two different linear flows having different characteristic curves at the same time. Indeed, Proposition \ref{Xsb bad} shows that linear FPU flows are not uniformly bounded in the $X^{s,b}$-norm associated with the Airy flows (the other direction can be proved similarly). Therefore, we do not use the $X^{s,b}$-norm. We  employ the Strichartz estimates, the local smoothing and maximal function estimates, and their corresponding norms (see Proposition \ref{prop:Strichartz} and \ref{Thm:linear estimates for KdV flows}), because they are not sensitive to the propagators. These norms have been employed in the previous work of Kenig, Ponce, and Vega \cite{KPV-1991} for the local well-posedness of KdV in $H^s$ for $s>\frac{3}{4}$. However, it is known that the maximal function estimate holds only when $s>\frac{3}{4}$. Therefore, we are currently unable to go below $s=\frac{3}{4}$.

 \subsection{Organization of the paper} The remainder of this paper is organized as follows. In Section \ref{sec: outline}, the outline of the proof of Theorem \ref{main theorem} is presented. In particular, FPU systems are reformulated and Theorem \ref{main theorem} is reduced to two propositions. In Section \ref{sec:pre}, some definitions and estimates, in particular, well-known estimates and the Littlewood--Paley theory on a lattice and $X^{s,b}$ space are introduced. In Section \ref{sec: Well-posedness}, the local well-posedness of FPU is established. In Section \ref{sec: linear estimate}, Strichartz, local smoothing, and maximal function estimates of linear FPU flows are discussed in comparison with linear KdV flows. In Section \ref{sec: bilinear estimates}, $X^{s,b}$ bilinear estimates are proven. In Section \ref{proof of uniform bound}. In Section \ref{sec:main proof}, the main theorem is proven by combining the proofs of two propositions. Finally, in Appendices \ref{app:FLE} and \ref{app:GP}, justification of the non-triviality of the approximation via $X^{s,b}$ analysis and the estimate of the higher-order term are discussed, respectively.

\subsection{Notations and basic definitions}

In this article, we deal with two different types of functions, i.e.,  functions on the real line $\mathbb{R}$ and functions on the lattice domain $h\mathbb{Z}$. To avoid possible confusion, we use the subscript $h$ for functions on $h\mathbb{Z}$ with no exception. For instance, $u_h$, $v_h$, and $w_h$ are defined on $h\mathbb{Z}$, while $u$, $v$, and $w$ are defined on $\mathbb{R}$.

If there is no confusion, we assign lower-case letters $x, y, z, ...$ to spatial variables regardless of whether they are on the lattice or on the real line; for instance, $u_h(x):h\mathbb{Z}\to \mathbb{R}$ and $u(x):\mathbb{R}\to\mathbb{R}$. Note that the subscript $h$ determines the space of the spatial variable.

For notational convenience, we may abbreviate the domain and codomain of a function in the norm. For example, for $f_h=f_h(x): h\mathbb{Z}\to\mathbb{R}$ (resp., $f=f(x):\mathbb{R}\to\mathbb{R}$), 
$$\begin{aligned}
\|f_h\|_{L^p}&=\|f_h\|_{L^p(h\mathbb{Z})}\quad\big(\textup{resp., }\|f\|_{L^p}=\|f\|_{L^p(\mathbb{R})}\big),\\
\|f_h\|_{W^{s,p}}&=\|f_h\|_{W^{s,p}(h\mathbb{Z})}\quad\big(\textup{resp., }\|f\|_{W^{s,p}}=\|f\|_{W^{s,p}(\mathbb{R})}\big),
\end{aligned}$$
and for $F_h=F_h(t,x): h\mathbb{Z}\to\mathbb{R}$ (resp., $F=F(t,x):\mathbb{R}\to\mathbb{R}$), 
$$\begin{aligned}
\|F_h\|_{L_t^qL_x^r}&=\|F_h\|_{L_t^q([-T,T];L_x^r(h\mathbb{Z}))}\quad\big(\textup{resp., }\|F\|_{L_t^qL_x^r}=\|F\|_{L_t^q([-T,T];L_x^r(\mathbb{R}))}\big),\\
\|F_h\|_{L_t^qW_x^{s,r}}&=\|F_h\|_{L_t^q([-T,T];W_x^{s,r}(h\mathbb{Z}))}\quad\big(\textup{resp., }\|F\|_{L_t^qW_x^{s,r}}=\|F\|_{L_t^q([-T,T];W_x^{s,r}(\mathbb{R}))}\big).
\end{aligned}$$
Similarly, for a vector-valued function $(f_h^+, f_h^-): h\mathbb{Z}\to \mathbb{R}\times\mathbb{R}$, we have
$$\|(f_h^+, f_h^-)\|_{L^p}=\big\||(f_h^+, f_h^-)|\big\|_{L^p(h\mathbb{Z})}=\Big\|\big\{(f_h^+)^2+(f_h^-)^2\big\}^{1/2}\Big\|_{L^p(h\mathbb{Z})}.$$

\subsection{Acknowledgement}
This research of the first author was supported by Basic Science Research Program through the National Research Foundation of Korea(NRF) funded by the Ministry of Education (NRF-2017R1C1B1008215). The second author was supported by FONDECYT Postdoctorado 2017 Proyecto No. 3170067 and project France-Chile ECOS-Sud C18E06. The third author was supported by Samsung Science and Technology Foundation under Project Number SSTF-BA1702-02. Part of this work was complete while the second author was visiting Chung-Ang University (Seoul, Republic of Korea). The second author acknowledges the warm hospitality of the institution.

\section{Outline of the proof}\label{sec: outline}

The proof of the main theorem (Theorem \ref{main theorem}) is outlined in this section. Although our proof is strongly inspired by the original idea of Zabusky and Kruskal \cite{ZK-1965}, it is reorganized to fit into the theory of dispersive PDEs.

First, we note that the constants $a$ and $b$ in the assumption \eqref{V assumption} can be normalized by constant multiplication and scaling $\tilde{r}_h(t,x) \mapsto \frac{a}{b}\tilde{r}_h(\sqrt{a}t,x)$. Thus, we assume that $a=b=1$. By Taylor's theorem, the nonlinear term in FPU \eqref{FPU} can be expressed as 
$$\frac{1}{h^6}\Delta_h\Big\{V'(h^2\tilde{r}_h)-h^2\tilde{r}_h\Big\}=\frac{1}{2h^2}\Delta_h\Big\{\big(\tilde{r}_h\big)^2+ h^2\mathcal{R}\Big\},$$
with the higher-order remainder
\begin{equation}\label{eq:R}
\mathcal{R}:=\frac{V^{(4)}(h^2\tilde{r}_h^*)}{3} \big(\tilde{r}_h\big)^3,
\end{equation}
where $V^{(4)}$ denotes the fourth-order derivative of $V$ and $\tilde{r}_h^*$ is some number between $0$ and $\tilde{r}_h$. To avoid non-essential complexity, we suggest that readers consider the FPU with simple quadratic nonlinearity, i.e., $\mathcal{R}=0$, which corresponds to the normalized standard FPU potential $V(r)=\frac{r^2}{2}+\frac{r^3}{6}$.

\subsection{Reformulation of FPU as a coupled system}\label{sec: outline 1}
By Duhamel's formula, the initial data problem for FPU \eqref{FPU} is written as 
\begin{equation}\label{Duhamel FPU'}
\begin{aligned}
\tilde{r}_h(t)&=\cos\left(\frac{t\sqrt{-\Delta_h}}{h^2}\right)\tilde{r}_{h,0}+\sin\left(\frac{t\sqrt{-\Delta_h}}{h^2}\right)\frac{h^2}{\sqrt{-\Delta_h}}\tilde{r}_{h,1}\\
&\quad-\frac{1}{2}\int_0^t \sin\left(\frac{(t-t_1)\sqrt{-\Delta_h}}{h^2}\right)\sqrt{-\Delta_h}\Big\{\tilde{r}_h(t_1)^2 + h^2\mathcal{R}(t_1)\Big\} dt_1.
\end{aligned}
\end{equation}
Observe that
\begin{align*}
\cos\left(\frac{t\sqrt{-\Delta_h}}{h^2}\right)=\frac{1}{2}\big(e^{\frac{t}{h^2}\nabla_h}+e^{-\frac{t}{h^2}\nabla_h}\big),\quad\sin\left(\frac{t\sqrt{-\Delta_h}}{h^2}\right)= \frac{1}{2i}\big(e^{\tfrac{t}{h^2}\nabla_h}  -  e^{-\frac{t}{h^2}\nabla_h}\big)\mathcal{H}.
\end{align*}
where $\nabla_h$ is the discrete Fourier multiplier of the  symbol $\frac{2i}{h}\sin(\frac{h\xi}{2})$ (see Definition \ref{def: discrete differentials}) and $\mathcal{H}$ is the Hilbert transform, i.e., the Fourier multiplier of the symbol $-i\textup{sign}(\xi)$. Indeed, $\cos(\frac{t}{h^2}|\frac{2}{h}\sin(\frac{h\xi}{2})|)=\cos(\frac{2t}{h^3}\sin(\tfrac{h\xi}{2}))=\frac{1}{2}(e^{i\frac{2t}{h^3}\sin(\frac{h\xi}{2})}+e^{-i\frac{2t}{h^3}\sin(\frac{h\xi}{2})})$, and the other identity can be shown similarly. Thus, by inserting these into the Duhamel formula \eqref{Duhamel FPU'} and separating the operators $e^{\mp\frac{t}{4h^2}\nabla_h}$, we deduce that if
$$\big(\tilde{r}_h^+, \tilde{r}_h^-\big):\mathbb{R}\times h\mathbb{Z}\to\mathbb{R}\times\mathbb{R}$$
solves the system of coupled equations
\begin{equation}\label{coupled FPU}
\tilde{r}_h^\pm(t) =e^{\mp\frac{t}{h^2}\nabla_h}\tilde{r}_{h,0}^\pm \mp\frac14\int_0^t e^{\mp\frac{(t-t_1)}{h^2}\nabla_h} \nabla_h\Big\{\tilde{r}_h(t_1)^2 + h^2\mathcal{R}(t_1)\Big\} dt_1
\end{equation}
with initial data
\begin{equation}\label{q0+-}
\tilde{r}_{h,0}^\pm= \frac12\Big\{ \tilde{r}_{h,0}\mp h^2 \nabla_h^{-1} \tilde{r}_{h,1}\Big\},
\end{equation}
then
\begin{equation}\label{q+-}
\tilde{r}_h(t,x)=\tilde{r}_h^+(t,x) +\tilde{r}_h^-(t,x)
\end{equation}
is the solution to FPU \eqref{Duhamel FPU'}. 

Next, we introduce
\begin{equation}\label{trans}
u_h^\pm(t):=e^{\pm\frac{t}{h^2}\partial_h}\tilde{r}_h^\pm(t),
\end{equation}
where $\partial_h$ is given in Definition \ref{def: discrete differentials}. Then, they solve the coupled integral equation
$$u_h^\pm(t) =  e^{\mp\frac{t}{h^2}(\nabla_h-\partial_h)}\tilde{r}_{h,0}^\pm \mp \frac14\int_0^t e^{\mp\frac{(t-t_1)}{h^2}(\nabla_h-\partial_h)} \nabla_h e^{\pm\frac{t_1}{h^2}\partial_h}\Big\{\tilde{r}_h(t_1)^2 + h^2\mathcal{R}(t_1)\Big\}dt_1.$$
Note that the main nonlinear term in the integral can be written as
\begin{equation}\label{eq:translation}
e^{\pm\frac{t_1}{h^2}\partial_h}\Big\{\tilde{r}_h(t_1)^2\Big\}=e^{\pm\frac{t_1}{h^2}\partial_h}\Big\{\tilde{r}_h^+(t_1)+\tilde{r}_h^-(t_1)\Big\}^2=\Big\{u_h^\pm(t_1)+e^{\pm\frac{2t_1}{h^2}\partial_h}u_h^\mp(t_1)\Big\}^2,
\end{equation}
because $e^{a\partial_h}(u_h^2)=(e^{a\partial_h}u_h)^2$ holds.\footnote{By the discrete Fourier transform, $$\begin{aligned}
\mathcal{F}_h\left(e^{a\partial_h}(u_h^2)\right)(\xi)&=e^{ia\xi}\frac{1}{2\pi}\int_{-\pi/h}^{\pi/h}(\mathcal{F}_hu_h)(\xi-\eta) (\mathcal{F}_hu_h)(\eta)d\eta\\
&=\frac{1}{2\pi}\int_{-\pi/h}^{\pi/h}e^{ia(\xi-\eta)}(\mathcal{F}_hu_h)(\xi-\eta) e^{ia\eta}(\mathcal{F}_hu_h)(\eta)d\eta=\mathcal{F}_h\left((e^{a\partial_h}u_h)^2\right)(\xi).
\end{aligned}$$
This computation can be extended to any polynomial of finite degree.}
Therefore, the equation \eqref{coupled FPU} is reformulated as a coupled system of integral equations, which we refer to as the \textit{coupled FPU}, 
\begin{equation}\label{coupled FPU'}
\boxed{\quad\begin{aligned}
u_h^\pm(t)&= S_h^\pm(t)u_{h,0}^\pm\mp\frac14 \int_0^t S_h^\pm(t-t_1) \nabla_h \bigg[\Big\{u_h^\pm(t_1)+e^{\pm\frac{2t_1}{h^2}\partial_h}u_h^\mp(t_1)\Big\}^2\\
&\hspace{19em}+ h^2e^{\pm\frac{t_1}{h^2}\partial_h}\mathcal{R}(t_1)\bigg]dt_1,
\end{aligned}\quad}
\end{equation}
with initial data
\begin{equation}\label{coupled FPU' initial}
u_{h,0}^\pm= \frac12 \Big\{ \tilde{r}_{h,0}\mp h^2\nabla_h^{-1} \tilde{r}_{h,1} \Big\},
\end{equation}
where the linear FPU propagator is denoted by
\begin{equation}\label{eq: linear FPU flow}
\boxed{\quad S_h^\pm(t)=e^{\mp\frac{t}{h^2}(\nabla_h-\partial_h)}\quad}.
\end{equation}
\begin{remark}\label{coupled remark}
$(i)$ By construction, FPU \eqref{FPU} can be recovered from the equation \eqref{coupled FPU'} via
\begin{equation}\label{coupled FPU' to FPU}
\tilde{r}_h(t,x)=e^{-\frac{t}{h^2}\partial_h}u_h^+(t,x) +e^{\frac{t}{h^2}\partial_h}u_h^-(t,x).
\end{equation}
$(ii)$ $e^{\pm\frac{t}{h^2}\partial_h}$ is an \textit{almost translation} in that at each discrete time $t=h^3k$ with $k\in\mathbb{N}$, 
$$(e^{\mp \frac{t}{h^2}\partial_h}u_h^\pm)(t,x)=(e^{\mp hk\partial_h}u_h^\pm)(t,x)=u_h^\pm(t, x\mp hk)=u_h^\pm(t, x\mp \tfrac{t}{h^2}).$$
Thus, at least formally, if $u_h^\pm(t,x) \approx w^\pm(t,x)$, then by \eqref{coupled FPU' to FPU}, the solution $\tilde{r}_h(t,x)$ to the FPU becomes asymptotically decoupled into the counter-propagating flows $w^+(t, x- \tfrac{t}{h^2})$ (moving to the right) and $w^-(t, x+\tfrac{t}{h^2})$ (moving to the left).\\
$(iii)$ If the nonlinear solution $u_h^\pm(t)$ behaves almost linearly in a short time interval, the coupled term $e^{\pm\frac{2t_1}{h^2}\partial_h}u_h^\mp(t_1,x)$ in \eqref{coupled FPU'} is approximated by
$$e^{\pm\frac{t_1}{h^2}(\partial_h+\nabla_h)}u_{h,0}^\mp(x)=\frac{1}{2\pi}\int_{-\frac{\pi}{h}}^{\frac{\pi}{h}}e^{\pm\frac{it_1}{h^2}(\xi+\frac{2}{h}\sin(\frac{h\xi}{2}))+ix\xi}(\mathcal{F}_hu_{h,0}^\mp)(\xi)d\xi.$$
This term is expected to vanish as $h\to 0$ owing to fast dispersion. Note that its group velocity $\mp\frac{1}{h^2}(1+\cos(\frac{h\xi}{2}))=\mp \frac{1}{h^2}(2-\frac{h^2\xi^2}{8}+\cdots)$ diverges to $\mp\infty$ for $|\xi|\leq\frac{\pi}{h}$. The higher-order remainder  $e^{\pm\frac{t_1}{h^2}\partial_h}\mathcal{R}(t_1)$ is also expected to vanish owing to the spare $h$ of order $2$. \\
$(iv)$ The linear propagator $S_h^\pm(t)$ formally converges to the Airy flow, because by Taylor's theorem, $\mp\frac{1}{h^2}(\frac{2}{h}\sin(\frac{h\xi}{2})-\xi)=\pm(\frac{\xi^3}{24}-\frac{h^2\xi^5}{1920}+\cdots)\to\pm\frac{\xi^3}{24}$ as $h\to 0$.
\end{remark}

\subsection{From coupled to decoupled FPU}\label{sec: outline 2}

As mentioned in Remark \ref{coupled remark} $(iii)$ and $(v)$, one would expect that the coupled terms $e^{\pm\frac{2t_1}{h^2}\partial_h}u_h^\mp(t_1,x)$ in \eqref{coupled FPU'}, as well as the $O(h^2)$-order remainder term, vanish as $h\to 0$. Thus, dropping them in \eqref{coupled FPU'}, we derive the following decoupled system, which we refer to as the \textit{decoupled FPU}:
\begin{equation}\label{decoupled FPU}
\boxed{\quad\begin{aligned}
v_h^\pm(t) = S_h^\pm(t)u_{h,0}^\pm\mp\frac14 \int_0^t S_h^\pm(t-t_1) \nabla_h\Big\{v_h^{\pm}(t_1)\Big\}^2 dt_1,
\end{aligned}\quad}
\end{equation}
where
$$v_h^\pm=v_h^\pm(t,x): \mathbb{R}\times h\mathbb{Z}\to\mathbb{R}.$$
It is easy to show that both the coupled and the decoupled FPUs are well-posed (see Proposition \ref{LWP of CS}). However, their well-posedness is not sufficient for rigorous reduction to the decoupled equation. Indeed, the time interval of existence, given by the well-posedness, may shrink to zero as $h\to 0$; however, some regularity is also required to measure the difference between two solutions. Thus, we exploit the dispersive and smoothing properties of the linear FPU flows in Section \ref{sec: linear estimate} and \ref{sec: bilinear estimates}, and we obtain finer uniform-in-$h$ bounds for nonlinear solutions (Proposition \ref{Prop:LWP}). In Section \ref{sec: coupled to decoupled}, by using these uniform bounds, we justify the convergence from the coupled to the decoupled FPU.

\begin{proposition}[Convergence to the decoupled FPU]\label{convergence to decoupled FPU}
If $V$ satisfies \eqref{V assumption}, then for any $R>0$, there exists $T(R)>0$ such that the following holds. Let $s\in (0,1]$. Suppose that
$$\sup_{h\in(0,1]}\big\|\big(u_{h,0}^+, u_{h,0}^-\big)\big\|_{H^s(h\mathbb{Z})\times H^s(h\mathbb{Z})}\leq R.$$
Let $(u_h^+(t), u_h^-(t))$ (resp., $(v_h^+(t),v_h^-(t))$) be the solution to the coupled FPU \eqref{coupled FPU'} (resp., the decoupled FPU \eqref{decoupled FPU}) with initial data $(u_{h,0}^+, u_{h,0}^-)$. Then, 
$$\big\|u_h^\pm(t)-v_h^\pm(t)\big\|_{C_t([-T,T]; L_x^2(h\mathbb{Z}))}\lesssim h^s\|u_0^\pm\|_{H^s(\mathbb{R})}.$$
\end{proposition}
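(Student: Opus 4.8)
The plan is to set $\delta_h^\pm := u_h^\pm - v_h^\pm$ and subtract the Duhamel formulations \eqref{coupled FPU'} and \eqref{decoupled FPU}. The two solutions share the same free evolution $S_h^\pm(t)u_{h,0}^\pm$, so $\delta_h^\pm$ solves a purely inhomogeneous equation $\delta_h^\pm(t) = \mp\tfrac14\int_0^t S_h^\pm(t-t_1)\nabla_h\,[\,\cdots]\,dt_1$, and I would split the bracket into three groups: (a) the self-interaction difference $(u_h^\pm)^2 - (v_h^\pm)^2 = \delta_h^\pm\,(u_h^\pm + v_h^\pm)$; (b) the coupled contributions $2u_h^\pm\,e^{\pm\frac{2t_1}{h^2}\partial_h}u_h^\mp + \big(e^{\pm\frac{2t_1}{h^2}\partial_h}u_h^\mp\big)^2$; and (c) the higher-order remainder $h^2 e^{\pm\frac{t_1}{h^2}\partial_h}\mathcal{R}$ from \eqref{eq:R}. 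Throughout I would measure everything in the solution norm built from the estimates of Proposition \ref{prop:Strichartz}, in which Proposition \ref{Prop:LWP} controls $u_h^\pm$ and $v_h^\pm$ uniformly in $h$ by a constant depending only on $R$; since $\delta_h^\pm$ inherits a finite (a priori $O(R)$) bound in this norm, the strategy is to re-estimate it and absorb.

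Groups (a) and (c) are the routine ones. For (a) the bilinear estimate Lemma \ref{lem:bi1} applies directly, as both factors ride the \emph{same} flow $S_h^\pm$; together with the uniform bounds it yields a contribution $\lesssim C(R)\,T^{\theta}\,\|\delta_h^\pm\|$ for some $\theta>0$, so choosing $T=T(R)$ small makes the prefactor at most $\tfrac12$ and this term is absorbed into the left-hand side. For (c), the operator $e^{\pm\frac{t_1}{h^2}\partial_h}$ is an $L^2$-isometry and $\mathcal{R}$ is cubic with a coefficient $\tfrac{V^{(4)}}{3}$ bounded along the (uniformly bounded) solution, so the explicit prefactor $h^2$ renders this term $O(h^2)$ after a trilinear estimate; here $s>0$ suffices to close the cubic term.

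The hard part will be group (b). No smallness is available from operator norms, since $e^{\pm\frac{2t_1}{h^2}\partial_h}$ is unitary on $L^2(h\mathbb{Z})$; the decay is genuinely bilinear and oscillatory. Composing $S_h^\pm(t-t_1)$ with $e^{\pm\frac{2t_1}{h^2}\partial_h}$ shifts the characteristic of the $\mp$-factor to the combined symbol $e^{\pm\frac{it_1}{h^2}(\xi+\frac{2}{h}\sin(\frac{h\xi}{2}))}$, whose group velocity $\pm\frac{1}{h^2}(1+\cos(\frac{h\xi}{2}))$ is of order $h^{-2}$ on the entire band $[-\frac{\pi}{h},\frac{\pi}{h})$, whereas the companion $u_h^\pm$ propagates along $S_h^\pm$ with group velocity only of order $\xi^2$ (cf. Remark \ref{coupled remark}$(iii)$). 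I would exploit this transversality: the enormous separation of group velocities produces a large resonance function, and a bilinear estimate comparing the slow flow against the fast shifted flow gains a positive power of $h$ from the reciprocal of the velocity gap; spending the $s$ derivatives of available regularity converts this into the factor $h^s$. The purely fast quadratic term $\big(e^{\pm\frac{2t_1}{h^2}\partial_h}u_h^\mp\big)^2 = e^{\pm\frac{2t_1}{h^2}\partial_h}(u_h^\mp)^2$ is treated the same way, its rapid oscillation in $t_1$ giving an analogous gain. The delicate point is that this must hold uniformly in $h$ down to $s>0$, precisely the regime in which the $X^{s,b}$ norm adapted to a single flow is useless (Proposition \ref{Xsb bad}); the resolution is to measure each nonlinear factor along its \emph{own} characteristic and extract the gain from the mismatch of characteristics rather than from any modulation weight. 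Combining (a)--(c) gives $\|\delta_h^\pm\| \le C(R)(h^s + h^2) + \tfrac12\|\delta_h^\pm\|$, and absorbing the last term closes the estimate at the rate $h^s$; restricting to the $C_tL^2_x$ component of the norm yields the claimed bound.
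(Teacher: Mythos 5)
Your proposal is correct and follows essentially the same route as the paper's proof: subtract the two Duhamel formulas, absorb the self-interaction difference $(u_h^\pm)^2-(v_h^\pm)^2$ via the bilinear estimate of Lemma \ref{lem:bi1} and a small choice of $T(R)$, treat the $h^2\mathcal{R}$ term as a harmless remainder (Lemma \ref{lem:AppB}), and extract the $h^s$ gain from the coupled terms through the mismatch of characteristics. The ``transversality'' bilinear estimates you propose to develop for group (b) are precisely the paper's Lemmas \ref{lem:bi2} and \ref{lem:bi3} (applied with $s'=s$ and output regularity $0$), and the paper runs the entire scheme in $X_{h,\pm}^{0,b}$ --- which is legitimate here, unlike in the FPU-to-Airy comparison of Proposition \ref{Xsb bad}, since both solutions evolve under the same discrete propagators --- before concluding with the embedding $X_{h,\pm}^{0,b}\hookrightarrow C_t([-T,T];L_x^2(h\mathbb{Z}))$.
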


\subsection{From decoupled FPU to KdV}\label{sec: outline 3}
As mentioned in Remark \ref{coupled remark} $(iv)$, by convergence of symbols, each equation in the decoupled FPU \eqref{decoupled FPU} is expected to converge to the Korteweg--de Vries equation (KdV)
\begin{equation}\label{KdV}
\boxed{\quad
w^\pm(t) = S^\pm(t) u_0^\pm \mp \frac14 \int_0^t S^\pm(t-t_1) \partial_x\Big\{w^\pm(t_1)\Big\}^2 dt_1,
\quad}
\end{equation}
where
$$w^\pm=w^\pm(t,x): \mathbb{R}\times\mathbb{R}\to \mathbb{R}$$
and
\begin{equation}\label{eq: Airy flow}
\boxed{\quad S^\pm(t)=e^{\mp\frac{t}{24}\partial_x^3}\quad}
\end{equation}
denotes the Airy flow.

In Section \ref{sec: decoupled to KdV}, we establish the convergence from the decoupled FPU \eqref{decoupled FPU} to KdVs \eqref{KdV}. Its proof  uses uniform bounds for nonlinear solutions (see Section \ref{proof of uniform bound}).
\begin{proposition}[Convergence to KdVs]\label{convergence to KdV}
If $V$ satisfies \eqref{V assumption}, then for any $R>0$, there exists $T(R)>0$ such that the following holds. Let $s\in (\frac{3}{4},1]$. Suppose that
$$\sup_{h\in(0,1]}\big\|\big(u_{h,0}^+, u_{h,0}^-\big)\big\|_{H^s(h\mathbb{Z})\times H^s(h\mathbb{Z})}\leq R.$$
Let $\big(v_h^+(t), v_h^-(t)\big)$ (resp., $(w_h^+(t),w_h^-(t))$)
be the solution to the decoupled FPU \eqref{decoupled FPU} (resp., the KdVs \eqref{KdV}) with initial data $(u_{h,0}^+, u_{h,0}^-)$ (resp., with initial data $(l_hu_{h,0}^+, l_hu_{h,0}^-)$), where $l_h$ is the linear interpolation operator to be defined in \eqref{def:  linear interpolation}. Then,
$$\big\|l_hv_h^\pm(t)-w_h^\pm(t)\big\|_{C_t([-T,T]; L_x^2(\mathbb{R}))}\lesssim h^{\frac{2s}{5}}.$$
\end{proposition}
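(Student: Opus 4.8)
The plan is to transport both equations to the Fourier side and treat the proposition as a perturbation of the KdV local theory of Kenig--Ponce--Vega. Since $v_h^\pm$ lives on $h\Z$ and $w_h^\pm$ on $\R$, I would first replace the piecewise-linear interpolation $l_h$ by the band-limited extension $\mathcal{E}_h$, sending $f_h$ to the function on $\R$ with Fourier transform $(\mathcal{F}_hf_h)\mathbf{1}_{[-\pi/h,\pi/h)}$. By Parseval, $\mathcal{E}_h$ is an $L^2$-isometry and conjugates the discrete multipliers $\nabla_h$ and $S_h^\pm$ into honest multipliers on $\R$ supported in $[-\tfrac{\pi}{h},\tfrac{\pi}{h})$. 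The discrepancy $\|l_hf_h-\mathcal{E}_hf_h\|_{L^2(\R)}$, and likewise the change of KdV data $l_hu_{h,0}^\pm\mapsto\mathcal{E}_hu_{h,0}^\pm$, is an interpolation error bounded in the preliminaries of Section \ref{sec:pre} at a rate $\lesssim h^{s}\le h^{2s/5}$; by Lipschitz dependence of the KdV flow on its $L^2$ data this reduces the claim to estimating $\|\mathcal{E}_hv_h^\pm-\tilde w_h^\pm\|_{C_tL_x^2}$, where $\tilde w_h^\pm$ solves KdV \eqref{KdV} with data $\mathcal{E}_hu_{h,0}^\pm$.

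\textbf{Linear comparison.} The heart of the matter is the discrepancy between the propagators. Writing $\psi_h(\xi):=-\tfrac{1}{h^2}(\tfrac{2}{h}\sin\tfrac{h\xi}{2}-\xi)=\tfrac{\xi^3}{24}-\tfrac{h^2\xi^5}{1920}+\cdots$, the symbol of $\mathcal{E}_hS_h^\pm(t)$ is $e^{\pm it\psi_h(\xi)}\mathbf{1}_{[-\pi/h,\pi/h)}$ and that of $S^\pm(t)\mathcal{E}_h$ is $e^{\pm it\xi^3/24}\mathbf{1}_{[-\pi/h,\pi/h)}$, so by Parseval, using $|\psi_h(\xi)-\tfrac{\xi^3}{24}|\lesssim h^2|\xi|^5$ on $[-\tfrac{\pi}{h},\tfrac{\pi}{h})$ and $|e^{i\alpha}-e^{i\beta}|\le\min(2,|\alpha-\beta|)$,
$$\big\|(\mathcal{E}_hS_h^\pm(t)-S^\pm(t)\mathcal{E}_h)f_h\big\|_{L^2}^2\lesssim\int_{|\xi|\le\pi/h}\min\!\big(1,\,T^2h^4|\xi|^{10}\big)\,|\mathcal{F}_hf_h(\xi)|^2\,d\xi.$$
I would split at the frequency $N=h^{-2/5}$: on $|\xi|\le N$ the bound $h^2|\xi|^5\le h^2N^{5-s}\langle\xi\rangle^s$ contributes $\lesssim h^2N^{5-s}\|f_h\|_{H^s}$, while on $|\xi|>N$ the trivial bound $2$ contributes $\lesssim N^{-s}\|f_h\|_{H^s}$; both equal $h^{2s/5}\|f_h\|_{H^s}$ at $N=h^{-2/5}$. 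This is exactly the mechanism producing the rate, and it is why $H^s$-regularity must be invested even in the low-frequency regime.

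\textbf{Nonlinear comparison and closing the estimate.} Set $e_h^\pm:=\mathcal{E}_hv_h^\pm-\tilde w_h^\pm$. Subtracting the Duhamel formulas \eqref{decoupled FPU} and \eqref{KdV} splits $e_h^\pm$ into (i) the linear term above; (ii) a symbol error $\big(\mathcal{E}_hS_h^\pm(t-t_1)\nabla_h-S^\pm(t-t_1)\partial_x\mathcal{E}_h\big)\{v_h^\pm\}^2$ together with the aliasing defect $\mathcal{E}_h(\{v_h^\pm\}^2)-(\mathcal{E}_hv_h^\pm)^2$ arising from the periodic convolution of the discrete product; and (iii) the genuine nonlinear difference $S^\pm(t-t_1)\partial_x\big[(\mathcal{E}_hv_h^\pm)^2-(\tilde w_h^\pm)^2\big]=S^\pm(t-t_1)\partial_x\big[e_h^\pm(\mathcal{E}_hv_h^\pm+\tilde w_h^\pm)\big]$. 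Terms (ii) are estimated exactly as in the linear step, the extra derivative being absorbed by the local smoothing estimate of Propositions \ref{prop:Strichartz} and \ref{Thm:linear estimates for KdV flows} while the uniform $H^s$-bounds on $v_h^\pm$ from Proposition \ref{Prop:LWP} control the amplitude (the aliasing defect being lower order since these solutions are frequency-concentrated); they contribute $O(h^{2s/5})$. Term (iii) is linear in $e_h^\pm$ and, by the bilinear estimate (Lemma \ref{lem:bi1}) and the Strichartz/smoothing/maximal-function estimates together with the uniform bounds on $v_h^\pm,\tilde w_h^\pm$, is bounded by $C(R)\,T^{\theta}\|e_h^\pm\|_{C_tL_x^2}$ for some $\theta>0$. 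Choosing $T=T(R)$ small turns this into a contraction, yielding $\|e_h^\pm\|_{C_tL_x^2}\lesssim h^{2s/5}$.

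\textbf{Main obstacle.} The hard part is term (iii): reproducing the Kenig--Ponce--Vega local theory \emph{uniformly in} $h$ while paying only the convergence rate $h^{2s/5}$. This forces the use of the maximal function estimate, which is available only for $s>\tfrac34$, and is precisely the threshold in the hypothesis and the obstruction to lowering $s$. A secondary technical point is the aliasing defect in (ii), which must be shown to be lower order by exploiting the frequency localization of the nonlinear solutions.
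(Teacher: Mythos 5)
Your skeleton---Duhamel differencing, extraction of the rate by splitting frequencies at $|\xi|\sim h^{-2/5}$, absorption for small $T$, and the maximal function estimate as the source of the threshold $s>\frac34$---is the same as the paper's, but both nonlinear steps contain genuine gaps, and the first one is fatal as written. In your term (ii), the smallness $h^{2s/5}$ can only be extracted on the Fourier side (Plancherel together with $|\psi_h(\xi)-\tfrac{\xi^3}{24}|\lesssim h^2|\xi|^5$), and there the extra derivative must be paid by regularity of the \emph{input} $\{v_h^\pm\}^2$. With only the uniform $H^s$ bound on $v_h^\pm$ (hence on its square), the symbol of the difference operator carrying the derivative is bounded by $2|\xi|$ at high frequencies, so that piece costs $|\xi|^{1-s}\lesssim h^{s-1}$, which \emph{diverges} for $s<1$; the low-frequency piece similarly gives $Th^2N^{6-s}=Th^{(2s-2)/5}$ at $N=h^{-2/5}$, and no choice of $N$ repairs this. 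Nor can you ``absorb the derivative by local smoothing'': the retarded estimate $\|\int_0^tS^\pm(t-t_1)\partial_xF\,dt_1\|_{C_tL_x^2}\lesssim\|F\|_{L_x^1L_t^2}$ applies to each propagator \emph{separately} and therefore destroys the difference structure---each term is then $O(1)$, not $O(h^{2s/5})$. The missing ingredient is a product estimate that gains a full derivative on the square, namely $\|\nabla_h(v_h^\pm)^2\|_{L_t^1H_x^s}\lesssim\|v_h^\pm\|_{X_{h,\pm}^{s,b}}^2$; this is exactly the paper's Lemma \ref{lem:prod} (proved by Littlewood--Paley decomposition and smoothing-times-maximal bilinear bounds via Corollary \ref{cor:Strichartz}), after which the Fourier-side splitting does yield the rate, as in the paper's estimate of $I_2$.

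Your term (iii) is also not closed as stated. You cite Lemma \ref{lem:bi1}, but that is a bilinear estimate in the discrete spaces $X_{h,\pm}^{s,b}$: the difference $e_h^\pm$ is a function on $\R$ adapted to \emph{neither} propagator, and the paper's Appendix \ref{app:FLE} (Proposition \ref{Xsb bad}) shows precisely that one cannot uniformly place the interpolated FPU flow in the Airy $X^{s,b}$ space, so this citation cannot be repaired. The paper handles this term by Leibniz plus H\"older in the mixed norms $L_t^4L_x^\infty$, $L_x^2L_t^\infty$, $L_x^\infty L_t^2$ (Corollaries \ref{cor:Strichartz} and \ref{cor:linear estimates for KdV flows}), which forces one to control $\|\partial_xe_h^\pm\|_{L_x^\infty L_t^2}$; hence the entire argument must be run in the norm $S([-T,T])$ containing the local smoothing component, and the linear comparison must then also be proved in that norm (Proposition \ref{linear approx})---your Plancherel-only comparison does not provide it. (Alternatively one can close in $C_tL_x^2$ alone via the retarded smoothing bound $\lesssim T^{1/2}\|e_h^\pm\|_{C_tL_x^2}\|\mathcal{E}_hv_h^\pm+\tilde w_h^\pm\|_{L_x^2L_t^\infty}$ together with the maximal function estimate, but that is not the argument you wrote.) The remaining issues are minor and fixable: the bound $\|l_hf_h-\mathcal{E}_hf_h\|_{L^2}\lesssim h^s\|f_h\|_{H^s}$ is not in Section \ref{sec:pre} (though it follows from Lemma \ref{p_h symbol}), and the aliasing defect needs an actual estimate---it is $O(h^s)$ because every aliased interaction involves a factor at frequency $\gtrsim h^{-1}$, not because the solutions are ``frequency-concentrated.''
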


\begin{remark}\label{rem:w_h}
To avoid confusion, we explain here that the solutions $w_h^{\pm}$ to KdVs \eqref{KdV} with initial data $l_hu_{h,0}^{\pm}$ are real-valued functions posed  not on $h\Z$ but on $\R$ because, as seen, their initial data depend on $h$. Therefore, $w_h^{\pm}$ involve subscript $h$.
\end{remark}
Finally, by combining Proposition \ref{convergence to decoupled FPU} (with Lemma \ref{Lem:discretization linearization inequality}) and Proposition \ref{convergence to KdV}, we complete the proof of our main theorem. Figure \ref{Fig:Main} shows the convergence scheme outlined in this section.

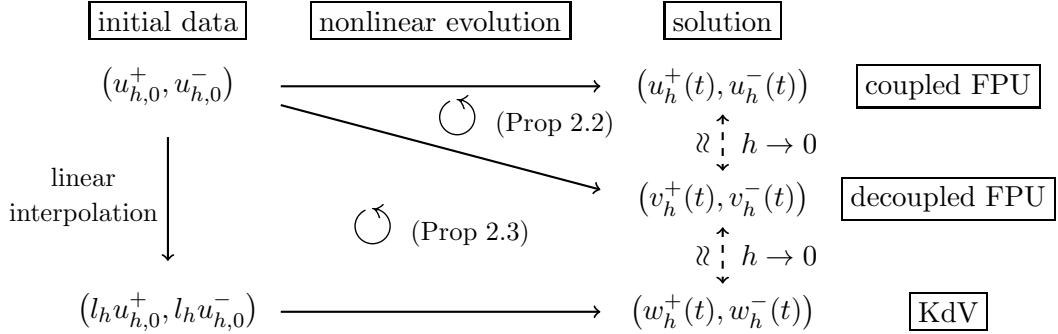
\begin{figure}[h!]
\begin{center}
\begin{tikzpicture}[scale=0.5]
\node at (-7.25,-4){$\big(l_h u_{h,0}^+,l_h u_{h,0}^-\big)$};
\draw[thick,->] (-4.25,-4) -- (4.25,-4);
\node at (-7.25,3.75){\boxed{\text{initial data}}};
\node at (-7.25,2){$\big(u_{h,0}^+,u_{h,0}^-\big)$};
\draw[thick,->] (-4.25,2) -- (4.25,2);
\node at (0,3.75){\boxed{\text{nonlinear evolution}}};
\draw[thick,->] (-4.25,1.5) -- (4.25,-0.75);
\node at (2.25,1.15){\huge$\circlearrowleft$\ \small (Prop \ref{convergence to decoupled FPU})};
\draw[thick,->] (-7.25,0.65) -- (-7.25,-2.65);
\node at (-0,-1.75){\huge$\circlearrowleft$\ \small (Prop \ref{convergence to KdV})};
\node at (-9.5,-0.4){{\small linear}};
\node at (-9.5,-1.4){{\small interpolation}};
\node at (7.5,-4){$\big(w_h^+(t),w_h^-(t)\big)$};
\node at (13.5,-4){\boxed{\text{KdV}}};
\node at (7.5,-1){$\big(v_h^+(t),v_h^-(t)\big)$};
\node at (13.5,-1){\boxed{\text{decoupled FPU}}};
\draw[thick,dashed,<->] (7.5,-1.75) -- (7.5,-3.25);
\node at (7,-2.5){\rotatebox{90}{$\approx$}};
\node at (9,-2.5){$h \to 0$};
\node at (7.5,3.75){\boxed{\text{solution}}};
\node at (7.5,2){$\big(u_h^+(t),u_h^-(t)\big)$};
\node at (13.5,2){\boxed{\text{coupled FPU}}};
\node at (7,0.5){\rotatebox{90}{$\approx$}};
\draw[thick,dashed, <->] (7.5,1.25) -- (7.5,-0.25);
\node at (9,0.5){$h \to 0$};
%
\end{tikzpicture}
\end{center}
\caption{Convergence scheme from FPU to KdV.}\label{Fig:Main}
\end{figure}

\section{Preliminaries}\label{sec:pre}

In this section, we summarize the basic analysis tools for functions on a lattice.

\subsection{Basic inequalities and Littlewood-Paley theory on a lattice}

By definition, $L^p(h\mathbb{Z})=\ell^p(\mathbb{Z})$ but $\|f_h\|_{L^p(h\mathbb{Z})}=h^{1/p}\|f_h\|_{\ell^p(\mathbb{Z})}$. Thus, we have H\"older's inequality
$$\|f_hg_h\|_{L^p(h\mathbb{Z})} \le \|f_h\|_{L^{p_1}(h\mathbb{Z})}\|g_h\|_{L^{p_2}(h\mathbb{Z})},\quad \tfrac{1}{p}=\tfrac{1}{p_1}+\tfrac{1}{p_2},$$
the standard duality relation
$$\|f_h\|_{L^p(h\mathbb{Z})} =\sup_{\|g\|_{L^{p'}(h\mathbb{Z})}\le 1} h\bigg|\sum_{x\in h\mathbb{Z}^d}f_h(x)\overline{g_h(x)}\bigg|,\quad \tfrac{1}{p}+\tfrac{1}{p'}=1,$$
and Young's inequality
\begin{equation}\label{eq:young}
\|f_h * g_h\|_{L^r(h\mathbb{Z})} \le \|f_h\|_{L^{p}(h\mathbb{Z})}\|g_h\|_{L^{q}(h\mathbb{Z})},\quad 1+\tfrac{1}{r}=\tfrac{1}{p}+\tfrac{1}{p},
\end{equation}
where the convolution $*=*_h$ is defined by
\[(f_h * g_h)(x) = h \sum_{y \in h\Z} f_h(y) g_h(x-y).\]

For the basic properties of Sobolev spaces on a lattice, we have the following lemmas.

\begin{lemma}[Norm equivalence, Proposition 1.2 in \cite{HY-2019DCDS}]\label{Prop:norm equivalence}
	For any $1<p<\infty$, we have
\begin{equation}\label{norm equivalence}
\|f_h\|_{\dot{W}^{s,p}(h\mathbb{Z})}\sim \| |\nabla_h|^s f_h\|_{L^p(h\mathbb{Z})}\quad\forall s\in\mathbb{R}
\end{equation}
	and
	$$\|f_h\|_{\dot{W}^{1,p}(h\mathbb{Z})}\sim \| \nabla_h f_h \|_{L^p(h\mathbb{Z})}\sim \|\pa_{h}^{\pm}f_h\|_{L^p(h\mathbb{Z})}.$$
\end{lemma}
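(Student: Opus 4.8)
The final statement to prove is the norm equivalence lemma (Lemma~\ref{Prop:norm equivalence}), which asserts that on the lattice $h\mathbb{Z}$, for $1<p<\infty$, the homogeneous Sobolev norm $\|f_h\|_{\dot{W}^{s,p}}=\||\partial_h|^sf_h\|_{L^p}$ is equivalent to $\||\nabla_h|^sf_h\|_{L^p}$, and that for $s=1$ these are in turn equivalent to $\|\nabla_h f_h\|_{L^p}$ and to the forward/backward difference quotient norm $\|\partial_h^{\pm}f_h\|_{L^p}$. Since this is cited as Proposition~1.2 of \cite{HY-2019DCDS}, I would prove it by the standard Mikhlin--H\"ormander multiplier theorem on the torus, exploiting the fact that the discrete Fourier transform realizes $L^p(h\mathbb{Z})$ as a periodic multiplier setting on $[-\tfrac{\pi}{h},\tfrac{\pi}{h})$.

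\textbf{Reduction to a multiplier bound.} The plan is to write each claimed equivalence as the $L^p$-boundedness of a Fourier multiplier operator whose symbol is a ratio of the two symbols involved, together with the boundedness of its reciprocal. Concretely, $|\partial_h|^s$ has symbol $|\xi|^s$ and $|\nabla_h|^s$ has symbol $|\tfrac{2}{h}\sin(\tfrac{h\xi}{2})|^s$, so \eqref{norm equivalence} reduces to showing that the multiplier
\[
m_h(\xi):=\left(\frac{\tfrac{2}{h}|\sin(\tfrac{h\xi}{2})|}{|\xi|}\right)^s
=\left(\frac{|\sin(\tfrac{h\xi}{2})|}{|\tfrac{h\xi}{2}|}\right)^s
\]
and its reciprocal $m_h(\xi)^{-1}$ both define operators bounded on $L^p(h\mathbb{Z})$, uniformly in $h$. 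The key simplification is that after the substitution $\eta=h\xi$, the symbol becomes $(|\sin(\eta/2)|/|\eta/2|)^s$ on the fixed interval $\eta\in[-\pi,\pi)$, which is completely independent of $h$; by the scaling invariance of the dyadic/Mikhlin hypotheses this reduces the entire problem to a single $h$-independent periodic multiplier on $\mathbb{T}=\R/2\pi\Z$.

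\textbf{Verifying the Mikhlin--H\"ormander condition.} Next I would check the periodic Mikhlin hypotheses for the reduced symbol $g(\eta)=(|\sin(\eta/2)|/|\eta/2|)^s$ on $[-\pi,\pi)$. This function is smooth and bounded above and below away from zero on $[-\pi,\pi)$: near $\eta=0$ it extends analytically with $g(0)=1$ (the removable singularity from $\sin$), and at the endpoints $\eta=\pm\pi$ it takes the finite value $(2/\pi)^s$, with all derivatives controlled. One must verify the differential inequalities $|\eta^k g^{(k)}(\eta)|\lesssim 1$ for $0\le k\le 1$ (for $p$ in a Hörmander-sharp range one needs up to one derivative past $L^2$-regularity, i.e.\ the Marcinkiewicz condition suffices on $\mathbb{T}$), and likewise for $g^{-1}$, which is legitimate precisely because $g$ is bounded away from $0$. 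The same verification applies to the $s=1$ equivalences: there the relevant symbols are $i\xi$ versus $\tfrac{2i}{h}\sin(\tfrac{h\xi}{2})$ versus $\tfrac{e^{\pm ih\xi}-1}{h}$ (the symbol of $\partial_h^\pm$), and after the $\eta=h\xi$ substitution each pairwise ratio reduces to a fixed bounded, bounded-away-from-zero periodic function with controlled derivatives. Applying the Marcinkiewicz--Mikhlin multiplier theorem on $\mathbb{T}$ then yields the $L^p(\mathbb{T})$ bounds, which transfer back to $L^p(h\mathbb{Z})$ via Parseval \eqref{eq:parseval} and the identification $L^p(h\mathbb{Z})=\ell^p(\mathbb{Z})$ with the stated normalization.

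\textbf{Main obstacle.} I expect the genuinely delicate point to be the behavior at the high-frequency edge $\xi=\pm\tfrac{\pi}{h}$, i.e.\ $\eta=\pm\pi$, where the periodic structure matters. Unlike the whole-line Mikhlin theorem, here one works on the torus, so one must ensure the symbol and its derivatives match up appropriately as a periodic function---in particular that $g$ does not develop a singularity or a derivative blow-up as $\eta\to\pi^-$. Since $\sin(\eta/2)$ is nonzero and smooth at $\eta=\pi$, this is fine for the $|\nabla_h|^s$ comparison, but it is exactly the feature that fails for, say, the discrete Schr\"odinger symbol discussed in the introduction, so the argument is not automatic and the edge analysis is where care is required. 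A secondary technical point is the uniformity in $h$: one must track that the reduction to the $h$-independent symbol on $[-\pi,\pi)$ is exact (no residual $h$-dependence hides in the constants), which follows from the clean scaling $\eta=h\xi$ but should be stated explicitly.
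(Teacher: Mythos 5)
First, a point of comparison: the paper itself contains no proof of this lemma — it is imported verbatim as Proposition~1.2 of \cite{HY-2019DCDS} — so your proposal can only be measured against the standard multiplier-theorem argument that such a result rests on, which is indeed the route you chose. Your skeleton is the right one: reduce each equivalence to $L^p(h\Z)$-boundedness of the ratio symbol and of its reciprocal, and observe that the rescaling $\eta=h\xi$ produces symbols on $[-\pi,\pi)$ with no residual $h$-dependence, so uniformity in $h$ is automatic. Your analysis of the ratio $\bigl(|\sin(\eta/2)|/|\eta/2|\bigr)^{\pm s}$ — positive, bounded above and below, analytic on the closed fundamental domain — is also correct.

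The gap is in the sentence asserting that, for the $s=1$ chain, ``each pairwise ratio reduces to a fixed bounded, bounded-away-from-zero periodic function with controlled derivatives.'' Two of the ratios are not even continuous as periodic functions. First, the $\dot W^{1,p}$ norm is defined through $|\partial_h|$ (symbol $|\xi|$), so comparing it with $\|\nabla_h f_h\|_{L^p}$ forces a sign factor into the ratio: after rescaling, the ratio is $i\,\mathrm{sign}(\eta)\,|\sin(\eta/2)|/|\eta/2|$, which jumps at $\eta=0$ and at the zone edge. Second, the ratio of the symbol of $\partial_h^+$ to that of $\nabla_h$ is exactly $e^{i\eta/2}$, since $e^{i\eta}-1=e^{i\eta/2}\cdot 2i\sin(\eta/2)$; as a $2\pi$-periodic function this jumps at the edge (value $i$ as $\eta\to\pi^-$, value $-i$ as $\eta\to-\pi^+$), and its convolution kernel on $\Z$ is $(-1)^n/\bigl(\pi(n+\tfrac12)\bigr)$, which is not absolutely summable. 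Consequently no smooth-symbol Mikhlin theorem, and no Young-type kernel argument, applies to these pieces: they are modulated discrete Hilbert transforms, and their $\ell^p$-boundedness — via Riesz projections/M.~Riesz, equivalently a Marcinkiewicz- or Steckin-type (bounded-variation) multiplier theorem that explicitly tolerates jumps — is the real engine of the proof and the precise reason the lemma is restricted to $1<p<\infty$. Relatedly, your diagnosis of the edge for the ``good'' ratio is also too quick: smoothness of $\bigl(\sin(\eta/2)/(\eta/2)\bigr)^{s}$ on the closed interval $[-\pi,\pi]$ does not make its periodic extension $C^1$; because the function is even with nonvanishing one-sided derivative at $\pi$, the extension has a corner there. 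That corner is harmless (the Fourier coefficients still decay like $n^{-2}$, so that kernel is summable), but it shows that regularity must be checked for the periodized symbol, not merely on the fundamental domain. Your argument becomes complete once the multiplier input is stated as a bounded-variation/Marcinkiewicz theorem allowing finitely many jumps (at $\eta=0$ and $\eta=\pm\pi$), rather than a smooth-multiplier theorem.
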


\begin{lemma}[Sobolev embedding, Proposition 2.5 in \cite{HY-2019DCDS}]
	Let $h\in(0,1]$. If $1\leq p<q\leq\infty$ and $\frac{1}{q}>\frac{1}{p}-s$, then
\begin{equation}\label{Sobolev}
\|f_h\|_{L^q(h\mathbb{Z})}\lesssim \|f_h\|_{W^{s,p}(h\mathbb{Z})}.
\end{equation}
\end{lemma}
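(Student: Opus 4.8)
The plan is to factor the Bessel operator $\bra{\pa_h}^{-s}$ through its convolution kernel and reduce everything, via the lattice Young inequality, to a single uniform-in-$h$ estimate. Setting $g_h := \bra{\pa_h}^s f_h$, so that $\|g_h\|_{L^p(h\Z)} = \|f_h\|_{W^{s,p}(h\Z)}$, we have $f_h = G_h * g_h$ with the discrete Bessel kernel
\[
G_h(x) := \mathcal{F}_h^{-1}\big[\bra{\xi}^{-s}\big](x) = \frac{1}{2\pi}\int_{-\pi/h}^{\pi/h}\bra{\xi}^{-s}e^{ix\xi}\,d\xi .
\]
By \eqref{eq:young}, $\|f_h\|_{L^q(h\Z)} \le \|G_h\|_{L^r(h\Z)}\|g_h\|_{L^p(h\Z)}$ where $1+\tfrac1q = \tfrac1r+\tfrac1p$, i.e. $\tfrac1r = 1+\tfrac1q-\tfrac1p$. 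The hypothesis $\tfrac1q > \tfrac1p - s$ is then exactly equivalent to $\tfrac1r > 1-s$, so the lemma reduces to the claim that $\sup_{h\in(0,1]}\|G_h\|_{L^r(h\Z)} < \infty$ whenever $\tfrac1r > 1-s$.

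The heart of the matter is pointwise control of $G_h$, uniform in $h$, which I would obtain by comparison with the classical continuous Bessel kernel $G(x)=\frac{1}{2\pi}\int_{\R}\bra{\xi}^{-s}e^{ix\xi}\,d\xi$, for which $|G(x)|\ls |x|^{s-1}$ on $|x|\le1$ and $|G(x)|\ls_N|x|^{-N}$ on $|x|\ge1$ are standard (taking $0<s<1$; the case $s\ge1$ gives a bounded, at worst logarithmically singular, kernel). Writing $G_h-G=-\frac{1}{2\pi}\int_{|\xi|>\pi/h}\bra{\xi}^{-s}e^{ix\xi}\,d\xi$ and integrating by parts once in $\xi$ yields $|G_h(x)-G(x)|\ls h^{s}/|x|$ for $x\ne0$. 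Since every nonzero lattice point has $|x|\ge h$, on $h\le|x|\le1$ this gives $h^s/|x|\le|x|^{s-1}$, hence $|G_h(x)|\ls|x|^{s-1}$ there; at the origin a direct estimate gives $|G_h(0)|\ls\int_0^{\pi/h}\bra{\xi}^{-s}\,d\xi\ls h^{s-1}$; and for $|x|\ge1$ repeated integration by parts in the defining integral gives $|G_h(x)|\ls_N|x|^{-N}$. A convenient feature is that on $h\Z$ the boundary contributions at $\xi=\pm\frac{\pi}{h}$ are benign: since $e^{\pm ix\pi/h}=(-1)^m$ for $x=hm$, those coming from the even-order derivatives of the even symbol $\bra{\xi}^{-s}$ cancel outright, while those from odd-order derivatives are uniformly negligible as $h\to0$, so they do not spoil the decay.

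With these three regimes in hand the remaining step is summation:
\[
\|G_h\|_{L^r(h\Z)}^r = h|G_h(0)|^r + h\!\!\sum_{0<|x|\le1}\!\!|G_h(x)|^r + h\!\!\sum_{|x|>1}\!\!|G_h(x)|^r .
\]
The first term is $\ls h^{1+(s-1)r}$, which is bounded (indeed tends to $0$) precisely because $\tfrac1r>1-s$ forces $1+(s-1)r>0$. The middle term is a Riemann sum for $\int_0^1|x|^{(s-1)r}\,dx$; comparing the decreasing integrand with its integral yields a bound uniform in $h$, and the integral converges under the same condition $(s-1)r>-1\Leftrightarrow\tfrac1r>1-s$. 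The tail is dominated by $\int_1^\infty|x|^{-Nr}\,dx<\infty$ for $N$ large. Hence $\sup_{h\in(0,1]}\|G_h\|_{L^r(h\Z)}<\infty$, completing the proof.

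I expect the pointwise kernel estimates — in particular the uniform local bound $|G_h(x)|\ls|x|^{s-1}$ and the sharp value $|G_h(0)|\ls h^{s-1}$, obtained while correctly accounting for the truncated Fourier domain $[-\tfrac{\pi}{h},\tfrac{\pi}{h})$ — to be the main obstacle, since this is where discreteness and the frequency cutoff genuinely interact. The case $s\ge1$ (bounded kernel) and the endpoint exponents $p=1$ or $q=\infty$ (where $r=p'$) require only minor, routine modifications of the summation step.
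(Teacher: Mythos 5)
The paper itself does not prove this lemma: it is quoted, with attribution, as Proposition 2.5 of \cite{HY-2019DCDS}, so there is no in-paper argument to compare against. Your proof is a correct, self-contained argument, and it follows the classical Bessel-kernel route: write $f_h = G_h * \langle\partial_h\rangle^s f_h$ with $G_h=\mathcal{F}_h^{-1}[\langle\xi\rangle^{-s}]$, reduce by Young's inequality \eqref{eq:young} on $h\mathbb{Z}$ to the uniform bound $\sup_{h\in(0,1]}\|G_h\|_{L^r(h\mathbb{Z})}<\infty$ for $\tfrac1r=1+\tfrac1q-\tfrac1p>1-s$, and prove that bound by pointwise kernel estimates. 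The exponent bookkeeping, the observation that the boundary terms at $\xi=\pm\tfrac{\pi}{h}$ behave well at lattice points (since $e^{\pm ix\pi/h}=(-1)^m$ there, the contributions of the even-order derivatives of the even symbol cancel), the comparison $|G_h(x)-G(x)|\lesssim h^s/|x|$, and the three-regime summation are all sound, and the constants are manifestly $h$-independent, which is what the paper needs. This is a genuinely different route from the one suggested by the toolbox the paper imports from the same reference (Littlewood--Paley projections and Bernstein's inequality \eqref{ineq:Bernstein}): the standard dyadic proof estimates $\|P_N f_h\|_{L^q}\lesssim (N/h)^{\frac1p-\frac1q}\|P_N f_h\|_{L^p}\lesssim (N/h)^{\frac1p-\frac1q-s}\|f_h\|_{W^{s,p}}$ and sums the geometric series over $N_0<N\le1$, which converges exactly when $\tfrac1p-\tfrac1q-s<0$. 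The dyadic proof is shorter because the kernel analysis is hidden inside Bernstein; yours is more elementary (no square-function theory) and makes the interaction between discreteness and the frequency truncation explicit, which is exactly where you correctly located the difficulty.

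One overstatement should be repaired, though it costs nothing. You claim $|G_h(x)|\lesssim_N |x|^{-N}$ for arbitrary $N$ when $|x|\ge 1$. For the truncated symbol this is false: the odd-order derivative boundary terms that you yourself identify do not vanish, and after two integrations by parts they leave a genuine contribution of size $h^{s+1}|x|^{-2}$, so in general $G_h$ decays no faster than $|x|^{-2}$. This is harmless for your argument: a single integration by parts (whose boundary terms cancel exactly at lattice points) already gives the clean bound $|G_h(x)|\lesssim |x|^{-1}\int_{-\pi/h}^{\pi/h}|\partial_\xi\langle\xi\rangle^{-s}|\,d\xi\lesssim |x|^{-1}$, and since $r>1$ strictly (because $p<q$), the tail sum $h\sum_{|x|\ge1}|x|^{-r}$ is already uniformly bounded. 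So replace ``$|x|^{-N}$ for $N$ large'' by ``$|x|^{-1}$ (or $|x|^{-2}$) from one (or two) integrations by parts,'' and the proof is complete as written.
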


In contrast to the continuous domain case, differential operators are bounded on a lattice; however, the bound blows up as $h\to0$.
\begin{lemma}[Boundedness of differential operators, Lemma 2.2 in \cite{HY-2019SIAM}]\label{Lem:Sobolev bound depending on h}
For $h > 0$ and $0\le s_1\le s_2$, we have
\begin{align*}
\|f_h\|_{\dot H^{s_2}(h\mathbb{Z})} &\lesssim \frac{1}{h^{s_2-s_1}} \|f_h\|_{\dot H^{s_1}(h\mathbb{Z})}.
\end{align*}
\end{lemma}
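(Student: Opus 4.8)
The plan is to prove the inequality $\|f_h\|_{\dot{H}^{s_2}(h\mathbb{Z})} \lesssim h^{-(s_2-s_1)} \|f_h\|_{\dot{H}^{s_1}(h\mathbb{Z})}$ by passing to the Fourier side and comparing the two homogeneous symbols pointwise on the compact frequency domain $[-\frac{\pi}{h},\frac{\pi}{h})$. By definition of $\dot{H}^s(h\mathbb{Z}) = \dot{W}^{s,2}(h\mathbb{Z})$ and Parseval's identity \eqref{eq:parseval}, we have
\begin{equation*}
\|f_h\|_{\dot{H}^{s}(h\mathbb{Z})}^2 = \| |\partial_h|^s f_h\|_{L^2(h\mathbb{Z})}^2 = \frac{1}{2\pi}\int_{-\frac{\pi}{h}}^{\frac{\pi}{h}} |\xi|^{2s} \big|(\mathcal{F}_h f_h)(\xi)\big|^2 \, d\xi,
\end{equation*}
since $|\partial_h|$ is the Fourier multiplier with symbol $|\xi|$ by Definition \ref{def: discrete differentials}$(ii)$. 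Thus the claim reduces to the pointwise symbol bound $|\xi|^{s_2} \lesssim h^{-(s_2-s_1)} |\xi|^{s_1}$ on the support of $\mathcal{F}_h f_h$, namely on $[-\frac{\pi}{h},\frac{\pi}{h})$.

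The key step is then elementary: I would factor $|\xi|^{s_2} = |\xi|^{s_2-s_1}\cdot |\xi|^{s_1}$ and observe that, since $s_2 - s_1 \geq 0$ and $|\xi| \leq \frac{\pi}{h}$ on the frequency domain, we have $|\xi|^{s_2-s_1} \leq (\frac{\pi}{h})^{s_2-s_1} = \pi^{s_2-s_1} h^{-(s_2-s_1)} \lesssim h^{-(s_2-s_1)}$. Inserting this bound into the Fourier-side expression yields
\begin{equation*}
\|f_h\|_{\dot{H}^{s_2}(h\mathbb{Z})}^2 = \frac{1}{2\pi}\int_{-\frac{\pi}{h}}^{\frac{\pi}{h}} |\xi|^{2(s_2-s_1)} |\xi|^{2s_1} \big|(\mathcal{F}_h f_h)(\xi)\big|^2 \, d\xi \lesssim \frac{1}{h^{2(s_2-s_1)}} \|f_h\|_{\dot{H}^{s_1}(h\mathbb{Z})}^2,
\end{equation*}
and taking square roots gives the desired estimate.

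There is no substantial obstacle here; the entire content is that the frequency variable is confined to the compact interval $[-\frac{\pi}{h},\frac{\pi}{h})$, which is precisely what distinguishes the lattice from the continuum and causes the bound to degenerate like $h^{-(s_2-s_1)}$ as $h \to 0$. The only minor point requiring a word of care is the behavior near $\xi = 0$ when $s_1 < 0$, where the weight $|\xi|^{2s_1}$ is singular; however, this singularity appears identically on both sides of the inequality, so the factorization argument above remains valid as a pointwise comparison of integrands and no separate treatment of low frequencies is needed. I would note that the hypothesis $s_2 \geq s_1$ is exactly what makes the exponent $s_2 - s_1$ nonnegative, so that raising the upper frequency bound $\frac{\pi}{h}$ to this power produces the stated factor $h^{-(s_2-s_1)}$.
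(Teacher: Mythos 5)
Your proof is correct: passing to the Fourier side via Parseval and using the pointwise bound $|\xi|^{s_2-s_1}\leq(\pi/h)^{s_2-s_1}$ on the compact frequency domain $[-\frac{\pi}{h},\frac{\pi}{h})$ is exactly the standard argument, and the paper itself gives no proof here (it simply cites Lemma 2.2 of \cite{HY-2019SIAM}, whose proof is this same symbol comparison). Your aside about $s_1<0$ is moot since the hypothesis already imposes $0\le s_1$, but it does not affect the validity of the argument.
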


\begin{lemma}[Leibniz rule for discrete differentials]\label{Lem:Leibnitz rule}
Differential operators $\partial_h^+$ and $\nabla_h$ allow the following types of Leibniz rule:
\begin{align}
\partial_h ^+( f_h g_h ) &= \partial_h^+ f_h\cdot g_h+f_h(\cdot+h)\cdot\partial_h^+ g_h, \label{Leibnitz rule1}\\
\nabla_h ( f_h g_h ) &=\nabla_h f_h\cdot\cos(\tfrac{-ih\pa_h}{2})g_h 
+\cos(\tfrac{-ih\pa_h}{2})f_h\cdot\nabla_h g_h,\label{Leibnitz rule2}
\end{align}
where $\cos(\frac{-ih\pa_h}{2})$ denotes the Fourier multiplier of the symbol $\cos(\frac{h\xi}{2})$.
\end{lemma}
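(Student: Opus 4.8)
\emph{The plan is to} verify each identity separately: the first by a direct computation from the definition of $\pa_h^+$, and the second by passing to the discrete Fourier side and invoking the sine addition formula.

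For \eqref{Leibnitz rule1}, I would simply unfold the definition. Writing
\[(\pa_h^+(f_hg_h))(x)=\frac{f_h(x+h)g_h(x+h)-f_h(x)g_h(x)}{h},\]
the standard add-and-subtract trick of inserting $\pm f_h(x+h)g_h(x)$ in the numerator splits the right-hand side into $\frac{f_h(x+h)-f_h(x)}{h}g_h(x)+f_h(x+h)\frac{g_h(x+h)-g_h(x)}{h}$, which is precisely $(\pa_h^+f_h)(x)\,g_h(x)+f_h(x+h)(\pa_h^+g_h)(x)$. No estimate is needed.

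For \eqref{Leibnitz rule2}, I would work entirely on the Fourier transform side. Since $\nabla_h$ is the multiplier of the symbol $\frac{2i}{h}\sin(\frac{h\xi}{2})$, and $\mathcal{F}_h(f_hg_h)$ is the convolution of $\mathcal{F}_hf_h$ and $\mathcal{F}_hg_h$, I would write
\[\mathcal{F}_h(\nabla_h(f_hg_h))(\xi)=\frac{2i}{h}\sin(\tfrac{h\xi}{2})\cdot\frac{1}{2\pi}\int_{-\pi/h}^{\pi/h}(\mathcal{F}_hf_h)(\xi-\eta)(\mathcal{F}_hg_h)(\eta)\,d\eta.\]
The key step is to apply the sine addition formula to $\sin(\frac{h\xi}{2})=\sin(\frac{h(\xi-\eta)}{2}+\frac{h\eta}{2})$, which factors the symbol appearing inside the integral as
\[\tfrac{2i}{h}\sin(\tfrac{h(\xi-\eta)}{2})\cos(\tfrac{h\eta}{2})+\cos(\tfrac{h(\xi-\eta)}{2})\,\tfrac{2i}{h}\sin(\tfrac{h\eta}{2}).\]
Recognizing $\frac{2i}{h}\sin(\frac{h(\xi-\eta)}{2})$ as the symbol of $\nabla_h$ evaluated at the frequency $\xi-\eta$ and $\cos(\frac{h\eta}{2})$ as the symbol of $\cos(\frac{-ih\pa_h}{2})$ at $\eta$ (and symmetrically for the second term), each of the two resulting integrals is again a Fourier convolution, hence the Fourier transform of a product. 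The first yields $\mathcal{F}_h(\nabla_hf_h\cdot\cos(\frac{-ih\pa_h}{2})g_h)$ and the second $\mathcal{F}_h(\cos(\frac{-ih\pa_h}{2})f_h\cdot\nabla_hg_h)$, so applying the inverse Fourier transform gives \eqref{Leibnitz rule2}.

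I expect no substantive obstacle, as both identities are purely algebraic. The only point demanding care is bookkeeping in the second identity: one must track which factor in the convolution carries the $\xi-\eta$ argument versus the $\eta$ argument, so that the $\nabla_h$ and $\cos(\frac{-ih\pa_h}{2})$ multipliers land on the correct functions. This is dictated entirely by the symmetric splitting produced by the sine addition formula, so the assignment is unambiguous.
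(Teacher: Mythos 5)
Your proposal is correct and follows essentially the same route as the paper: the first identity by direct expansion of the difference quotient, and the second by writing $\mathcal{F}_h(f_hg_h)$ as a convolution, splitting the symbol $\frac{2i}{h}\sin(\frac{h\xi}{2})$ via the sine addition formula, and inverting. The paper's proof is exactly this computation (with the roles of $\eta$ and $\xi-\eta$ relabeled), so there is nothing to add.
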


\begin{proof}
Here, \eqref{Leibnitz rule1} follows from the definition. For \eqref{Leibnitz rule2}, we take the Fourier transform of the left-hand side: 
$$\mathcal F_h \big( \nabla_h(f_hg_h)\big)(\xi)=\frac{2i}{h}\sin\Big(\frac{h\xi}{2}\Big)\frac{1}{2\pi}\int_{-\frac{\pi}{h}}^{\frac{\pi}{h}} (\mathcal{F}_hf_h)(\eta)(\mathcal{F}_hg_h)(\xi-\eta)d\eta.$$
We apply the identity $\frac{2i}{h}\sin(\frac{h\xi}{2})=\frac{2i}{h}\sin(\frac{h\eta}{2})\cos(\frac{h(\xi-\eta)}{2})+\cos(\frac{h\eta}{2})\frac{2i}{h}\sin(\frac{h(\xi-\eta)}{2})$ to the integral and then take the inversion.
\end{proof}

Let $\phi:\mathbb{R}\to[0,1]$ be an even smooth bump function such that $\phi(\xi)=1$ for $|\xi|\leq 1$ and $\phi(\xi)= 0$ for $|\xi|\geq 2$. For a dyadic number $N\in 2^{\mathbb{Z}}$ with $N\leq 1$, set $\psi_N$ by
\[\psi_N(\xi) = \phi\left(\frac{h\xi}{\pi N}\right) - \phi\left(\frac{2h\xi}{\pi N}\right).\]
Note that $\mbox{supp} \psi_N \subset \{\xi : \frac{\pi N}{2h} \le |\xi| \le \frac{2\pi N}{h}\}$, and $\{\psi_N\}$ is a partition of unity on $\T_h$, i.e., $\sum_{N\le1} \psi_N \equiv 1$. Now we define the Littlewood-Paley projection operator $P_N=P_{N;h}$ as the Fourier multiplier operator given by
\begin{equation}\label{LP}
\mathcal{F}_h(P_{N}f_h)(\xi)=\psi_N(\xi)(\mathcal{F}_hf_h)(\xi).
\end{equation}
Moreover, we define $ P_{\le N}$ by $\mathcal F_h (P_{\le N}f_h)=\sum_{M\le N} \psi_M \mathcal F f_h$. 

\begin{remark}\label{rem:low}
For each $h \in (0,1]$, there exists $N_0 = N_{0,h} \in 2^{\mathbb{Z}}$ such that $\pi N \le h$ holds for all $N \le N_0$. The projection $P_{\le N_0}$ on the lattice corresponds to $P_{\le 1}$ on $\R$ (referred to as a \emph{low} frequency piece). Indeed, 
\begin{equation}\label{eq:decomposition}
\sum_{N\leq 1}\psi_N(\xi) \sim \phi(\xi)+\sum_{N_0 < N\leq 1}\psi_N(\xi) \equiv 1,\quad\forall\xi\in \T_h.
\end{equation}
\end{remark}

\begin{proposition}[Littlewood-Paley inequality, Theorem 4.2 in \cite{HY-2019DCDS}]\label{LP inequalities}
For $1<p<\infty$, we have
$$\|f_h\|_{L^p(h\mathbb{Z})}\lesssim\Big\|\Big(\sum_{N\leq 1}|P_N f_h|^2\Big)^{1/2}\Big\|_{L^p(h\mathbb{Z})}\lesssim \|f_h\|_{L^p(h\mathbb{Z})}.$$
\end{proposition}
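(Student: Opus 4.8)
\emph{Proof proposal.} The plan is to deduce both inequalities from a single analytic input, a Hörmander--Mikhlin multiplier theorem on $L^p(h\mathbb{Z})$ whose operator norm is \emph{uniform in} $h\in(0,1]$, combined with Khintchine's inequality. To set this up I would introduce a sequence $\{\varepsilon_N\}_{N\leq 1}$ of independent Rademacher (random sign) variables on a probability space $(\Omega,\mathbb{P})$, and for each $\omega\in\Omega$ consider the discrete Fourier multiplier operator $T_\omega$ with symbol $m_\omega(\xi):=\sum_{N\leq 1}\varepsilon_N(\omega)\psi_N(\xi)$ on $\T_h$. Applying Khintchine's inequality pointwise in $x$ with scalars $a_N=(P_Nf_h)(x)$, then multiplying by $h$, summing over $x\in h\mathbb{Z}$ and using Fubini, yields
$$\Big\|\Big(\sum_{N\leq 1}|P_Nf_h|^2\Big)^{1/2}\Big\|_{L^p(h\mathbb{Z})}^p\ \sim\ \mathbb{E}_\omega\,\|T_\omega f_h\|_{L^p(h\mathbb{Z})}^p .$$
Thus the \emph{right-hand} inequality reduces exactly to the bound $\|T_\omega f_h\|_{L^p(h\mathbb{Z})}\lesssim\|f_h\|_{L^p(h\mathbb{Z})}$ with an implied constant independent of $\omega$ and $h$.

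To obtain this uniform bound I would invoke a discrete Hörmander--Mikhlin multiplier theorem: if $m:\T_h\to\mathbb{C}$ satisfies $|m(\xi)|+|\xi||m'(\xi)|+|\xi|^2|m''(\xi)|\lesssim A$ uniformly on $\T_h$, then $T_m$ is bounded on $L^p(h\mathbb{Z})$, $1<p<\infty$, with norm $\lesssim A$ uniformly in $h$; this can be obtained either by transference from the corresponding multiplier on $\mathbb{R}$ or directly by (vector-valued) Calderón--Zygmund theory on $h\mathbb{Z}$. The task is then to verify the Mikhlin condition for $m_\omega$ uniformly in $\omega$ and $h$. Here the key structural facts are that the $\psi_N$ have bounded overlap (each $\xi$ lies in the support of only finitely many, uniformly), so the sum is locally finite, and that by Remark \ref{rem:low} (cf.\ \eqref{eq:decomposition}) the low-frequency tail collapses to the single smooth piece $\phi(\xi)$, making the whole sum genuinely finite on the lattice. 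Since $\psi_N$ is a function of $h\xi/N$ supported where $|\xi|\sim N/h$, one has $|\xi\psi_N'(\xi)|\lesssim 1$ and $|\xi^2\psi_N''(\xi)|\lesssim 1$, and these survive summation because of the bounded overlap; the choice of $\phi$ (flat near $0$ and near the top edge) ensures $m_\omega$ extends smoothly and periodically across $\xi=\pm\tfrac{\pi}{h}$, so the hypotheses genuinely hold on the torus.

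For the \emph{left-hand} inequality I would argue by duality. Choosing fattened projections $\tilde P_N$ with symbols $\tilde\psi_N$ satisfying $\tilde\psi_N\equiv1$ on $\operatorname{supp}\psi_N$ (so that $\tilde P_NP_N=P_N$) and $\{\tilde\psi_N\}$ still a bounded-overlap dyadic family, I write, using the self-adjointness of the projections and $\sum_{N}\psi_N\equiv1$,
$$h\sum_{x\in h\mathbb{Z}}f_h\overline{g_h}=\sum_{N\leq 1}h\sum_{x\in h\mathbb{Z}}(P_Nf_h)\overline{(\tilde P_Ng_h)}.$$
Applying Cauchy--Schwarz in $N$ pointwise, then Hölder's inequality (the duality relation from the preliminaries), bounds this by $\big\|(\sum_N|P_Nf_h|^2)^{1/2}\big\|_{L^p}\big\|(\sum_N|\tilde P_Ng_h|^2)^{1/2}\big\|_{L^{p'}}$, and the second factor is $\lesssim\|g_h\|_{L^{p'}}$ by the right-hand inequality already proved for the family $\{\tilde\psi_N\}$. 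Taking the supremum over $\|g_h\|_{L^{p'}}\leq1$ gives $\|f_h\|_{L^p}\lesssim\big\|(\sum_N|P_Nf_h|^2)^{1/2}\big\|_{L^p}$.

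The main obstacle is entirely concentrated in the second step: establishing the multiplier theorem with a constant \emph{uniform as} $h\to0$ and verifying the Mikhlin condition for $m_\omega$ uniformly, with the delicate points being the behaviour at the frequency endpoints $\xi=\pm\tfrac{\pi}{h}$ (where periodicity must be respected) and the accumulation of dyadic pieces at low frequency, both of which are handled precisely by the design of the Littlewood--Paley family and Remark \ref{rem:low}. By contrast, the randomization via Khintchine and the duality argument are standard and setting-independent.
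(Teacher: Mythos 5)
The paper itself offers no proof of this proposition: it is imported verbatim as Theorem 4.2 of \cite{HY-2019DCDS}, so there is no in-paper argument to compare against, and your proposal must be judged on its own. On those terms it is correct, and it is the standard route (very likely the same one as in the cited reference): Khintchine's inequality reduces the right-hand bound to a uniform-in-$(\omega,h)$ $L^p$ bound for the randomized multiplier $m_\omega=\sum_{N\le 1}\varepsilon_N\psi_N$; the Mikhlin bounds $|m_\omega(\xi)|+|\xi||m_\omega'(\xi)|+|\xi|^2|m_\omega''(\xi)|\lesssim 1$ hold uniformly because each $\psi_N$ is a dilate of a fixed bump with supports of overlap at most two, and since $\phi\equiv 0$ on $\{|y|\ge 2\}$, all derivatives of $m_\omega$ vanish at $\xi=\pm\frac{\pi}{h}$, so the symbol extends smoothly and periodically across the edge --- exactly what is needed to integrate by parts in the kernel without boundary terms, as $e^{ix\xi}$ is $\frac{2\pi}{h}$-periodic for $x\in h\mathbb{Z}$; and the duality argument with fattened projections (e.g.\ $\tilde\psi_N=\psi_{N/2}+\psi_N+\psi_{2N}$, which telescopes to $1$ on $\operatorname{supp}\psi_N$) gives the left-hand bound.

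Two caveats, neither fatal. First, the entire analytic weight of your argument rests on the uniform-in-$h$ discrete H\"ormander--Mikhlin theorem, which you invoke but do not prove; establishing that theorem (by Calder\'on--Zygmund theory on $h\mathbb{Z}$ with $h$-independent constants, or by transference) is precisely the nontrivial content behind the cited result, so your proposal should be read as a correct reduction to that input rather than a self-contained proof. Second, your claim that Remark \ref{rem:low} makes the sum ``genuinely finite'' is inaccurate: the square function and the symbol $m_\omega$ still involve infinitely many dyadic pieces $N\le N_0$, all supported in $|\xi|\lesssim 1$. What the argument actually needs --- and what you also correctly invoke --- is bounded overlap of the supports together with the fact that Khintchine's inequality holds for infinite sequences with constants independent of the number of terms; with that correction the proof goes through.
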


\begin{lemma}[Bernstein’s inequality, Lemma 2.3 in \cite{HY-2019DCDS}]
Let $h \in (0,1]$. If $1 \le p \le q \le \infty$, then we have
\begin{equation}\label{ineq:Bernstein}
\|P_N f_h\|_{L^q(h\mathbb{Z})} \lesssim \left(\frac{N}{h}\right)^{\frac1p - \frac1q}\|P_N f_h\|_{L^p(h\mathbb{Z})}.
\end{equation}
\end{lemma}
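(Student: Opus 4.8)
The plan is to reduce the inequality to a bound on the convolution kernel of the projection and then to invoke Young's inequality \eqref{eq:young}. Fix a dyadic $N\le 1$ and introduce a \emph{fattened} cutoff $\tilde\psi_N$, i.e.\ a smooth bump with $\tilde\psi_N\equiv 1$ on $\mathrm{supp}\,\psi_N$ and $\mathrm{supp}\,\tilde\psi_N\subset\{\xi:\tfrac{\pi N}{4h}\le|\xi|\le\tfrac{4\pi N}{h}\}$, chosen of the dilated form $\tilde\psi_N(\xi)=\Psi(\tfrac{h\xi}{N})$ for a single profile $\Psi$ independent of $N$ and $h$ (recall $\psi_N(\xi)=\Phi(\tfrac{h\xi}{N})$ for a fixed $\Phi$, so such a $\Psi$ exists). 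Writing $\tilde K_N:=\mathcal{F}_h^{-1}\tilde\psi_N$ and using $P_N=\tilde P_N P_N$, we get $P_Nf_h=\tilde K_N*(P_Nf_h)$. Choosing $r$ by $\tfrac1r=1-(\tfrac1p-\tfrac1q)$, so that $1+\tfrac1q=\tfrac1r+\tfrac1p$, Young's inequality \eqref{eq:young} yields
\[\|P_Nf_h\|_{L^q(h\mathbb{Z})}\le\|\tilde K_N\|_{L^r(h\mathbb{Z})}\,\|P_Nf_h\|_{L^p(h\mathbb{Z})}.\]
Thus the statement follows once we prove the kernel bound $\|\tilde K_N\|_{L^r(h\mathbb{Z})}\lesssim(\tfrac{N}{h})^{1-1/r}=(\tfrac{N}{h})^{1/p-1/q}$, uniformly in $N\le1$ and $h\le1$.

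To prove the kernel bound I would first establish the pointwise decay estimate
\[|\tilde K_N(x)|=\Big|\tfrac{1}{2\pi}\int_{-\pi/h}^{\pi/h}\Psi(\tfrac{h\xi}{N})e^{ix\xi}\,d\xi\Big|\lesssim_M\frac{N}{h}\,\bra{\tfrac{Nx}{h}}^{-M}\qquad(\forall M\in\mathbb{N}).\]
The trivial bound $|\tilde K_N(x)|\lesssim\tfrac{N}{h}$ comes from $|\Psi|\le1$ and the fact that the integrand is supported on a set of measure $\sim\tfrac{N}{h}$. For the decay, integrating by parts $M$ times in $\xi$ produces a factor $|x|^{-M}$, while $\partial_\xi^M\tilde\psi_N(\xi)=(\tfrac{h}{N})^M(\partial^M\Psi)(\tfrac{h\xi}{N})$ contributes $(\tfrac{h}{N})^M$ over a support of measure $\sim\tfrac{N}{h}$; combining the two gives the rapid decay. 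With this in hand,
\[\|\tilde K_N\|_{L^r(h\mathbb{Z})}^r=h\sum_{x\in h\mathbb{Z}}|\tilde K_N(x)|^r\lesssim h\Big(\frac{N}{h}\Big)^r\sum_{m\in\mathbb{Z}}\bra{Nm}^{-Mr},\]
and choosing $M$ with $Mr>1$ and comparing the last sum to $\int_{\mathbb{R}}\bra{Ny}^{-Mr}\,dy\sim\tfrac1N$ — a Riemann-sum comparison valid precisely because the sampling spacing $N\le1$ — gives $\sum_m\bra{Nm}^{-Mr}\lesssim\tfrac1N$. Hence $\|\tilde K_N\|_{L^r(h\mathbb{Z})}^r\lesssim h(\tfrac{N}{h})^r\tfrac1N=(\tfrac{N}{h})^{r-1}$, which is the desired bound; the endpoint $r=\infty$ is just $\|\tilde K_N\|_{L^\infty}\lesssim\tfrac{N}{h}$.

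The main technical obstacle is that the symbol lives on the torus $[-\tfrac{\pi}{h},\tfrac{\pi}{h})$ rather than on $\mathbb{R}$: for dyadic $N$ close to $1$ the annulus $\{|\xi|\sim\tfrac{N}{h}\}$ reaches the edge $\pm\tfrac{\pi}{h}$, so the clean ``single dilated profile'' picture and the boundary-term-free integration by parts are not literally available. This is resolved by observing that $\psi_N$ (and $\tilde\psi_N$) extend to smooth $\tfrac{2\pi}{h}$-periodic functions, so integration by parts on the torus produces no boundary contributions; equivalently, one treats the $O(1)$ many dyadic scales $N\in(\tfrac12,1]$ separately, where the truncated multiplier is still a fixed smooth profile at frequency scale $\tfrac1h\sim\tfrac{N}{h}$ and the same decay-and-summation argument applies with uniform constants. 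All remaining steps — the choice of $\Psi$, the integration by parts, and the Riemann-sum comparison — are routine, so this single observation suffices to conclude uniformly in $h\le1$ and $N\le1$.
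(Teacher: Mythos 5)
The paper never proves this lemma itself: it is quoted, with proof deferred, from Lemma 2.3 of \cite{HY-2019DCDS}. So there is no in-paper argument to compare against, and your proof has to stand on its own — which it does. The route you take (fattened projection with $P_N=\tilde P_N P_N$, Young's inequality on $h\mathbb{Z}$ with $1+\tfrac1q=\tfrac1r+\tfrac1p$, and the kernel bound $\|\tilde K_N\|_{L^r(h\mathbb{Z})}\lesssim(N/h)^{1-1/r}$ deduced from the pointwise decay $|\tilde K_N(x)|\lesssim (N/h)\langle Nx/h\rangle^{-M}$) is the standard proof of discrete Bernstein, and the steps check out: the convolution theorem is compatible with the paper's $h$-normalized convolution and Fourier transform, the exponent $\tfrac1r=1-(\tfrac1p-\tfrac1q)\in[0,1]$ is admissible precisely because $1\le p\le q\le\infty$, and the summation $\sum_{m}\langle Nm\rangle^{-Mr}\lesssim N^{-1}$ is correct. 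You also identified the one genuinely lattice-specific subtlety — for $N$ comparable to $1$ the fattened annulus spills over the edge of $[-\tfrac{\pi}{h},\tfrac{\pi}{h})$ — and your fix is sound: $e^{ix\xi}$ is $\tfrac{2\pi}{h}$-periodic in $\xi$ exactly when $x\in h\mathbb{Z}$, and since $\phi$ vanishes to infinite order at $|\cdot|=2$, each $\psi_M$ (hence a fattened cutoff built from neighboring $\psi_M$'s) extends to a smooth $\tfrac{2\pi}{h}$-periodic function with derivative bounds $\lesssim_k (h/N)^k$ uniform in $h$, so the integration by parts on the torus is boundary-free; treating the finitely many edge scales separately works equally well. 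Two cosmetic remarks only: the paper's Young inequality \eqref{eq:young} contains a typo in the exponent relation (it reads $\tfrac1p+\tfrac1p$), and you implicitly use the correct general form; and the scales that actually touch the edge are $N\in\{\tfrac14,\tfrac12,1\}$ (depending on the fattening factor), not just $N\in(\tfrac12,1]$, but since there are $O(1)$ of them this changes nothing.
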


\subsection{Properties of
 Linear interpolation}\label{sec:DisLin}
A function $f_h:h\mathbb{Z}\rightarrow \R$ on a lattice domain becomes continuous by linear interpolation
\begin{equation}\label{def:  linear interpolation}
(l_h f_h)(x):= f_h(x_m)+(\partial_h^+ f_h)(x_m)\cdot(x-x_m),\quad\forall x\in x_m+[0,h),
\end{equation}

This operator is bounded in Sobolev spaces.
\begin{lemma}[Boundedness of 
linear interpolation, Lemma
5.2 in \cite{HY-2019SIAM}]\label{Lem:discretization linearization inequality}
Let $0\leq s \leq 1$. Then, for $f_h\in H^s(h\mathbb{Z})$, we have 
\begin{align}\label{ineq:discretization linearization inequality}
\| l_h f_h \|_{\dot{H}^s(\mathbb{R})}\lesssim \| f_h \|_{\dot{H}^s(h\mathbb{Z})}.
\end{align}
\end{lemma}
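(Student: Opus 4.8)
The plan is to prove the boundedness estimate $\| l_h f_h \|_{\dot{H}^s(\mathbb{R})}\lesssim \| f_h \|_{\dot{H}^s(h\mathbb{Z})}$ by interpolating between the endpoint cases $s=0$ and $s=1$. First I would treat $s=0$, where the claim reads $\|l_h f_h\|_{L^2(\mathbb{R})}\lesssim \|f_h\|_{L^2(h\mathbb{Z})}$. On each interval $[x_m, x_m+h)$ the interpolant is the affine function $f_h(x_m)+(\partial_h^+ f_h)(x_m)(x-x_m)$, so I would compute $\int_{x_m}^{x_m+h} |(l_h f_h)(x)|^2\,dx$ exactly. Since the integral of the square of a linear function interpolating between the endpoint values $f_h(x_m)$ and $f_h(x_m+h)$ equals $\tfrac{h}{3}\big(f_h(x_m)^2 + f_h(x_m)f_h(x_m+h)+f_h(x_m+h)^2\big)$, a Cauchy--Schwarz bound $f_h(x_m)f_h(x_m+h)\le \tfrac12(f_h(x_m)^2+f_h(x_m+h)^2)$ gives a pointwise estimate by $\tfrac{h}{2}\big(f_h(x_m)^2+f_h(x_m+h)^2\big)$. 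Summing over $m\in\mathbb{Z}$ and recalling $\|f_h\|_{L^2(h\mathbb{Z})}^2 = h\sum_m f_h(x_m)^2$ yields $\|l_h f_h\|_{L^2(\mathbb{R})}^2 \lesssim \|f_h\|_{L^2(h\mathbb{Z})}^2$, since each lattice value is counted in at most two adjacent intervals.

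Next I would handle the endpoint $s=1$, where $\|l_h f_h\|_{\dot{H}^1(\mathbb{R})} = \|(l_h f_h)'\|_{L^2(\mathbb{R})}$. The key observation is that $l_h f_h$ is piecewise linear, so its distributional derivative is the piecewise-constant function equal to $(\partial_h^+ f_h)(x_m)$ on each interval $[x_m, x_m+h)$. Hence $\|(l_h f_h)'\|_{L^2(\mathbb{R})}^2 = h\sum_m |(\partial_h^+ f_h)(x_m)|^2 = \|\partial_h^+ f_h\|_{L^2(h\mathbb{Z})}^2$. By the norm equivalence in Lemma \ref{Prop:norm equivalence}, $\|\partial_h^+ f_h\|_{L^2(h\mathbb{Z})} \sim \|f_h\|_{\dot{W}^{1,2}(h\mathbb{Z})} = \|f_h\|_{\dot{H}^1(h\mathbb{Z})}$, which is exactly the desired bound at $s=1$. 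This case is in fact an equality up to the equivalence constant, reflecting that linear interpolation is the natural derivative-preserving extension.

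To pass from the two endpoints to the intermediate range $0 < s < 1$, I would invoke complex interpolation. One clean way is to set up the linear operator $l_h$ as a map and interpolate the pair of bounds $l_h : L^2(h\mathbb{Z}) \to L^2(\mathbb{R})$ and $l_h : \dot{H}^1(h\mathbb{Z}) \to \dot{H}^1(\mathbb{R})$, identifying $\dot{H}^s$ as the complex interpolation space $[L^2, \dot{H}^1]_s$ on both the lattice and the continuum sides. A mild technical point is to verify that these homogeneous spaces interpolate correctly; I would either use the inhomogeneous versions $H^s$ (on which interpolation is standard and for which the two endpoint estimates hold verbatim, since $l_h$ also maps $L^2\to L^2$ boundedly and $H^1\to H^1$ boundedly by combining the two displayed endpoint computations) and then note that the estimate transfers to the homogeneous norms by the frequency-localized structure, or appeal directly to a homogeneous interpolation scale.

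The main obstacle I anticipate is not any single estimate but the interpolation bookkeeping: ensuring that the complex interpolation identity $[L^2, \dot H^1]_s = \dot H^s$ is applied consistently on the two different domains $h\mathbb{Z}$ and $\mathbb{R}$, and that the operator norms produced by interpolation are uniform in $h\in(0,1]$. Since both endpoint constants in the computations above are absolute (independent of $h$), interpolation preserves $h$-uniformity, so this is a matter of careful setup rather than a genuine difficulty. Alternatively, one could avoid abstract interpolation entirely by a direct frequency-space argument: express $l_h f_h$ via its Fourier transform on $\mathbb{R}$, relate $\widehat{l_h f_h}(\xi)$ to $(\mathcal{F}_h f_h)$ restricted to $[-\tfrac{\pi}{h}, \tfrac{\pi}{h})$ together with an explicit interpolation kernel (the Fourier transform of the tent function), and then compare the multiplier $|\xi|^{2s}$ against the discrete symbol using the elementary bounds $|\xi| \lesssim \tfrac{1}{h}$ on the support, but this route is more computation-heavy and I would prefer the interpolation argument as the cleaner presentation.
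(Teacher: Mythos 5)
Your proof is correct, but it takes a genuinely different route from the one the paper relies on. The paper does not prove this lemma in-house at all: its ``proof'' is a citation to Lemma 5.2 of \cite{HY-2019SIAM}, where the bound is obtained from the Fourier-multiplier representation of $l_h$ (recorded in this paper as Lemma \ref{p_h symbol}): $\mathcal{F}(l_hf_h)(\xi)=\mathcal{L}_h(\xi)(\tilde{\mathcal{F}}_hf_h)(\xi)$ with $\mathcal{L}_h(\xi)=\frac{4\sin^2(h\xi/2)}{h^2\xi^2}$, followed by a direct comparison of $|\xi|^{2s}\mathcal{L}_h(\xi)^2$, summed over the periodized frequencies $\xi'+\frac{2\pi m}{h}$, against the discrete weight $|\xi'|^{2s}$ on $[-\frac{\pi}{h},\frac{\pi}{h})$ --- i.e.\ exactly the ``computation-heavy'' alternative you mention in your final sentences. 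Your route instead proves the endpoints elementarily: the exact formula $\frac{h}{3}\bigl(a^2+ab+b^2\bigr)$ at $s=0$ (and each lattice value indeed appears in exactly two intervals, so the constant is even $\le 1$), and at $s=1$ the identity $\|(l_hf_h)'\|_{L^2(\mathbb{R})}=\|\partial_h^+f_h\|_{L^2(h\mathbb{Z})}$ combined with Lemma \ref{Prop:norm equivalence}. The interpolation step is sound provided you run it on the homogeneous scale directly: since all four spaces are $L^2$-based and defined by nonnegative Fourier multipliers, the Hilbert-scale identity $[L^2,\dot H^1]_s=\dot H^s$ holds with constant one on both $\mathbb{R}$ and $h\mathbb{Z}$, so the interpolated operator norm $C_0^{1-s}C_1^s$ is automatically uniform in $h$. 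What each approach buys: yours is self-contained, explains why the window $0\le s\le 1$ is natural, and avoids symbol bookkeeping; the Fourier route gives every $s$ at once without interpolation machinery and reuses the multiplier formula that the paper needs anyway (e.g.\ in Proposition \ref{linear approx} and Appendix \ref{app:FLE}).

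One caution on your fallback option: proving the inhomogeneous bound $\|l_hf_h\|_{H^s(\mathbb{R})}\lesssim\|f_h\|_{H^s(h\mathbb{Z})}$ and then ``transferring to the homogeneous norms by the frequency-localized structure'' is the one step that is not rigorous as written. For fixed $h$, an inhomogeneous bound does not imply the homogeneous one (test it on $f_h$ with Fourier support near $\xi=0$, where $\|f_h\|_{\dot H^s}\ll\|f_h\|_{H^s}$). It can be repaired using the $h$-uniformity --- apply the inhomogeneous bound to the rescalings $f_h(\cdot/\lambda)$ on $(\lambda h)\mathbb{Z}$ and let $\lambda\to 0$, under which both sides converge to the homogeneous norms --- but that scaling argument must be spelled out. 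Your primary route (interpolating the homogeneous couple directly) does not suffer from this issue, so I would present that one and drop the fallback.
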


\begin{proof}
See Lemma
5.2 in \cite{HY-2019SIAM} for the proofs.
\end{proof}

The linear interpolation operator and the differential (in some sense) are exchangeable at the cost of one additional derivative.
\begin{proposition}
\label{Pro:reverse order}
If $f_h \in \dot{H}^2(h\mathbb{Z})$, then
\begin{align*}
\| l_h \nabla_h f_h -\pa_x l_h f_h\|_{L^2(\mathbb{R})}
\ls h \| f_h\|_{\dot H^2(h\mathbb{Z})}.
\end{align*}
\end{proposition}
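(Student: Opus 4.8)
The plan is to work on the Fourier side and exploit Parseval's identity to reduce the estimate to a pointwise comparison of two Fourier multipliers, then to control the local error of linear interpolation in terms of the second difference of $f_h$. First I would write $l_h f_h$ explicitly on each cell $[hm, hm+h)$ using \eqref{def:  linear interpolation}. On such a cell the function $l_h\nabla_h f_h - \partial_x l_h f_h$ is piecewise affine minus a piecewise constant: its derivative term $\partial_x l_h f_h$ equals the constant $(\partial_h^+ f_h)(hm)$, while $l_h \nabla_h f_h$ is the linear interpolant of the lattice function $\nabla_h f_h$. Thus on each cell the difference is itself an affine function whose two endpoint values are $(\nabla_h f_h)(hm) - (\partial_h^+ f_h)(hm)$ and $(\nabla_h f_h)(hm+h) - (\partial_h^+ f_h)(hm)$. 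The key observation is that each of these endpoint discrepancies is a \emph{first-order-in-$h$} object built from the second difference of $f_h$: morally $\nabla_h - \partial_h^+ = O(h \Delta_h)$, because the symbol of $\nabla_h$ is $\frac{2i}{h}\sin(\frac{h\xi}{2})$ while the symbol of $\partial_h^+$ is $\frac{e^{ih\xi}-1}{h}$, and both agree with $i\xi$ to leading order with a next-order term of size $h\xi^2$.

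The main step is to convert these per-cell affine errors into an $L^2(\R)$ bound. On an interval of length $h$, the $L^2$ norm of an affine function is controlled (up to a constant) by $h^{1/2}$ times the maximum of its endpoint absolute values, so
\[
\|l_h \nabla_h f_h - \partial_x l_h f_h\|_{L^2(\R)}^2
\lesssim h \sum_{m\in\Z} \Big( |(\nabla_h f_h - \partial_h^+ f_h)(hm)|^2 + |(\nabla_h f_h)(hm+h) - (\partial_h^+ f_h)(hm)|^2\Big).
\]
After recognizing the second summand as a shifted copy of a comparable lattice function, the right-hand side is comparable to $\|\nabla_h f_h - \partial_h^+ f_h\|_{L^2(h\Z)}^2$ plus a controllable commutator coming from the shift $\partial_h^+ f_h(hm)$ versus $\partial_h^+ f_h(hm+h)$; each such shift difference is again a second difference of $f_h$ and hence $O(h \|f_h\|_{\dot H^2})$ by Lemma \ref{Lem:Sobolev bound depending on h}. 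So the whole problem reduces to the single lattice estimate $\|\nabla_h f_h - \partial_h^+ f_h\|_{L^2(h\Z)} \lesssim h \|f_h\|_{\dot H^2(h\Z)}$.

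This final lattice estimate I would prove purely by Parseval \eqref{eq:parseval}: it suffices to bound the multiplier difference $\frac{2i}{h}\sin(\frac{h\xi}{2}) - \frac{e^{ih\xi}-1}{h}$ pointwise by $h\xi^2$ on the Fourier domain $[-\frac{\pi}{h},\frac{\pi}{h})$, since then
\[
\|\nabla_h f_h - \partial_h^+ f_h\|_{L^2(h\Z)}^2
= \frac{1}{2\pi}\int_{-\pi/h}^{\pi/h} \Big|\tfrac{2i}{h}\sin(\tfrac{h\xi}{2}) - \tfrac{e^{ih\xi}-1}{h}\Big|^2 |(\mathcal F_h f_h)(\xi)|^2\, d\xi
\lesssim h^2 \||\partial_h|^2 f_h\|_{L^2}^2.
\]
A direct computation of the bracketed symbol (factoring $e^{ih\xi/2}$ reduces it to $\frac{2i}{h}\sin(\frac{h\xi}{2})(1 - e^{ih\xi/2})$, whose modulus is $\frac{2}{h}|\sin(\frac{h\xi}{2})|\cdot 2|\sin(\frac{h\xi}{4})|$) gives exactly a quantity $\lesssim h\xi^2$ for $|\xi| \le \frac{\pi}{h}$, because $|\sin\theta| \le |\theta|$ supplies two spare powers of $\xi$ and one spare power of $h$. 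The hard part of the argument is the bookkeeping in the second paragraph --- handling the shifted endpoint term and its interpolation commutator cleanly so that nothing worse than a single second-difference (hence a single factor of $h$ against $\dot H^2$) survives; once the problem is localized to the pure symbol comparison the Fourier estimate is routine.
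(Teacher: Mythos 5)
Your proposal is correct and follows essentially the same route as the paper: decompose cell by cell using the piecewise-affine structure, reduce to the lattice estimate $\|\nabla_h f_h - \partial_h^+ f_h\|_{L^2(h\mathbb{Z})} \ls h\|f_h\|_{\dot H^2(h\mathbb{Z})}$ plus a second-difference term of the same size, and prove that lattice estimate by Plancherel. Your factorization of the multiplier difference as $\tfrac{2i}{h}\sin(\tfrac{h\xi}{2})\big(1-e^{ih\xi/2}\big)$, with modulus $\tfrac{4}{h}|\sin(\tfrac{h\xi}{2})\sin(\tfrac{h\xi}{4})| \le \tfrac{h\xi^2}{2}$, is in fact slightly cleaner than the paper's real/imaginary-part split, which produces an extra $h^2|\xi|^3$ contribution and therefore needs Lemma \ref{Lem:Sobolev bound depending on h} to absorb $h^2\|f_h\|_{\dot H^3(h\mathbb{Z})}$ into $h\|f_h\|_{\dot H^2(h\mathbb{Z})}$.
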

\begin{proof}
By definition, we have
$$l_h \nabla_h f_h(x) - \pa_x l_h f_h(x)
= \nabla_h f_h(x_m) +\pa_h^+(\nabla_h f_h)(x_m) \cdot (x-x_m)
-\pa_h^+ f_h(x_m)$$
for $x\in[x_m,x_m+h)$; thus, 
\begin{align*}
\| l_h \nabla_h f_h  - \pa_x l_h f_h \|_{L^2(\R)}
\le \|\nabla_h f_h- \pa_h^+ f_h \|_{L^2(h\Z)}
+ h \| \pa_h^+\nabla_h f_h\|_{L^2(\R)}.
\end{align*}
Plancherel's theorem and the norm equivalence \eqref{norm equivalence} yield
\[\begin{aligned}
\|\nabla_h f_h- \pa_h^+ f_h \|_{L^2(h\mathbb{Z})}
&=\big\|(\tfrac{2i}{h}\sin(\tfrac{h\xi}{2}) - \tfrac{e^{ih\xi}-1}{h})(\mathcal{F}_hf_h)(\xi)\big\|_{L_\xi^2([-\frac{\pi}{h},\frac{\pi}{h}))} \\
&=\big\|\big\{\tfrac{\cos(h\xi)-1}{h}+i(\tfrac{\sin(h\xi)}{h} -\tfrac{2}{h}\sin(\tfrac{h\xi}{2}))\big\} (\mathcal{F}_hf_h)(\xi)\big\|_{L_\xi^2([-\frac{\pi}{h},\frac{\pi}{h}))}\\
&\le h \| f\|_{\dot H^2(h\mathbb{Z})} +h^2 \| f_h\|_{ \dot H^3(h\mathbb{Z})} \ls h \| f\|_{\dot H^2(h\mathbb{Z})},
\end{aligned}\]
where, in the last step, we use Lemma \ref{Lem:Sobolev bound depending on h}.
\end{proof}

\begin{proposition}[Almost distribution]\label{Pro:almost distribution}
\begin{align*}
\| \pa_x l_h (f_h^2)- \pa_x ( l_h f_h )^2 \|_{L^2(\R)}
\ls h \| (\pa_h^+ f_h)^2\|_{L^2(h\mathbb{Z})}.
\end{align*}
\end{proposition}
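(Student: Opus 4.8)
The plan is to reduce everything to an elementary cell-by-cell computation, exploiting the explicit piecewise-affine form of the interpolant $l_h$ on each lattice cell $[x_m,x_m+h)$ and then summing the resulting pointwise discrepancy. Write $a_m := f_h(x_m)$ and $d_m := (\pa_h^+ f_h)(x_m)$, so that by the definition \eqref{def:  linear interpolation} one has $(l_h f_h)(x) = a_m + d_m(x - x_m)$ on $[x_m,x_m+h)$, and in particular $f_h(x_m+h) = a_m + h d_m$.

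First I would expand the square of the interpolant on a single cell:
$$(l_h f_h)^2(x) = a_m^2 + 2 a_m d_m (x - x_m) + d_m^2 (x-x_m)^2,$$
whence $\pa_x (l_h f_h)^2(x) = 2 a_m d_m + 2 d_m^2 (x - x_m)$, which is affine in $x$. On the other hand, $l_h(f_h^2)$ is by definition the affine interpolant of the nodal values of $f_h^2$, so on the same cell its derivative is the constant slope $(\pa_h^+(f_h^2))(x_m) = \tfrac1h\big(f_h(x_m+h)^2 - a_m^2\big)$; substituting $f_h(x_m+h)=a_m+hd_m$ collapses this to $2 a_m d_m + h d_m^2$.

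Subtracting, the term $2a_m d_m$ cancels and the cell-wise discrepancy becomes purely linear,
$$\pa_x l_h(f_h^2)(x) - \pa_x (l_h f_h)^2(x) = d_m^2\big(h - 2(x - x_m)\big).$$
The key quantitative input is then the elementary integral $\int_0^h (h - 2u)^2\,du = \tfrac{h^3}{3}$, which gives
$$\big\| \pa_x l_h(f_h^2) - \pa_x (l_h f_h)^2 \big\|_{L^2(\R)}^2 = \sum_{m} d_m^4 \int_0^h (h-2u)^2\,du = \frac{h^2}{3}\, h\sum_m d_m^4.$$
Finally I would recognize $h\sum_m d_m^4 = h\sum_m \big|(\pa_h^+ f_h)(x_m)^2\big|^2 = \big\|(\pa_h^+ f_h)^2\big\|_{L^2(h\Z)}^2$, so that taking square roots yields the claimed estimate with implicit constant $1/\sqrt{3}$.

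There is no genuine analytic obstacle: the entire argument is a direct computation, and the gain of the factor $h$ comes for free from the cancellation of the two $2a_m d_m$ slope contributions, leaving only the $O(h)$ remainder $d_m^2(h-2(x-x_m))$. The only points that demand care are bookkeeping ones, namely verifying the algebraic simplification of the difference of slopes and correctly matching $h\sum_m d_m^4$ to the $L^2(h\Z)$-norm of $(\pa_h^+ f_h)^2$ appearing on the right-hand side.
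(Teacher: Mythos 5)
Your proof is correct and follows essentially the same route as the paper: both reduce to the exact cell-wise identity $\pa_x l_h(f_h^2)(x)-\pa_x(l_hf_h)^2(x)=(\pa_h^+f_h(x_m))^2\big(h-2(x-x_m)\big)$ on $[x_m,x_m+h)$ and then take the $L^2(\R)$ norm. The only (harmless) differences are cosmetic — you expand the square of the interpolant directly where the paper uses the chain rule together with the discrete Leibniz identity, and you carry out the final integration explicitly to get the sharp constant $1/\sqrt{3}$.
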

\begin{proof}
From the definition of $l_h$ and \eqref{Leibnitz rule1}, we write for $x\in[x_m,x_m+h)$ that
\begin{align*}
\pa_x l_h (f_h^2)(x)- \pa_x ( l_h f_h )^2(x) 
=&~{}\pa_h^+(f_h^2)(x_m) - 2l_hf_h(x)\pa_h^+f_h(x_m) \\
=&~{}\pa_h^+f_h(x_m)f_h(x_m+h)+\pa_h^+f_h(x_m)f_h(x_m)\\
&-2( f_h(x_m) + \pa_h^+f_h(x_m)(x-x_m))\pa_h^+f_h(x_m) \\
=&~{}h\big(\pa_h^+f_h(x_m)\big)^2-2\big(\pa_h^+f_h(x_m)\big)^2(x-x_m).
\end{align*}
Taking the $L^2(\mathbb{R})$ norm, we complete the proof.
\end{proof}
\subsection{$X^{s,b}$ spaces}\label{sec: Xsb space}
In this subsection, we introduce the $X^{s,b}$ spaces\footnote{They are sometimes called the Bourgain spaces or dispersive Sobolev spaces.} introduced by Bourgain \cite{B-1993Sch} and further developed by Kenig, Ponce, and Vega \cite{KPV-1996} and Tao \cite{T-2001}. 

First, we define the function space in a general setting. In subsequent applications, the spatial domain $\Lambda$ will be either the real line $\mathbb{R}$ or the lattice $h\mathbb{Z}$, and the associated symbol $P$ is chosen according to the model considered. Since the following are stated in a general setting, they can be applied in a unified way. We refer to \cite{T-2006} for the details and proofs.

\begin{definition}[$X^{s,b}$ spaces]\label{def:Xsb}
Let $\Lambda$ be either $\mathbb{R}$ or $h\mathbb{Z}$. Let $P$ be a real-valued continuous function. For $s,b\in\R$, we define the $X_P^{s,b}(\R \times \Lambda)$ spaces ($X^{s,b}$ in short) as the completion of $\mathcal S(\R\times \Lambda)$ with respect to the norm 
\[\| u \|_{X^{s,b}} := \bigg\{\iint_{\R \times \widehat{\Lambda}}\la \xi \ra^{2s} \la \tau -P(\xi) \ra^{2b} |\tilde{u}(\tau,\xi)|^2 d\xi d\tau\bigg\}^{1/2},\]
where $\tilde{u}$ denotes the space-time Fourier transform of $u$ defined by\footnote{In particular, when $\lambda = h\Z$, $\tilde u$ (as in Definition \ref{def Xsb}) is defined by 
\[\tilde u_h(\tau, \xi) = h \sum_{x \in h\Z} \int_{\R} e^{-it\tau} e^{-ix \xi} u_h(t,x) \; d\tau.\]
}
\[\tilde u (\tau, \xi) = \int_{\R \times \Lambda} e^{-it\tau} e^{-ix \xi} u(t,x) \; d\tau dx\]
and $\widehat{\Lambda}$ is the Pontryagin dual space of $\Lambda$, i.e, $\widehat{\R} = \R$ and $\widehat{h\Z} = \T_h$.
\end{definition}
The following are well-known properties of $X^{s,b}$ spaces (see, for instance, \cite{T-2006} for the proofs).
\begin{lemma}\label{lem:properties}
Let $s,b \in \R$ and $X^{s,b}$ spaces be defined as in Definition \ref{def:Xsb}. Let $\theta \in \mathcal S(\R)$ be a (compactly supported) cut-off function. Then, the following properties hold:
\begin{enumerate}
\item (Nesting) $X^{s',b'} \subset X^{s,b}$ whenever $s \le s',~b\le b'$.
\item (Well-defined for linear solutions) For any $f \in H^s$, we have 
\[\left\|\theta(t) e^{itP(-i\nabla)} f \right\|_{X^{s,b}} \ls_{\theta,b} \| f\|_{H^s}.\]
\item (Transference principle) Let $Y$ be a Banach space such that the inequality 
\[\|e^{it\tau_0}e^{itP(-i\nabla)}f\|_{Y} \lesssim_b \|f\|_{H^s}\]
holds for all $f \in H^s$ and $\tau_0 \in \R$. If,  additionally, $b > \frac12$, then we have the embedding
\[\|u\|_{Y} \lesssim \|u\|_{X^{s,b}}.\]
In particular, we have
\begin{equation}\label{eq:embedding}
\|u\|_{C_tH_x^s} \lesssim \|u\|_{X^{s,b}}.
\end{equation}
\item (Stability with respect to time localization) Let $0 < T < 1$, $b > \frac12$ and $f \in H^s$. We have
\[\big\|\theta(\tfrac{t}{T}) e^{itP(-i\nabla)} f \big\|_{X^{s,b}} \ls_{\theta, b} T^{\frac12 - b} \| f\|_{H^s}.\]
If $-\frac12<b' \le b<\frac12$, then we have
\[\big\|\theta(\tfrac{t}{T}) u\big\|_{X^{s,b'}} \ls_{\theta, b, b'} T^{b-b'} \| u\|_{X^{s,b}}.\]
\item (Inhomogeneous estimate) Let $b>\frac12$. Then, we have 
\begin{equation}\label{inhomogeneous estimate}
\left\| \theta(t) \int_0^t e^{i(t-t_1)P(-i\nabla)} F(t_1)dt_1 \right\|_{X^{s,b}}  \ls \| F\|_{X^{s,b-1}}.
\end{equation}
\end{enumerate} 
\end{lemma}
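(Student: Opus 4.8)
The plan is to reduce every item to an explicit computation on the space-time Fourier side, exploiting the fact that the modulation weight $\langle \tau - P(\xi)\rangle$ measures distance to the characteristic surface $\tau = P(\xi)$, on which free solutions are supported. The driving identity is that, since $\widetilde{e^{itP(-i\nabla)}f}(\tau,\xi) = \hat{f}(\xi)\,\delta(\tau - P(\xi))$, multiplying by a temporal cutoff $\theta(t)$ is convolution by $\hat\theta$ in $\tau$, so $\widetilde{\theta(t) e^{itP(-i\nabla)}f}(\tau,\xi) = \hat\theta(\tau - P(\xi))\hat{f}(\xi)$. From here, item (1) is immediate: because $\langle\xi\rangle \ge 1$ and $\langle\tau - P(\xi)\rangle \ge 1$, the conditions $s \le s'$ and $b \le b'$ force the $X^{s,b}$ integrand to be pointwise dominated by the $X^{s',b'}$ one. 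For (2) and the first display of (4), I would insert the identity above into the norm and change variables $\sigma = \tau - P(\xi)$ in the inner integral. In (2) this factors the integral as $\big(\int\langle\sigma\rangle^{2b}|\hat\theta(\sigma)|^2\,d\sigma\big)\,\|f\|_{H^s}^2$, with the $\sigma$-integral finite because $\theta \in \mathcal{S}(\R)$. For the rescaled cutoff $\theta(t/T)$, whose transform is $T\hat\theta(T\tau)$, the same substitution together with the elementary inequality $\langle\sigma/T\rangle \le \langle\sigma\rangle/T$ (valid for $0 < T < 1$) produces the gain $T^{1/2-b}$.

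For the transference principle (3), I would represent $u$ as a continuous superposition of modulated free solutions. Setting $\hat{g}_{\tau_0}(\xi) = \tilde{u}(\tau_0 + P(\xi),\xi)$ and substituting $\tau = \tau_0 + P(\xi)$ in the Fourier inversion formula gives $u(t) = \frac{1}{2\pi}\int_\R e^{it\tau_0}\,e^{itP(-i\nabla)}g_{\tau_0}\,d\tau_0$. Minkowski's inequality and the hypothesized $Y$-bound yield $\|u\|_Y \lesssim \int_\R\|g_{\tau_0}\|_{H^s}\,d\tau_0$; inserting the weight $\langle\tau_0\rangle^{\pm b}$ and applying Cauchy--Schwarz, the factor $\int\langle\tau_0\rangle^{-2b}\,d\tau_0$ converges precisely because $b > \frac12$, and the remaining factor equals $\|u\|_{X^{s,b}}$ after reversing the change of variables. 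The embedding $\|u\|_{C_tH^s_x} \lesssim \|u\|_{X^{s,b}}$ is then the special case $Y = C_tH^s_x$, since $P$ real-valued makes $e^{itP(-i\nabla)}$ unitary on $H^s$, so the hypothesis holds with a constant independent of $\tau_0$.

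I expect the main obstacles to be the second estimate in (4) and the inhomogeneous bound (5), both of which require the delicate range $|b| < \frac12$ in which the computations above break down. For (4), multiplication by $\theta(t/T)$ is again a convolution in $\tau$, but now the weight $\langle\tau - P(\xi)\rangle^{b'}$ with $b'$ possibly negative is no longer submultiplicative; one must establish a one-dimensional convolution lemma controlling $\langle\cdot\rangle^{b'}$ after convolving with the rescaled bump, and it is exactly the constraint $-\frac12 < b' \le b < \frac12$ that makes the governing $\tau$-integrals converge. For (5), the truncated Duhamel integral $\int_0^t$ lacks a clean Fourier symbol; I would write the integrand as $e^{itP}\int_0^t e^{-it_1 P}F\,dt_1$ and split the time-integration according to whether $|\tau - P(\xi)|$ is large or small, producing one piece governed by the symbol $\frac{1}{\tau - P(\xi)}$ and one free-solution remainder, with the estimate again resting on $b > \frac12$. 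Since all of these are standard facts about $X^{s,b}$ spaces, I would carry out the elementary items (1)--(3) and the first part of (4) by direct Fourier computation and invoke the convolution machinery of \cite{T-2006} for the two harder estimates.
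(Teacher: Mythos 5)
Your proposal is correct, and it follows essentially the same route as the paper, which in fact offers no proof of its own: it simply cites \cite{T-2006} and remarks that the discrete setting ($\Lambda = h\Z$) goes through unchanged because all the arguments are temporal Fourier analysis. Your computations for (1)--(3) and the first half of (4) are accurate renditions of the standard arguments (the identity $\widetilde{\theta(t)e^{itP(-i\nabla)}f}(\tau,\xi)=\hat\theta(\tau-P(\xi))\hat f(\xi)$, the change of variables $\sigma=\tau-P(\xi)$, and the superposition $u(t)=\frac{1}{2\pi}\int_{\R}e^{it\tau_0}e^{itP(-i\nabla)}g_{\tau_0}\,d\tau_0$ with Cauchy--Schwarz in $\tau_0$ requiring $b>\frac12$), and since they live entirely on the Fourier side in $\tau$ they apply verbatim when $\widehat{\Lambda}=\T_h$, which is precisely the point of the paper's remark; deferring the second half of (4) and estimate (5) to the convolution lemmas of \cite{T-2006} is exactly what the paper does for the whole lemma.
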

\begin{remark}
The proof of the above-mentioned lemma under the discrete setting is analogous to the one under the continuous setting, since the proof is based on the temporal Fourier analysis. 
\end{remark}

Now, we fix the symbols associated with the discrete linear FPU flows, and we focus on the corresponding $X^{s,b}$ spaces, because they are our main function spaces.

\begin{definition}[$X_{h,\pm}^{s,b}$ spaces]\label{def Xsb}
For $s,b\in\R$, we define the discrete Bourgain spaces $X_{h,\pm}^{s,b}=X_{h,\pm}^{s,b}(\R\times h\Z)$ as the completion of $\mathcal S(\R\times h\Z)$ with respect to the norm 
\[\| u_h \|_{X_{h,\pm}^{s,b}} 
:= \bigg\{\int_{-\infty}^\infty\int_{-\frac{\pi}{h}}^{\frac{\pi}{h}}\la \xi \ra^{2s} \la \tau \mp s_h(\xi) \ra^{2b} |\tilde{u}_h(\tau,\xi)|^2 d\xi d\tau\bigg\}^{1/2},\]
where $\tilde{u}_h$ denotes the (discrete) space-time Fourier transform of $u_h$, and 
\begin{equation}\label{eq:s_h}
s_h(\xi):=\frac{1}{h^2}\Big(\xi-\frac{2}{h}\sin\Big(\frac{h\xi}{2}\Big)\Big).
\end{equation}
\end{definition}


\begin{remark}
The Littlewood-Paley theory ensures
\[\|f_h\|_{X_{h,\pm}^{s,b}}^2 \sim \sum_{N_0 \le N \le 1} \left(\frac{N}{h}\right)^{2s}\|P_N f_h\|_{X_{h,\pm}^{0,b}}^2.\]
This facilitates a type of fractional Leibniz rule; see Lemma \ref{lem:prod}.
\end{remark}

We end this section with the following temporal Sobolev embedding property.
\begin{lemma}[Temporal Sobolev embedding]\label{lem:tsobolev}
Let $2 \le p < \infty$ and $u_h^{\pm}$ be a smooth function on $\R \times h\Z$. Then for $b \ge \frac12 - \frac1p$, we have
\[\norm{f}_{L_t^p(\R;H_x^s(h\Z))} \lesssim \norm{f}_{X_{h,\pm}^{s,b}}.\]
When $p = \infty$, the usual Sobolev embedding ($b > \frac12$) holds.
\end{lemma}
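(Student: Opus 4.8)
The plan is to reduce everything to a one-dimensional temporal Sobolev embedding, disposing of the spatial variable by Plancherel and of the dispersive shift $s_h(\xi)$ by a modulation. First, since the spatial weight $\la\xi\ra^s$ appearing in both $H_x^s(h\Z)$ and $X_{h,\pm}^{s,b}$ is a Fourier multiplier in $x$ that commutes with the temporal Fourier transform and with multiplication by $e^{\mp it s_h(\xi)}$, I would replace $f$ by $\la\partial_h\ra^s f$ and thereby reduce matters to the case $s=0$, i.e.\ to proving $\norm{f}_{L_t^p(\R;L_x^2(h\Z))} \lesssim \norm{f}_{X_{h,\pm}^{0,b}}$.

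Second, I would exploit the hypothesis $p\ge 2$. Writing the spatial $L^2$ norm via Parseval's identity \eqref{eq:parseval} and then applying Minkowski's inequality in the exponent $\tfrac{p}{2}\ge 1$, one obtains
\[\norm{f}_{L_t^p L_x^2}^2 \lesssim \int_{-\frac{\pi}{h}}^{\frac{\pi}{h}} \norm{\widehat{f}(\cdot,\xi)}_{L_t^p(\R)}^2 \, d\xi,\]
where $\widehat{f}(t,\xi)$ denotes the spatial (discrete) Fourier transform. This localizes the estimate to each fixed frequency $\xi$, at which $\widehat f(\cdot,\xi)$ is merely a scalar function of $t$.

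Third, for fixed $\xi$ I would introduce the modulation $G_\xi(t) := e^{\mp it s_h(\xi)}\widehat{f}(t,\xi)$. Since modulation is an isometry on $L_t^p$, we have $\norm{G_\xi}_{L_t^p} = \norm{\widehat f(\cdot,\xi)}_{L_t^p}$, whereas its temporal Fourier transform equals $\widetilde f(\tau \pm s_h(\xi),\xi)$. Applying the one-dimensional Sobolev embedding $H_t^b(\R)\hookrightarrow L_t^p(\R)$, valid precisely when $b\ge \tfrac12-\tfrac1p$ for $2\le p<\infty$, gives
\[\norm{\widehat f(\cdot,\xi)}_{L_t^p}^2 = \norm{G_\xi}_{L_t^p}^2 \lesssim \int_{\R} \la\tau\ra^{2b}\, |\widetilde f(\tau\pm s_h(\xi),\xi)|^2\, d\tau = \int_{\R} \la\tau\mp s_h(\xi)\ra^{2b}\, |\widetilde f(\tau,\xi)|^2\, d\tau,\]
with implicit constant independent of $\xi$ and $h$. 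Integrating in $\xi$ recovers exactly $\norm{f}_{X_{h,\pm}^{0,b}}^2$, settling the range $2\le p<\infty$. For $p=\infty$ the same scheme applies with $H_t^b(\R)\hookrightarrow L_t^\infty(\R)$ ($b>\tfrac12$) in place of the previous embedding; alternatively it is exactly the embedding \eqref{eq:embedding} of Lemma \ref{lem:properties}.

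The argument is essentially routine; the only points demanding care are the use of $p\ge 2$ in the Minkowski step, which is what allows the spatial and temporal norms to be decoupled, and the observation that the modulation absorbs the dispersive shift $s_h(\xi)$ \emph{exactly}. This latter point is the crux: it is precisely this frequency-uniform modulation invariance that makes the endpoint $b=\tfrac12-\tfrac1p$ attainable irrespective of the (possibly large) size of $s_h(\xi)$ near the high-frequency edge $\xi=\pm\tfrac{\pi}{h}$, and it is what guarantees the implicit constant in the one-dimensional embedding is uniform in both $\xi$ and the lattice spacing $h$.
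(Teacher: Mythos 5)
Your proof is correct and follows essentially the same route as the paper: the paper conjugates by the free flow $S_h^\pm(-t)$ (exactly your frequency-wise modulation $e^{\mp its_h(\xi)}$) and then invokes the temporal Sobolev embedding $H_t^b\hookrightarrow L_t^p$. Your Plancherel--Minkowski reduction merely spells out the vector-valued version of that embedding which the paper's two-line argument uses implicitly, so the two proofs coincide in substance.
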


\begin{proof}
The proof directly follows from the Sobolev embedding with respect to the temporal variable $t$. For $S_h^{\pm}(t)u_{h}^{\pm}(t,x) = \mathcal F_h^{-1}[e^{\mp its_h(\xi)}\mathcal F(u_h^\pm)(t,\xi)]$, we know that $\|S_h^\pm(-t)u_h^\pm\|_{H_x^s} = \|u_h^\pm\|_{H_x^s}$. Thus,
\[\|u_h^\pm\|_{L_t^p(H_x^s)} = \|\|S_h^\pm(-t)u_h^\pm\|_{H_x^s}\|_{L_t^p} \lesssim \|\|S_h^\pm(-t)u_h^\pm\|_{H_x^s}\|_{H_t^b}= \|u_h^\pm\|_{X_{h,\pm}^{s,b}},\]
which completes the proof.
\end{proof}


\section{Well-posedness of coupled and decoupled FPUs}\label{sec: Well-posedness}

The well-posedness of a nonlinear difference (or discrete differential) equation is obvious in most cases  owing to the boundedness of discrete differential operators. Nevertheless, the proof of  the local well-posedness of the coupled and decoupled FPUs is included for the readers' convenience.

\begin{proposition}[Local well-posedness of coupled and decoupled FPUs]\label{LWP of CS}
Let $h \in (0,1]$ be fixed. For any $R>0$, there exists $T(R,h)>0$ such that the following holds. Suppose that
$$\big\|\big(u_{h,0}^+, u_{h,0}^-\big)\big\|_{L^2(h\mathbb{Z})}\leq R.$$
Then, there exists a unique solution $(u_h^+, u_h^-)\in C_t([-T,T]; L_x^2(h\mathbb{Z}))$ $(\textup{resp., }(v_h^+, v_h^-)\in C_t([-T,T]; L_x^2(h\Z)))$ to the coupled FPU \eqref{coupled FPU'} (resp., the decoupled FPU \eqref{decoupled FPU}) with initial data $(u_{h,0}^+,u_{h,0}^-)$. Moreover, $(u_h^+, u_h^-)$ preserves the Hamiltonian $H_h(\tilde{r}_h)$ (see \eqref{Hamiltonian}), where $\tilde{r}_h$ is given by \eqref{coupled FPU' to FPU}.
\end{proposition}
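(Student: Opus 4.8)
The plan is to prove local well-posedness for both systems by a standard contraction mapping (fixed-point) argument in the Banach space $C_t([-T,T]; L_x^2(h\mathbb{Z}))\times C_t([-T,T]; L_x^2(h\mathbb{Z}))$ for a single fixed $h\in(0,1]$. Since $h$ is fixed, I am permitted to lose powers of $h$ freely, and the key simplification is that all the discrete differential operators appearing in \eqref{coupled FPU'} and \eqref{decoupled FPU}, namely $\nabla_h$, $\partial_h$, and the exponentials $e^{\pm\frac{t}{h^2}\partial_h}$ and $S_h^\pm(t)$, are bounded multipliers on $L^2(h\mathbb{Z})$ with operator norms controlled in terms of $h$. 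The symbols $\frac{2i}{h}\sin(\frac{h\xi}{2})$ and $i\xi$ are bounded by $\frac{2}{h}$ and $\frac{\pi}{h}$ respectively on the frequency domain $[-\frac{\pi}{h},\frac{\pi}{h})$, while the exponential multipliers have unit modulus and hence are $L^2$-isometries; in particular $S_h^\pm(t)$ is a unitary group on $L^2(h\mathbb{Z})$.

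\medskip

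First I would define the solution map $\Phi=(\Phi^+,\Phi^-)$ by the right-hand sides of \eqref{coupled FPU'} (and separately of \eqref{decoupled FPU} for the decoupled case), and estimate it on a ball of radius $2R$ in $C_t([-T,T]; L_x^2)$. For the nonlinear term I would bound
$$\big\|S_h^\pm(t-t_1)\nabla_h\big[\{u_h^\pm+e^{\pm\frac{2t_1}{h^2}\partial_h}u_h^\mp\}^2\big]\big\|_{L_x^2}\lesssim_h \big\|\{u_h^\pm+e^{\pm\frac{2t_1}{h^2}\partial_h}u_h^\mp\}^2\big\|_{L_x^2},$$
using unitarity of $S_h^\pm$ and the $O(h^{-1})$ bound on $\nabla_h$. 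The quadratic term is then controlled by the algebra-type estimate $\|f_h g_h\|_{L^2(h\mathbb{Z})}\lesssim h^{-1/2}\|f_h\|_{L^2}\|g_h\|_{L^2}$, which follows from the embedding $L^2(h\mathbb{Z})\hookrightarrow L^\infty(h\mathbb{Z})$ with constant $h^{-1/2}$ (a special case of \eqref{Sobolev} or Bernstein \eqref{ineq:Bernstein}). The higher-order remainder $\mathcal{R}$ in \eqref{eq:R} involves $V^{(4)}$ evaluated at $h^2\tilde{r}_h^*$; since $V\in C^5$ and we work on a fixed ball, $V^{(4)}$ is bounded on the relevant range, so $\mathcal{R}$ is controlled by a cubic power of the $L^2$ norm times powers of $h^{-1}$. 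Integrating in time over $[-T,T]$ produces an overall factor of $T$, so choosing $T=T(R,h)$ small enough makes $\Phi$ map the ball into itself and contract; the difference estimate is entirely analogous. The decoupled case is strictly simpler since the coupling exponentials and $\mathcal{R}$ are absent.

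\medskip

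Having obtained the unique fixed point, continuity in time follows from dominated convergence in the Duhamel integral, giving $C_t([-T,T];L_x^2)$ regularity. Finally, for the Hamiltonian conservation claim, I would unwind the reformulation: by Remark \ref{coupled remark}$(i)$, the reconstruction \eqref{coupled FPU' to FPU} recovers the FPU solution $\tilde{r}_h$ from $(u_h^+,u_h^-)$, and by construction this $\tilde{r}_h$ satisfies the Duhamel formula \eqref{Duhamel FPU'}, hence solves FPU \eqref{FPU}. Since \eqref{FPU} is the Hamiltonian flow generated by $H_h$, the quantity $H_h(\tilde{r}_h(t))$ is constant along the flow; this is a formal computation of $\frac{d}{dt}H_h(\tilde{r}_h(t))=0$ using the equation of motion and the structure of \eqref{Hamiltonian}, which is justified rigorously for the smooth/$L^2$ solutions produced above.

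\medskip

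I expect the main (though still routine) obstacle to be bookkeeping the various $h$-dependent constants correctly and verifying that the existence time $T(R,h)$, while allowed to degenerate as $h\to 0$, is genuinely positive for each fixed $h$; this is exactly the point the authors flag in Section \ref{sec: outline 2} as the reason ordinary well-posedness is insufficient for the limit and must later be upgraded to the $h$-uniform bounds of Proposition \ref{Prop:LWP}. A secondary technical point is the treatment of the remainder $\mathcal{R}$: because $V^{(4)}(h^2\tilde{r}_h^*)$ depends pointwise on the unknown through the intermediate value $\tilde{r}_h^*$, I would bound it via the mean value form and the uniform bound on $V^{(4)}$ over compact sets, rather than attempting to treat $\mathcal{R}$ as a clean multilinear expression.
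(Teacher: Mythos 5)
Your proposal is correct and follows essentially the same route as the paper: a contraction mapping argument in $C_t([-T,T];L_x^2(h\mathbb{Z}))$ using unitarity of $S_h^\pm(t)$, the $O(h^{-1})$ bound on $\nabla_h$, an $h^{-1/2}$-loss product estimate (the paper uses $\|f_h\|_{L^4}\le h^{-1/4}\|f_h\|_{L^2}$ where you use the $L^\infty$ embedding, which is equivalent), a choice of $T\sim h^{3/2}/R$, and the same direct computation of $\frac{d}{dt}H_h(\tilde{r}_h)=0$ via the equation. The only cosmetic difference is that you treat the remainder $\mathcal{R}$ inline via boundedness of $V^{(4)}$ on compact sets, whereas the paper omits it from this proof and defers the estimate to its Appendix \ref{app:GP}.
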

\begin{remark}\label{rem:simple}
In the proof below, we do not estimate the higher-order remainder term in \eqref{coupled FPU'}, since the higher-order term contains a spare $h$ of order $2$ and it is thus small and nonessential in our analysis. For readers' convenience, we refer to Lemma \ref{lem:AppB} for the proof of the estimate of the higher-order remainder term.
\end{remark}
\begin{proof}[Proof of Proposition \ref{LWP of CS}]
We drop the time interval $[-T,T]$ in the notation $C_t([-T,T])$. We consider only the coupled FPU, because the decoupled FPU can be dealt with in the same way. 

We define a nonlinear map $\Phi=(\Phi^+, \Phi^-)$ by 
$$\Phi^\pm(u_h^+, u_h^-):= S_h^\pm(t)u_{h,0}^\pm\mp\frac{1}{4}  \int_0^t S_h^\pm(t-t_1) \nabla_h \Big\{u_h^\pm(t_1)+e^{\pm\frac{2t_1}{h^2}\partial_h}u_h^\mp(t_1)\Big\}^2dt_1.$$
Let $T>0$ be a small number to be chosen later. Then, by unitarity, it follows that
$$\begin{aligned}
\|\Phi^{\pm}(u_h^+, u_h^-)\|_{C_tL_x^2}\leq \|u_{h,0}^\pm\|_{L_x^2}+\frac{T}{2}\Big\|\nabla_h \Big\{u_h^\pm+e^{\pm\frac{2t}{h^2}\partial_h}u_h^\mp\Big\}^2\Big\|_{C_tL_x^2}.
\end{aligned}$$
For the nonlinear term, we observe that by the boundedness of the discrete differential operator $\nabla_h$ (see Definition \ref{def: discrete differentials}) and the trivial inequality $\|f_h\|_{L_x^4} \le h^{-\frac14}\|f_h\|_{L_x^2}$, we have
$$\begin{aligned}
\Big\|\nabla_h \Big\{u_h^\pm+e^{\pm\frac{2t}{h^2}\partial_h}u_h^\mp\Big\}^2\Big\|_{L_x^2}&\leq\frac{2}{h}  \Big\| \Big\{u_h^\pm+e^{\pm\frac{2t}{h^2}\partial_h}u_h^\mp\Big\}^2\Big\|_{L_x^2}=\frac{2}{h}  \big\|u_h^\pm+e^{\pm\frac{2t}{h^2}\partial_h}u_h^\mp\big\|_{L_x^4}^2\\
&\leq \frac{2}{h^{3/2}}\Big\{\|u_h^\pm\|_{L_x^2}+\big\|e^{\pm\frac{2t}{h^2}\partial_h}u_h^\mp\big\|_{L_x^2}\Big\}^2\leq\frac{4}{h^{3/2}}\|(u_h^+,u_h^-)\|_{L_x^2}^2.
\end{aligned}$$
Thus, we obtain 
\[\|\Phi(u_h^+, u_h^-)\|_{C_tL_x^2}\leq R+\frac{2T}{h^{3/2}}\|(u_h^+,u_h^-)\|_{C_tL_x^2}^2.\]
Similarly, one can show that 
$$\begin{aligned}
&\|\Phi(u_h^+, u_h^-)-\Phi(\tilde{u}_h^+, \tilde{u}_h^-)\|_{C_tL_x^2}\\
&\leq \frac{2T}{h^{3/2}}\Big\{\|(u_h^+,u_h^-)\|_{C_tL_x^2}+\|(\tilde{u}_h^+,\tilde{u}_h^-)\|_{C_tL_x^2}\Big\}\|(u_h^+-\tilde{u}_h^+,u_h^--\tilde{u}_h^-)\|_{C_tL_x^2}.
\end{aligned}$$
Taking $T=\frac{h^{3/2}}{16R}$, we prove that $\Phi$ is contractive on the ball in $C_tL_x^2$ of radius 
$2R$ centered at zero. Therefore, local well-posedness follows from the contraction mapping principle. 

By a straightforward computation, we prove the conservation law,
\[\begin{aligned}
\frac{d}{dt}H_h(\tilde{r}_h) &=h\sum_{x \in h\Z} h^4\frac{1}{\sqrt{-\Delta_h}}\partial_t \tilde{r}_h\cdot\frac{1}{\sqrt{-\Delta_h}}\partial_t^2 \tilde{r}_h+\frac{1}{h^2}V'(h^2\tilde{r}_h)\partial_t\tilde{r}_h\\
&=h^5\sum_{x \in h\Z} (-\Delta_h)^{-1}\partial_t \tilde{r}_h\cdot\Big\{\partial_t^2 \tilde{r}_h-\frac{1}{h^6}\Delta_h V'(h^2\tilde{r}_h)\Big\}=0,
\end{aligned}\]
where in the last step, we use $\tilde{r}_h$ to solve \eqref{FPU}.
\end{proof}


\section{Linear FPU flows}\label{sec: linear estimate}

We investigate various dispersive and smoothing properties for the linear FPU flows (Proposition \ref{prop:Strichartz}), and we then show how these discrete flows can be approximated by the Airy flows as $h\to0$ (Proposition \ref{linear approx}). Later, in Section \ref{sec: decoupled to KdV}, the main results of this section will be employed to prove the convergence from the decoupled FPU to KdVs.

\subsection{Estimates for the linear FPU flows}

We establish dispersive and smoothing inequalities for the linear FPU propagator $S_h^\pm(t)$, i.e., the discrete Fourier multipliers of the symbol $e^{\mp\frac{it}{h^2}(\frac{2}{h}\sin(\frac{h\xi}{2})-\xi)}$.

\begin{proposition}[Estimates for the linear FPU flows]\label{prop:Strichartz} Let $s>\frac{3}{4}$. Suppose that $2\leq q,r\leq\infty$ and $\frac2q+\frac1r=\frac12$ with $(q,r)\neq (4,\infty)$.\\
$(i)$ (Strichartz estimates)
$$\big\| |\pa_h|^{\frac1q} S_h^\pm(t)u_{h,0}\big\|_{L_{t}^q(\mathbb{R}; L_x^r(h\mathbb{Z}))}\ls \|u_{h,0}\|_{L^2(h\mathbb{Z})}.$$
$(ii)$ (Local smoothing estimate)
$$\| \partial_h S_h^\pm(t)u_{h,0}\|_{L_x^\infty(h\mathbb{Z}; L_t^2(\mathbb{R}))} \ls \|u_{h,0}\|_{L^2(h\mathbb{Z})}.$$
$(iii)$ (Maximal function estimate)
$$\|S_h^\pm(t)u_{h,0} \|_{L_x^2(h\mathbb{Z}; L_{t}^\infty([-1,1]))} \ls \|u_{h,0}\|_{H^s(h\mathbb{Z})}.$$
In all the three above-mentioned inequalities, the implicit constants are independent of $h\in(0,1]$. Moreover, the differential operator $\partial_h$ in $(i)$ and $(ii)$ can be replaced by $\pa_h^+$ or $\nabla_h$ (see Definition \ref{def: discrete differentials}).
\end{proposition}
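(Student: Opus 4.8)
The plan is to treat the three estimates as discrete analogues of the classical Airy estimates of Kenig, Ponce, and Vega \cite{KPV-1991}, the essential point being that every constant remains uniform in $h\in(0,1]$. The engine is the comparability of the phase $s_h(\xi)$ (see \eqref{eq:s_h}) and its derivatives with those of the Airy phase $\frac{\xi^3}{24}$: on the frequency domain $[-\frac{\pi}{h},\frac{\pi}{h})$ one has, uniformly in $h$,
\[
s_h(\xi)\sim\xi^3,\qquad s_h'(\xi)=\tfrac{1}{h^2}\big(1-\cos(\tfrac{h\xi}{2})\big)\sim\xi^2,\qquad s_h''(\xi)=\tfrac{1}{2h}\sin(\tfrac{h\xi}{2})\sim\xi.
\]
Since the symbols $i\xi$, $\frac{e^{ih\xi}-1}{h}$, and $\frac{2i}{h}\sin(\frac{h\xi}{2})$ of $\partial_h$, $\partial_h^+$, and $\nabla_h$ all have modulus comparable to $|\xi|$ on this domain, it suffices to prove $(i)$ and $(ii)$ with $\partial_h$, the remaining cases being identical. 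Throughout I would frequency-localize via the projections $P_N$ of \eqref{LP}, write $\lambda:=N/h$ for the effective frequency of the piece $P_Nu_{h,0}$, and reassemble using Proposition \ref{LP inequalities}.

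The local smoothing estimate $(ii)$ is the most transparent, and I would establish it first and directly. Fixing $x\in h\Z$ and applying Plancherel in the time variable to
\[
(\partial_h S_h^\pm(t)u_{h,0})(x)=\frac{1}{2\pi}\int_{-\frac{\pi}{h}}^{\frac{\pi}{h}}(i\xi)\,e^{\mp it s_h(\xi)}e^{ix\xi}\,(\mathcal F_hu_{h,0})(\xi)\,d\xi,
\]
the change of variables $\tau=\mp s_h(\xi)$ (legitimate since $s_h$ is monotone, as $s_h'\ge0$) produces the Jacobian $|s_h'(\xi)|$, so that
\[
\big\|\partial_h S_h^\pm(\cdot)u_{h,0}(x)\big\|_{L_t^2(\R)}^2\sim\int_{-\frac{\pi}{h}}^{\frac{\pi}{h}}\frac{|\xi|^2}{|s_h'(\xi)|}\,\big|(\mathcal F_hu_{h,0})(\xi)\big|^2\,d\xi.
\]
The comparability $s_h'(\xi)\sim\xi^2$ forces $\frac{|\xi|^2}{|s_h'(\xi)|}\ls1$ uniformly in $\xi$ and $h$; since the bound is independent of $x$, taking the supremum over $x$ yields $(ii)$.

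For the Strichartz estimates $(i)$ I would work on a single piece $P_N$ and extract a dispersive decay. Van der Corput's lemma applied to the kernel $\frac{1}{2\pi}\int\psi_N(\xi)\,e^{i(x\xi\mp t s_h(\xi))}\,d\xi$, using the lower bound $|s_h''(\xi)|\sim\lambda$ on $\mathrm{supp}\,\psi_N$, gives $\|S_h^\pm(t)P_Nu_{h,0}\|_{L_x^\infty}\ls(|t|\lambda)^{-1/2}\|P_Nu_{h,0}\|_{L_x^1}$. Interpolating with $L^2$ unitarity and running the standard $TT^*$ argument with Hardy--Littlewood--Sobolev in time—where the admissibility $\frac{2}{q}+\frac{1}{r}=\frac12$ is exactly what makes the time exponent match—yields $\|S_h^\pm(t)P_Nu_{h,0}\|_{L_t^qL_x^r}\ls\lambda^{-1/q}\|P_Nu_{h,0}\|_{L_x^2}$. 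Because $|\partial_h|^{1/q}$ acts like $\lambda^{1/q}$ on $P_N$, the derivative exactly compensates the gain, and summing with Proposition \ref{LP inequalities} (using $q,r\ge2$) completes $(i)$.

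The maximal function estimate $(iii)$ is the main obstacle and the source of the threshold $s>\frac34$. After the Littlewood--Paley reduction, the task is the single-frequency bound $\|S_h^\pm(t)P_Nu_{h,0}\|_{L_x^2L_t^\infty([-1,1])}\ls\lambda^{3/4}\|P_Nu_{h,0}\|_{L_x^2}$; summing then costs $\sum_N\lambda^{3/4}\|P_Nu_{h,0}\|_{L_x^2}$, which Cauchy--Schwarz controls by $\|u_{h,0}\|_{H^s}$ precisely when $s>\frac34$. The single-frequency bound is genuinely delicate: unlike local smoothing it cannot be read off from Plancherel, and instead requires the oscillatory-integral analysis of \cite{KPV-1991}. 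The difficulty I anticipate is that the discrete phase $s_h$ is not exactly homogeneous, so the Airy scaling that normalizes the continuous argument is unavailable; I would instead run the oscillatory estimate directly, relying only on the uniform comparabilities $s_h\sim\xi^3$, $s_h'\sim\xi^2$, $s_h''\sim\xi$ (and the sign-definiteness of $s_h''$) to reproduce the bound with an $h$-independent constant. Keeping this constant uniform across the full frequency range, including near the edge $\xi=\pm\frac{\pi}{h}$ where $s_h'''$ degenerates, is the crux.
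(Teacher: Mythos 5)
Your treatments of $(i)$ and $(ii)$ are correct and coincide with the paper's own proof: part $(ii)$ is the Constantin--Saut argument (Plancherel in time after the monotone change of variables $\tau=\mp s_h(\xi)$, with the Jacobian cancellation $\xi^2/s_h'(\xi)\lesssim 1$), and part $(i)$ is van der Corput on the frequency-localized kernel using $|s_h''|\sim N/h$ on $\mathrm{supp}\,\psi_N$, interpolation with $L^2$ unitarity, $TT^*$, and Littlewood--Paley reassembly.

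The genuine gap is in $(iii)$, and it is exactly the step you defer. The single-frequency bound $\|S_h^\pm(t)P_Nu_{h,0}\|_{L_x^2L_t^\infty([-1,1])}\lesssim (N/h)^{3/4}\|P_Nu_{h,0}\|_{L^2}$ \emph{is} the maximal function estimate: everything else in your sketch (the dyadic reduction and the Cauchy--Schwarz summation giving $s>\frac34$) is routine, so announcing that you would "run the oscillatory estimate directly, relying only on the uniform comparabilities" leaves the proposition unproved. The bound does not follow from the curvature comparabilities $s_h\sim\xi^3$, $s_h'\sim\xi^2$, $s_h''\sim\xi$ by any soft argument, because a maximal estimate requires pointwise decay of the kernel in $x$, uniformly in $t\in[-1,1]$, not dispersive decay in $t$. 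What the paper actually supplies, and what is missing from your plan, is the following chain: a $TT^*$ duality step reducing the claim to $\big\|\int_{-1}^1 S_h^\pm(t-t_1)P_Ng(t_1)\,dt_1\big\|_{L_x^2L_t^\infty}\lesssim (N/h)^{3/2}\|g\|_{L_x^2L_t^1}$; Young's inequality in $x$, reducing this to the kernel bound $\|K_N^\pm\|_{L_x^1L_t^\infty([-1,1])}\lesssim (N/h)^{3/2}$; and then the time-uniform pointwise estimates of Lemma \ref{Lem:Oscillartory integral} $(ii)$, namely $|K_N^\pm(t,x)|\lesssim N/h$ for $|x|\le h/N$, $\lesssim (N/(h|x|))^{1/2}$ for $h/N\le |x|\le N^2/h^2$, and $\lesssim h/(Nx^2)$ for $|x|\ge N^2/h^2$. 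Proving these is the real content: for $K_N^+$ one uses that the phase derivative $x+ts_h'(\xi)$ never vanishes (since $s_h'\ge0$), so integration by parts applies; for $K_N^-$ stationary points do occur and the frequency support must be split into $\{|s_h'(\xi)|\le \frac{x}{2t}\}$ (nonstationary; integrate by parts) and its complement (van der Corput via $|s_h''|\sim|\xi|$); the outer regime needs two integrations by parts, and $t=0$ a separate one-line argument. Your threshold $s>\frac34$ is precisely the exponent $\frac12\cdot\frac32$ produced by integrating the middle-regime kernel decay, so without this analysis the threshold is unsupported.

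Two further points. The three-regime kernel bound only makes sense for $N>N_0$ (effective frequency $N/h\gtrsim1$); the low-frequency lump $P_{\le N_0}$ must be treated separately --- the paper shows $|K_{\le N_0}^\pm(t,x)|\lesssim (1+x^2)^{-1}$ for $|t|\le 1$, giving an $h$-uniform $L^2\to L_x^2L_t^\infty$ bound --- whereas your dyadic sum treats all $N$ on the same footing. On the other hand, your worry about $s_h'''$ degenerating near $\xi=\pm\frac{\pi}{h}$ is harmless: the proofs use only an upper bound $|s_h'''|\lesssim 1$ together with lower bounds on $s_h''$ (and the sign of $s_h'$), all of which hold uniformly in $h$.
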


\begin{remark}\label{rem:LS}
Proposition \ref{prop:Strichartz} $(i)$ may hold at $(q,r)=(4,\infty)$; however, it is excluded here to simplify the proof. Indeed, this endpoint case is not necessary in this article.
\end{remark}

As a direct consequence of Proposition \ref{prop:Strichartz} and the transference principle (Lemma \ref{lem:properties} (3)), we obtain the bounds in the associated Bourgain spaces.
\begin{corollary}\label{cor:Strichartz}
Let $s>\frac{3}{4}$ and $b>\frac{1}{2}$. Suppose that $2\leq q,r\leq\infty$ and $\frac2q+\frac1r=\frac12$ with $(q,r)\neq (4,\infty)$. Then, we have
$$\big\| |\pa_h|^{\frac1q} u_h\big\|_{L_{t}^q(\mathbb{R}; L_x^r(h\mathbb{Z}))}+\| \partial_h u_h\|_{L^\infty(h\mathbb{Z}; L_t^2(\mathbb{R}))}\ls \|u_h\|_{X_{h,\pm}^{0,b}}$$
and
$$\|u_h\|_{L^2(h\mathbb{Z}; L_{t}^\infty([-1,1]))} \ls \|u_h\|_{X_{h,\pm}^{s,b}},$$
where the implicit constants are independent of $h\in(0,1]$ and $\partial_h$ can be replaced by $\pa_h^+$ or $\nabla_h$.
\end{corollary}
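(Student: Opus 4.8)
The plan is to establish the three estimates in Proposition~\ref{prop:Strichartz} by exploiting the derivative-level comparability of the FPU phase function $s_h(\xi)$ with the Airy phase $\tfrac{\xi^3}{24}$, illustrated in Figure~\ref{Fig: FPU symbol}, and then upgrade them to the Bourgain-space bounds of the corollary by the transference principle. Concretely, I would first reduce the oscillatory analysis to kernel estimates. For the Strichartz estimates $(i)$, the strategy is to prove a fixed-time dispersive decay bound for the kernel of $|\pa_h|^{1/q}S_h^\pm(t)$ via a stationary phase / van der Corput argument, using that the second derivative $s_h''(\xi)=\tfrac{1}{2h}\sin(\tfrac{h\xi}{2})\sim\xi$ stays bounded below away from zero in the sense dictated by the $|\partial_h|^{1/q}$ weight, so that the dispersive bound matches that of the Airy flow on $[-\tfrac{\pi}{h},\tfrac{\pi}{h})$. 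One then feeds this into the standard $TT^*$ argument and the Keel--Tao/Strichartz machinery, taking care that the admissibility relation $\tfrac2q+\tfrac1r=\tfrac12$ is the Airy-type (rather than Schr\"odinger-type) scaling, and that the endpoint $(q,r)=(4,\infty)$ is excluded as noted in Remark~\ref{rem:LS}.

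For the local smoothing estimate $(ii)$, I would work directly on the Fourier side: writing $\partial_h S_h^\pm(t)u_{h,0}$ via Plancherel in $t$ for each fixed $x$, the $L_t^2$ norm is computed by changing variables from $\xi$ to the phase $\tau=\mp s_h(\xi)$. The Jacobian is $s_h'(\xi)=\tfrac{1}{h^2}(1-\cos(\tfrac{h\xi}{2}))\sim\xi^2$, and the factor $i\xi$ from $\partial_h$ (or the comparable symbols $\tfrac{2i}{h}\sin(\tfrac{h\xi}{2})$, $\tfrac{e^{ih\xi}-1}{h}$ for $\nabla_h$, $\pa_h^+$) exactly cancels the singularity of $(s_h')^{-1}$ at $\xi=0$, yielding the gain of one derivative uniformly in $x$ and in $h\in(0,1]$. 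The comparability $s_h'(\xi)\sim\xi^2$ on the whole frequency domain, including the high-frequency edge $\xi=\pm\tfrac{\pi}{h}$, is precisely what makes this work and what fails for the discrete Schr\"odinger flow; this is the crux of the ``magical'' Zabusky--Kruskal phenomenon.

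The maximal function estimate $(iii)$ is the one I expect to be the main obstacle, and it is also the source of the regularity threshold $s>\tfrac34$. The plan is to use a Littlewood--Paley decomposition $\sum_{N_0\le N\le 1}P_N$ (Remark~\ref{rem:low}) and estimate each piece, summing in $N$. For a single dyadic block I would bound the maximal-in-time operator by interpolating between an $L_t^2$ smoothing-type input and a derivative-in-time control, following the Kenig--Ponce--Vega approach in \cite{KPV-1991}; the loss of $\tfrac34+$ derivatives arises from the square-function/Sobolev embedding used to convert $L_t^\infty$ into a controllable quantity, and it cannot be avoided with these techniques since, as the authors explicitly state, the maximal function estimate for the Airy equation holds only for $s>\tfrac34$. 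The key technical point is to verify that the dyadic maximal bounds are uniform in $h$, which again reduces to the comparability of $s_h$ and its derivatives with the Airy symbol on each dyadic frequency annulus; the high-frequency blocks near $\xi=\pm\tfrac{\pi}{h}$ require the most care, but are controlled by the same phase comparability. Finally, Corollary~\ref{cor:Strichartz} follows immediately by combining each estimate with the transference principle (Lemma~\ref{lem:properties}~(3)) for $b>\tfrac12$, after noting that the time-translated flows $e^{it\tau_0}S_h^\pm(t)$ satisfy the same bounds with constants independent of $\tau_0$.
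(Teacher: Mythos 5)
Your proposal is correct and follows essentially the same route as the paper: the corollary is obtained by applying the transference principle (Lemma \ref{lem:properties} (3), with $b>\frac12$) to the three estimates of Proposition \ref{prop:Strichartz}, and your observation that the relevant mixed norms are unchanged under multiplication by the unimodular factor $e^{it\tau_0}$ is precisely what verifies the hypothesis of that lemma. Your sketch of Proposition \ref{prop:Strichartz} itself also mirrors the paper's proof — van der Corput kernel bounds on dyadic pieces fed into a $TT^*$ argument for the Strichartz estimates, the Constantin--Saut change of variables $\tau=\mp s_h(\xi)$ with $|\xi|^2/s_h'(\xi)\sim 1$ up to the frequency edge for local smoothing, and Kenig--Ponce--Vega-type dyadic maximal bounds of size $(N/h)^{3/4}$ summed under $s>\frac34$ for the maximal function estimate.
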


Before presenting the proof of Proposition \ref{prop:Strichartz}, let us recall and compare with the linear estimates for the Airy propagator $S^\pm(t)=e^{\mp\frac{t}{24}\partial_x^3}$ from Kenig, Ponce and Vega \cite{KPV-1991}.

\begin{proposition}[Linear estimates for the Airy flows]\label{Thm:linear estimates for KdV flows} Let $s>\frac{3}{4}$. Suppose that $2\leq q,r\leq\infty$ and $\frac2q+\frac1r=\frac12$.\\
$(i)$ (Strichartz estimates)
$$\big\| |\pa_x|^\frac1q S^\pm(t) u_0 \big\|_{L_{t}^q (\mathbb{R};L_x^r(\mathbb{R}))} \ls \|u_0\|_{L^2(\mathbb{R})}.$$
$(ii)$ (Local smoothing estimate)
$$\| \pa_x S^\pm(t) u_0\|_{L_x^\infty(\mathbb{R}; L_t^2(\mathbb{R}))} \ls \|u_0\|_{L^2(\mathbb{R})}.$$
$(iii)$ (Maximal function estimate)
$$\| S^\pm(t) u_0\|_{L_x^2(\mathbb{R}; L_{t}^\infty([-1,1]))} \ls \|u_0\|_{H^s(\mathbb{R})}.$$
\end{proposition}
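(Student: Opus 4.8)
<br />

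The final statement to prove is Proposition~\ref{Thm:linear estimates for KdV flows}, the Strichartz, local smoothing, and maximal function estimates for the Airy flow $S^\pm(t)=e^{\mp\frac{t}{24}\partial_x^3}$. These are classical results due to Kenig, Ponce, and Vega \cite{KPV-1991}, so the plan is to indicate how the standard oscillatory-integral machinery delivers each of the three inequalities rather than to reinvent it. Throughout, I work on the Fourier side where the propagator acts as multiplication by $e^{\mp\frac{it}{24}\xi^3}$, so the phase is cubic and its second derivative is comparable to $|\xi|$.

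\textbf{Strichartz estimates.} I would derive part $(i)$ from a dispersive (fixed-time) decay estimate combined with the abstract $TT^*$ argument. The key point is that for the Airy group the kernel obeys a decay bound reflecting the cubic phase: applying stationary phase / van der Corput to $\int e^{i(x\xi \mp \frac{t}{24}\xi^3)}\,d\xi$ gives a time decay of order $|t|^{-1/3}$, but with the crucial gain of a derivative. Concretely one uses the scaling-adapted estimate $\||\partial_x|^{1/2}S^\pm(t)u_0\|_{L_x^\infty}\lesssim |t|^{-1/2}\|u_0\|_{L_x^1}$, which after $TT^*$ and the Hardy--Littlewood--Sobolev inequality yields the admissible family $\frac{2}{q}+\frac1r=\frac12$ with the sharp derivative gain $|\partial_x|^{1/q}$. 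The endpoint $(q,r)=(4,\infty)$ is recovered here (unlike in the discrete Proposition~\ref{prop:Strichartz}) because there is no high-frequency truncation at $\xi=\pm\pi/h$; the whole-line dispersion is uniform across all frequencies.

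\textbf{Local smoothing.} For part $(ii)$ I would argue directly in the frequency variable. Writing $\partial_x S^\pm(t)u_0$ via its Fourier representation and computing the $L_t^2(\mathbb{R})$ norm at a fixed $x$, Plancherel in $t$ together with the change of variables $\tau=\mp\frac{1}{24}\xi^3$ (whose Jacobian $\frac{d\tau}{d\xi}=\mp\frac18\xi^2$ exactly absorbs the two derivatives produced by $\partial_x$) turns the space-time integral into $\|u_0\|_{L^2}^2$ uniformly in $x$. This is the one-derivative Kato smoothing effect, and it is the cleanest of the three; the gain of one full derivative is precisely dual to the homogeneity of the cubic phase.

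\textbf{Maximal function.} Part $(iii)$ is the delicate one and I expect it to be the main obstacle, since the $L_x^2 L_t^\infty$ norm controls the pointwise-in-time supremum and its sharp threshold is exactly $s>\frac34$. I would follow the Kenig--Ponce--Vega scheme: localize in frequency via Littlewood--Paley, estimate $\|S^\pm(t)P_N u_0\|_{L_x^2 L_t^\infty([-1,1])}$ on each dyadic block, and sum. On each block one bounds the time-supremum by a Sobolev embedding in $t$ (controlling $L_t^\infty$ by $L_t^2$ of $t$-derivatives) or, more efficiently, by an oscillatory-integral kernel estimate that produces a factor $N^{3/4}$; the restriction $s>\frac34$ arises precisely because the summation $\sum_N N^{2(3/4-s)}$ over dyadic $N$ converges only in that range. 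This is the step that forces the regularity threshold in both the continuous and discrete theories, and it is why the discrete Proposition~\ref{prop:Strichartz}$(iii)$ carries the same $s>\frac34$ hypothesis. Since the present statement is a verbatim restatement of the results of \cite{KPV-1991}, I would simply cite that paper for the full details after sketching these three reductions.
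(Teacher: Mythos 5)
Your proposal is correct and matches the paper's treatment: the paper gives no proof of this proposition at all, stating it purely as a recollection of the classical results of Kenig, Ponce, and Vega \cite{KPV-1991}, which is exactly the citation you fall back on. Your sketches of the three standard arguments (derivative-weighted dispersive bound plus $TT^*$ for Strichartz, Plancherel in $t$ with the change of variables $\tau=\mp\xi^3/24$ for Kato smoothing, and dyadic $N^{3/4}$ kernel estimates summed for $s>\frac34$ for the maximal function) are accurate and consistent with both \cite{KPV-1991} and the paper's own proof of the discrete analogue in Proposition \ref{prop:Strichartz}.
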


Let $X_\pm^{s,b}$ denote the Bourgain space with the norm 
\begin{equation}\label{eq:XsbKdV}
\|u \|_{X_\pm^{s,b}} := \|\la \xi \ra^{s} \la \tau \mp\tfrac{\xi^3}{24} \ra^{b}\tilde{u}(\tau,\xi)\|_{L_\tau^2L_\xi^2(\mathbb{R}\times\mathbb{R})}.
\end{equation}
By the transference principle, we have the following corollary. 
\begin{corollary}\label{cor:linear estimates for KdV flows} Let $s>\frac{3}{4}$ and $b>\frac{1}{2}$. Suppose that $2\leq q,r\leq\infty$ and $\frac2q+\frac1r=\frac12$. Then, we have 
$$\big\| |\pa_x|^\frac1q w \big\|_{L_{t}^q (\mathbb{R};L_x^r(\mathbb{R}))}+\| \pa_x w\|_{L_x^\infty(\mathbb{R}; L_t^2(\mathbb{R}))} \ls \|w\|_{X_\pm^{0,b}}$$
and
$$\|w\|_{L_x^2(\mathbb{R}; L_{t}^\infty([-1,1]))} \ls \|w\|_{X_\pm^{s,b}}.$$
\end{corollary}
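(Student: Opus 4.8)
The plan is to read off the corollary from the fixed-time linear estimates of Proposition~\ref{Thm:linear estimates for KdV flows} by invoking the transference principle, Lemma~\ref{lem:properties}~(3). To this end I first match the symbol: the space $X_\pm^{s,b}$ in \eqref{eq:XsbKdV} corresponds to the choice $P(\xi)=\pm\frac{\xi^3}{24}$ in Definition~\ref{def:Xsb}, for which $e^{itP(-i\nabla)}$ has symbol $e^{\pm it\xi^3/24}$ and therefore coincides with the Airy propagator $S^\pm(t)=e^{\mp\frac{t}{24}\pa_x^3}$. Thus the transference principle applies directly to the propagator governing these Bourgain spaces.

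Next I fix, for each of the three inequalities, the Banach space $Y$ playing the role required by Lemma~\ref{lem:properties}~(3): for the Strichartz and local smoothing bounds I take $Y$ to be the space with norm $\||\pa_x|^{1/q}w\|_{L_t^qL_x^r}$, respectively $\|\pa_x w\|_{L_x^\infty L_t^2}$, and work at regularity $s=0$; for the maximal function bound I take $Y$ to be the space with norm $\|w\|_{L_x^2L_t^\infty([-1,1])}$ and work at regularity $s>\frac34$. The hypothesis of the transference principle then demands, for each such $Y$, the \emph{time-modulated} linear estimate
\[\big\|e^{it\tau_0}S^\pm(t)u_0\big\|_Y \ls \|u_0\|_{H^s}, \qquad \text{uniformly in } \tau_0\in\R.\]

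The key point is that this modulated estimate is free of charge: the factor $e^{it\tau_0}$ is unimodular and depends only on $t$, so it commutes with the spatial multipliers $|\pa_x|^{1/q}$ and $\pa_x$, and one has $|e^{it\tau_0}S^\pm(t)u_0(x)|=|S^\pm(t)u_0(x)|$ pointwise in $(t,x)$. Hence each of the space-time Lebesgue norms defining $Y$ is left unchanged by the modulation, and the modulated estimate reduces, with the same implicit constant and uniformly in $\tau_0$, to the corresponding inequality in Proposition~\ref{Thm:linear estimates for KdV flows}. Since $b>\frac12$, Lemma~\ref{lem:properties}~(3) then yields $\|w\|_Y\ls\|w\|_{X_\pm^{s,b}}$ for each of these $Y$; adding the two $s=0$ conclusions produces the first displayed inequality, and the $s>\frac34$ conclusion produces the second. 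In short, all the genuine dispersive and smoothing content is already contained in Proposition~\ref{Thm:linear estimates for KdV flows}, and the only thing requiring verification is the uniform-in-$\tau_0$ modulation invariance noted above, which presents no analytic obstacle.
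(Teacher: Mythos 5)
Your proof is correct and is exactly the paper's route: the paper deduces this corollary from Proposition~\ref{Thm:linear estimates for KdV flows} by the transference principle (Lemma~\ref{lem:properties}~(3)), and your verification that the modulation $e^{it\tau_0}$ leaves the mixed Lebesgue norms invariant is precisely the (standard) detail the paper leaves implicit.
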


Proposition \ref{prop:Strichartz} will be proved below by adapting the argument in \cite{KPV-1991} and the references therein. For the proof, we decompose the linear propagator into dyadic pieces,
$$S_h^\pm(t)=\sum_{N\leq1}S_h^\pm(t)P_N=S_h^\pm(t)P_{\le N_0}+\sum_{N_0< N\leq1}S_h^\pm(t)P_N,$$
where $N_0$ is the dyadic number introduced in Remark \ref{rem:low}. We denote the kernel of a dyadic piece of the linear propagator $S_h^\pm(t)P_N$ (resp., $S_h^\pm(t)P_{\le N_0}$) by $K_N^\pm(t,x)$ (resp., $K_{\le N_0}^\pm(t,x)$). By the discrete Fourier transform in addition to \eqref{eq:decomposition}, their kernels can be expressed as oscillatory integrals
$$K_N^\pm(t,x)=\frac{1}{2\pi}\int_{-\frac{\pi}{h}}^{\frac{\pi}{h}} e^{\pm \frac{it}{h^2}(\xi-\frac{2}{h}\sin(\frac{h\xi}{2}))+ix\xi}\psi_N(\xi)d\xi$$
and
$$K_{\le N_0}^\pm(t,x)=\frac{1}{2\pi}\int_{-\frac{\pi}{h}}^{\frac{\pi}{h}} e^{\pm \frac{it}{h^2}(\xi-\frac{2}{h}\sin(\frac{h\xi}{2}))+ix\xi}\phi(\xi)d\xi.$$
These kernels obey the following decay properties.

\begin{lemma}\label{Lem:Oscillartory integral}
$(i)$ For every $N=2^k\leq 1$, we have
$$|K_N^\pm(t,x)|\lesssim\left(\frac{h}{N|t|}\right)^{1/2}.$$
$(ii)$ Suppose that $|t|\leq 1$. Then, we have
\begin{equation}\label{ineq:I0}
|K_{\le N_0}^\pm(t,x)|\lesssim\frac{1}{1+x^2}.
\end{equation}
For every $N_0 < N \le 1$, we have
\begin{equation}\label{ineq:I}
|K_N^\pm(t,x)|\ls \left\{\begin{aligned}
&\frac{N}{h} &&\text{if } |x|\le \frac{h}{N},\\
&\left(\frac{N}{h|x|}\right)^{1/2} &&\text{if }  \frac{h}{N}\le |x| \le \frac{N^2}{h^2},\\
&\frac{h}{Nx^2} &&\text{if }|x|\ge \frac{N^2}{h^2}.
\end{aligned}\right.
\end{equation}
\end{lemma}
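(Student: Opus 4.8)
The plan is to prove both decay estimates by the method of stationary phase applied to the oscillatory integrals defining the kernels, after a rescaling that reduces everything to the universal phase function $\phi_h(\xi) = \xi - \tfrac{2}{h}\sin(\tfrac{h\xi}{2})$. The crucial input is the comparison established earlier (Figure~\ref{Fig: FPU symbol} and the surrounding discussion): on $[-\tfrac{\pi}{h},\tfrac{\pi}{h})$ the derivatives satisfy $s_h'(\xi) = \tfrac{1}{h^2}(1-\cos(\tfrac{h\xi}{2})) \sim \xi^2$ and $s_h''(\xi) = \tfrac{1}{2h}\sin(\tfrac{h\xi}{2}) \sim \xi$, exactly mirroring the Airy phase $\tfrac{\xi^3}{24}$ whose first and second derivatives are $\sim \xi^2$ and $\sim \xi$. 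This is what lets the standard Airy kernel estimates of Kenig--Ponce--Vega \cite{KPV-1991} carry over, and the key technical point is that the second derivative $s_h''$ does not degenerate beyond the rate dictated by the continuum model.

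For part $(i)$, I would work on the dyadic piece where $|\xi| \sim N/h$ on the support of $\psi_N$. Here the total phase is $\tfrac{t}{h^2}\phi_h(\xi) + x\xi$ and the relevant quantity is the second derivative of the phase in $\xi$, namely $\tfrac{t}{h^2}s_h''(\xi) \sim \tfrac{t}{h^2}\cdot\tfrac{N}{h} = \tfrac{tN}{h^3}$ up to the rescaling; the van der Corput lemma with a second-derivative lower bound then gives a bound of the form $(\text{second derivative})^{-1/2}$ times a harmless factor, producing $(h/(N|t|))^{1/2}$ after accounting for the measure of the support. The main thing to verify carefully is that $|s_h''(\xi)| \gtrsim N/h$ uniformly on $\mathrm{supp}\,\psi_N$, which follows from $s_h''(\xi) = \tfrac{1}{2h}\sin(\tfrac{h\xi}{2})$ being comparable to $\xi \sim N/h$ on the dyadic annulus (note $h\xi/2$ stays bounded away from $\pi$ there). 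Since this bound holds with a constant independent of $N$ and $h$, van der Corput applies uniformly.

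For part $(ii)$, the low-frequency piece $K_{\le N_0}^\pm$ with the smooth cutoff $\phi$ is handled by noting that on $\mathrm{supp}\,\phi$ we have $|\xi|\lesssim 1$, so $\tfrac{t}{h^2}\phi_h(\xi)$ behaves like a bounded Airy-type phase (using $\tfrac{1}{h^2}\phi_h(\xi) \to \tfrac{\xi^3}{24}$) and non-stationary phase / repeated integration by parts in $\xi$ yields the $\tfrac{1}{1+x^2}$ decay for $|t|\le 1$. The genuinely delicate part is the three-regime estimate \eqref{ineq:I} for $N_0 < N \le 1$, which requires separating according to whether the phase has a stationary point in the relevant frequency range. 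The stationary point condition $\tfrac{t}{h^2}s_h'(\xi) = -x$, i.e. $\xi^2 \sim x h^2/t$, determines the transition scales: the trivial bound $\|\psi_N\|_{L^1} \sim N/h$ governs the region $|x|\le h/N$ where there is no oscillatory gain; the stationary-phase regime $h/N \le |x| \le N^2/h^2$ gives the $(N/(h|x|))^{1/2}$ decay from the $s_h''$-based van der Corput estimate; and for $|x|\ge N^2/h^2$ the phase is non-stationary on the support, so integration by parts twice yields the $h/(Nx^2)$ decay.

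\smallskip

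I expect the main obstacle to be the careful bookkeeping in the stationary-phase regime of \eqref{ineq:I}, specifically keeping track of the three scales $h/N$, $N^2/h^2$ and the uniformity in both $N$ and $h$ simultaneously. One must confirm that the stationary point, when it exists, actually lies in $\mathrm{supp}\,\psi_N$, and that the lower bound on $|s_h''|$ used in van der Corput is consistent across the frequency annulus $|\xi| \sim N/h$. A subtlety is that near the high-frequency edge $\xi = \pm\tfrac{\pi}{h}$ (relevant when $N$ is close to $1$) the factor $\sin(\tfrac{h\xi}{2})$ in $s_h''$ could in principle be small; I would check that the dyadic localization keeps $\tfrac{h|\xi|}{2}$ bounded strictly below $\pi$ on each annulus so that $|s_h''| \gtrsim N/h$ survives, which is precisely the content of the comparison in Figure~\ref{Fig: FPU symbol}. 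Once these uniform derivative bounds are secured, each regime reduces to a direct application of either van der Corput or non-stationary integration by parts, with the boundary cases matching continuously across the transition scales.
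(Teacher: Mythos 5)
Your proposal is correct in outline and takes essentially the same route as the paper's proof: van der Corput with the uniform non-degeneracy $|s_h''(\xi)|=\frac{1}{2h}|\sin(\frac{h\xi}{2})|\sim N/h$ on each dyadic annulus for part $(i)$; integration by parts (using $|t|\le 1$ to control the amplitude and first-derivative terms) for the low-frequency kernel and for $|x|\ge N^2/h^2$; and the stationary/non-stationary dichotomy in the middle regime, which is exactly the paper's splitting of $K_N^-$ over $\Omega_1=\{\xi:|s_h'(\xi)|\le \frac{|x|}{2|t|}\}$ and its complement (while $K_N^+$ has no stationary point at all and is handled by a single integration by parts). Two bookkeeping slips should be fixed in execution: the second derivative of the phase is $|t\,s_h''(\xi)|\sim |t|N/h$, not $|t|N/h^3$ (the factor $1/h^2$ is already contained in $s_h$), and the stationary-point condition reads $|t|\xi^2\sim|x|$, i.e. $\xi^2\sim |x|/|t|$ rather than $|x|h^2/|t|$; with the correct scales, a stationary point can lie in $\mathrm{supp}\,\psi_N$ only when $|t|\gtrsim h^2|x|/N^2$, and it is precisely this constraint that upgrades the raw van der Corput bound $(h/(|t|N))^{1/2}$ of part $(i)$ to the claimed $(N/(h|x|))^{1/2}$ in the intermediate region.
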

\begin{remark}
It is easy to see that $s_h(\xi)=\frac{1}{h^2}(\xi-\frac{2}{h}\sin(\frac{h\xi}{2}))$ is comparable with $\xi^3$ on the frequency domain $[-\frac{\pi}{h},\frac{\pi}{h})$ in the sense that
\begin{equation}\label{comparable symbol}
\begin{aligned}
s_h'(\xi)=\frac{1}{h^2}\Big(1-\cos\Big(\frac{h\xi}{2}\Big)\Big)\sim |\xi|^2,\quad |s_h''(\xi)|\sim |\xi|,\quad|s_h'''(\xi)|\sim 1.
\end{aligned}
\end{equation}
\end{remark}
\begin{proof}
$(i).$ The proof follows from the van der Corput lemma with $|(\pm ts_h(\xi)+x\xi)''|=|ts_h''(\xi)|\sim |t||\xi|\sim \frac{N}{h}|t|$ on the support of $\psi_N$.

$(ii).$ For \eqref{ineq:I0}, by integration by parts twice, we  write
$$K_{\le N_0}^\pm(t,x)=\int_{-\frac{\pi}{h}}^{\frac{\pi}{h}} \frac{1}{(ix)^2}(e^{ix\xi})''e^{\pm its_h(\xi)}\phi(\xi)d\xi=- \frac{1}{x^2}\int_{-\frac{\pi}{h}}^{\frac{\pi}{h}}e^{ix\xi}\big(e^{\pm its_h(\xi)}\phi(\xi)\big)''d\xi.$$
A straightforward computation shows that 
$$\begin{aligned}
|(e^{\pm its_h(\xi)}\phi(\xi))''|&=\big|\big(t^2 s_h'(\xi))^2\mp it s_h''(\xi)\big)\phi(\xi)\mp 2it s_h'(\xi)\phi'(\xi)-\phi''(\xi)\big|\\
&\lesssim (\xi^4+|\xi|)\phi(\xi)+\xi^2|\phi'(\xi)|+|\phi''(\xi)|<\infty.
\end{aligned}$$
Thus, it follows that $|K_{\le N_0}^\pm(t,x)|\lesssim\frac{1}{x^2}$. Together with the trivial bound $|K_{\le N_0}^\pm(t,x)|\lesssim1$, we obtain \eqref{ineq:I0}.

For \eqref{ineq:I}, it is obvious that $|K_N^\pm(t,x)|\lesssim\frac{N}{h}$. Suppose that $|x|\ge\frac{N^2}{h^2}\geq 1$. By integration by parts twice, we have
$$|K_N^\pm(t,x)|\leq\int_{-\frac{\pi}{h}}^{\frac{\pi}{h}} \bigg|\bigg(\frac{1}{x\pm ts_h'(\xi)}\Big(\frac{\psi_N(\xi)}{x\pm ts_h'(\xi)}\Big)'\bigg)'\bigg|d\xi.$$
A direct computation gives
$$\begin{aligned}
\bigg(\frac{1}{x\pm ts_h'(\xi)}\Big(\frac{\psi_N(\xi)}{x\pm ts_h'(\xi)}\Big)'\bigg)'&=\frac{\psi_N''(\xi)}{(x\pm t s_h'(\xi))^2} + \frac{3t^2\psi _N(\xi)(s_h''(\xi))^2}{(x\pm t s_h'(\xi))^4 }\\
&\quad\mp\frac{ts_h'''(\xi)\psi_N(\xi)+3t s_h''(\xi)\psi_N'(\xi)}{(x\pm t s_h'(\xi))^3}.
\end{aligned}$$
Note that in this case, $|x\pm t s_h'(\xi)|=|x|-\tfrac{2|t|}{h^2}\sin^2(\tfrac{h\xi}{4})\gtrsim|x|$ on $\textup{supp}\psi_N$. Thus, by \eqref{comparable symbol}, we obtain that 
$$\begin{aligned}
|K_N^\pm(t,x)| &\lesssim \int_{-\frac{\pi}{h}}^{\frac{\pi}{h}}\frac{|\psi_N''(\xi)|}{|x|^2} + \frac{\xi^2\psi_N(\xi)}{|x|^4}+ \frac{\psi_N(\xi)+|\xi||\psi_N'(\xi)|}{|x|^3}d\xi\\
&\lesssim \frac{h}{N|x|^2} + \left(\frac{N}{h}\right)^{3} \frac{1}{|x|^4} +  \frac{N}{h|x|^3}\lesssim\frac{h}{N |x|^2}.
\end{aligned}$$

It remains to consider the case $\frac{h}{N}\le |x| \le \frac{N^2}{h^2}$ for \eqref{ineq:I}. When $t=0$, by integration by parts, we obtain
\begin{equation}\label{eq:K_N0}
|K_N^\pm(0,x)|=\Big|\frac{1}{2\pi}\int_{-\frac{\pi}{h}}^{\frac{\pi}{h}} e^{ix\xi}\psi_N(\xi)d\xi\Big|\ls \frac{1}{|x|}\le\left(\frac{N}{h|x|}\right)^{1/2}.
\end{equation}
When $t\neq 0$, by simple change of variables, we may assume that $t,x>0$, since $s_h(\xi)=-s_h(-\xi)$ and $\psi_N$ is an even function. For $K_N^+(t,x)$, by integration by parts with $(x\xi+ts_h(\xi))'=x+\frac{2t}{h^2}\sin^2(\frac{h\xi}{4})\ge x$, we obtain
\begin{equation}\label{K_N^+ bound}
\begin{aligned}
|K_N^+ (t,x) | &\le  \int_{-\frac{\pi}{h}}^{\frac{\pi}{h}} \Big|\Big(\frac{ \psi_N(\xi)}{x+t s_h'(\xi)}\Big)'\Big| d\xi=\int_{-\frac{\pi}{h}}^{\frac{\pi}{h}} \Big|\frac{\psi_N'(\xi)}{x+t s_h'(\xi)}-\frac{ts_h''(\xi)\psi_N(\xi)}{(x+t s_h'(\xi))^2}\Big|d\xi  \\
&\lesssim\frac{1}{|x|}+\frac{N^2}{h^2|x|^2}\lesssim \left(\frac{N}{h|x|}\right)^{1/2}.
\end{aligned}
\end{equation}
For $K_N^-(t,x)$, we note that its phase function may have stationary points. Thus, by splitting the frequency domain $[-\tfrac{\pi}{h},\tfrac{\pi}{h})=\{\xi: |s_h'(\xi)|\le \frac{x}{2t} \} \cup \{\xi: |s_h'(\xi)|> \frac{x}{2t} \}=:\Omega_1\cup \Omega_2$, we decompose
$$K_N^-(t,x) =  \frac{1}{2\pi}\left(\int_{\Omega_1} + \int_{\Omega_2}\right)e^{i(x\xi-ts_h(\xi))} \psi_N(\xi)d\xi=:K_{N,1}^-(t,x)+K_{N,2}^-(t,x).$$
Note that the phase function does not have a stationary point in $\Omega_1$. More precisely, we have a lower bound 
$(x\xi-ts_h(\xi))'= x-ts_h'(\xi) \geq\frac{x}{2}$. Hence, by repeating \eqref{K_N^+ bound}, we obtain $|K_{N,1}^- (t,x) |\lesssim (\frac{N}{h|x|})^{1/2}$. For $K_{N,2}^-(t,x)$, we observe from \eqref{comparable symbol} that $|\xi|^2\sim s_h'(\xi)\geq \frac{x}{2t}$ on $\Omega_2$; consequently, $|(x\xi-ts_h(\xi))''|=t|s_h''(\xi)| \sim t|\xi|\geq (\frac{N}{h|x|})^{1/2}$. Thus, by the van der Corput lemma, we obtain $|K_{N,2}^- (t,x) |\lesssim (\frac{N}{h|x|})^{1/2}$. By combining all the above-mentioned results, we complete the proof.
\end{proof}

\begin{proof}[Proof of Proposition \ref{prop:Strichartz}]

$(i).$ We use Lemma \ref{Lem:Oscillartory integral} $(i)$ to obtain 
$$\|S_h^\pm(t)P_N u_{h,0}\|_{L_x^\infty}=\bigg\|h\sum_{y\in h\mathbb{Z}} K_N(t,x-y) u_{h,0}(y)\bigg\|_{L_x^\infty}\lesssim\left(\frac{h}{N|t|}\right)^{1/2} \|u_{h,0}\|_{L^1}.$$
By interpolating with the trivial inequality $\|S_h^\pm(t)P_N u_{h,0}\|_{L_x^2}\leq \|u_{h,0}\|_{L^2}$, it follows that  
$$\|S_h^\pm(t)P_N u_{h,0}\|_{L_x^r}\lesssim \left(\frac{h}{N|t|}\right)^{\frac{1}{2}-\frac{1}{r}}\|u_{h,0}\|_{L^{r'}}$$
for $2\leq r\leq\infty$, where $r'$ is the H\"older conjugate of $r$. Hence, a standard $TT^*$ argument \cite{KT-1998} yields 
$$\|S_h^\pm(t)P_Nu_{h,0}\|_{L_{t}^qL_x^r}\ls \left(\frac{N}{h}\right)^{-\frac{1}{q}} \|u_{h,0}\|_{L^2}.$$
The Littlewood-Paley theory (Lemma \ref{LP inequalities}) and the norm equivalence (Lemma \ref{Prop:norm equivalence}) enable us to show $(i)$ for all $\partial_h$, $\partial_h^+$ and $\nabla_h$.

$(ii).$ We follow the argument in \cite{CS-1988}. First, by changing the variable $\tau=\pm \frac{1}{h^2}(\xi-\frac{2}{h}\sin(\frac{h\xi}{2}))$ such that $d\tau=\pm \frac{1}{h^2}(1-\cos(\frac{h\xi}{2}))d\xi$, we write 
$$\begin{aligned}
\partial_h S_h^\pm(t) u_{h,0}(x)
&=\frac{1}{2\pi}\int_{-\frac{\pi}{h}}^{\frac{\pi}{h}}i\xi e^{\pm \frac{it}{h^2}(\xi-\frac{2}{h}\sin(\frac{h\xi}{2}))}e^{ix\xi} (\mathcal{F}_hu_{h,0})(\xi)d\xi\\
&=\frac{i}{2\pi}\int_{-\infty}^\infty e^{it\tau}\mathbf{1}_{[-\frac{\pi}{h},\frac{\pi}{h})}(\xi)e^{ix\xi} (\mathcal{F}_hu_{h,0})(\xi)\frac{\pm h^2\xi}{1-\cos(\frac{h\xi}{2})}d\tau,
\end{aligned}$$
where, in the last integral, $\xi=\xi(\tau)$ is a function of $\tau$. Thus, Plancherel's theorem yields
$$\| \partial_h S_h^\pm(t)u_{h,0}\|_{L_t^2}\sim \bigg\|\mathbf{1}_{[-\frac{\pi}{h},\frac{\pi}{h})}(\xi)e^{ix\xi} (\mathcal{F}_hu_{h,0})(\xi)\frac{\pm h^2\xi}{1-\cos(\frac{h\xi}{2})}\bigg\|_{L_\tau^2}.$$
By changing the variable back to $\tau=\pm \frac{1}{h^2}(\xi-\frac{2}{h}\sin(\frac{h\xi}{2}))\mapsto \xi$ and using $1-\cos(\frac{h\xi}{2})=2\sin^2(\frac{h\xi}{4})\sim h^2\xi^2$ on $[-\frac{\pi}{h},\frac{\pi}{h})$, we prove that 
$$\begin{aligned}
\| \partial_h S_h^\pm(t)u_{h,0}\|_{L_t^2}^2\sim \int_{-\frac{\pi}{h}}^{\frac{\pi}{h}} |(\mathcal{F}_hu_{h,0})(\xi)|^2\frac{h^2\xi^2}{1-\cos(\frac{h\xi}{2})} d\xi\sim \|u_{h,0}\|_{L^2}^2.
\end{aligned}$$
Similarly, one can show the inequality with $\pa_h^+$ (resp., $\nabla_h$) by replacing the symbol $i\xi$ for $\partial_h$ by $\frac{e^{ih\xi}-1}{h}$ (resp., $\frac{2i}{h}\sin(\frac{h\xi}{2})$).

$(iii).$ We claim that if $N_0< N\leq 1$, then
$$\|S_h^\pm(t)P_N u_{h,0}\|_{L_x^2L_{t}^\infty} \ls \left(\frac{N}{h}\right)^\frac34\| u_{h,0}\|_{L^2},$$
where the time interval $[-1,1]$ is omitted in the norm. Indeed, by a $TT^*$ argument, the claim is equivalent to 
$$\bigg\| \int_{-1}^1  S_h^\pm(t-t_1)P_N g(t_1) dt_1 \bigg\|_{L_x^2L_{t}^\infty}\le \left(\frac{N}{h}\right)^\frac32 \|g\|_{L_x^2L_{t}^1}.$$
We observe that 
$$\begin{aligned}
\bigg|\int_{-1}^1S_h^\pm(t-t_1)P_N g(t_1) dt_1\bigg|&=\bigg|h\sum_{x_1\in h\mathbb{Z}}\int_{-1}^1 K_N(t-t_1,x_1) g(t_1,\cdot-x_1)dt_1\bigg|\\
&\leq h\sum_{x_1\in h\mathbb{Z}}\|K_N(t,x_1)\|_{L_t^\infty} \|g(t,\cdot-x_1)\|_{L_t^1}.
\end{aligned}$$
Using \eqref{eq:young} and \eqref{ineq:I} (in particular $\|K_N(t,x)\|_{L_x^1L_t^\infty}\lesssim(\frac{N}{h})^{3/2}$), we conclude that
$$\bigg\| \int_{-1}^1S_h^\pm(t-t_1)P_N g(t_1) dt_1 \bigg\|_{L_x^2L_{t}^\infty}\leq\|K_N\|_{L_h^1L_t^\infty} \|g\|_{L_x^2L_t^1}\le \left(\frac{N}{h}\right)^\frac32 \|g\|_{L_x^2L_{t}^1}.$$
Similarly but by using \eqref{ineq:I0}, one has 
$$\big\|S_h^\pm(t)P_{\le N_0} u_{h,0} \big\|_{L_x^2 L_{t}^\infty} \ls \| u_{h,0}\|_{L^2}.$$
Summing them up for $s > \frac34$, we obtain
\begin{align*}
\|S_h^\pm(t)u_{h,0}\|_{L_x^2L_{t}^\infty}&\leq \|S_h^\pm(t)P_{\le N_0}u_{h,0}\|_{L_x^2L_{t}^\infty}+\sum_{N_0 < N\leq 1}\|S_h^\pm(t)P_N \tilde{P}_Nu_{h,0}\|_{L_x^2L_{t}^\infty}\\
&\lesssim \|u_{h,0}\|_{L^2}+\sum_{N_0 < N \leq 1}\left(\frac{N}{h}\right)^\frac34\|\tilde{P}_Nu_{h,0}\|_{L^2}\\
&\lesssim \Bigg\{1+\sum_{N_0 < N \leq 1}\left(\frac{N}{h}\right)^{-(s-\frac34)}\Bigg\}\|u_{h,0}\|_{H^s}\lesssim \|u_{h,0}\|_{H^s}.
\end{align*}
Therefore, we complete the proof.
\end{proof}

As mentioned in Remark \ref{rem:LS}, the time-averaged $L^\infty(h\Z)$ bound corresponding to the endpoint $(q,r)=(4,\infty)$ is excluded here. Nevertheless, together with the Sobolev inequality, we still have the following bound.
\begin{corollary}\label{Corendpoint}
If $\frac34<s<\frac32$ and $4<q<\frac{6}{3-2s}$, then 
\begin{equation}\label{Stri:endpoint}
\| \partial_h S_h^\pm(t)u_{h,0}\|_{L_{t}^{q}(\mathbb{R}; L_x^\infty(h\mathbb{Z}))}\ls \|u_{h,0}\|_{H^s(h\mathbb{Z})}.
\end{equation}
Here, $\partial_h$ can be replaced by $\pa_h^+$ and $\nabla_h$.
\end{corollary}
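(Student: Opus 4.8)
The plan is to recover the missing $L_x^\infty$ endpoint by interpolating a genuine (non-endpoint) Strichartz pair against the lattice Sobolev embedding \eqref{Sobolev}, trading a little regularity for the gain in integrability. Since $q>4$, the conjugate exponent $r$ defined by $\frac1r=\frac12-\frac2q$ satisfies $2<r<\infty$, so $(q,r)$ is an admissible pair with $r$ finite and $(q,r)\neq(4,\infty)$; hence Proposition \ref{prop:Strichartz} $(i)$ applies to it.

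First I would fix an intermediate regularity $\sigma$ and reduce to a plain Strichartz estimate. By the Sobolev embedding \eqref{Sobolev} in the spatial variable, valid whenever $\sigma>\frac1r$, for each fixed $t$ one has $\|\pa_h S_h^\pm(t)u_{h,0}\|_{L_x^\infty}\ls \|\la\pa_h\ra^\sigma \pa_h S_h^\pm(t)u_{h,0}\|_{L_x^r}$; taking the $L_t^q$ norm gives
$$\|\pa_h S_h^\pm(t)u_{h,0}\|_{L_t^qL_x^\infty}\ls \big\|\la\pa_h\ra^\sigma \pa_h S_h^\pm(t)u_{h,0}\big\|_{L_t^qL_x^r}.$$
Next I would absorb the derivatives into the Strichartz smoothing gain. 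Writing $\la\pa_h\ra^\sigma\pa_h=|\pa_h|^{1/q}\,m(\pa_h)$, where $m$ is the discrete Fourier multiplier of symbol $\la\xi\ra^\sigma(i\xi)|\xi|^{-1/q}$, and using that $S_h^\pm(t)$ commutes with Fourier multipliers, Proposition \ref{prop:Strichartz} $(i)$ yields
$$\big\||\pa_h|^{1/q}S_h^\pm(t)\,m(\pa_h)u_{h,0}\big\|_{L_t^qL_x^r}\ls \|m(\pa_h)u_{h,0}\|_{L^2}.$$
It then remains to check $\|m(\pa_h)u_{h,0}\|_{L^2}\ls\|u_{h,0}\|_{H^s}$, which by Plancherel reduces to the pointwise bound $\la\xi\ra^\sigma|\xi|^{1-1/q}\ls\la\xi\ra^s$ on $[-\tfrac\pi h,\tfrac\pi h)$. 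Since $1-\frac1q>0$ the factor $|\xi|^{1-1/q}$ is harmless near $\xi=0$, and at high frequency the symbol behaves like $\la\xi\ra^{\sigma+1-1/q}$, so this holds precisely when $\sigma\le s-1+\frac1q$.

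The crux is therefore purely the arithmetic of choosing $\sigma$: the two constraints $\frac1r<\sigma$ and $\sigma\le s-1+\frac1q$ admit a common value exactly when $\frac12-\frac2q<s-1+\frac1q$, which rearranges to $q<\frac{6}{3-2s}$ — precisely the stated hypothesis. (That $\frac34<s<\frac32$ guarantees $4<\frac{6}{3-2s}<\infty$, so the range $4<q<\frac{6}{3-2s}$ for $q$ is nonempty.) There is no genuine analytic obstacle here beyond what is already contained in Proposition \ref{prop:Strichartz}; the only care needed is the bookkeeping of the two derivative budgets and the verification that the combined multiplier $\la\xi\ra^\sigma|\xi|^{1-1/q}$ is controlled by $\la\xi\ra^s$ \emph{uniformly in} $h\in(0,1]$, including near $\xi=0$, where the negative power $|\xi|^{-1/q}$ coming from the Strichartz smoothing is compensated by the $|\xi|$ supplied by $\pa_h$. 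Finally, replacing $\pa_h$ by $\pa_h^+$ or $\nabla_h$ alters the symbol of $m$ only by a bounded factor ($\frac{e^{ih\xi}-1}{h}$ or $\frac{2i}{h}\sin(\frac{h\xi}{2})$ in place of $i\xi$), so by the norm equivalence (Lemma \ref{Prop:norm equivalence}) the identical estimate follows, completing the proof.
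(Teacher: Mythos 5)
Your proposal is correct and follows essentially the same route as the paper: Sobolev embedding $L_x^\infty\hookleftarrow W_x^{\sigma,r}$ with $\sigma>\frac1r$, then the smoothing Strichartz estimate of Proposition \ref{prop:Strichartz} $(i)$ applied to the admissible pair $\frac2q+\frac1r=\frac12$, then a Plancherel-level symbol comparison, and finally the norm equivalence for $\partial_h^+$ and $\nabla_h$. The paper simply fixes your free parameter at its extreme value $\sigma=\frac1r+\delta$ with $\delta=s+\frac3q-\frac32$ (i.e., $\sigma=s-1+\frac1q$), which is exactly the window-compatibility condition you derived.
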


\begin{proof}
Let $r$ satisfy $\frac{2}{q}+\frac{1}{r}=\frac{1}{2}$. Then, it follows from the Sobolev inequality \eqref{Sobolev} and Strichartz estimates (Proposition \ref{prop:Strichartz} $(i)$) that 
$$\begin{aligned}
\|\partial_h S_h^\pm(t)u_{h,0}\|_{L_t^{q}L_x^{\infty}}
&\ls \| \la \pa_h\ra^{\frac1r+\delta} \partial_h S_h^\pm(t)u_{h,0}\|_{L_t^{q}L_x^{r}}\\
&\lesssim \|\la \pa_h\ra^{\frac1r+\delta} \partial_h |\partial_h|^{-\frac{1}{q}}u_{h,0}\|_{L^2}\lesssim \|\la \pa_h\ra^s u_{h,0}\|_{L^2}
\end{aligned}$$
with $\delta=s+\frac{3}{q}-\frac{3}{2}>0$. Thus, \eqref{Stri:endpoint} follows from the norm equivalence (Lemma \ref{Prop:norm equivalence}). Similarly, one can show the desired inequalities for different operators.
\end{proof}

\begin{remark}\label{rem:stri}
Let $\tilde S_h^{\pm}(t)$ denote another linear propagator with the kernel
$$\tilde{K}_N^\pm(t,x)=\frac{1}{2\pi}\int_{-\frac{\pi}{h}}^{\frac{\pi}{h}} e^{\pm \frac{it}{h^2}(\xi+\frac{2}{h}\sin(\frac{h\xi}{2}))+ix\xi}\psi_N(\xi)d\xi.$$
By repeating the proof of Lemma \ref{Lem:Oscillartory integral} $(i)$, one can show the same kernel estimate, 
$$|K_N^\pm(t,x)|\lesssim\left(\frac{h}{N|t|}\right)^{1/2}.$$
Consequently, the Strichartz estimates for the propagator $\tilde S_h^{\pm}(t)$ of the form in Proposition \ref{prop:Strichartz} $(i)$ follow (see the proof of Proposition \ref{prop:Strichartz} $(i)$ above). This will enable us to control the cubic term for the analysis on a general nonlinearity (see Appendix \ref{app:GP}).
\end{remark}

\subsection{Approximation of linear FPU flows by Airy flows}\label{sec:AFA}
We now compare the linear FPU flows with the Airy flows with respect to the space-time norm
\begin{equation}\label{mixed spacetime norm}
\| u\|_{S([-T,T])}:= \| u\|_{C_t([-T,T]; L_x^2(\mathbb{R}))} + \|\pa _x u\|_{L_x^\infty (\mathbb{R};L_t^2([-T,T]))}
\end{equation}
using the linear interpolation $l_h$ defined by \eqref{def:  linear interpolation}. 

\begin{proposition}[Comparison between linear FPU and Airy flows]\label{linear approx}
For $s\in(0,1]$, we have
$$\big\| l_h(S_h^\pm(t)u_{h,0})-S^\pm(t)(l_hu_{h,0})\big\|_{S([-1,1])}\lesssim h^{\frac{2s}{5}}\|u_{h,0}\|_{H^s(h\mathbb{Z})}.$$
\end{proposition}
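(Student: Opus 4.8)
The plan is to insert the band-limited (\emph{sinc}) interpolation $\mathcal I_h$, defined on the Fourier side by $\mathcal F_h(\mathcal I_h f_h)=\mathbf 1_{[-\frac\pi h,\frac\pi h)}\mathcal F_hf_h$ regarded as a function on $\R$, which produces the band-limited function on $\R$ agreeing with $f_h$ on $h\Z$. Its virtue is that $\mathcal I_h$ commutes with the discrete flow, $\mathcal I_hS_h^\pm(t)=S_h^\pm(t)\mathcal I_h$, since both are Fourier multipliers supported on $[-\frac\pi h,\frac\pi h)$ with the same symbol. I would then split
\[
l_h(S_h^\pm u_{h,0})-S^\pm(l_hu_{h,0})=\underbrace{(l_h-\mathcal I_h)S_h^\pm u_{h,0}}_{A}+\underbrace{(S_h^\pm-S^\pm)\mathcal I_hu_{h,0}}_{B}+\underbrace{S^\pm(\mathcal I_h-l_h)u_{h,0}}_{C}.
\]
Here $B$ is the genuine symbol-comparison term and is the heart of the matter, while $A$ and $C$ only measure the discrepancy between the linear spline $l_h$ and the band-limited interpolation $\mathcal I_h$.

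For $B$, since $\mathcal I_hu_{h,0}$ is band-limited I would use Plancherel on $\R$, reducing the $C_tL_x^2$ part to $\big\||e^{\pm its_h(\xi)}-e^{\pm it\xi^3/24}|\,\mathcal F_hu_{h,0}\big\|_{L_\xi^2([-\frac\pi h,\frac\pi h))}$. The Taylor expansion of $s_h$ gives the pointwise symbol bound $|s_h(\xi)-\frac{\xi^3}{24}|\ls h^2|\xi|^5$ on $[-\frac\pi h,\frac\pi h)$, so with $|e^{i\theta_1}-e^{i\theta_2}|\le\min(2,|\theta_1-\theta_2|)$ the integrand is $\ls\min(1,h^2|\xi|^5)$ for $|t|\le1$. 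Splitting the frequency integral at $|\xi|=\Lambda$ and optimizing (the low part contributes $h^2\Lambda^{5-s}$, the high part $\Lambda^{-s}$) forces $\Lambda=h^{-2/5}$ and yields exactly $h^{2s/5}$. For the local smoothing part of $B$ there is no common phase, so I would use the homotopy $p_\theta(\xi):=\theta s_h(\xi)+(1-\theta)\frac{\xi^3}{24}$ together with the exact identity $e^{\pm its_h}-e^{\pm it\frac{\xi^3}{24}}=\pm it(s_h-\frac{\xi^3}{24})\int_0^1e^{\pm itp_\theta}\,d\theta$; since $p_\theta'(\xi)\sim\xi^2$ uniformly in $\theta,h$ by \eqref{comparable symbol}, each flow $e^{\pm itp_\theta(D)}$ obeys the same local smoothing estimate, and the same high/low split at $\Lambda=h^{-2/5}$ (using $|t|\le1$ to absorb the prefactor $t$) again gives $h^{2s/5}$.

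For $A$ and $C$ I would observe that, writing $G:=\mathcal I_hS_h^\pm u_{h,0}$ (resp. $G:=\mathcal I_hu_{h,0}$), the quantity $(l_h-\mathcal I_h)(\cdot)$ is precisely the piecewise-linear interpolation error $l_h[G|_{h\Z}]-G$ of the band-limited, hence smooth, function $G$. The standard spline bounds $\|l_h[G]-G\|_{L^2(\R)}\ls h^2\|G''\|_{L^2(\R)}$ and, for the derivative, the $L^1$-averaged estimate $|\partial_x(l_h[G]-G)(t,x)|\le\int_{hm}^{hm+h}|G''(t,y)|\,dy$ on $[hm,hm+h)$ (which preserves the correct $L_x^\infty L_t^2$ order) reduce everything to $\|G''\|_{L^2}$ and $\|G''\|_{L_x^\infty L_t^2}$. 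Using that the flows preserve Sobolev norms, that $\partial_x\mathcal I_h=\mathcal I_h\partial_h$, the Airy local smoothing for $C$ and the continuous-$x$ version of Proposition \ref{prop:Strichartz}$(ii)$ for $A$ (its Plancherel-in-$t$ proof is valid for every real $x$, since $e^{ix\xi}$ does not affect the $L_\tau^2$ norm), and finally Lemma \ref{Lem:Sobolev bound depending on h} to trade the two surplus derivatives for factors $h^{-(2-s)}$ (resp. $h^{-(1-s)}$), both $A$ and $C$ come out $\ls h^s\|u_{h,0}\|_{H^s(h\Z)}$, which is stronger than $h^{2s/5}$ and therefore harmless.

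The main obstacle is the local smoothing estimate for $B$: unlike the $C_tL_x^2$ norm it cannot be read off from Plancherel, and the propagators $S_h^\pm$ and $S^\pm$ have different characteristic relations, so no single change of variables applies. The homotopy--FTC device above is what circumvents this, at the price of needing the local smoothing bound uniformly along the family $p_\theta$. I also expect the exponent itself to be the true bottleneck: the quintic symbol error $h^2|\xi|^5$ balanced against $H^s$ regularity at the threshold $|\xi|\sim h^{-2/5}$ is exactly what produces $h^{2s/5}$, and nothing in these estimates can push the rate higher.
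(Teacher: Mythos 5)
Your proposal is correct, and its decisive device---the fundamental-theorem-of-calculus/homotopy identity combined with local smoothing estimates holding uniformly along the family $p_\theta$ (guaranteed by $p_\theta'(\xi)\sim\xi^2$ via \eqref{comparable symbol}), together with the frequency threshold $|\xi|\sim h^{-2/5}$ balancing the symbol error $h^2|\xi|^5$ against $H^s$ regularity---is exactly the paper's treatment of its term $I$ (the paper's propagator $S_{h,\alpha}^\pm$ is your $e^{\pm itp_\alpha(D)}$). Where you genuinely diverge is in how the interpolation operator is handled. The paper represents $l_h$ as a Fourier multiplier on $\R$ with symbol $\mathcal{L}_h(\xi)=4\sin^2(\frac{h\xi}{2})/(h^2\xi^2)$ acting on the \emph{periodized} discrete Fourier transform (Lemma \ref{p_h symbol}), so that $l_h S_h^\pm(t)=\tilde{S}_h^\pm(t)l_h$; the price is that the high-frequency piece $II$ contains periodization tails over $m\neq0$, which the paper controls by an explicit computation bounding $\sum_{m\neq0}e^{2\pi imx/h}/(2m\pi+h\xi)$ uniformly in $x$, yielding an $h^s$ contribution. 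You instead route through the band-limited interpolation $\mathcal{I}_h$, which commutes exactly with $S_h^\pm(t)$ and produces no periodization, and you pay by controlling the spline-versus-sinc discrepancies $A$ and $C$ with classical piecewise-linear approximation bounds plus Lemma \ref{Lem:Sobolev bound depending on h} to trade the surplus derivatives for $h^{-(2-s)}$ and $h^{-(1-s)}$; this also yields $h^s$, so on both roads the symbol-comparison term ($B$, resp.\ $I$--$III$) is the $h^{2s/5}$ bottleneck. Your route is arguably cleaner---no periodized symbols, and the continuous-$x$ extension of Proposition \ref{prop:Strichartz}$(ii)$ that you need is indeed immediate from its Plancherel-in-$t$ proof (the paper relies on the same extension when applying that proposition to its term $II_0$, which is a function of $x\in\R$)---while the paper's route avoids introducing a second interpolation operator and keeps every estimate phrased directly in terms of $l_h$.
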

\begin{remark}
We note that the linear interpolation can be regarded as a Fourier multiplier (see \cite[Lemma 5.5]{HY-2019SIAM}).
\end{remark}
\begin{lemma}[Symbol of linear interpolation operator]\label{p_h symbol}
The interpolation operator $l_h$ is a Fourier multiplier operator in the sense that
$$\mathcal{F}(l_h f_h)(\xi) = \mathcal{L}_h(\xi) \mathcal (\tilde{\mathcal{F}}_h f_h)(\xi),\quad\forall\xi\in\mathbb{R},$$
where
\begin{equation}\label{L_x symbol calculation}
\mathcal{L}_h(\xi)=\frac{1}{h}\int_0^h e^{-ix\xi} dx+\frac{e^{ih\xi}-1}{h^2}\int_0^h x e^{-ix\xi}dx=\frac{4\sin^2(\frac{h\xi}{2})}{h^2\xi^2}
\end{equation}
and $\tilde{\mathcal{F}}_h$ denotes the $[-\frac{\pi}{h},\frac{\pi}{h})$-periodic extension of the discrete Fourier transform $\mathcal{F}_h$, precisely, 
$$(\tilde{\mathcal{F}}_hf_h)(\xi)=(\mathcal{F}_hf_h)(\xi'),$$ where $\xi'=\xi-\frac{2\pi m}{h}\in[-\frac{\pi}{h},\frac{\pi}{h})$ for some $m\in\mathbb{Z}$.
\end{lemma}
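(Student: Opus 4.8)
The plan is to compute the continuous Fourier transform $\mathcal{F}(l_h f_h)(\xi)=\int_{\mathbb{R}}(l_h f_h)(x)e^{-ix\xi}dx$ directly from the cell-wise affine representation \eqref{def:  linear interpolation}, and then to recognize the resulting exponential sum as the periodic extension $\tilde{\mathcal{F}}_h f_h$. First I would decompose the integral over the lattice cells $[hm,hm+h)$ and, on each cell, substitute $y=x-hm$ to factor out $e^{-ihm\xi}$, obtaining
\[
\mathcal{F}(l_h f_h)(\xi)=\sum_{m\in\mathbb{Z}}e^{-ihm\xi}\Big[f_h(hm)\int_0^h e^{-iy\xi}\,dy+(\pa_h^+f_h)(hm)\int_0^h y\,e^{-iy\xi}\,dy\Big].
\]
This isolates exactly the two elementary integrals appearing in the middle expression of \eqref{L_x symbol calculation}.

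Next I would treat the two resulting exponential sums. By the definition of the periodic extension, $h\sum_m f_h(hm)e^{-ihm\xi}=(\tilde{\mathcal{F}}_h f_h)(\xi)$ for every $\xi\in\mathbb{R}$, so the first sum equals $\tfrac1h(\tilde{\mathcal{F}}_h f_h)(\xi)$. For the second sum I would write $(\pa_h^+f_h)(hm)=\tfrac1h\{f_h(hm+h)-f_h(hm)\}$ and shift the summation index $m\mapsto m-1$ in the term carrying $f_h(hm+h)$; this produces the factor $e^{ih\xi}-1$, so the second sum equals $\tfrac{e^{ih\xi}-1}{h^2}(\tilde{\mathcal{F}}_h f_h)(\xi)$. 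Collecting the two contributions yields $\mathcal{F}(l_h f_h)(\xi)=\mathcal{L}_h(\xi)(\tilde{\mathcal{F}}_h f_h)(\xi)$ with $\mathcal{L}_h$ as in \eqref{L_x symbol calculation}.

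It then remains to evaluate the two integrals to reach the closed form. A direct computation gives $\int_0^h e^{-iy\xi}\,dy=\tfrac{2e^{-ih\xi/2}\sin(h\xi/2)}{\xi}$, and integration by parts gives an explicit expression for $\int_0^h y\,e^{-iy\xi}\,dy$; substituting these into $\mathcal{L}_h$ and simplifying (the terms in $e^{\pm ih\xi/2}$ combine into $\sin^2(h\xi/2)$) produces $\mathcal{L}_h(\xi)=\tfrac{4\sin^2(h\xi/2)}{h^2\xi^2}$. As an independent check one may note that $l_h f_h=\sum_m f_h(hm)\Lambda_h(\cdot-hm)$, where $\Lambda_h$ is the tent function of height $1$ supported on $[-h,h]$; since $\Lambda_h=\tfrac1h\,\mathbf{1}_{[-h/2,h/2]}*\mathbf{1}_{[-h/2,h/2]}$, its Fourier transform is $\tfrac{4\sin^2(h\xi/2)}{h\xi^2}$, and factoring out the exponential sum recovers the same $\mathcal{L}_h$.

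The only genuine obstacle is analytic rather than algebraic: interchanging the infinite sum with the integral and rearranging the exponential series require justification. I would first establish the identity for $f_h$ of finite support, where every sum is finite and absolutely convergent, and then pass to general $f_h\in H^s(h\mathbb{Z})$ by density, using that $\mathcal{L}_h$ is a bounded Fourier multiplier. Some care is also needed to keep track of the distinction between $\mathcal{F}_h$, defined on the fundamental cell $[-\tfrac{\pi}{h},\tfrac{\pi}{h})$, and its periodic extension $\tilde{\mathcal{F}}_h$ on all of $\mathbb{R}$; the decay of $\mathcal{L}_h$ away from the fundamental cell is precisely what reconciles the continuous transform with the discrete one.
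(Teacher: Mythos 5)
Your proposal is correct and follows essentially the same route as the paper's computation (which the paper delegates to \cite[Lemma 5.5]{HY-2019SIAM}): decomposing $\int_{\mathbb{R}}(l_hf_h)(x)e^{-ix\xi}\,dx$ over the cells $[hm,hm+h)$, factoring the exponential sums into $\tilde{\mathcal{F}}_hf_h$ via the index shift that produces $e^{ih\xi}-1$, is exactly what yields the middle expression of \eqref{L_x symbol calculation}, and your evaluation of the two integrals to get $\frac{4\sin^2(h\xi/2)}{h^2\xi^2}$ checks out. Your closing remarks on the density argument and the quadratic decay of $\mathcal{L}_h$ off the fundamental cell, together with the tent-function cross-check, are sound and complete the justification.
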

A straightforward computation of the Fourier transform and Lemma \ref{p_h symbol} give 
\begin{equation}\label{linear interpolated linear FPU flow}
\begin{aligned}
l_h(S_h^\pm(t)u_{h,0})(x)&=\frac{1}{2\pi}\int_{-\infty}^\infty \mathcal L_h(\xi) (\tilde{\mathcal{F}}_hS_h^{\pm}(t)u_{h,0})(\xi) e^{ix\xi}d\xi\\
&=\frac{1}{2\pi}\int_{-\infty}^\infty \mathcal L_h(\xi) e^{\pm \frac{it}{h^2}(\xi'-\frac{2}{h}\sin(\frac{h\xi'}{2}))}(\mathcal{F}_hu_{h,0})(\xi') e^{ix\xi}d\xi\\
&=\tilde{S}_h^\pm(t)(l_hu_{h,0})(x),
\end{aligned}
\end{equation}
where the new linear propagator $\tilde{S}_h^\pm(t): L^2(\mathbb{R})\to L^2(\mathbb{R})$ is given by 
$$\mathcal{F}(\tilde{S}_h^\pm(t)u_0)(\xi)=e^{\pm \frac{it}{h^2}(\xi'-\frac{2}{h}\sin(\frac{h\xi'}{2}))}(\mathcal{F}u_0)(\xi)$$
with $\xi'=\xi-\frac{2\pi m}{h}\in[-\frac{\pi}{h},\frac{\pi}{h})$ for some $m\in\mathbb{Z}$.
 
\begin{proof}[Proof of Proposition \ref{linear approx}]

First, by \eqref{linear interpolated linear FPU flow}, we write
\begin{align*}
l_h (S_h^\pm(t)u_{h,0}) - S^\pm(t)(l_hu_{h,0})&=\tilde{S}_h^\pm(t)(l_h u_{h,0}) - S^\pm(t)(l_hu_{h,0})\\
&=(\tilde{S}_h^\pm(t)-S^\pm(t))P_{low}(l_h  u_{h,0})+\tilde{S}_h^\pm(t)P_{high}\big(l_h  u_{h,0}\big)\\
&\quad-S^\pm(t)P_{high}(l_h u_{h,0})\\
&=:I+II+III,
\end{align*}
where $P_{low}$ (resp., $P_{high}$) is the Fourier multiplier of the symbol $\mathbf{1}_{|\xi|\leq h^{-2/5}}$ (resp., $\mathbf{1}_{|\xi|\geq h^{-2/5}}$).

For $III$, we use Proposition \ref{Thm:linear estimates for KdV flows} and Lemma \ref{Lem:discretization linearization inequality} to obtain
$$\|III\|_{S([-1,1])}\lesssim\|P_{high}(l_h u_{h,0})\|_{L^2}\lesssim h^{\frac{2s}{5}}\|l_h u_{h,0}\|_{H^s}\lesssim h^{\frac{2s}{5}} \|u_{h,0}\|_{H^s}.$$

For $I$, we observe from \eqref{linear interpolated linear FPU flow} that
$$I=\frac{1}{2\pi}\int_{-\infty}^{\infty}\Big(e^{\pm \frac{it}{h^2}(\xi-\frac{2}{h}\sin(\frac{h\xi}{2}))}-e^{\pm \frac{it}{24}\xi^3}\Big) \mathbf{1}_{|\xi|\leq h^{-\frac{2}{5}}} \mathcal{F}_h(l_hu_{h,0})(\xi) e^{ix\xi} d\xi.$$
The fundamental theorem of calculus yields
$$e^{\pm \frac{it}{h^2}(\xi-\frac{2}{h}\sin(\frac{h\xi}{2}))}-e^{\pm \frac{it}{24}\xi^3}=\pm it\Big(\tfrac{\xi-\frac{2}{h}\sin(\frac{h\xi}{2})}{h^2}-\tfrac{\xi^3}{24}\Big)\int_0^1 e^{\pm it(\frac{\alpha}{h^2}(\xi-\frac{2}{h}\sin(\frac{h\xi}{2}))+\frac{1-\alpha}{24}\xi^3)} d\alpha.$$
Thus, introducing the linear propagator $S_{h,\alpha}^\pm(t)$ given by 
$$\mathcal{F}(S_{h,\alpha}^\pm(t) u_{0})(\xi)=e^{\pm it(\frac{\alpha}{h^2}(\xi-\frac{2}{h}\sin(\frac{h\xi}{2}))+\frac{1-\alpha}{24}\xi^3)}(\mathcal{F}u_{0})(\xi),$$ 
we write
$$\begin{aligned}
I &=\pm it\int_0^1S_{h,\alpha}^\pm(t)\mathcal{F}^{-1}\Big(\mathbf{1}_{|\xi|\leq h^{-\frac{2}{5}}}\Big(\tfrac{\xi-\frac{2}{h}\sin(\frac{h\xi}{2})}{h^2}-\tfrac{\xi^3}{24}\Big)\mathcal{F}_h(l_hu_{h,0})\Big)d\alpha.
\end{aligned}$$
Analogously, one can show for the new propagator $S_{h,\alpha}^\pm(t)$ that
$$\|S_{h,\alpha}^\pm(t)P_{low}u_0\|_{C_tL_x^2}\lesssim \|u_0\|_{L^2}$$
and
$$\|\partial_xS_{h,\alpha}^\pm(t)P_{low}u_0\|_{L_x^\infty L_t^2}\lesssim \|u_0\|_{L^2}.$$
These results, together with Plancherel's theorem, the Taylor series expansion $\tfrac{\xi-\frac{2}{h}\sin(\frac{h\xi}{2})}{h^2}-\tfrac{\xi^3}{24}=O(h^2\xi^5)$,  and Lemma \ref{Lem:discretization linearization inequality} yield
$$\begin{aligned}
\|I\|_{S([-1,1])}&\lesssim \Big\|\mathcal{F}^{-1}\Big(\mathbf{1}_{|\xi|\leq h^{-\frac{2}{5}}}\Big(\tfrac{\xi-\frac{2}{h}\sin(\frac{h\xi}{2})}{h^2}-\tfrac{\xi^3}{24}\Big)\mathcal{F}_h(l_hu_{h,0})\Big)\Big\|_{L^2}\\
&\lesssim h^2\Big\||\xi|^5\mathbf{1}_{|\xi|\leq h^{-\frac{2}{5}}}\mathcal{F}_h(l_hu_{h,0})\Big\|_{L_\xi^2}\leq h^2h^{-\frac{2}{5}(5-s)}\big\||\xi|^s\mathcal{F}(l_hu_{h,0})(\xi)\big\|_{L_\xi^2}\\
&\lesssim h^{\frac{2s}{5}}\|l_hu_{h,0}\|_{H^s}\lesssim h^{\frac{2s}{5}}\|u_{h,0}\|_{H^s}.
\end{aligned}$$
For $II$, by the unitarity of $\tilde{S}_h^\pm(t)$ and Lemma \ref{Lem:discretization linearization inequality}, we obtain
$$\|II\|_{C_tL_x^2}\lesssim \|P_{high}(l_h u_{h,0})\|_{L^2}\lesssim h^{\frac{2s}{5}}\|l_h u_{h,0}\|_{H^s}\lesssim h^{\frac{2s}{5}}\|u_{h,0}\|_{H^s}.$$

It remains to estimate $\|\partial_x II\|_{L_x^\infty L_t^2}$. By \eqref{linear interpolated linear FPU flow}, we write 
$$\begin{aligned}
II&=\frac{1}{2\pi}\int_{|\xi|\ge \frac{1}{h^{2/5}}}\mathcal{L}_h(\xi) e^{\pm \frac{it}{h^2}(\xi'-\frac{2}{h}\sin(\frac{h\xi'}{2}))}(\mathcal{F}_hu_{h,0})(\xi') e^{ix\xi}d\xi\\
&=\frac{1}{2\pi}\int_{-\infty}^\infty\mathbf 1_{\frac{1}{h^{2/5}}\leq|\xi|\leq\frac{\pi}{h}}\mathcal{L}_h(\xi) e^{\pm \frac{it}{h^2}(\xi-\frac{2}{h}\sin(\frac{h\xi}{2}))}(\mathcal{F}_hu_{h,0})(\xi) e^{ix\xi}d\xi\\
&\quad+\sum_{m \in \Z \setminus \{0\}} \frac{1}{2\pi}\int_{-\frac\pi h}^{\frac\pi h}\mathcal{L}_h(\xi+\tfrac{2m\pi}{h}) e^{\pm \frac{it}{h^2}(\xi-\frac{2}{h}\sin(\frac{h\xi}{2}))}(\mathcal{F}_hu_{h,0})(\xi) e^{ix(\xi+\frac{2m\pi}{h})}d\xi\\
&=: II_0 + II_{\neq 0}.
\end{aligned}$$
For $II_0$, we apply the local smoothing estimate (Proposition \ref{prop:Strichartz} $(ii)$) to obtain 
\begin{equation}\begin{aligned}\label{m=0}
\| \pa_x II_0\|_{L_x^\infty L_t^2}&=\Big\|\pa_x\tilde{S}_h^\pm(t)\mathcal{F}_x^{-1}\Big(\mathbf 1_{\frac{1}{h^{2/5}}\leq |\xi|\leq\frac{\pi}{h}}  \mathcal L_x(\xi) (\mathcal{F}_hu_{h,0})(\xi)\Big)\Big\|_{L_x^\infty L_t^2}\\
&\ls \Big\|\mathcal{F}_x^{-1}\Big(\mathbf 1_{\frac{1}{h^{2/5}}\leq |\xi|\leq\frac{\pi}{h}}  \mathcal L_x(\xi) (\mathcal{F}_hu_{h,0})(\xi)\Big)\Big\|_{L^2}\ls h^{\frac25 s} \|u_{h,0}\|_{H^s}.
\end{aligned}
\end{equation}
Now, we consider $II_{\neq0}$. A direct computation with \eqref{L_x symbol calculation} gives
\[\pa_x II_{\neq 0}(t,x) = \frac{1}{2\pi}\int_{-\frac\pi h}^{\frac\pi h} e^{it(\pm\frac{1}{h^2}(\xi-\frac{2}{h}\sin(\frac{h\xi}{2})))} e^{ix\xi} \sum_{m \neq 0}\frac{4\sin^2(\frac{h\xi}{2})}{h(2m\pi+h\xi)} e^{i\frac{2m\pi x}{h}} (\mathcal{F}_hu_{h,0})(\xi) d\xi.\]
Analogously to the proof of Proposition \ref{prop:Strichartz} $(ii)$, one can show that 
$$\begin{aligned}
\|\pa_x II_{\neq 0}\|_{L_t^2}&\ls\Bigg\| \frac{h}{\sqrt{1-\cos(\frac{h\xi}{2})}}\sum_{m\neq0}\frac{4\sin^2(\frac{h\xi}{2})}{h(2m \pi+h\xi)} e^{i\frac{2m\pi x}{h}} (\mathcal{F}_hu_{h,0})(\xi)\Bigg\|_{L_\xi^2([-\frac{\pi}{h},\frac{\pi}{h}))}\\
&\lesssim\Bigg\| \Bigg(\sum_{m \neq0}\frac{e^{i\frac{2m\pi x}{h}}}{2m\pi +h\xi}\Bigg) \sin\left(\frac{h\xi}{2}\right) (\mathcal{F}_hu_{h,0})(\xi)\Bigg\|_{L_\xi^2([-\frac{\pi}{h},\frac{\pi}{h}))}.
\end{aligned}$$
Note that 
\[\Bigg|\sum_{m\neq0}\frac{e^{imx}}{m}\Bigg| = 2\Bigg|\sum_{m=1}^{\infty}\frac{\sin(mx)}{m}\Bigg| \lesssim \Bigg|\int_0^\infty \frac{\sin (xt)}{t} \; dt\Bigg| < \infty, \quad \mbox{uniformly in } x,\]
which implies that
$$\left|\sum_{m\neq0} \frac{e^{i\frac{2m\pi x}{h}}}{2m\pi +h\xi}\right|=\left|\sum_{m\neq0}\frac{e^{i\frac{2m\pi x}{h}}}{2m\pi}+\sum_{m\neq0} \bigg(\frac{e^{i\frac{2m\pi x}{h}}}{2m\pi +h\xi}-\frac{e^{i\frac{2m\pi x}{h}}}{2m\pi}\bigg)\right|\lesssim 1+\sum_{m \neq0}\frac{1}{m^2} < \infty,
$$
uniformly in $x \in \R$. Thus, we prove that
$$\|\pa_x II_{\neq 0}\|_{L_t^2}\ls \| \sin(\tfrac{h\xi}{2}) (\mathcal{F}_hu_{h,0})(\xi)\|_{L_\xi^2([-\frac{\pi}{h},\frac{\pi}{h}))}\ls h^s\| u_{h,0}\|_{H^s},$$
which, in addition to \eqref{m=0}, implies that
$$\| \pa_x II\|_{L_x^\infty L_t^2}\ls h^{\frac{2s}{5}}\| u_{h,0}\|_{H^s}.$$
By combining all the results, we complete the proof of the proposition.
\end{proof}

\section{Bilinear estimates}\label{sec: bilinear estimates}
In this section, we prove a series of $X^{s,b}$ bilinear estimates, which are the key estimates in our analysis.

\begin{lemma}[Bilinear estimate I]\label{lem:bi1}
For $s \ge 0$, there exist $b = b(s) > \frac{1}{2}$ and $\delta = \delta(b)>0$ such that
\begin{equation}\label{eq:bi1}
\big\|\nabla_h (u_h^\pm\cdot v_h^\pm) \big\|_{X_{h,\pm}^{s,-(1-b-\delta)}} \ls \|u_h^\pm\|_{X_{h,\pm}^{s,b}}\|v_h^\pm\|_{X_{h,\pm}^{s,b}},
\end{equation}
for any $u_h^{\pm}, v_h^{\pm} \in X_{h,\pm}^{s,b}$.
\end{lemma}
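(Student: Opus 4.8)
This is the discrete, same-dispersion analogue of the classical bilinear estimate of Kenig, Ponce and Vega \cite{KPV-1996} for KdV, and I would prove it by the $X^{s,b}$ duality--resonance method. First I would pass to the space-time Fourier side. Writing $\sigma=\tau\mp s_h(\xi)$ and $\sigma_j=\tau_j\mp s_h(\xi_j)$, putting $F=\la\xi_1\ra^s\la\sigma_1\ra^b|\tilde u_h^\pm|$ and $G=\la\xi_2\ra^s\la\sigma_2\ra^b|\tilde v_h^\pm|$ so that $\|F\|_{L^2}=\|u_h^\pm\|_{X_{h,\pm}^{s,b}}$ and $\|G\|_{L^2}=\|v_h^\pm\|_{X_{h,\pm}^{s,b}}$, and testing against $\psi\in L^2$ with $\|\psi\|_{L^2}=1$, the inequality reduces by duality (and by $\tfrac2h|\sin(\tfrac{h\xi}{2})|\sim|\xi|$, the symbol of $\nabla_h$) to the weighted trilinear bound
\[\mathcal J:=\iiint_{*}\frac{\la\xi\ra^s\,|\xi|}{\la\sigma\ra^{1-b-\delta}}\,\frac{F(\tau_1,\xi_1)}{\la\xi_1\ra^s\la\sigma_1\ra^b}\,\frac{G(\tau_2,\xi_2)}{\la\xi_2\ra^s\la\sigma_2\ra^b}\,\psi(\tau,\xi)\ \ls\ \|F\|_{L^2}\|G\|_{L^2},\]
where $*$ denotes the discrete convolution constraint $\tau=\tau_1+\tau_2$ and $\xi\equiv\xi_1+\xi_2\pmod{\tfrac{2\pi}{h}}$. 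Since $s\ge0$ one has $\la\xi\ra\ls\la\xi_1\ra\la\xi_2\ra$ (even under wrap-around, because a wrap forces $|\xi_1|+|\xi_2|\ge\tfrac{\pi}{h}>|\xi|$), so the numerator $\la\xi\ra^s$ is absorbed by the denominators and the problem reduces to $s=0$.

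The engine is the resonance identity. For $\xi=\xi_1+\xi_2$ a sum-to-product factorization gives
\[s_h(\xi)-s_h(\xi_1)-s_h(\xi_2)=\frac{8}{h^3}\sin\!\Big(\tfrac{h\xi}{4}\Big)\sin\!\Big(\tfrac{h\xi_1}{4}\Big)\sin\!\Big(\tfrac{h\xi_2}{4}\Big)=:H_h,\]
so that $\tfrac2h|\sin(\tfrac{h\eta}{4})|\sim|\eta|$ on $[-\tfrac{\pi}{h},\tfrac{\pi}{h})$ yields $|H_h|\sim|\xi\,\xi_1\,\xi_2|$ uniformly in $h$; in the wrap-around range an extra term $\mp\tfrac{2\pi}{h^3}$ only enlarges $|H_h|$, so $|H_h|\gs|\xi\xi_1\xi_2|$ persists. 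Since the constraint gives $\sigma_1+\sigma_2-\sigma=\pm H_h$, we obtain the dichotomy $\max(\la\sigma\ra,\la\sigma_1\ra,\la\sigma_2\ra)\gs\la H_h\ra\gs1+|\xi\xi_1\xi_2|$; this is the gain that must pay for the derivative $|\xi|$ produced by $\nabla_h$.

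By one Cauchy--Schwarz in the output variable $(\tau,\xi)$ followed by Fubini (the Schur test for convolution forms), $\mathcal J\ls\|F\|_{L^2}\|G\|_{L^2}$ follows from the single bound
\[\sup_{(\tau,\xi)}\ \frac{\xi^2}{\la\sigma\ra^{2(1-b-\delta)}}\iint_{*}\frac{d\tau_1\,d\xi_1}{\la\sigma_1\ra^{2b}\la\sigma_2\ra^{2b}}\ \ls\ 1.\]
The $\tau_1$-integration (legitimate since $2b>1$) produces $\la\sigma_1+\sigma_2\ra^{-2b}=\la\sigma\pm H_h\ra^{-2b}$, after which I would integrate in $\xi_1$ via the change of variables $\xi_1\mapsto\rho:=\sigma\pm H_h$, whose Jacobian is $\pa_{\xi_1}H_h=s_h'(\xi_2)-s_h'(\xi_1)\sim(\xi_2-\xi_1)\,\xi$ by $s_h'(\eta)\sim\eta^2$.

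The main obstacle, exactly as for continuum KdV, is the resonant set $\xi_1\approx\xi_2\approx\tfrac{\xi}{2}$, where this Jacobian degenerates. There $H_h$ is quadratic in $\xi_1$ with curvature $\pa_{\xi_1}^2H_h=-s_h''(\xi_1)-s_h''(\xi_2)\sim-\xi$ (using $s_h''(\eta)\sim\eta$), and at the stationary point $|H_h(\tfrac{\xi}{2})|\sim|\xi|^3$; the level-set measure therefore costs a square-root loss $\int\tfrac{d\xi_1}{\la\rho\ra^{2b}}\ls|\xi|^{-1/2}\la\rho_0\ra^{-1/2}$ with $\rho_0:=\sigma\pm H_h(\tfrac{\xi}{2})$. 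Feeding this back, the quantity to control is $|\xi|^{3/2}\la\sigma\ra^{-2(1-b-\delta)}\la\rho_0\ra^{-1/2}$, and the dichotomy $\max(\la\sigma\ra,\la\rho_0\ra)\gs|\xi|^3$ closes it precisely when $1-b-\delta>\tfrac14$, i.e. $b<\tfrac34$; the non-degenerate part of the $\xi_1$-integral is handled by the same dichotomy without loss. Thus any $b\in(\tfrac12,\tfrac34)$ with $\delta>0$ small works, and the comparabilities $s_h^{(k)}(\eta)\sim\eta^{3-k}$ from \eqref{comparable symbol} keep every constant independent of $h\in(0,1]$. I expect the genuinely delicate points to be the bookkeeping of the wrap-around frequencies and the uniform-in-$h$ control of this degenerate $\xi_1$-integral; note that the argument needs only $s\ge0$, so the $s>\tfrac34$ threshold of the main theorem originates not here but in the maximal-function estimate.
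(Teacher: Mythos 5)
Your proposal is correct and takes essentially the same route as the paper's proof: the same duality/Cauchy--Schwarz reduction to the single sup bound \eqref{eq:bi1_1}, the same $\tau_1$-integration via \eqref{eq:integral1}, the same square-root loss in the $\xi_1$-integral coming from the degenerate point $\xi_1=\xi_2=\frac{\xi}{2}$ (controlled by \eqref{eq:integral2}), and the identical parameter constraint $\frac12<b<\frac34-\delta$. The differences are only presentational: where you split into resonant/non-resonant regions and invoke the modulation dichotomy $\max(\la \sigma\ra,\la \rho_0\ra)\gs \la H_h\ra\sim|\xi|^3$, the paper runs one global change of variables $\mu=\frac{4}{h^3}\sin(\frac{h\xi}{4})\cos\xi_1$ (whose square-root Jacobian encodes exactly your level-set loss) and packages the dichotomy as the weight inequality \eqref{eq:abz}, finishing with an explicit trigonometric cancellation.
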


The following elementary integral estimates will be employed.
\begin{lemma}[Lemma 2.3 in \cite{KPV-1996}]\label{lem:elementary}
Let $\alpha, \beta \in \R$. For $b > \frac12$, we have 
\begin{equation}\label{eq:integral1}
\int_{-\infty}^\infty \frac{dx}{\la x-\alpha \ra^{2b}\la x - \beta \ra^{2b}} \lesssim \frac{1}{\la \alpha-\beta \ra^{2b}}
\end{equation}
and
\begin{equation}\label{eq:integral2}
\int_{-\infty}^\infty \frac{1}{\la x\ra^{2b}\sqrt{|x-\beta|}}\ls \frac{1}{\la\beta\ra^\frac12}.
\end{equation}
\end{lemma}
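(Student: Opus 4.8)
The plan is to prove both inequalities by the same elementary device: decompose the line of integration into regions on which one of the two weights can be bounded from below by the quantity we wish to gain, pull that factor out, and control the remaining integral by the fact that $\int_\R \la y\ra^{-2b}\,dy < \infty$ whenever $2b>1$. This finite integral is precisely where the hypothesis $b>\frac12$ enters in both estimates, and no oscillation or cancellation is required.

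For \eqref{eq:integral1} I would split $\R = A \cup B$ with $A = \{x : |x-\al| \ge |x-\be|\}$ and $B = \{x : |x-\be| \ge |x-\al|\}$. On $A$ the triangle inequality gives $|\al-\be| \le |x-\al| + |x-\be| \le 2|x-\al|$, hence $\la x-\al\ra \gs \la \al-\be\ra$; discarding this factor leaves
\[
\int_A \frac{dx}{\la x-\al\ra^{2b}\la x-\be\ra^{2b}} \ls \frac{1}{\la\al-\be\ra^{2b}}\int_\R \frac{dx}{\la x-\be\ra^{2b}} \ls \frac{1}{\la\al-\be\ra^{2b}},
\]
where the final step translates the variable and uses $2b>1$. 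The integral over $B$ is handled identically with the roles of $\al$ and $\be$ exchanged, and adding the two contributions yields \eqref{eq:integral1}.

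For \eqref{eq:integral2} I would first dispose of the regime $|\be|\ls1$, where $\la\be\ra\sim1$ and it suffices to bound the integral by an absolute constant: the singularity $|x-\be|^{-1/2}$ is integrable over $\{|x-\be|\le1\}$, while on $\{|x-\be|>1\}$ one has $|x-\be|^{-1/2}\le1$ and the decay $\la x\ra^{-2b}$ closes the estimate. For $|\be|\gg1$ (so $\la\be\ra\sim|\be|$) I would split $\R$ into the region away from the singularity, $\{|x-\be|\ge|\be|/2\}$, and its neighbourhood, $\{|x-\be|<|\be|/2\}$. On the former, $|x-\be|^{-1/2}\ls\la\be\ra^{-1/2}$ may be extracted and the surviving $\la x\ra^{-2b}$ integrated over $\R$, giving the desired bound. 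On the latter, $|x|\ge|\be|-|x-\be|>|\be|/2$ forces $\la x\ra\gs\la\be\ra$, so here $\la x\ra^{-2b}\ls\la\be\ra^{-2b}$ is the factor to extract instead.

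The only genuine balance in the whole statement occurs in this neighbourhood of the singularity. After extracting $\la\be\ra^{-2b}$ there, one is left with
\[
\int_{|x-\be|\le|\be|/2}\frac{dx}{\sqrt{|x-\be|}} = \int_{|y|\le|\be|/2}\frac{dy}{\sqrt{|y|}} \sim |\be|^{1/2},
\]
so this region contributes $\ls\la\be\ra^{-2b}|\be|^{1/2}\sim\la\be\ra^{-(2b-\frac12)}$. The point, and the main (mild) obstacle, is that the $|\be|^{1/2}$ growth produced by integrating the square-root singularity across a window of length comparable to $|\be|$ must be absorbed by the decay weight; this requires $2b-\frac12\ge\frac12$, i.e. $b\ge\frac12$, which is guaranteed by the hypothesis $b>\frac12$. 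Summing the two regions completes the proof of \eqref{eq:integral2}.
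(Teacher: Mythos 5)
Your proof is correct. Note that the paper itself offers no proof of this lemma: it is quoted verbatim as Lemma 2.3 of \cite{KPV-1996}, so there is no in-paper argument to compare against; your region-splitting proof (triangle inequality on $\{|x-\alpha|\ge|x-\beta|\}$ and its mirror for \eqref{eq:integral1}; the window $\{|x-\beta|<|\beta|/2\}$ versus its complement for \eqref{eq:integral2}, with the observation that the $|\beta|^{1/2}$ produced by integrating the square-root singularity is absorbed since $2b-\tfrac12\ge\tfrac12$) is exactly the standard elementary argument behind the cited result, and every step checks out.
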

%

\begin{proof}[Proof of Lemma \ref{lem:bi1}]
We prove Lemma \ref{lem:bi1} only for the $\|\nabla_h (u_h^+v_h^+) \|_{X_{h,+}^{s,-(1-b-\delta)}}$ case, otherwise, an analogous argument is applicable.

By Parseval's identity, we write 
$$\begin{aligned}
&\int_{-\infty}^\infty\sum_{x\in h\mathbb{Z}}\nabla_h (u_h^+ v_h^+)(t,x)\overline{w_h(t,x)}\\
&\sim\iiiint \tfrac{2i}{h}\sin(\tfrac{h\xi}{2}) \tilde{u}_h^+(\tau_1,\xi_1) \tilde{v}_h^+(\tau-\tau_1,\xi-\xi_1)\overline{\tilde{w}_h(\tau,\xi)} d\xi_1 d \xi  d\tau_1  d\tau,
\end{aligned}$$
where $\tilde{u}$ is the space-time Fourier transform, and the intervals of integration are omitted for notational convenience. By the symmetry and duality, it suffices to show that 
$$\begin{aligned}
&\bigg|\iiiint \tfrac{2i}{h}\sin(\tfrac{h\xi}{2}) \tilde{u}_h^+(\tau_1,\xi_1) \tilde{v}_h^+(\tau-\tau_1,\xi-\xi_1)\overline{\tilde{w}_h(\tau,\xi)} d\xi_1 d \xi  d\tau_1  d\tau\bigg|\\
&\lesssim  \|u_h^+\|_{X_{h,+}^{s,b}}\|v_h^+\|_{X_{h,+}^{0,b}}\|w_h\|_{X_{h,+}^{-s, 1-b-\delta}},
\end{aligned}$$
which is equivalent to 
$$\begin{aligned}
&\bigg|\iiiint\frac{\frac{2i}{h}\sin(\frac{h\xi}{2})\la \xi \ra^s F(\tau_1,\xi_1)G(\tau-\tau_1,\xi-\xi_1) \overline{W(\tau,\xi)}d\xi_1d \xi d\tau_1  d\tau}{\la \xi_1 \ra^{s}\la \xi-\xi_1 \ra^{s}\la \tau_1-s_h(\xi_1) \ra^{b} \la \tau-\tau_1-s_h(\xi-\xi_1) \ra^{b} \la \tau - s_h(\xi) \ra^{1-b - \delta}}\bigg|\\
&\lesssim  \|F\|_{L_{\tau,\xi}^2}\|G\|_{L_{\tau,\xi}^2}\|W\|_{L_{\tau,\xi}^2},
\end{aligned}$$
where
$$s_h(\xi):=\tfrac{1}{h^2}(\xi-\tfrac{2}{h}\sin(\tfrac{h\xi}{2}))$$
and
$$\left\{\begin{aligned}
&F(\tau,\xi) =\la \xi \ra^s\la \tau-s_h(\xi) \ra^{b} \tilde{u}_h^+(\tau,\xi),\\
&G(\tau,\xi) =\la \tau-s_h(\xi) \ra^{b} \tilde{v}_h^+(\tau,\xi),\\
&W(\tau,\xi) =\la \xi \ra^{-s}\la \tau-s_h(\xi) \ra^{1-b-\delta} \tilde{w}_h(\tau,\xi).
\end{aligned}\right.$$
Hence, by the trivial inequality 
\begin{equation}\label{simple xi bound}
\frac{\la \xi \ra^s}{\la \xi_1 \ra^{s}\la \xi-\xi_1 \ra^{s}} \lesssim 1
\end{equation}
the Cauchy-Schwarz inequality for the $\tau_1$- and $\xi_1$- variables and  \eqref{eq:integral1}, we have
$$\begin{aligned}
&\bigg|\iiiint \frac{\frac{2i}{h}\sin(\frac{h\xi}{2})\la \xi \ra^s F(\tau_1,\xi_1)G(\tau-\tau_1,\xi-\xi_1) \overline{W(\tau,\xi)}d\xi_1d \xi d\tau_1  d\tau}{\la \xi_1 \ra^{s}\la \tau_1-s_h(\xi_1) \ra^{b} \la \tau-\tau_1-s_h(\xi-\xi_1) \ra^{b} \la \tau - s_h(\xi) \ra^{1-b - \delta}}\bigg|\\
&\lesssim \iiiint\frac{|\frac{2}{h}\sin(\frac{h\xi}{2})| |F(\tau_1,\xi_1)||G(\tau-\tau_1,\xi-\xi_1)||W(\tau,\xi)|}{\la \tau_1-s_h(\xi_1) \ra^{b} \la \tau-\tau_1-s_h(\xi-\xi_1) \ra^{b} \la \tau - s_h(\xi) \ra^{1-b - \delta}} d\xi_1  d \xi d\tau_1 d\tau\\
&\lesssim \Bigg(\sup_{\tau \in \R}\sup_{|\xi|\leq\frac{\pi}{h}} \iint\frac{\frac{4}{h^2}\sin^2(\frac{h\xi}{2})}{\la \tau_1-s_h(\xi_1) \ra^{2b} \la \tau-\tau_1- s_h(\xi-\xi_1)\ra^{2b}\la \tau - s_h(\xi) \ra^{2(1-b-\delta)}}d\tau_1d\xi_1\Bigg)^{\frac{1}{2}}\\
&\quad\quad \cdot\|F\|_{L_{\tau,\xi}^2}\|G\|_{L_{\tau,\xi}^2}\|W\|_{L_{\tau,\xi}^2}\\
&\lesssim\Bigg(\sup_{\tau \in \R}\sup_{|\xi|\leq\frac{\pi}{h}}\int\frac{\frac{4}{h^2}\sin^2(\frac{h\xi}{2})}{\la \tau- s_h(\xi_1)-s_h(\xi-\xi_1)\ra^{2b}\la \tau - s_h(\xi) \ra^{2(1-b-\delta)}}d\xi_1\Bigg)^{\frac{1}{2}}\\
&\quad\quad \cdot\|F\|_{L_{\tau,\xi}^2}\|G\|_{L_{\tau,\xi}^2}\|W\|_{L_{\tau,\xi}^2}.
\end{aligned}$$
Therefore, it suffices to show that 
\begin{equation}\label{eq:bi1_1}
\sup_{\tau \in \R}\sup_{|\xi|\leq\frac{\pi}{h}}\frac{\frac{4}{h^2}\sin^2(\frac{h\xi}{2})}{\la \tau - s_h(\xi) \ra^{2(1-b-\delta)}}  \int_{-\frac{\pi}{h}}^{\frac{\pi}{h}} \frac{d\xi_1}{ \la \tau-s_h(\xi_1) - s_h(\xi-\xi_1) \ra^{2b}} \ls 1.
\end{equation}

Note that the left-hand side of \eqref{eq:bi1_1} vanishes when $\xi = 0$. In what follows, we assume that $\xi \neq 0$. By the symmetry $s_h(-\xi)=-s_h(\xi)$, we may assume that $\xi>0$. To show \eqref{eq:bi1_1}, by the sum-to-product rule for sine functions, we write
$$\begin{aligned}
I_{\tau,\xi}:&= \int_{-\frac{\pi}{h}}^{\frac{\pi}{h}} \frac{d\xi_1}{ \la \tau-s_h(\xi_1) - s_h(\xi-\xi_1) \ra^{2b}}=\int_{-\frac{\pi}{h}}^{\frac{\pi}{h}}\frac{d\xi_1}{\langle \tau -\frac{\xi}{h^2}+\frac{2}{h^3}(\sin(\frac{h\xi_1}{2})+\sin(\frac{h(\xi-\xi_1)}{2})) \rangle^{2b}}\\
&=\int_{-\frac{\pi}{h}}^{\frac{\pi}{h}}\frac{d\xi_1}{\langle \tau - \frac{\xi}{h^2} + \frac{4}{h^3} \sin(\frac{h\xi}{4})\cos(\frac{h(\xi-2\xi_1)}{4}) \rangle^{2b}}.
\end{aligned}$$
Then, by changing the variables $\frac{h(\xi-2\xi_1)}{4}\mapsto\xi_1$ and since $\cos\xi_1$ is an even function, it follows that
\begin{equation}\label{eq:CoV1}
\begin{aligned}
I_{\tau,\xi}&=\frac{2}{h}\int_{-\frac{\pi}{2}+\frac{h\xi}{4}}^{\frac{\pi}{2}+\frac{h\xi}{4}}\frac{d\xi_1}{\langle\tau-\frac{\xi}{h^2}+\frac{4}{h^3}\sin(\frac{h\xi}{4})\cos\xi_1\rangle^{2b}}\\
&=\frac{2}{h}\left(\int_0^{\frac{\pi}{2}+\frac{h\xi}{4}}+\int_{-\frac{\pi}{2}+\frac{h\xi}{4}}^0\right)\frac{d\xi_1}{\langle\tau-\frac{\xi}{h^2}+\frac{4}{h^3}\sin(\frac{h\xi}{4})\cos\xi_1\rangle^{2b}}\\
&\leq\frac{4}{h}\int_0^{\frac{\pi}{2}+\frac{h\xi}{4}}\frac{d\xi_1}{\langle\tau-\frac{\xi}{h^2}+\frac{4}{h^3}\sin(\frac{h\xi}{4})\cos\xi_1\rangle^{2b}}.
\end{aligned}
\end{equation}
Since $\cos \xi_1$ is invertible in the interval $[0,\pi]$, changing the variable $\mu=\frac{4}{h^3}\sin(\frac{h\xi}{4})\cos\xi_1$ with
$$\begin{aligned}
\frac{d\mu}{d\xi_1}&=-\frac{4}{h^3}\sin\left(\frac{h\xi}{4}\right)\sin\xi_1=-\frac{4}{h^3}\sin\left(\frac{h\xi}{4}\right)\sqrt{1-\cos^2\xi_1}\\
&=-\sqrt{\tfrac{4}{h^3}\sin(\tfrac{h\xi}{4})-\mu}\sqrt{\tfrac{4}{h^3}\sin(\tfrac{h\xi}{4})+\mu}
\end{aligned}$$
yields
$$\begin{aligned}
I_{\tau,\xi}&\leq\frac{4}{h}\int_{-\frac{4}{h^3}\sin^2\frac{h\xi}{4}}^{\frac{4}{h^3}\sin(\frac{h\xi}{4})}\frac{d\mu}{\langle\tau-\frac{\xi}{h^2}+\mu\rangle^{2b}\sqrt{\tfrac{4}{h^3}\sin(\tfrac{h\xi}{4})-\mu}\sqrt{\tfrac{4}{h^3}\sin(\tfrac{h\xi}{4})+\mu}}\\
&\leq \frac{4}{h\sqrt{\tfrac{4}{h^3}\sin(\frac{h\xi}{4})-\frac{4}{h^3}\sin^2\frac{h\xi}{4}}}\int_{-\infty}^{\frac{4}{h^3}\sin(\frac{h\xi}{4})}\frac{d\mu}{\langle\tau-\frac{\xi}{h^2}+\mu\rangle^{2b}\sqrt{\tfrac{4}{h^3}\sin(\tfrac{h\xi}{4})-\mu}}\\
&\leq \frac{4}{\sqrt{\tfrac{4}{h}\sin(\frac{h\xi}{4})(1-\sin(\frac{h\xi}{4}))}}\int_{-\infty}^\infty\frac{d\mu}{\langle \mu\rangle^{2b}\sqrt{|\mu-(\tau-\frac{\xi}{h^2}+\tfrac{4}{h^3}\sin(\tfrac{h\xi}{4}))|}}.
\end{aligned}$$
Next, we apply the inequality \eqref{eq:integral2} together with $|1-\sin(\frac{h\xi}{4})| \sim 1$ for all $\xi \in(0, \frac{\pi}{h})$ to obtain
$$I_{\tau,\xi}\lesssim\frac{1}{\sqrt{\frac{4}{h}\sin(\frac{h\xi}{4})}}\frac{1}{\langle\tau-\frac{\xi}{h^2}+\tfrac{4}{h^3}\sin(\tfrac{h\xi}{4})\rangle^{1/2}}.$$
Coming back to \eqref{eq:bi1_1}, we insert the bound
$$\begin{aligned}
&\frac{\frac{4}{h^2}\sin^2(\frac{h\xi}{2})}{\la \tau - s_h(\xi) \ra^{2(1-b-\delta)}}I_{\tau,\xi} \lesssim\frac{\frac{4}{h^2}\sin^2(\frac{h\xi}{2})}{\sqrt{\frac{4}{h}\sin(\frac{h\xi}{4})}\la \tau - \frac{\xi}{h^2}+\frac{2}{h^3}\sin(\frac{h\xi}{2}) \ra^{2(1-b-\delta)}\langle\tau-\frac{\xi}{h^2}+\tfrac{4}{h^3}\sin(\tfrac{h\xi}{4})\rangle^{1/2}}.
\end{aligned}$$
Note that
\begin{equation}\label{eq:abz}
\left\la \alpha - \beta \right\ra^{\frac12} \lesssim \la \zeta - \alpha \ra^{\frac12}\la \zeta - \beta \ra^{\frac12} \lesssim \la \zeta - \alpha \ra^{2(1-b-\delta)}\la \zeta - \beta \ra^{\frac12}, \quad \forall \zeta, \alpha, \beta \in \R, 
\end{equation}
whenever $0 < \delta < \frac14$ and $\frac12 < b < \frac34 - \delta$. Thus, taking $\zeta = \tau - \frac{\xi}{h^2}$, $\alpha =- \frac{2}{h^3}\sin(\frac{h\xi}{2})$ and $\beta=-\frac{4}{h^3}\sin(\frac{h\xi_1}{4})$ in \eqref{eq:abz}, and using trigonometric identities, 
we conclude that
\begin{equation}\label{eq:bi1_3}
\begin{aligned}
\frac{\frac{4}{h^2}\sin^2(\frac{h\xi}{2})}{\la \tau - s_h(\xi) \ra^{2(1-b-\delta)}}I_{\tau,\xi}&\lesssim\frac{\frac{4}{h^2}\sin^2(\frac{h\xi}{2})}{\sqrt{\frac{4}{h}\sin(\frac{h\xi}{4})}\sqrt{\tfrac{4}{h^3}\sin(\tfrac{h\xi}{4})- \frac{2}{h^3}\sin(\frac{h\xi}{2})}}\\
&=\frac{\frac{16}{h^2}\sin^2\frac{h\xi}{4}\cos^2(\frac{h\xi}{4})}{\sqrt{\frac{4}{h}\sin(\frac{h\xi}{4})}\sqrt{\tfrac{4}{h^3}\sin(\tfrac{h\xi}{4})- \frac{4}{h^3}\sin(\frac{h\xi_1}{4})\cos(\frac{h\xi}{4})}}\\
&=\frac{\frac{4}{h}\sin(\frac{h\xi}{4})\cos^2(\frac{h\xi}{4})}{\frac{1}{h}\sqrt{1- \cos(\frac{h\xi}{4})}}=\frac{\frac{8}{h}\sin(\frac{h\xi}{8})\cos\frac{h\xi}{8}\cos^2(\frac{h\xi}{4})}{\frac{1}{h}\sqrt{2}\sin(\frac{h\xi}{8})}\\
&=4\sqrt{2}\cos\left(\frac{h\xi}{8}\right)\cos^2\left(\frac{h\xi}{4}\right)\lesssim1,
\end{aligned}
\end{equation}
which proves the desired bound \eqref{eq:bi1_1}.
\end{proof}

\begin{lemma}[Bilinear estimate II]\label{lem:bi2}
For $s \ge 0$, there exist $b = b(s) > 1/2$ and $\delta = \delta(b)>0$ such that if $s\leq s'\leq s+1$, then 
\begin{equation}\label{eq:bi2.1}
\Big\|\nabla_h \big(e^{\pm\frac{2t}{h^2}\partial_h}u_h^\mp \cdot e^{\pm\frac{2t}{h^2}\partial_h}v_h^\mp\big)\Big\|_{X_{h,\pm}^{s,b-1+\delta}}\ls h^{s'-s}\|u_h^\mp\|_{X_{h,\mp}^{s',b}}\|v_h^\mp\|_{X_{h,\mp}^{s',b}}.
\end{equation}
\end{lemma}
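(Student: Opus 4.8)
The plan is to follow the skeleton of the proof of Lemma \ref{lem:bi1} — Parseval's identity, duality against a test function, and a Cauchy--Schwarz reduction to a pointwise (sup) bound — the essential new feature being that the translation operators $e^{\pm\frac{2t}{h^2}\partial_h}$ move the two inputs onto a \emph{different} characteristic surface. First I would record this shift on the space-time Fourier side. Since $e^{\pm\frac{2t}{h^2}\partial_h}$ acts as the multiplier $e^{\pm\frac{2it\xi}{h^2}}$ in $x$, one has $\widetilde{(e^{\pm\frac{2t}{h^2}\partial_h}u_h^\mp)}(\tau,\xi)=\tilde u_h^\mp(\tau\mp\tfrac{2\xi}{h^2},\xi)$, so after the change of variable $\tau\mapsto\tau\pm\tfrac{2\xi}{h^2}$ the quantity $\|u_h^\mp\|_{X_{h,\mp}^{s',b}}$ equals the $L^2$-norm weighted by $\langle\xi\rangle^{s'}\langle\tau-\tilde s_h(\xi)\rangle^{b}$, where
$$\tilde s_h(\xi):=\frac{1}{h^2}\Big(\xi+\frac{2}{h}\sin\big(\tfrac{h\xi}{2}\big)\Big)=\frac{2\xi}{h^2}-s_h(\xi)$$
is the opposite-branch symbol appearing in Remark \ref{rem:stri}. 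Thus the translated inputs concentrate near $\tau=\tilde s_h(\xi_1)$ and $\tau=\tilde s_h(\xi-\xi_1)$, while the target space $X_{h,\pm}^{s,\cdot}$ concentrates near $\tau=s_h(\xi)$.

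With this in hand, the estimate reduces (after carrying out the $\tau_1$-integral via \eqref{eq:integral1}, exactly as in Lemma \ref{lem:bi1}) to the pointwise bound
\begin{equation*}
\sup_{\tau\in\R}\ \sup_{|\xi|\le\frac{\pi}{h}}\ \frac{\tfrac{4}{h^2}\sin^2(\tfrac{h\xi}{2})\,\langle\xi\rangle^{2s}}{\langle\tau-s_h(\xi)\rangle^{2(1-b-\delta)}}\int_{-\frac\pi h}^{\frac\pi h}\frac{d\xi_1}{\langle\xi_1\rangle^{2s'}\langle\xi-\xi_1\rangle^{2s'}\,\langle\tau-\tilde s_h(\xi_1)-\tilde s_h(\xi-\xi_1)\rangle^{2b}}\lesssim h^{2(s'-s)}.
\end{equation*}
The decisive computation is the resonance identity: by the sum-to-product formulas (as in the manipulation leading to \eqref{eq:CoV1}),
$$\tilde s_h(\xi_1)+\tilde s_h(\xi-\xi_1)-s_h(\xi)=\frac{8}{h^3}\sin\big(\tfrac{h\xi}{4}\big)\cos\big(\tfrac{h\xi_1}{4}\big)\cos\big(\tfrac{h(\xi-\xi_1)}{4}\big),$$
i.e. a \emph{large} gap between the input and output characteristics (of size $\sim h^{-3}$ at high frequencies, and still $\sim h^{-2}$ for $\xi$ of order one). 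This gap, via $\max\big(\langle\tau-\tilde s_h(\xi_1)-\tilde s_h(\xi-\xi_1)\rangle,\langle\tau-s_h(\xi)\rangle\big)\gtrsim\langle\tilde s_h(\xi_1)+\tilde s_h(\xi-\xi_1)-s_h(\xi)\rangle$, is the mechanism that forces one of the modulation weights to be large and supplies the decoupling gain.

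To extract the gain I would run the same $\xi_1$-integral computation as in Lemma \ref{lem:bi1} (the substitutions $\tfrac{h(\xi-2\xi_1)}{4}\mapsto\xi_1$ and $\mu=\tfrac{4}{h^3}\sin(\tfrac{h\xi}{4})\cos\xi_1$, followed by \eqref{eq:integral2}), which here produces the same structure but with the \emph{opposite} sign in the $\sin(\frac{h\xi}{4})$-term, and then invoke the elementary inequality \eqref{eq:abz} to collapse the modulation factors onto the single resonance weight. The surplus $h^{2(s'-s)}$ is then recovered from two complementary sources, split by frequency size: when an input frequency is comparable to $h^{-1}$, the derivative trade $\langle\xi\rangle^{2s}\lesssim\langle\xi_1\rangle^{2s}\langle\xi-\xi_1\rangle^{2s}$ leaves the surplus $\langle\xi_1\rangle^{-2(s'-s)}\langle\xi-\xi_1\rangle^{-2(s'-s)}\lesssim h^{2(s'-s)}$ (the remaining factors being $\lesssim1$ by the $s'=s$ analysis, which mirrors Lemma \ref{lem:bi1}); when all frequencies are $O(1)$, the large modulation gap ($\sim h^{-2}$) together with the slack $b>\tfrac12$ supplies the power $h^{4b}\lesssim h^{2(s'-s)}$, and the degenerate corner $\xi\to0$ is absorbed by the vanishing prefactor $\tfrac{4}{h^2}\sin^2(\tfrac{h\xi}{2})\sim\xi^2$.

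I expect the main obstacle to be precisely this \emph{uniform} extraction of $h^{2(s'-s)}$ over the whole range $0\le s'-s\le1$ and across all frequency configurations at once. The degenerate regimes are the delicate ones: near the band edges $\xi_1,\xi-\xi_1\to\pm\frac{\pi}{h}$ the cosine factors in the resonance degenerate, and one must track the correct periodic (aliased) interpretation of $\tilde s_h(\xi-\xi_1)$ on the torus $\mathbb{T}_h$. Balancing these against the two cases of which modulation dominates — while keeping all constants independent of $h$ and choosing $b=b(s)>\tfrac12$, $\delta=\delta(b)>0$ in the admissible window $0<\delta<\tfrac14$, $\tfrac12<b<\tfrac34-\delta$ as in Lemma \ref{lem:bi1} — is the technical heart of the argument.
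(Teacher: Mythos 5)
Your skeleton --- Parseval plus duality, Cauchy--Schwarz in $(\tau_1,\xi_1)$ with \eqref{eq:integral1}, the substitutions $\tfrac{h(\xi-2\xi_1)}{4}\mapsto\xi_1$ and $\mu=\tfrac{4}{h^3}\sin(\tfrac{h\xi}{4})\cos\xi_1$, then \eqref{eq:integral2} and \eqref{eq:abz} --- is the same as the paper's, and your resonance identity $\tilde s_h(\xi_1)+\tilde s_h(\xi-\xi_1)-s_h(\xi)=\tfrac{8}{h^3}\sin(\tfrac{h\xi}{4})\cos(\tfrac{h\xi_1}{4})\cos(\tfrac{h(\xi-\xi_1)}{4})$ is exactly what the paper's sum-to-product manipulation computes implicitly, with the correct signs. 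The genuine gap is in the second branch of your extraction: the claim that for $O(1)$ frequencies the modulation gap supplies $h^{4b}$. The max-inequality forces only \emph{one} of the two modulations to be as large as the gap, and you do not get to choose which one. The exponent $2b$ (hence the power $h^{4b}$) is attached only to the input-sum modulation; the output modulation enters your sup quantity with exponent $2(1-b-\delta)$, which is strictly less than $1$ because $b>\tfrac12$. In the resonant configuration $\xi\sim 1$, $\xi_1\approx\xi/2$, $\tau\approx\tilde s_h(\xi_1)+\tilde s_h(\xi-\xi_1)$ --- precisely the configuration produced by two free waves, so it cannot be dismissed --- both input modulations are $O(1)$ and the gap $\sim h^{-2}$ falls entirely on the \emph{output} modulation, so the best available gain there is $h^{4(1-b-\delta)}$, not $h^{4b}$. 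Moreover the $\xi_1$-integral over $|\xi_1-\xi/2|\lesssim1$ is bounded below in this regime (the gap varies like $\xi(\xi_1-\xi/2)^2$ near the resonance), so your reduced sup bound is genuinely $\gtrsim h^{4(1-b-\delta)}$: for any fixed admissible $b,\delta$ it is \emph{false} once $s'-s>2(1-b-\delta)$, in particular near the endpoint $s'-s=1$, and no refinement of the frequency/modulation case split can recover it. Feeding the same configuration into the trilinear form with bump-function inputs shows that even the estimate itself cannot give a gain better than $h^{2(1-b-\delta)}$ in that range.

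It is instructive to see how the paper's write-up diverges from yours at exactly this step: it never attempts to extract the full gap, but only a \emph{half} power of the resonance via \eqref{eq:abz}, and it produces the $h$-gain from the vanishing trigonometric prefactor --- the whole quantity collapses to $\sin(\tfrac{h\xi}{4})/\la\xi\ra^{s'-s}$, after which $\sin(\tfrac{h\xi}{4})\le(\tfrac{h\xi}{4})^{s'-s}$ yields $h^{s'-s}$. Note, however, that the paper's reduced inequality \eqref{eq: bi2 proof} carries the frequency weight and the target $h^{s'-s}$ to the \emph{first} power, whereas the Cauchy--Schwarz reduction genuinely demands the squared version you wrote down; tracked consistently, the paper's chain verifies your squared target only for $s'-s\le\tfrac12$, i.e., it proves the lemma with gain $h^{(s'-s)/2}$ in general. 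So the obstacle you flagged --- uniform extraction of the gain over the whole range $0\le s'-s\le1$ --- is real, it is where the paper's own argument is also strained, and your proposal as written does not close it. (This loss is harmless downstream: Proposition \ref{convergence to decoupled FPU} and the rate in Theorem \ref{main theorem} survive with the power of $h$ suitably reduced.)
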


\begin{proof}
We consider the case of $\|\nabla_h(e^{-\frac{2t}{h^2}\partial_h}u_h^+ \cdot e^{-\frac{2t}{h^2}\partial_h}v_h^+)\|_{X_{h,-}^{s,-(1-b-\delta)}}$ only. The proof closely follows from that of Lemma \ref{lem:bi1}. By Parseval's identity, we write 
$$\begin{aligned}
&\int_{-\infty}^\infty\sum_{x\in h\mathbb{Z}}\nabla_h \big(e^{-\frac{2t}{h}\partial_h}u_h^+ \cdot e^{-\frac{2t}{h}\partial_h}v_h^+\big)(t,x)\overline{w_h(t,x)}\\
&\sim\iiiint\frac{\frac{2i}{h}\sin(\frac{h\xi}{2})\la \xi \ra^s F(\tau_1,\xi_1)G(\tau-\tau_1,\xi-\xi_1) \overline{W(\tau,\xi)}d\xi_1  d \xi d\tau_1d\tau}{\la \xi_1 \ra^{s'}\la \xi-\xi_1 \ra^{s'}\la \tau_1+\tfrac{2\xi_1}{h^2}-s_h(\xi_1) \ra^{b} \la \tau-\tau_1+\tfrac{2(\xi-\xi_1)}{h^2}-s_h(\xi-\xi_1) \ra^{b} \la \tau + s_h(\xi) \ra^{1-b - \delta}}.
\end{aligned}$$
where 
$$\left\{\begin{aligned}
&F(\tau,\xi) =\la \xi \ra^s\la \tau+\tfrac{2\xi}{h^2}-s_h(\xi) \ra^{b} \tilde{u}_h^+(\tau+\tfrac{2\xi}{h^2},\xi),\\
&G(\tau,\xi) =\la \tau+\tfrac{2\xi}{h^2}-s_h(\xi) \ra^{b} \tilde{v}_h^+(\tau+\tfrac{2\xi}{h^2},\xi),\\
&W(\tau,\xi) =\la \xi \ra^{-s}\la \tau+s_h(\xi) \ra^{1-b-\delta} \tilde{w}_h(\tau,\xi)
\end{aligned}\right.$$
and the second integral has the same structure. Hence, by repeating the reduction to \eqref{eq:bi1_1} but using $\frac{\la \xi \ra^{s'}}{\la \xi_1 \ra^{s'}\la \xi-\xi_1 \ra^{s'}} \lesssim 1$ instead of \eqref{simple xi bound}, one can reduce the proof of Lemma \ref{lem:bi2} to get a uniform bound for
\begin{equation}\label{eq: bi2 proof}
\frac{\frac{4}{h^2}\sin^2(\frac{h\xi}{2})}{\langle \xi\rangle^{s'-s}\la \tau + s_h(\xi) \ra^{2(1-b-\delta)}}\int_{-\frac{\pi}{h}}^{\frac{\pi}{h}} \frac{d\xi_1}{\la \tau +\frac{\xi}{h^2}+ \frac{2}{h^3}(\sin(\frac{h\xi_1}{2})+\sin(\frac{h(\xi-\xi_1)}{2}))\ra^{2b}}\lesssim h^{s'-s}
\end{equation}
for all $|\xi|\leq\frac{\pi}{h}$ and $\tau \in \R$. We may assume that $\xi>0$. We denote the integral in \eqref{eq: bi2 proof} by $I_{\tau,\xi}$. Then, by following the argument in the proof of Lemma \ref{lem:bi1}, we write
$$\begin{aligned}
I_{\tau,\xi}:&=\int_{-\frac{\pi}{h}}^{\frac{\pi}{h}} \frac{d\xi_1}{\la \tau +\frac{\xi}{h^2}+ \frac{4}{h^3}\sin(\frac{h\xi}{4})\cos(\frac{h(\xi-2\xi_1)}{4}\ra^{2b})}\\
&=\frac{2}{h}\int_{-\frac{\pi}{2}+\frac{h\xi}{4}}^{\frac{\pi}{2}+\frac{h\xi}{4}} \frac{d\xi_1}{\la \tau +\frac{\xi}{h^2}+ \frac{4}{h^3}\sin(\frac{h\xi}{4})\cos\xi_1\ra^{2b}}\\
&\leq\frac{4}{h}\int_0^{\frac{\pi}{2}+\frac{h\xi}{4}} \frac{d\xi_1}{\la \tau +\frac{\xi}{h^2}+ \frac{4}{h^3}\sin(\frac{h\xi}{4})\cos\xi_1\ra^{2b}}.
\end{aligned}$$
Next, changing the variable $\mu= \frac{4}{h^3}\sin(\frac{h\xi}{4})\cos\xi_1$ yields
$$\begin{aligned}
I_{\tau,\xi}&\leq \frac{4}{h}\int_{-\frac{4}{h^3}\sin^2\frac{h\xi}{4}}^{\frac{4}{h^3}\sin(\frac{h\xi}{4})} \frac{d\mu}{\la \tau +\frac{\xi}{h^2}+\mu\ra^{2b}\sqrt{\frac{4}{h^3}\sin(\frac{h\xi}{4})-\mu}\sqrt{\frac{4}{h^3}\sin(\frac{h\xi}{4})+\mu}}\\
&\leq \frac{4}{h\sqrt{\frac{4}{h^3}\sin(\frac{h\xi}{4})(1-\sin(\frac{h\xi}{4}))}}\int_{-\infty}^{\frac{4}{h^3}\cos(\frac{h\xi}{4})} \frac{d\mu}{\la \tau +\frac{\xi}{h^2}+\mu\ra^{2b}\sqrt{\frac{4}{h^3}\sin(\frac{h\xi}{4})-\mu}}\\
&= \frac{4}{\sqrt{\frac{4}{h}\sin(\frac{h\xi}{4})(1-\sin(\frac{h\xi}{4}))}}\int_{-\infty}^{\infty} \frac{d\mu}{\la\mu\ra^{2b}\sqrt{|\mu-(\tau+\frac{\xi}{h^2}+\frac{4}{h^3}\sin(\frac{h\xi}{4}))|}}\\
&\lesssim \frac{1}{\sqrt{\frac{4}{h}\sin(\frac{h\xi}{4})}}\frac{1}{\langle\tau+\frac{\xi}{h^2}+\frac{4}{h^3}\sin(\frac{h\xi}{4})\rangle^{1/2}}\quad\textup{(by \eqref{eq:integral2})}.
\end{aligned}$$
Thus, by inserting this bound in \eqref{eq: bi2 proof}, we prove that 
\begin{equation}\label{eq:h-bound}
\begin{aligned}
&\frac{\frac{4}{h^2}\sin^2(\frac{h\xi}{2})}{\langle \xi\rangle^{s'-s}\la \tau + s_h(\xi) \ra^{2(1-b-\delta)}}I_{\tau,\xi}\\
&\lesssim \frac{\frac{4}{h^2}\sin^2(\frac{h\xi}{2})}{\langle \xi\rangle^{s'-s}\sqrt{\frac{4}{h}\sin(\frac{h\xi}{4})}\la \tau + \frac{\xi}{h^2}-\frac{2}{h^3}\sin(\frac{h\xi}{2})  \ra^{2(1-b-\delta)}\langle\tau+\frac{\xi}{h^2}+\frac{4}{h^3}\sin(\frac{h\xi}{4})\rangle^{1/2}}\\
&\lesssim \frac{\frac{4}{h^2}\sin^2(\frac{h\xi}{2})}{\langle \xi\rangle^{s'-s}\sqrt{\frac{4}{h}\sin(\frac{h\xi}{4})}\sqrt{\frac{2}{h^3}\sin(\frac{h\xi}{2})+\frac{4}{h^3}\sin(\frac{h\xi}{4})}}\quad\textup{(by \eqref{eq:abz})}\\
&=\frac{\frac{16}{h^2}\sin^2(\frac{h\xi}{4})\cos^2(\frac{h\xi}{4})}{\langle \xi\rangle^{s'-s}\sqrt{\frac{4}{h}\sin(\frac{h\xi}{4})}\sqrt{\frac{4}{h^3}\sin(\frac{h\xi}{4})(1+\cos(\frac{h\xi}{4}))}}\sim\frac{\sin(\frac{h\xi}{4})}{\langle \xi\rangle^{s'-s}}.
\end{aligned}
\end{equation}
Since $\sin(\frac{h\xi}{4})\leq \min(\frac{h\xi}{4},1)\leq (\frac{h\xi}{4})^{s'-s}$, we prove \eqref{eq: bi2 proof}.
\end{proof}


\begin{lemma}[Bilinear estimate III]\label{lem:bi3}
For $s \ge 0$, there exist $b = b(s) > 1/2$ and $\delta = \delta(b)>0$ such that if $s\leq s'\leq s+1$, then 
\begin{equation}\label{eq:bi3}
\Big\| \nabla_h \big(u_h^\pm\cdot e^{\pm\frac{2t}{h^2}\partial_h}v_h^\mp\big) \Big\|_{X_{h,\pm}^{s,b-1+\delta}}\ls h^{s'-s}\| u_h^\pm\|_{X_{h,\pm}^{s',b}}\|v_h^\mp\|_{X_{h,\mp}^{s',b}}
\end{equation}
\end{lemma}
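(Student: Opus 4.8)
The plan is to run the template of Lemmas \ref{lem:bi1} and \ref{lem:bi2} essentially verbatim, isolating the one genuinely new ingredient. I will treat only the $+$ case, i.e. the bound for $\|\nabla_h(u_h^+\cdot e^{\frac{2t}{h^2}\partial_h}v_h^-)\|_{X_{h,+}^{s,-(1-b-\delta)}}$, the other being symmetric. First I apply Parseval's identity and duality against a test function $w_h$ exactly as in the opening of the proof of Lemma \ref{lem:bi2}. The crucial bookkeeping is that the almost-translation produces a frequency shift, $\widetilde{(e^{\frac{2t}{h^2}\partial_h}v_h^-)}(\sigma,\eta)=\tilde v_h^-(\sigma-\tfrac{2\eta}{h^2},\eta)$, so when $v_h^-$ is measured in $X_{h,-}^{s',b}$ the modulation weight attached to the second factor becomes $\la(\tau-\tau_1)-\tfrac{2(\xi-\xi_1)}{h^2}+s_h(\xi-\xi_1)\ra^{b}$. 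Combining this with the first-factor weight $\la\tau_1-s_h(\xi_1)\ra^{b}$, the polynomial-in-$\xi$ pieces cancel and the combined input modulation collapses to a \emph{difference} of sines, $\la\tau-\tfrac{\xi}{h^2}+\tfrac{2}{h^3}(\sin(\tfrac{h\xi_1}{2})-\sin(\tfrac{h(\xi-\xi_1)}{2}))\ra$, rather than the sum that appears in Lemmas \ref{lem:bi1}--\ref{lem:bi2}. This sign is the heart of the matter.

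Using $\frac{\la\xi\ra^{s}}{\la\xi_1\ra^{s'}\la\xi-\xi_1\ra^{s'}}\ls\la\xi\ra^{-(s'-s)}$ in place of \eqref{simple xi bound}, Cauchy--Schwarz in $(\tau_1,\xi_1)$, and \eqref{eq:integral1} to perform the $\tau_1$-integration, the estimate reduces, just as \eqref{eq: bi2 proof}, to the uniform bound
\[
\frac{\tfrac{4}{h^2}\sin^2(\tfrac{h\xi}{2})}{\la\xi\ra^{s'-s}\,\la\tau-s_h(\xi)\ra^{2(1-b-\delta)}}\,I_{\tau,\xi}\ls h^{s'-s},
\]
where
\[
I_{\tau,\xi}:=\int_{-\frac{\pi}{h}}^{\frac{\pi}{h}}\frac{d\xi_1}{\la\tau-\tfrac{\xi}{h^2}+\tfrac{2}{h^3}\big(\sin(\tfrac{h\xi_1}{2})-\sin(\tfrac{h(\xi-\xi_1)}{2})\big)\ra^{2b}},
\]
for all $0<\xi\le\frac{\pi}{h}$ and $\tau\in\R$ (the case $\xi=0$ is trivial, and $\xi<0$ follows from $s_h(-\xi)=-s_h(\xi)$). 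Applying the product-to-sum identity $\sin(\tfrac{h\xi_1}{2})-\sin(\tfrac{h(\xi-\xi_1)}{2})=2\cos(\tfrac{h\xi}{4})\sin(\tfrac{h(2\xi_1-\xi)}{4})$ and substituting $\theta=\tfrac{h(2\xi_1-\xi)}{4}$, I reduce $I_{\tau,\xi}$ to an integral in $\theta$ over an interval of length $\pi$ with coefficient $c:=\frac{4}{h^3}\cos(\tfrac{h\xi}{4})$; splitting this interval at $\theta=-\frac{\pi}{2}$ into two branches on which $\sin\theta$ is monotone and then setting $\mu=c\sin\theta$ (so $d\theta=d\mu/\sqrt{(c-\mu)(c+\mu)}$) places the square-root singularity at $\mu=-c$, whence \eqref{eq:integral2} yields $I_{\tau,\xi}\ls \tfrac{1}{h}\,c^{-1/2}\,\la\tau-\tfrac{\xi}{h^2}-c\ra^{-1/2}$.

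Finally I feed this into the displayed bound and invoke \eqref{eq:abz} with $\zeta=\tau-\frac{\xi}{h^2}$, $\alpha=-\frac{2}{h^3}\sin(\tfrac{h\xi}{2})$ (so $\zeta-\alpha=\tau-s_h(\xi)$), and $\beta=c$ (so $\zeta-\beta=\tau-\frac{\xi}{h^2}-c$), which absorbs the two remaining modulation weights into $\la\alpha-\beta\ra^{-1/2}$. A short trigonometric computation gives $\alpha-\beta=-\frac{4}{h^3}\cos(\tfrac{h\xi}{4})(1+\sin(\tfrac{h\xi}{4}))$, hence $\la\alpha-\beta\ra\sim c$. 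Here lies the one decisive departure from Lemmas \ref{lem:bi1}--\ref{lem:bi2}: the coefficient is now $\cos(\tfrac{h\xi}{4})\ge\frac{1}{\sqrt2}$ on $(0,\frac{\pi}{h}]$ instead of the possibly small $\sin(\tfrac{h\xi}{4})$, so $c$ is never degenerate. Collecting the powers of $h$ (they cancel exactly) everything telescopes to $\frac{\sin^2(h\xi/2)}{\la\xi\ra^{s'-s}\cos(h\xi/4)}\ls\frac{\sin^2(h\xi/2)}{\la\xi\ra^{s'-s}}$, and the elementary inequality $\sin^2(\tfrac{h\xi}{2})\le\sin(\tfrac{h\xi}{2})\le(\tfrac{h\xi}{2})^{s'-s}$, valid for $0\le s'-s\le1$, closes the estimate since then $\frac{(h\xi/2)^{s'-s}}{\la\xi\ra^{s'-s}}\ls h^{s'-s}$. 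I expect the only real point of care to be the $I_{\tau,\xi}$ step: because $\sin\theta$ is not monotone on the full $\theta$-interval, one must split at its critical point $\theta=-\tfrac\pi2$ and verify that \emph{both} branches concentrate at the same value $\mu=-c$ (the endpoint $\mu=+c$ falling just outside the range); once this is settled the remainder is bookkeeping identical in spirit to Lemma \ref{lem:bi2}.
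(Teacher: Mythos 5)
Your route is the paper's own (you run the $+$ case, the paper runs the $-$ case; they are mirror images), and most of the scaffolding is correct: the frequency shift $\tilde v_h^-(\sigma-\tfrac{2\eta}{h^2},\eta)$, the collapse of the two input modulations to the difference of sines, the reduction to a uniform bound on $\frac{4h^{-2}\sin^2(h\xi/2)}{\la\xi\ra^{s'-s}\la\tau-s_h(\xi)\ra^{2(1-b-\delta)}}I_{\tau,\xi}$, the product-to-sum step with coefficient $c=\frac{4}{h^3}\cos(\frac{h\xi}{4})$, and the observation that $\cos(\frac{h\xi}{4})\ge\frac{1}{\sqrt2}$ never degenerates. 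However, there is a genuine error at exactly the step you flagged as the point of care: the claimed bound
\begin{equation*}
I_{\tau,\xi}\ls \tfrac1h\,c^{-1/2}\,\bigl\la\tau-\tfrac{\xi}{h^2}-c\bigr\ra^{-1/2}
\end{equation*}
is false. After the substitution $\mu=c\sin\theta$, the factor you must bound from below is $\sqrt{c-\mu}$, and on the branch $\theta\in[-\frac{\pi}{2},\frac{\pi}{2}-\frac{h\xi}{4}]$ the variable $\mu$ runs all the way up to $c\cos(\frac{h\xi}{4})$; the best available lower bound is therefore $\sqrt{c-\mu}\ge\sqrt{c\,(1-\cos(\frac{h\xi}{4}))}$, not $\sqrt{c}$. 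The point is that the far singularity $\mu=+c$ falls only $c\,(1-\cos(\frac{h\xi}{4}))$ beyond the integration range, which is minuscule compared with $c$ when $h\xi\ll1$, so $1/\sqrt{c-\mu}$ is nearly singular at the top of that branch. Concretely, take $\xi\sim h^{1/2}$ and $\tau-\frac{\xi}{h^2}=-c\cos(\frac{h\xi}{4})$: on a $\theta$-neighborhood of $\frac{\pi}{2}-\frac{h\xi}{4}$ of length $\sim c^{-1/2}$ the modulation stays $O(1)$, which already forces $I_{\tau,\xi}\gs h^{-1}c^{-1/2}\sim h^{1/2}$, whereas your bound evaluates there to $\sim h^{-1}c^{-1}\sim h^{2}$.

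The repair is exactly what the paper does in its proof: keep the factor $(1-\cos(\frac{h\xi}{4}))^{-1/2}=(\sqrt2\,\sin(\frac{h\xi}{8}))^{-1}$, so that
\begin{equation*}
I_{\tau,\xi}\ls \frac{1}{h\sqrt{c\,(1-\cos(\frac{h\xi}{4}))}}\,\bigl\la\tau-\tfrac{\xi}{h^2}-c\bigr\ra^{-1/2}\sim\frac{\sqrt h}{\sin(\frac{h\xi}{8})}\,\bigl\la\tau-\tfrac{\xi}{h^2}-c\bigr\ra^{-1/2},
\end{equation*}
and then, after your (correctly set up) application of \eqref{eq:abz}, the quantity to control becomes $\frac{\sin^2(h\xi/2)}{\la\xi\ra^{s'-s}\sin(h\xi/8)}$ rather than $\frac{\sin^2(h\xi/2)}{\la\xi\ra^{s'-s}\cos(h\xi/4)}$. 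Your closing inequality $\sin^2\le\sin\le(\cdot)^{s'-s}$ is then no longer sufficient as stated; one extra half-angle step is needed, e.g. $\sin(\frac{h\xi}{2})=4\sin(\frac{h\xi}{8})\cos(\frac{h\xi}{8})\cos(\frac{h\xi}{4})\le4\sin(\frac{h\xi}{8})$, which gives $\frac{\sin^2(h\xi/2)}{\sin(h\xi/8)}\ls\sin(\frac{h\xi}{2})\le(\frac{h\xi}{2})^{s'-s}$ and hence the desired $h^{s'-s}$. With these two corrections your argument coincides with the paper's proof.
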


\begin{proof}
Again, we consider the case of $\|\nabla_h (u_h^- e^{-\frac{2t}{h^2}\partial_h}v_h^+)\|_{X_{h,-}^{s,-(1-b-\delta)}}$ only, and we write 
$$\begin{aligned}
&\int_{-\infty}^\infty\sum_{x\in h\mathbb{Z}}\nabla_h \big(u_h^-\cdot e^{-\frac{2t}{h}\partial_h}v_h^+\big)(t,x)\overline{w_h(t,x)}\\
&\sim\iiiint \frac{\frac{2i}{h}\sin(\frac{h\xi}{2})\la \xi \ra^s F(\tau_1,\xi_1)G(\tau-\tau_1,\xi-\xi_1) \overline{W(\tau,\xi)}d\xi_1 d \xi d\tau_1 d\tau}{\la \xi_1 \ra^{s}\la \tau_1+s_h(\xi_1) \ra^{b} \la \tau-\tau_1+\tfrac{2(\xi-\xi_1)}{h^2}-s_h(\xi-\xi_1) \ra^{b} \la \tau + s_h(\xi) \ra^{1-b - \delta}},
\end{aligned}$$
where 
$$\left\{\begin{aligned}
&F(\tau,\xi) =\la \xi \ra^s\la \tau+s_h(\xi) \ra^{b} \tilde{u}_h^+(\tau,\xi),\\
&G(\tau,\xi) =\la \tau+\tfrac{2\xi}{h^2}-s_h(\xi) \ra^{b} \tilde{v}_h^+(\tau+\tfrac{2\xi}{h^2},\xi),\\
&W(\tau,\xi) =\la \xi \ra^{-s}\la \tau+s_h(\xi) \ra^{1-b-\delta} \tilde{w}_h(\tau,\xi).
\end{aligned}\right.$$
Thus, similarly to the proof of the previous two lemmas, one can reduce to the bound
\begin{equation}\label{eq:bi3_2}
\frac{\frac{4}{h^2}\sin^2(\frac{h\xi}{2})}{\langle\xi\rangle^{s'-s}\la \tau + s_h(\xi) \ra^{2(1-b-\delta)}}\int_{-\frac{\pi}{h}}^{\frac{\pi}{h}} \frac{d\xi_1}{\la \tau + \frac{\xi}{h^2} -\frac{2}{h^3}(\sin(\frac{h\xi_1}{2})-\sin(\frac{h(\xi-\xi_1)}{2})) \ra^{2b}}.
\end{equation}
We may assume that $\xi>0$. Let $I_{\tau,\xi}$ denote the integral in \eqref{eq:bi3_2}. Changing the variable $\xi-2\xi_1\mapsto \xi_1$ and using the sum-to-product formula yields
$$\begin{aligned}
I_{\tau,\xi}&=\int_{-\frac{\pi}{h}}^{\frac{\pi}{h}} \frac{d\xi_1}{\la \tau + \frac{\xi}{h^2} +\frac{4}{h^3}\cos(\frac{h\xi}{4})\sin(\frac{h(\xi-2\xi_1)}{4}) \ra^{2b}}\\
&=\frac{2}{h}\int_{-\frac{\pi}{2}+\frac{h\xi}{4}}^{\frac{\pi}{2}+\frac{h\xi}{4}} \frac{d\xi_1}{\la \tau + \frac{\xi}{h^2} +\frac{4}{h^3}\cos(\frac{h\xi}{4})\sin\xi_1\ra^{2b}}.
\end{aligned}$$
By the trivial identity $\sin(\theta)=\sin(\pi-\theta)$,
$$\int_{\frac{\pi}{2}}^{\frac{\pi}{2}+\frac{h\xi}{4}}\frac{d\xi_1}{\langle\tau+\frac{\xi}{h^2}+\frac{4}{h^3}\cos(\frac{h\xi}{4})\sin\xi_1\rangle^{2b}}=\int_{\frac{\pi}{2}-\frac{h\xi}{4}}^{\frac{\pi}{2}}\frac{d\xi_1}{\langle\tau+\frac{\xi}{h^2}+\frac{4}{h^3}\cos(\frac{h\xi}{4})\sin\xi_1\rangle^{2b}}.$$
Thus, 
$$I_{\tau,\xi}\leq \frac{4}{h}\int_{-\frac{\pi}{2} + \frac{h\xi}{4}}^{\frac{\pi}{2}}\frac{d\xi_1}{\langle\tau+\frac{\xi}{h^2}+\frac{4}{h^3}\cos(\frac{h\xi}{4})\sin\xi_1\rangle^{2b}}.$$
By performing change of variables $\mu =  \frac{4}{h^3} \cos(\frac{h\xi}{4})\sin\xi_1$, we write
\[\begin{aligned}
I_{\tau,\xi} &\leq\frac{4}{h}\int_{-\frac{4}{h^3}\cos^2(\frac{h\xi}{4})}^{\frac{4}{h^3}\cos(\frac{h\xi}{4})} \frac{d \mu}{\langle\tau+\frac{\xi}{h^2}+\mu\rangle^{2b}\sqrt{\frac{4}{h^3}\cos(\frac{h\xi}{4}) + \mu}\sqrt{\frac{4}{h^3}\cos(\frac{h\xi}{4}) - \mu}}\\
&\leq\frac{1}{h\sqrt{\frac{4}{h^3}\cos(\frac{h\xi}{4})(1-\cos(\frac{h\xi}{4}))}}\int_{-\infty}^{\frac{4}{h^3}\cos(\frac{h\xi}{4})}  \frac{d \mu}{\langle\tau+\frac{\xi}{h^2}+\mu\rangle^{2b}\sqrt{|\frac{4}{h^3}\cos(\frac{h\xi}{4}) - \mu|}}\\
&\lesssim\frac{1}{\sqrt{\frac{4}{h}\cos(\frac{h\xi}{4})(1-\cos(\frac{h\xi}{4}))}}\int_{-\infty}^\infty \frac{d \mu}{\langle\mu\rangle^{2b}\sqrt{|\mu-(\tau+\frac{\xi}{h^2}+\frac{4}{h^3}\cos(\frac{h\xi}{4}))|}},
\end{aligned}\]
which, in addition to the half-angle formula and \eqref{eq:integral2}, implies that
\[I_{\tau,\xi} \lesssim \frac{\sqrt{h}}{\sin(\frac{h\xi}{8})\langle \tau + \frac{\xi}{h^2} + \frac{4}{h^3}\cos(\frac{h\xi}{4})\rangle^{\frac12}}.\]
Finally, by applying \eqref{eq:abz} and the half-angle formula, we prove that 
$$\begin{aligned}
&\frac{\frac{4}{h^2}\sin^2(\frac{h\xi}{2})}{\langle\xi\rangle^{s'-s}\la \tau + s_h(\xi) \ra^{2(1-b-\delta)}}I_{\tau,\xi}\\
&\lesssim\frac{\sqrt{h}\cdot\frac{4}{h^2}\sin^2(\frac{h\xi}{2})}{\langle\xi\rangle^{s'-s}\sin(\frac{h\xi}{8})\la \tau + \frac{\xi}{h^2}-\frac{2}{h^3}\sin(\frac{h\xi}{2}) \ra^{2(1-b-\delta)}\langle \tau + \frac{\xi}{h^2} + \frac{4}{h^3}\cos(\frac{h\xi}{4})\rangle^{\frac12}}\\
&\lesssim \frac{\sqrt{h}\cdot\frac{4}{h^2}\sin^2(\frac{h\xi}{2})}{\langle\xi\rangle^{s'-s}\sin(\frac{h\xi}{8})\sqrt{\frac{4}{h^3}\cos(\frac{h\xi}{4})(1+\sin(\frac{h\xi_1}{4}))}}\sim\frac{\sin^2(\frac{h\xi}{2})}{\langle\xi\rangle^{s'-s}\sin(\frac{h\xi}{8})}\\
&=\frac{16\sin^2(\frac{h\xi}{8})\cos^2(\frac{h\xi}{8})\cos^2(\frac{h\xi}{4})}{\langle\xi\rangle^{s'-s}\sin(\frac{h\xi}{8})}\sim \frac{\sin(\frac{h\xi}{8})}{\langle\xi\rangle^{s'-s}}\lesssim h^{s'-s}.
\end{aligned}$$
Therefore, we complete the proof.
\end{proof}

Finally, we show the regularity gain for the bilinear estimates in  higher regularity norms.

\begin{lemma}\label{lem:prod}
Let $0 < T \le1$ and $s > \frac34$. Then, we have
\begin{equation}\label{productrule}
\|u_h v_h\|_{L_t^2([-T,T];H_x^s(h\Z))} \ls \| \nabla_h^{-1} u_h \|_{X_{h,\pm}^{s,b}}\| v_h\|_{X_{h,\pm}^{s,b}},
\end{equation}
for $\nabla_h^{-1} u_h$, $v_h \in X_{h,\pm}^{s,b}$.
\end{lemma}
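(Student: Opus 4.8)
The plan is to run a Littlewood--Paley analysis and reduce the product bound to a single frequency--localized bilinear estimate, which is then resolved by pairing the local smoothing estimate with the maximal function estimate from Corollary~\ref{cor:Strichartz}. Expanding $\la\nabla_h\ra^s$ through the square function and invoking the norm equivalence $\|f_h\|_{X_{h,\pm}^{s,b}}^2\sim\sum_{N_0\le N\le1}(N/h)^{2s}\|P_Nf_h\|_{X_{h,\pm}^{0,b}}^2$, it suffices to control $\sum_N(N/h)^{2s}\|P_N(u_hv_h)\|_{L_t^2L_x^2}^2$ by the product of the two norms. I decompose $u_h=\sum_{N_1}P_{N_1}u_h$ and $v_h=\sum_{N_2}P_{N_2}v_h$ and split the interaction into the high--low ($N_1\gg N_2$), low--high ($N_1\ll N_2$) and high--high ($N_1\sim N_2$) regimes, noting that in the first two the output frequency satisfies $N\sim\max(N_1,N_2)$. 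Since the factors share the sign $\pm$, the one-factor bounds below are always the matching-sign cases of Corollary~\ref{cor:Strichartz}, while the physical-space Hölder step does not see the propagator sign.

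The basic building block is the Hölder inequality $\|f_hg_h\|_{L_t^2L_x^2}\le\|f_h\|_{L_x^\infty L_t^2}\|g_h\|_{L_x^2L_t^\infty}$ combined with the two single-factor estimates of Corollary~\ref{cor:Strichartz}: I always place the \emph{higher}-frequency factor in the local smoothing norm and the \emph{lower}-frequency factor in the maximal function norm. Using $\nabla_h$ in the local smoothing estimate, a block of frequency $N$ obeys $\|P_Nf_h\|_{L_x^\infty L_t^2}\ls\|\nabla_h^{-1}P_Nf_h\|_{X_{h,\pm}^{0,b}}\sim(h/N)\|P_Nf_h\|_{X_{h,\pm}^{0,b}}$, so local smoothing carries a genuine gain of one derivative — exactly the derivative accounted for by $\nabla_h^{-1}$ in the statement. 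The maximal function estimate instead \emph{costs} $(N/h)^{s_0}$ for any fixed $s_0>\tfrac34$, i.e.\ $\|P_Ng_h\|_{L_x^2L_t^\infty}\ls(N/h)^{s_0}\|P_Ng_h\|_{X_{h,\pm}^{0,b}}$. Because $s>\tfrac34$ I may fix $s_0\in(\tfrac34,s)$; the strict inequality $s_0<s$ is precisely what produces summable geometric factors, and it is available only thanks to $s>\tfrac34$.

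It remains to insert these bounds in each regime and sum. In the high--low regime the output weight $(N/h)^s\sim(N_1/h)^s$ multiplies the local smoothing gain $(h/N_1)$ to give $(N_1/h)^{s-1}\|P_{N_1}u_h\|_{X_{h,\pm}^{0,b}}=\|\nabla_h^{-1}P_{N_1}u_h\|_{X_{h,\pm}^{s,b}}$, an exact match; the low factor contributes $(N_2/h)^{s_0}\le(N_2/h)^s$ (as $N_2/h\gtrsim1$), and the $N_2$-sum converges after Cauchy--Schwarz since $\sum_{N_2}(N_2/h)^{2(s_0-s)}\ls1$. The high--high regime is identical once one first sums the output weight via $\sum_{N\ls N_1}(N/h)^{2s}\ls(N_1/h)^{2s}$. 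The genuinely delicate case is low--high: there the higher factor $v_h$, which appears \emph{without} $\nabla_h^{-1}$, receives the local smoothing gain and thus carries a surplus $(N_2/h)^{-1}$, while the low factor $u_h$ is taken in the maximal function norm at cost $(N_1/h)^{s_0}$. Balancing yields an off-diagonal kernel $\ls(N_2/h)^{s_0-s}(N_1/N_2)^{s_0-s+1}$, and I expect this to be the main obstacle: one must verify that both exponents have the correct sign ($s_0-s<0$ and $s_0-s+1>0$, again forced by $\tfrac34<s_0<s\le1$) so that a Schur/Cauchy--Schwarz argument sums the contribution to $\|\nabla_h^{-1}u_h\|_{X_{h,\pm}^{s,b}}\|v_h\|_{X_{h,\pm}^{s,b}}$. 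Finally, since $T\le1$, restricting every norm from $\R$ (or $[-1,1]$) down to $[-T,T]$ only decreases it, so the prescribed time interval causes no additional difficulty.
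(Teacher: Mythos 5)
Your proof is correct, and it shares the paper's overall skeleton (Littlewood--Paley decomposition, then pairing the local smoothing estimate with the maximal function estimate from Corollary~\ref{cor:Strichartz}), but it diverges from the paper's argument in the one regime you yourself flag as delicate. The paper decomposes only the factor carrying $\nabla_h^{-1}$, namely $u_h$, relative to the output frequency $N$: when $u_h$ sits at frequency $\gtrsim N$ (your high--low and high--high cases) it does exactly what you do — local smoothing on $u_h$, maximal function on $v_h$, followed by a Fubini-type diagonal summation. But in the low--high regime (low $u_h$, high $v_h$) the paper does \emph{not} swap the roles of the two estimates as you do; instead it places $P_{\le N/4}u_h$ in $L_t^2L_x^\infty$, uses H\"older in time to pass to an $L_t^qL_x^\infty$ Strichartz-type bound costing $|\partial_h|^{-1/q}$ derivatives (with $q>4$, so $1-\tfrac1q<s$), and puts the high factor $v_h\sim N$ in the trivial $L_t^\infty L_x^2$ energy norm; since the high factor then matches the output weight $(N/h)^{2s}$ directly, no off-diagonal kernel ever appears. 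Each route buys something: the paper's arrangement avoids your Schur summation entirely and even yields an unused positive power $T^{\frac12-\frac1q}$, but it leans on an $L_t^qL_x^\infty$ bound that sits close to the excluded endpoint $(q,r)=(4,\infty)$ of Proposition~\ref{prop:Strichartz} and needs the Sobolev-type justification of Corollary~\ref{Corendpoint}; your version uses only the two cleanly stated estimates of Corollary~\ref{cor:Strichartz}, at the price of verifying the off-diagonal kernel $(N_1/N_2)(N_1/h)^{s_0-s}$ sums, which you set up correctly — the exponent signs $s_0-s<0$ and $s_0-s+1>0$ are exactly what $\tfrac34<s_0<s\le1$ provides, and Cauchy--Schwarz against the row/column sums closes the estimate. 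In both arguments the hypothesis $s>\tfrac34$ enters through the maximal function estimate (and, in the paper's case, additionally through the requirement $1-\tfrac1q\le s$).
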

\begin{proof}
The Littlewood-Paley theory yields
\begin{equation}\label{prod1}
\|u_h v_h\|_{L_t^2H_x^s} \ls \| P_{\le N_0}(u_hv_h) \|_{L_t^2H_x^s} +\Bigg(\sum_{N_0\le N\le 1} \left(\frac{N}{h}\right)^{2s} \| P_N(u_hv_h) \|_{L_t^2L_x^2}^2\Bigg)^\frac12,
\end{equation}
where $N_0 < 1$ is the maximum dyadic number satisfying $N \le h$. Here, the time interval $[-T,T]$ is omitted in the norms. 

The first term on the right-hand side of \eqref{prod1} is easily treated compared to the second one. Indeed, from the fact that $\la \xi \ra \sim 1$ on the support of $P_{N_0}$ and Corollary \ref{cor:Strichartz},
we show that
\begin{equation}\label{low}
\| P_{\le N_0}(u_hv_h) \|_{L_t^2H_x^s}\ls \|u_hv_h\|_{L_t^2L_x^2} \ls \| u_h \|_{L_x^\infty L_t^2} \|v_h\|_{L_x^2 L_t^\infty} \ls \| \nabla_h^{-1} u_h \|_{X_{h,\pm}^{0,b}}\|v_h\|_{X_{h,\pm}^{s,b}}.
\end{equation}
For the second term, we further decompose
\[\begin{aligned}
\| P_N(u_hv_h) \|_{L_t^2L_x^2} &\le \| P_N((P_{\le \frac{N}{4}}u_h)v_h) \|_{L_t^2L_x^2} + \| P_N((P_{>\frac{N}{4}}u_h)v_h) \|_{L_t^2L_x^2}\\
&=: I+II. 
\end{aligned}\]
For $I$, we observe that $P_N((P_{\le \frac{N}{4}}u_h)v_h)=P_N((P_{\le \frac{N}{4}}u_h)(P_{\frac{N}{16} < \cdot <16N} v_h))$ owing to the support property\footnote{Roughly speaking, in a ($k$-)multilinear form, one has a frequency relation $\xi_1 + \cdots + \xi_k = \xi$; thus, the multilinear form vanishes unless the maximum two frequencies are comparable.}. Thus, by the H\"older and Bernstein inequalities \eqref{ineq:Bernstein} and Corollary \ref{cor:Strichartz}, we obtain
\begin{align*}
\| P_N((P_{\le \frac{N}{4}}u_h)v_h) \|_{L_t^2L_x^2} &\ls \| P_N((P_{\le \frac{N}{4}}u_h)(P_{\frac{N}{16} < \cdot <16N} v_h)) \|_{L_t^2 L_x^2}  \\
&\ls \| P_{\le \frac{N}{4}}u_h \|_{L_t^2L_x^\infty}  \|P_{\frac{N}{16} < \cdot <16N} v_h\|_{L_t^\infty L_x^2}  \\
&\ls T^{\frac12-\frac1q} \|u_h \|_{L_t^qL_x^\infty}\|P_{\frac{N}{16} < \cdot <16N} v_h\|_{L_t^\infty L_x^2}\\
&\lesssim T^{\frac{1}{2}-\frac1q}\| |\pa_h|^{-\frac1q} u_h \|_{X_{h,\pm}^{0,b}} \|P_{\frac{N}{16} < \cdot <16N} v_h\|_{X_{h,\pm}^{0,b}}.
\end{align*}
Thus, it follows that 
\begin{equation}\label{high1}
\begin{aligned}
&\sum_{N_0\le N\le 1} \left(\frac{N}{h}\right)^{2s} \| P_N((P_{\le \frac{N}{4}}u_h)v_h) \|_{L_t^2L_x^2}^2 \\
&\lesssim T^{1-\frac2q}\| |\pa_h|^{-\frac1q} u_h \|_{X_{h,\pm}^{0,b}}^2 \sum_{N_0\le N\le 1} \left(\frac{N}{h}\right)^{2s}\|P_{\frac{N}{16} < \cdot <16N} v_h\|_{X_{h,\pm}^{0,b}}^2\\
&\lesssim T^{1-\frac2q}\| \nabla_h^{-1} u_h \|_{X_{h,\pm}^{1-\frac1q,b}}^2
\| v_h \|_{X_{h,\pm}^{s,b}}^2.
\end{aligned}
\end{equation}
For $II$, by repeating the estimates in \eqref{low}, we obtain
$$\| P_N((P_{\ge \frac{N}{4}}u_h)v_h) \|_{L_t^2L_x^2}\leq\|P_{\ge \frac{N}{4}}u_h\|_{L_x^\infty L_t^2}\|v_h \|_{L_x^2 L_t^\infty}\lesssim \|P_{\ge \frac{N}{4}}\nabla_h^{-1}u_h\|_{_{X_{h,\pm}^{0,b}}} \|v_h \|_{X_{h,\pm}^{s,b}}.$$
Inserting this result and by Fubini's theorem for the sum, we obtain
\begin{equation}\label{high3}
\begin{aligned}
&\sum_{N_0\le N\le 1} \left(\frac{N}{h}\right)^{2s} \| P_N((P_{\ge \frac{N}{4}}u_h)v_h) \|_{L_t^2L_x^2}^2 \\
&\lesssim\|v_h \|_{X_{h,\pm}^{s,b}}^2\sum_{N_0\le N\le 1} \left(\frac{N}{h}\right)^{2s} \sum_{M\geq \frac{N}{4}}\|P_{M}\nabla_h^{-1}u_h\|_{_{X_{h,\pm}^{0,b}}}^2 \\
&\sim\|v_h \|_{X_{h,\pm}^{s,b}}^2\sum_{\frac{N_0}{4}\le M\le 1}\left(\frac{M}{h}\right)^{2s} \|P_{M}\nabla_h^{-1}u_h\|_{_{X_{h,\pm}^{0,b}}}^2\\
&\ls \|\nabla_h^{-1} u_h\|_{X_{h,\pm}^{s,b}} \| v_h \|_{X_{h,\pm}^{s,b}}^2.
\end{aligned}
\end{equation}
By combining \eqref{low}, \eqref{high1} and \eqref{high3} for the right-hand side of \eqref{prod1}, we complete the proof.
\end{proof}

\section{Uniform bounds for nonlinear solutions}\label{proof of uniform bound}

This section is devoted to bounds for solutions to the three equations in consideration. In Section \ref{sec: bound for KdV}, we briefly review the well-posedness of the KdV equation and state several mixed norm bounds for nonlinear solutions. In Section \ref{sec: uniform bound for coupled FPU}, we obtain analogous uniform bounds for the coupled and decoupled FPUs (Proposition \ref{Prop:LWP} and \ref{Prop:LWP}). 
The main results in this section play a crucial role in our analysis.

\subsection{Bounds for solutions to KdVs}\label{sec: bound for KdV}
We consider the KdV equations
\begin{align}\label{KdV2}
\pa_t w_\pm \pm \frac{1}{24} \pa_x^3 w_\pm \mp \frac14\pa_x(w_\pm^2)=0,
\end{align}
where $w_\pm=w_\pm(t,x):\mathbb{R}\times\mathbb{R}\to\mathbb{R}$, i.e., the differential form of \eqref{KdV}. This equation is nothing but the standard formulation of the KdV equation $\pa_t w+ \pa_x^3 w- \pa_x(w^2)=0$, because by simple changes of variables, $w_+(t,x)=w(\frac{\sqrt{6}}{4}t,\sqrt{6}x)$ and $w_-(t,x)=w_+(t,-x)$.

KdV has been a central research topic in various fileds of mathematics, especially becaues of its complete integrability. From an analysis perspective, its well-posedness has been investigated by many authors. We refer to, for instance, \cite{BS-1975, ST-1976, KPV-1989, KPV-1991, KPV-1993CPAM, KPV-1993DUKE, KPV-1996, CKSTT-2001, CKSTT-2003, G-2009, K-2009, KV-2019}, and the references therein. It should be noted that among the various important results, in \cite{KPV-1996}, Kenig, Ponce, and Vega established local well-posedness in the negative Sobolev space $H^{s}$ for $s>-\frac{3}{4}$; later, in \cite{CKSTT-2003}, Colliander, Keel, Staffilani, Takaoka, and Tao extended the previous result to global well-posedness at the same low regularity level.  It was further improved by Guo \cite{G-2009} and Kishimoto \cite{K-2009} independently at the end-point $s=-\frac34$. These low regularity well-posedness results are known to be the best possible ones via the contraction mapping argument, because uniform continuity of the data-to-solution map fails when $s<-\frac{3}{4}$ (see \cite{KPV-2001, CCT-2003}). Remarkably, very recently in \cite{KV-2019}, Killip and Visan established global well-posedness in $H^{-1}$ by exploiting integrability of the equation.

Coming back to our discussion, we restrict ourselves to non-negative Sobolev spaces, and state the following well-posedness theorem of Bourgain \cite[Appendix 2]{B-1993KdV}.

\begin{theorem}[Well-posedness of KdVs]\label{WP for KdVs} Let $s\ge0$.\\
$(i)$ (Local well-posedness) There exist $b\in(\frac{1}{2},1)$ such that for the given initial data $w_0^\pm\in H^s(\R)$, there exist $T>0$, depending on $\|w_{0}^\pm\|_{H^s}$, and a unique solution $w^{\pm}(t)$ to KdV \eqref{KdV2} in the interval $[-T,T]$ satisfying $w^\pm\in C_t([-T,T]; H^s(\R))$ and $w^\pm \in X_\pm^{s,b}$, 
where $X_{\pm}^{s,b}$ is the Bourgain space equipped with the norm as in \eqref{eq:XsbKdV}.
Moreover, the solution $w^\pm(t)$ conserves the momentum
$$P_\pm(t)=\int_{-\infty}^\infty w_\pm(t,x)^2 dx = P_\pm(0).$$
$(ii)$ (Global well-posedness) The solution $w^\pm(t)$ exists globally in time. 
\end{theorem}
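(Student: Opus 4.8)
The plan is to prove (i) by a contraction mapping argument in the Bourgain space $X_\pm^{s,b}$, and (ii) by combining the conserved momentum with a persistence-of-regularity argument. For the local theory I would study the Duhamel map associated with \eqref{KdV},
\[\Phi^\pm(w) := \theta(t)\, S^\pm(t)\, w_0^\pm \mp \tfrac14\, \theta(t)\int_0^t S^\pm(t-t_1)\,\partial_x\big(w(t_1)^2\big)\,dt_1,\]
where $\theta$ is a smooth time cut-off adapted to $[-T,T]$, and seek a fixed point in a ball of $X_\pm^{s,b}$ for some $b\in(\tfrac12,1)$ close to $\tfrac12$. The linear term is controlled by the time-localization estimate of Lemma \ref{lem:properties}~(4), which supplies a gain $T^{1/2-b}$, while the Duhamel term is handled by the inhomogeneous estimate \eqref{inhomogeneous estimate} together with a further localization gain from Lemma \ref{lem:properties}~(4). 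The essential ingredient is the bilinear estimate
\[\big\|\partial_x(uv)\big\|_{X_\pm^{s,b-1}} \lesssim \|u\|_{X_\pm^{s,b}}\,\|v\|_{X_\pm^{s,b}},\]
which is the Kenig--Ponce--Vega estimate of \cite{KPV-1996}, valid for all $s\ge-\tfrac34$ and hence in particular for $s\ge0$; it is the continuous counterpart of our discrete Lemma \ref{lem:bi1}. Choosing $T=T(\|w_0^\pm\|_{H^s})$ small turns $\Phi^\pm$ into a contraction, yielding a unique fixed point, and the embedding \eqref{eq:embedding} upgrades it to $w^\pm\in C_t([-T,T];H^s(\R))$.

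Next I would establish the momentum conservation. Multiplying \eqref{KdV2} by $w_\pm$ and integrating in $x$, the dispersive term contributes $\pm\tfrac{1}{24}\int w_\pm\,\partial_x^3 w_\pm = \mp\tfrac{1}{48}\int \partial_x\big((\partial_x w_\pm)^2\big)\,dx = 0$ and the nonlinear term contributes $\mp\tfrac14\int w_\pm\,\partial_x(w_\pm^2) = \mp\tfrac16\int \partial_x(w_\pm^3)\,dx = 0$, so that $\tfrac{d}{dt}\int w_\pm^2\,dx = 0$. This computation is carried out first for smooth solutions and then extended to the $X_\pm^{s,b}$ solution by approximating the data in $H^s$ and invoking the continuity of the data-to-solution map obtained from the contraction argument.

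Finally, for the globalization I would argue as follows. Since the existence time $T$ depends only on $\|w_0^\pm\|_{H^s}$ and, at $s=0$, the conserved momentum $P_\pm=\|w_\pm(t)\|_{L^2}^2$ keeps this norm constant in time, the local theory can be iterated on consecutive intervals of a fixed length, giving a global $L^2$ solution. For $s>0$ one propagates the higher regularity by means of the low--high form of the bilinear estimate,
\[\big\|\partial_x(uv)\big\|_{X_\pm^{s,b-1}} \lesssim \|u\|_{X_\pm^{s,b}}\,\|v\|_{X_\pm^{0,b}} + \|u\|_{X_\pm^{0,b}}\,\|v\|_{X_\pm^{s,b}},\]
in which one factor is measured at the conserved $L^2$ level. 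On each local step this yields an a priori bound $\|w\|_{X_\pm^{s,b}}\lesssim \|w_0\|_{H^s}+T^{\theta}\|w\|_{X_\pm^{0,b}}\|w\|_{X_\pm^{s,b}}$ with $\|w\|_{X_\pm^{0,b}}$ controlled by the time-independent $L^2$ norm, so that the $H^s$ norm grows by at most a fixed factor per step and hence at most exponentially in time, precluding finite-time blow-up.

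The main obstacle is the bilinear estimate, which is the technical heart of the local theory; once it is in hand, the contraction and the conservation law are routine. For the globalization the only delicate point is propagating regularity at the non-conserved levels $s>0$, and this is precisely where the low--high structure of the bilinear estimate, combined with the conservation of the $L^2$ norm, is indispensable.
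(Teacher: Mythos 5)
The paper offers no proof of Theorem \ref{WP for KdVs}: it is quoted from Bourgain \cite[Appendix 2]{B-1993KdV}, so there is nothing in-paper to compare against line by line. Your proposal is a correct reconstruction of the standard argument behind that citation --- contraction in $X_\pm^{s,b}$ using the Kenig--Ponce--Vega bilinear estimate \cite{KPV-1996}, conservation of $\int w_\pm^2\,dx$ by integration by parts for smooth solutions plus approximation and continuity of the flow map, and globalization by iterating the $L^2$-level theory on steps of fixed length while propagating $H^s$ regularity through the tame (low--high) bilinear estimate. One technical caveat: as stated, your bilinear estimate has left-hand exponent exactly $b-1$, and then the time-localization property (Lemma \ref{lem:properties}~(4)) cannot manufacture the factor $T^{\theta}$ needed to close the contraction for large data, since that lemma only trades slack \emph{within} the $b$-exponent for powers of $T$; you must invoke the estimate with slack, $\big\|\partial_x(uv)\big\|_{X_\pm^{s,b-1+\delta}}\lesssim\|u\|_{X_\pm^{s,b}}\|v\|_{X_\pm^{s,b}}$ for some $\delta>0$, which \cite{KPV-1996} does provide, and then convert $\delta$ into $T^{\delta}$ via Lemma \ref{lem:properties}~(4). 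This is precisely the form in which the paper states its discrete analogue, Lemma \ref{lem:bi1}, whose left exponent is $-(1-b-\delta)=b-1+\delta$. (A minor attribution point: \cite{KPV-1996} covers $s>-\frac34$, and the endpoint $s=-\frac34$ is due to Guo \cite{G-2009} and Kishimoto \cite{K-2009}, though this is immaterial for $s\ge0$.) With that adjustment, your outline, including the persistence-of-regularity step that uses the conserved $L^2$ norm to make the iteration step length uniform, is sound.
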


Combining the previous theorem and Proposition \ref{Thm:linear estimates for KdV flows} with the transference principle, we deduce the following mixed norm bounds.

\begin{corollary}[Mixed norm bounds for the KdVs]\label{Cor:boundforKdV}
For $s>\frac{3}{4}$ and $b\in(\frac{1}{2},1)$, let $w^{\pm}(t)\in C_t([-T,T]; H_x^s(\R))$ be the solution to KdV \eqref{KdV2} with initial data $w_0^\pm\in H^s(\mathbb{R})$, constructed in Theorem \ref{WP for KdVs}. Then, we have
\begin{align*}
\big\||\pa_x|^{\frac{1}{4}}\langle\pa_x\rangle^s w^\pm\big\|_{L_t^{4}([-T,T]; L_x^{\infty}(\mathbb{R}))} &\ls \|w_{0}^\pm\|_{H^s(\R)}, \\
\| \pa_x\langle\pa_x\rangle^s w^\pm\|_{L_x^\infty(\R;L_t^2([-T,T]))} &\ls \|w_{0}^\pm\|_{H^s(\R)},\\
\| w^\pm\|_{L_x^2(\mathbb{R};L_{t}^\infty([-T,T])) }&\ls \|w_{0}^\pm\|_{H^s(\R)}.
\end{align*}
\end{corollary}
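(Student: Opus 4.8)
The plan is to reduce all three estimates to the single bound $\|w^\pm\|_{X_\pm^{s,b}} \lesssim \|w_0^\pm\|_{H^s}$ supplied by the local well-posedness part of Theorem \ref{WP for KdVs}, and then to feed this into the transferred linear estimates of Corollary \ref{cor:linear estimates for KdV flows}. Since $|\pa_x|^{1/4}$, $\pa_x$ and $\langle\pa_x\rangle^s$ are all spatial Fourier multipliers, they commute with one another and with the $X_\pm^{s,b}$-structure, so it suffices to apply the corollary to suitably differentiated copies of $w^\pm$. Crucially, unlike the discrete FPU estimates of Proposition \ref{prop:Strichartz}, the Airy Strichartz estimate in Corollary \ref{cor:linear estimates for KdV flows} admits the endpoint $(q,r)=(4,\infty)$, which is exactly the pair needed for the first bound.

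First I would fix $b\in(\tfrac12,1)$ as in Theorem \ref{WP for KdVs}, so that the constructed solution obeys $\|w^\pm\|_{X_\pm^{s,b}}\lesssim\|w_0^\pm\|_{H^s}$, shrinking $T$ if necessary so that $T\le 1$. For the Strichartz bound I apply the $(q,r)=(4,\infty)$ case of Corollary \ref{cor:linear estimates for KdV flows} to $\langle\pa_x\rangle^s w^\pm$, giving $\big\||\pa_x|^{1/4}\langle\pa_x\rangle^s w^\pm\big\|_{L_t^4 L_x^\infty}\lesssim \big\|\langle\pa_x\rangle^s w^\pm\big\|_{X_\pm^{0,b}}=\|w^\pm\|_{X_\pm^{s,b}}$. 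For the local smoothing bound I apply the local smoothing half of the same display to $\langle\pa_x\rangle^s w^\pm$, obtaining $\big\|\pa_x\langle\pa_x\rangle^s w^\pm\big\|_{L_x^\infty L_t^2}\lesssim\|w^\pm\|_{X_\pm^{s,b}}$. For the maximal function bound I apply the second display of the corollary directly to $w^\pm$, obtaining $\|w^\pm\|_{L_x^2 L_t^\infty}\lesssim\|w^\pm\|_{X_\pm^{s,b}}$. In each case the right-hand side is then controlled by $\|w_0^\pm\|_{H^s}$ via the well-posedness bound.

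The only genuine care lies in matching time intervals, since the $X^{s,b}$ estimates are global in $t$ whereas the target norms live on $[-T,T]$ (and the maximal-function estimate of the corollary is stated on $[-1,1]$). I would work with the restriction norm: the well-posedness theorem furnishes an extension $\tilde w^\pm$ of $w^\pm$ to $\mathbb{R}\times\mathbb{R}$ with $\tilde w^\pm|_{[-T,T]}=w^\pm$ and $\|\tilde w^\pm\|_{X_\pm^{s,b}}\lesssim\|w_0^\pm\|_{H^s}$. For the Strichartz and local smoothing norms, restricting the global-in-$t$ estimate for $\tilde w^\pm$ to $[-T,T]$ only decreases the norm, so the bounds transfer verbatim; for the maximal function norm, the inclusion $[-T,T]\subseteq[-1,1]$ (using $T\le 1$) gives $\|w^\pm\|_{L_x^2 L_t^\infty([-T,T])}\le\|\tilde w^\pm\|_{L_x^2 L_t^\infty([-1,1])}$, to which the corollary applies. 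I expect this interval bookkeeping to be the most delicate—though still entirely routine—point; the estimates themselves are immediate once the transference principle and the well-posedness bound are in hand.
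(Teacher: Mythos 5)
Your proposal is correct and follows essentially the same route as the paper: the paper deduces the corollary in one line by combining the well-posedness bound $\|w^\pm\|_{X_\pm^{s,b}}\lesssim\|w_0^\pm\|_{H^s}$ from Theorem \ref{WP for KdVs} with the transferred Airy estimates (Proposition \ref{Thm:linear estimates for KdV flows} via the transference principle, i.e., Corollary \ref{cor:linear estimates for KdV flows}). Your additional observations — that the multipliers $\langle\pa_x\rangle^s$ commute with the $X_\pm^{s,b}$ structure so that $\|\langle\pa_x\rangle^s w^\pm\|_{X_\pm^{0,b}}=\|w^\pm\|_{X_\pm^{s,b}}$, that the endpoint $(q,r)=(4,\infty)$ is available in the continuous (unlike the discrete) setting, and the routine restriction-norm bookkeeping — are exactly the details the paper leaves implicit.
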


\subsection{Uniform bounds for coupled and  decoupled FPUs}\label{sec: uniform bound for coupled FPU}

Next, we state the main results of this section. They assert uniform bounds for the coupled and  decoupled FPUs, analogous to those for the KdV equations.

\begin{proposition}[Uniform bound for coupled and  decoupled FPU]\label{Prop:LWP}
Let $s \ge 0$. Suppose that
$$\sup_{h\in(0,1]}\sum_\pm\|u_{h,0}^\pm\|_{H^s(h\mathbb{Z})}\leq R.$$
Then, there exists $T>0$, depending on $R>0$ but not on $h\in(0,1]$, such that the solution $(u_h^+(t),u_h^-(t))$ (resp., $(v_h^+(t),v_h^-(t))$) to the coupled FPU \eqref{coupled FPU'} (resp., decoupled FPU \eqref{decoupled FPU}) with initial data $(u_{h,0}^+,u_{h,0}^-)$ such that
\begin{equation}\label{uniform}
\left\|\theta(\tfrac{t}{T})u_h^\pm(t)\right\|_{X_{h,\pm}^{s,b}}\lesssim \|u_{h,0}^\pm\|_{H^s(h\mathbb{Z})}\quad\Big(\textup{resp., }\left\|\theta(\tfrac{t}{T})v_h^\pm(t)\right\|_{X_{h,\pm}^{s,b}}\lesssim \|u_{h,0}^\pm\|_{H^s(h\mathbb{Z})}\Big),
\end{equation}
where $\theta\in C_c^\infty$ is a non-negative cut-off, and the Bourgain space $X_{h,\pm}^{s,b}$ is given in Definition \ref{def Xsb}.
\end{proposition}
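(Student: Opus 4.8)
The plan is to build both the coupled and the decoupled solutions by a single contraction-mapping argument in the Bourgain spaces of Definition \ref{def Xsb}, treating the coupled FPU \eqref{coupled FPU'} as a system for the pair $(u_h^+,u_h^-)\in X_{h,+}^{s,b}\times X_{h,-}^{s,b}$ and reading off the bound for the decoupled FPU \eqref{decoupled FPU} as the special case in which the cross terms are switched off. The exponent $b>\frac12$ will be chosen close enough to $\frac12$ that the modulation gain $\delta>0$ furnished by the bilinear estimates strictly dominates the loss $b-\frac12$; concretely, the ranges in Lemmas \ref{lem:bi1}--\ref{lem:bi3} permit taking, say, $\delta=\frac18$ and $b=\frac12+\frac1{16}$, so that simultaneously $b>\frac12$ (hence $X_{h,\pm}^{s,b}\hookrightarrow C_tH_x^s$) and $b-\frac12<\delta$.

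For fixed $T\in(0,1]$ and a cut-off $\theta\in C_c^\infty$, I would consider the map $\Gamma=(\Gamma^+,\Gamma^-)$,
\[\Gamma^\pm(u_h^+,u_h^-):=\theta(\tfrac tT)S_h^\pm(t)u_{h,0}^\pm\mp\tfrac14\,\theta(\tfrac tT)\int_0^t S_h^\pm(t-t_1)\nabla_h\Big[\big(u_h^\pm+e^{\pm\frac{2t_1}{h^2}\partial_h}u_h^\mp\big)^2+h^2e^{\pm\frac{t_1}{h^2}\partial_h}\mathcal R\Big]\,dt_1,\]
and seek a fixed point in a ball of $X_{h,+}^{s,b}\times X_{h,-}^{s,b}$. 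The homogeneous term is bounded by $\|u_{h,0}^\pm\|_{H^s(h\mathbb{Z})}$ through the linear $X^{s,b}$ estimate, Lemma \ref{lem:properties}(2),(4), with a constant independent of $h$. Expanding the square as $(u_h^\pm)^2+2\,u_h^\pm e^{\pm\frac{2t_1}{h^2}\partial_h}u_h^\mp+(e^{\pm\frac{2t_1}{h^2}\partial_h}u_h^\mp)^2$, I would estimate the three summands by Lemma \ref{lem:bi1}, Lemma \ref{lem:bi3}, and Lemma \ref{lem:bi2}, respectively, each placing $\nabla_h(\cdots)$ in $X_{h,\pm}^{s,b-1+\delta}$; in the two cross terms I take $s'=s$ so that the prefactor $h^{s'-s}=1$ and all constants remain uniform in $h$. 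Feeding this into the inhomogeneous estimate Lemma \ref{lem:properties}(5), and using the time-localization estimate Lemma \ref{lem:properties}(4) to convert the surplus $\delta$ of modulation regularity into a factor $T^{\delta}$, yields a bound of the schematic form
\[\big\|\Gamma^\pm(u_h^+,u_h^-)\big\|_{X_{h,\pm}^{s,b}}\ls \|u_{h,0}^\pm\|_{H^s(h\mathbb{Z})}+T^{\delta}\big\|(u_h^+,u_h^-)\big\|_{X_{h,+}^{s,b}\times X_{h,-}^{s,b}}^2+h^2(\text{cubic}),\]
together with the companion difference estimate. Choosing the radius proportional to $R$ and then $T=T(R)$ small (which is possible precisely because $\delta-(b-\tfrac12)>0$) makes $\Gamma$ a contraction; re-running the estimate on the resulting fixed point gives \eqref{uniform} with an $h$-independent implicit constant. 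The decoupled case is identical with only the $(v_h^\pm)^2$ term present, handled by Lemma \ref{lem:bi1}.

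The steps requiring genuine care---and what I regard as the crux---are twofold. First is the exponent bookkeeping just described: one must confirm that a single pair $(b,\delta)$ is admissible in all three bilinear lemmas and simultaneously satisfies $b>\frac12$ and $b-\frac12<\delta$, so that the contraction closes for small $T$ with a gain rather than a loss in $T$. Second is the higher-order remainder $h^2e^{\pm\frac{t_1}{h^2}\partial_h}\mathcal R$, which is cubic in the solution and therefore lies outside the scope of Lemmas \ref{lem:bi1}--\ref{lem:bi3}; I would control it by the dedicated trilinear estimate of Lemma \ref{lem:AppB}, exploiting the spare factor $h^2$ both to absorb the derivative loss in $\nabla_h$ and to keep the contribution uniformly bounded in $h$ (in fact vanishing as $h\to0$). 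Everything else---unitarity of $S_h^\pm$, the $h$-independence of the transference principle noted after Lemma \ref{lem:properties}, and the algebraic identity $e^{a\partial_h}(u_h^2)=(e^{a\partial_h}u_h)^2$ that casts the cross terms into the form required by Lemmas \ref{lem:bi2}--\ref{lem:bi3}---is routine once these two points are in hand.
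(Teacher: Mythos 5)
Your proposal is correct and follows essentially the same route as the paper: the same time-localized contraction map in $X_{h,+}^{s,b}\times X_{h,-}^{s,b}$, the same use of Lemma \ref{lem:properties}(4),(5) to gain the factor $T^{\delta}$, the same application of Lemmas \ref{lem:bi1}--\ref{lem:bi3} with $s'=s$, the same exponent window $\frac12<b<\frac34-\delta$, $b-\frac12<\delta<\frac14$ (your concrete pair $b=\tfrac12+\tfrac1{16}$, $\delta=\tfrac18$ lies in it), and the same deferral of the cubic remainder $h^2\mathcal{R}$ to Lemma \ref{lem:AppB}. The paper merely omits the remainder from the displayed contraction estimates and cites Lemma \ref{lem:AppB} separately, which is exactly the treatment you propose.
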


\begin{remark}
For the same reason mentioned in Remark \ref{rem:simple}, the proof below does not include the estimate of the higher-order remainder term in \eqref{coupled FPU'}. See Lemma \ref{lem:AppB} for the proof of the estimate of the higher-order remainder term.
\end{remark}

\begin{remark}
As a consequence of the bound \eqref{uniform}), in addition to \eqref{eq:embedding}, we have (also for $v_h^{\pm}$)
\[\|(u_h^+(t), u_h^-(t))\|_{C_t([-T,T];H_x^s(h\Z))}\lesssim \|(u_{h,0}^+,u_{h,0}^-)\|_{H^s(h\Z)}.\]
\end{remark}

Analogous to Corollary \ref{Cor:boundforKdV}, we have the following result from Propositions \ref{prop:Strichartz} and \ref{Prop:LWP}.

\begin{corollary}[Mixed norm bounds for the decoupled FPU]\label{Cor:estimates}
Let $s\ge 0$,$\frac{3}{4} < s' < \frac{3}{2}$ and $b> \frac12$. 
Let $v_h^\pm(t)\in C_t([-T,T]; L_x^2(h\mathbb{Z}))$ be the solution to  the decoupled FPU \eqref{decoupled FPU} with initial data $u_{h,0}^{\pm}$ given in Proposition \ref{Prop:LWP}. 
Let $m(D_h)$ be the Fourier multiplier, i.e., $\mathcal F_h[m(D_h)f_h](\xi)=m(\xi)\widehat{f_h}(\xi)$, with a uniform bound such that
\begin{align*}
|m(\xi)|\le C \la \xi\ra^s , \text{ for all } \xi\in \T_h,
\end{align*}
where $C$ is independent of $h$.
Then, we have for $q$ as in Corollary~\ref{Corendpoint},
\begin{align*}
\big\|  m(D_h) \pa_h v_h^\pm\big\|_{L_t^{q}([-T,T];L_x^\infty(h\Z))} &\ls \|u_{h,0}^\pm\|_{H{s+s'}(h\Z)}, \\
\| m(D_h) \pa_h v_h^\pm\|_{L_x^\infty (h\Z;L_t^2([-T,T])} &\ls \|u_{h,0}^\pm\|_{H^s(h\Z)},\\
\|  m(D_h) v_h^\pm\|_{L_x^2 (h\Z;L_t^\infty([-T,T]))}&\ls \|u_{h,0}^\pm\|_{H^{s+s'}(h\Z)}.
\end{align*}
The discrete differential operator $\pa_h$ can be replaced by $\nabla_h$ or $\pa_h^+$.
\end{corollary}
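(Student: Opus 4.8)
The plan is to derive each of the three bounds by inserting the nonlinear solution $v_h^\pm$ (more precisely, its time-localization) into the corresponding linear estimate in its Bourgain-space form, and then closing with the uniform $X_{h,\pm}^{s,b}$ control supplied by Proposition \ref{Prop:LWP}, exactly as Corollary \ref{Cor:boundforKdV} was deduced in the continuous setting. The multiplier $m(D_h)$ is harmless throughout: since $|m(\xi)| \le C\la \xi\ra^s$ with $C$ independent of $h$, the operator $m(D_h)$ maps $X_{h,\pm}^{\sigma+s,b}$ boundedly into $X_{h,\pm}^{\sigma,b}$ uniformly in $h$ for every $\sigma \in \R$ (it multiplies the space-time Fourier transform by $m(\xi)$, leaving the $\tau$-weight untouched), so it may be absorbed at the sole cost of raising the regularity index by $s$. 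Moreover, $m(D_h)$ commutes with the discrete differentials $\pa_h$, $\nabla_h$, $\pa_h^+$, all of which are Fourier multipliers.

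For the local smoothing bound I would apply the transference estimate $\|\pa_h f_h\|_{L_x^\infty L_t^2} \ls \|f_h\|_{X_{h,\pm}^{0,b}}$ of Corollary \ref{cor:Strichartz} to $f_h = m(D_h)v_h^\pm$, giving $\|m(D_h)\pa_h v_h^\pm\|_{L_x^\infty L_t^2} \ls \|m(D_h)v_h^\pm\|_{X_{h,\pm}^{0,b}} \ls \|v_h^\pm\|_{X_{h,\pm}^{s,b}}$, and then invoke Proposition \ref{Prop:LWP} at regularity level $s$ to bound the last quantity by $\|u_{h,0}^\pm\|_{H^s}$. For the maximal function bound I would instead use the transference estimate $\|f_h\|_{L_x^2 L_t^\infty} \ls \|f_h\|_{X_{h,\pm}^{s',b}}$ of Corollary \ref{cor:Strichartz} (valid for $s' > \tfrac34$), obtaining $\|m(D_h)v_h^\pm\|_{L_x^2 L_t^\infty} \ls \|m(D_h)v_h^\pm\|_{X_{h,\pm}^{s',b}} \ls \|v_h^\pm\|_{X_{h,\pm}^{s+s',b}}$, and close with Proposition \ref{Prop:LWP} at the elevated level $s+s'$. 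For the endpoint Strichartz bound I would first pass Corollary \ref{Corendpoint} through the transference principle (Lemma \ref{lem:properties} (3)), which applies since $\|e^{it\tau_0}\pa_h S_h^\pm(t)u_{h,0}\|_{L_t^q L_x^\infty}$ is independent of $\tau_0$, yielding $\|\pa_h f_h\|_{L_t^q L_x^\infty} \ls \|f_h\|_{X_{h,\pm}^{s',b}}$; running the same scheme then gives $\|m(D_h)\pa_h v_h^\pm\|_{L_t^q L_x^\infty} \ls \|v_h^\pm\|_{X_{h,\pm}^{s+s',b}} \ls \|u_{h,0}^\pm\|_{H^{s+s'}}$. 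In every case the interchangeability of $\pa_h$ with $\nabla_h$ or $\pa_h^+$ is inherited directly from Proposition \ref{prop:Strichartz}, Corollary \ref{cor:Strichartz} and Corollary \ref{Corendpoint} through the norm equivalence of Lemma \ref{Prop:norm equivalence}.

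Two technical points remain. The linear and transference estimates hold globally in time, whereas the target norms are restricted to $[-T,T]$; I would therefore apply them to the cut-off extension $\theta(\tfrac{t}{T})v_h^\pm$ (with $\theta \equiv 1$ on $[-1,1]$) and then restrict back, which is precisely the object for which Proposition \ref{Prop:LWP} delivers the $X_{h,\pm}^{s,b}$ bound. The only genuinely delicate issue, and the main obstacle, is that the maximal function and endpoint bounds require the $X_{h,\pm}^{s+s',b}$ norm of $v_h^\pm$, hence Proposition \ref{Prop:LWP} at the raised level $s+s'$; I must verify that the existence time $T$ produced there is governed by the data size $R$ alone and is not diminished by elevating the regularity index. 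Since the contraction argument behind Proposition \ref{Prop:LWP} rests on the bilinear estimates of Section \ref{sec: bilinear estimates}, which hold at every $s \ge 0$ with a common $b$ and lose no derivatives, persistence of regularity ensures that the same $T$ controls the $X_{h,\pm}^{s+s',b}$ norm uniformly in $h$. With these points settled the three estimates follow at once.
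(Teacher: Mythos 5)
Your proposal is correct and is essentially the argument the paper intends: the corollary is stated there without proof as following from the linear estimates (Proposition \ref{prop:Strichartz}, Corollaries \ref{cor:Strichartz} and \ref{Corendpoint}) combined with the uniform $X_{h,\pm}^{s,b}$ bounds of Proposition \ref{Prop:LWP}, exactly as Corollary \ref{Cor:boundforKdV} follows in the continuous setting. Your handling of the details the paper leaves implicit --- absorbing $m(D_h)$ at the cost of $s$ derivatives since it commutes with the $\tau$-weight, transferring Corollary \ref{Corendpoint} via Lemma \ref{lem:properties} (3), working with the cutoff $\theta(\tfrac{t}{T})v_h^\pm$ and restricting to $[-T,T]$, and invoking Proposition \ref{Prop:LWP} at the elevated level $s+s'$ (justified because the bilinear estimate of Lemma \ref{lem:bi1} is tame, with one factor measured only in $X_{h,\pm}^{0,b}$) --- is sound and consistent with the paper's framework.
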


Our proof of Proposition \ref{Prop:LWP} uses the standard iteration scheme (also known as the ``Picard iteration method") via the Fourier restriction norm method, and the main ingredients are the bilinear estimates established in Section \ref{sec: bilinear estimates} (in particular, Lemma \ref{lem:bi1}, \ref{lem:bi2} and \ref{lem:bi3}). Here, we present the  proof of only the coupled FPU, because the proof of the decoupled one closely follows. Indeed, the latter is simpler owing to the absence of the coupled terms.

\begin{proof}[Proof of Proposition \ref{Prop:LWP}]
Let $\theta \in C_c^\infty(\R)$ be a time cut-off function satisfying $\theta(t) \equiv 1$ on $[-1,1]$ and $\theta(t) = 0$ for $|t| > 2$. For sufficiently small $T\in(0,1]$ to be chosen later, we define
$$\Phi(u_h^+, u_h^-)=\big(\Phi^+(u_h^+), \Phi^-(u_h^-)\big)$$
by
$$\Phi^\pm(u_h^\pm):= \theta(\tfrac{t}{T})S_h^\pm(t)u_{h,0}^\pm\mp\frac{\theta(\tfrac{t}{T})}{4}  \int_0^t S_h^\pm(t-t_1) \theta(\tfrac{t_1}{2T})\nabla_h \Big\{u_h^\pm(t_1)+e^{\pm\frac{2t_1}{h^2}\partial_h}u_h^\mp(t_1)\Big\}^2dt_1.$$
Let $X_h^{s,b}$ denote the solution space for $(u_h^+,u_h^-)$ equpped with the norm
$$\big\| (u_h^+,u_h^-)\big\|_{X_h^{s,b}} := \sum_\pm\|u_h^\pm\|_{X_{h,\pm}^{s,b}},$$
where the exponent $b$ will be chosen later.

Lemma \ref{lem:properties} (4) and (5) yield
\begin{equation}\label{eq:contraction1}
\|\Phi^\pm(u_h^\pm)\|_{X_{h,\pm}^{s,b}} \leq CT^{\frac12-b}R+ CT^{\delta}\Big\|\nabla_h \Big\{u_h^\pm+e^{\pm\frac{2t}{h^2}\partial_h}u_h^\mp\Big\}^2\Big\|_{X_{h,\pm}^{s,b-1+\delta}}
\end{equation}
for some $C\geq1$ and $\delta >0$ (to be chosen later). Then, by applying Lemma \ref{lem:bi1}, \ref{lem:bi2} and \ref{lem:bi3} (with $s'=s$) to the second term on the right-hand side of \eqref{eq:contraction1}, we obtain
$$\begin{aligned}
\big\|\Phi^\pm(u_h^\pm)\big\|_{X_{h,\pm}^{s,b}} &\le CT^{\frac12-b} R+ \tilde{C}T^{\delta}\big\|\big(u_h^+,u_h^-\big)\big\|_{X_h^{s,b}}^2.
\end{aligned}$$
Thus, by summing in $\pm$, we have
\begin{equation}\label{eq:contraction2}
\big\|\Phi(u_h^+, u_h^-)\big\|_{X_h^{s,b}}\leq 2CT^{\frac12-b} R + 2\tilde{C}T^{\delta}\big\|\big(u_h^+,u_h^-\big)\big\|_{X_h^{s,b}}^2.
\end{equation}
Now, we choose $b$ and $\delta$ satisfying
\[\frac12 < b < \frac34 - \delta \quad \mbox{and} \quad b-\frac12 < \delta < \frac14,\]
and we take $T>0$ such that
\begin{equation}\label{eq:T choice}
32C\tilde{C}T^{\frac12 + \delta -b}R \leq 1.
\end{equation}
Then, $\Phi$ maps from the set
\[\mathfrak{X}:=\Big\{ (u_h^+, u_h^-) \in X_h^{s,b} : \big\|\big(u_h^+, u_h^-\big) \big\|_{X_h^{s,b}} \le 4CT^{\frac12-b}R \Big\}\]
to itself. Indeed, it follows from \eqref{eq:contraction2} that 
$$\begin{aligned}
\left\|\Phi(u_h^+, u_h^-)\right\|_{X_h^{s,b}}&\leq 2CT^{\frac12-b}R + 2\tilde{C}T^{\delta}\cdot(4CT^{\frac12-b}R)^2\\
&= 2CT^{\frac12-b}R + 32C\tilde{C}T^{\frac{1}{2}+\delta-b}R\cdot CT^{\frac12-b}R\\
&\leq 4 CT^{\frac12-b}R.
\end{aligned}$$

We repeat the procedure for the difference. By Lemma \ref{lem:properties} (4) and (5), it follows that
$$\begin{aligned}
&\big\|\Phi^\pm(u_h^\pm)-\Phi^\pm(\tilde{u}_h^\pm)\big\|_{X_{h,\pm}^{s,b}} \\
&\leq CT^{\delta}\Big\|\nabla_h \Big(u_h^\pm+e^{\pm\frac{2t}{h^2}\partial_h}u_h^\mp\Big)^2-\nabla_h \Big(\tilde{u}_h^\pm+e^{\pm\frac{2t}{h^2}\partial_h}\tilde{u}_h^\mp\Big)^2\Big\|_{X_{h,\pm}^{s,b-1+\delta}}\\
&= CT^{\delta}\Big\|\nabla_h\Big((u_h^\pm+\tilde{u}_h^\pm)+e^{\pm\frac{2t}{h^2}\partial_h}(u_h^\mp+\tilde{u}_h^\mp)\Big) \Big((u_h^\pm-\tilde{u}_h^\pm)+e^{\pm\frac{2t}{h^2}\partial_h}(u_h^\mp-\tilde{u}_h^\mp)\Big)\Big\|_{X_{h,\pm}^{s,b-1+\delta}}.
\end{aligned}$$
Next, by applying Lemmas \ref{lem:bi1}, \ref{lem:bi2}, and \ref{lem:bi3} (with $s'=s$), we obtain
$$\big\|\Phi^\pm(u_h^\pm)-\Phi^\pm(\tilde{u}_h^\pm)\big\|_{X_{h,\pm}^{s,b}}\leq \tilde{C}T^{\delta}\left(\sum_\pm\|u_h^\pm\|_{X_{h,\pm}^{s,b}}+\|\tilde{u}_h^\pm\|_{X_{h,\pm}^{s,b}}\right) \sum_\pm\|u_h^\pm-\tilde{u}_h^\pm\|_{X_{h,\pm}^{s,b}}.$$
Hence, if $(u_h^+, u_h^-), (\tilde{u}_h^+, \tilde{u}_h^-)\in \mathfrak{X}$, then
\begin{equation}\label{eq:diff}
\begin{aligned}
\big\|\Phi\big(u_h^+,u_h^-\big)-\Phi\big(\tilde{u}_h^+, \tilde{u}_h^-\big)\big\|_{X_h^{s,b}}&=\sum_\pm\big\|\Phi^\pm(u_h^\pm)-\Phi^\pm(\tilde{u}_h^\pm)\big\|_{X_{h,\pm}^{s,b}} \\
&\leq 2\cdot\tilde{C}T^{\delta}\cdot 8CT^{\frac12-b}R\cdot\left\|\left(u_h^+-\tilde{u}_h^+,u_h^--\tilde{u}_h^-\right)\right\|_{X_h^{s,b}}\\
&\leq \frac{1}{2}\left\|\left(u_h^+-\tilde{u}_h^+,u_h^--\tilde{u}_h^-\right)\right\|_{X_h^{s,b}}.
\end{aligned}
\end{equation}
Therefore, we conclude that $\Phi$ is contractive on $\mathfrak{X}$. Consequently, $(u_h^+, u_h^-)$ is a solution to the coupled FPU \eqref{coupled FPU'}, which by uniqueness coincides with the solution in $C_t([-T,T]; L_x^2(h\mathbb{Z}))$, constructed in Proposition \ref{LWP of CS}. Moreover, it satisfies the desired bound $\|u_h^\pm\|_{X_{h,\pm}^{s,b}}\leq 4CT^{\frac12-b}R$.
\end{proof}

\section{Convergence from FPU to counter-propagating KdVs}\label{sec:main proof}

\subsection{From coupled to decoupled FPUs: Proof of Proposition \ref{convergence to decoupled FPU}}\label{sec: coupled to decoupled}
This section is devoted to showing that solutions to the coupled system \eqref{coupled FPU'} approximate to those to the decoupled system \eqref{decoupled FPU} in $L^2(h\Z)$ as $h \to \infty$. 

The remainder of this section will be (roughly) presented as follows (see also Fig. \ref{Fig:1}). For given (slightly) regular initial data $u_{h,0}^{\pm}$ (in $H^s(h\Z)$, $0 < s \le 1$), we have $H^s(h\Z)$ local solutions $u_h^{\pm}$ and $v_h^{\pm}$ (see Proposition \ref{LWP of CS}). Then, a suitable choice of $0 < T \ll 1$ and uniform bounds of $u_h^\pm$ and $v_h^\pm$ (see Propositions \ref{Prop:LWP} and \ref{Prop:LWP})) ensure that
\[\norm{u_h^{\pm}(t) - v_h^{\pm}(t)}_{L_x^2(h\Z)} \lesssim O(h^s), \quad \mbox{for all} \quad |t| \le T.\]
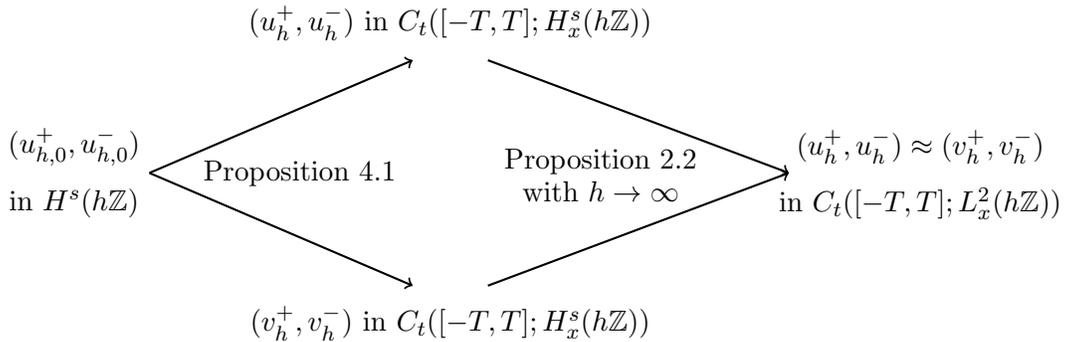
\begin{figure}[h!]
\begin{center}
\begin{tikzpicture}[scale=0.5]
\node at (-10,0.7){$(u_{h,0}^+,u_{h,0}^-)$};
\node at (-10,-0.8){in $H^{s}(h\Z)$};
\draw[thick,->] (-8,0) -- (-1,3);
\node at (-4,0){Proposition \ref{LWP of CS}};
\node at (0,4){$(u_h^+,u_h^-)$ in $C_t([-T,T];H_x^{s}(h\Z))$};
\draw[thick,->] (-8,0) -- (-1,-3);
\node at (0,-4){$(v_h^+,v_h^-)$ in $C_t([-T,T];H_x^{s}(h\Z))$};
\node at (4,0.3){Proposition \ref{convergence to decoupled FPU}};
\draw[thick,->] (1,3) -- (9,0);
\node at (4,-0.5){with $h \to \infty$};
\draw[thick,->] (1,-3) -- (9,0);
\node at (12.5,0.7){$(u_h^+,u_h^-) \approx (v_h^+,v_h^-)$};
\node at (12.5,-0.8){in $C_t([-T,T];L_x^2(h\Z))$};
\end{tikzpicture}
\end{center}
\caption{Schematic representation for decoupling \eqref{decoupled FPU} from \eqref{coupled FPU'}.}\label{Fig:1}
\end{figure}

\begin{remark}
As $h^2$ is involved in the higher-order remainder term in \eqref{coupled FPU'} (see Lemma \ref{lem:AppB}), the estimate of the higher-order term is not essential in the proof below; thus, we omit it.
\end{remark}

\begin{proof}[Proof of Proposition \ref{convergence to decoupled FPU}]
For given initial data $u_{h,0}^{\pm}$, let $u_h^\pm(t)$ (resp., $v_h^\pm(t)$) be the solution to the coupled FPU (resp., the decoupled FPU) constructed in Proposition \ref{LWP of CS}, and let $T=T(R)>0$ be the minimum of the existence times for two solutions. Moreover, Proposition \ref{Prop:LWP} implies that the $X_{h, \pm}^{s,b}$ norms of the solutions are uniformly bounded by the size of the initial data, i.e.,
\begin{equation}\label{eq:decoupling1}
\big\|u_h^\pm\big\|_{X_{h,\pm}^{s,b}}, \big\|v_h^\pm\big\|_{X_{h,\pm}^{s,b}} \le M=M(R).
\end{equation}
Note that $M(R) = 4CT^{\frac12 -b}R$.

First, by subtracting \eqref{decoupled FPU} from \eqref{coupled FPU'}, we write
$$\begin{aligned}
&u_h^\pm(t) - v_h^{\pm}(t) \\
=&~{}\mp\frac{\theta(\frac{t}{T})}4 \int_0^t S_h^\pm(t-t_1) \theta(\tfrac{t_1}{2T})\nabla_h \Big\{\big(u_h^\pm(t_1) + v_h^\pm(t_1)\big)\big(u_h^\pm(t_1) - v_h^\pm(t_1)\big)\Big\}dt_1 \\
&~{}\mp\frac{\theta(\frac{t}{T})}{4} \int_0^t S_h^\pm(t-t_1) \theta(\tfrac{t_1}{2T})\nabla_h \Big\{2u_h^\pm(t_1)\cdot e^{\mp\frac{2t_1}{h^2}\partial_h}u_h^\mp(t_1) + \big(e^{\mp\frac{2t_1}{h^2}\partial_h}u_h^\mp(t_1)\big)^2\Big\}dt_1.
\end{aligned}$$ 
%
%
We take the $X_{h,\pm}^{0,b}$ norm\footnote{One can fix $b > \frac12$ here,and $\delta > 0$ below such that the argument of the local well-posedness is valid.} on both sides. Similarly to the proof of Proposition \ref{Prop:LWP}, we apply Lemma \ref{lem:bi1} to the first integral and Lemma \ref{lem:bi2} and \ref{lem:bi3} to the second integral to obtain
$$\begin{aligned}
\big\|u_h^{\pm}(t)- v_h^{\pm}(t)\big\|_{X_{h,\pm}^{0,b}}\leq&~{} \tilde CT^{\delta}\Big(\|u_h^{\pm}\|_{X_{h,\pm}^{0,b}}+\|v_h^{\pm}\|_{X_{h,\pm}^{0,b}}\Big)\big\|u_h^{\pm} - v_h^{\pm}\big\|_{X_{h,\pm}^{0,b}}\\
&+ \tilde Ch^s T^{\delta}\Big(\|u_h^{\pm}\|_{X_{h,\pm}^{s,b}}\|u_h^{\mp}\|_{X_{h,\pm}^{s,b}} + \|u_h^{\mp}\|_{X_{h,\pm}^{s,b}}^2\Big),
\end{aligned}$$
for some uniform constant $\tilde C>0$ as in \eqref{eq:diff}. 
With the bounds \eqref{eq:decoupling1}, we extend the time interval $[-T,T]$ to be as long as  $2CT^{\delta}M < \frac12$, 
The local existence time $T>0$ satisfying \eqref{eq:T choice} enables us to conclude that 
$$4CM^2h^s \geq\big\|u_h^{\pm}(t)-v_h^{\pm}(t)\big\|_{X_{h,\pm}^{0,b}}\gtrsim\big\|u_h^{\pm}(t)-v_h^{\pm}(t)\big\|_{C_tL_x^2},$$
where the embedding $X_{h,\pm}^{0,b} \hookrightarrow C_t([-T,T];L_x^2(h\Z))$ is used in the last step (see Lemma \ref{lem:properties} (3)).
\end{proof}

\subsection{From decoupled FPU to KdV: Proof of Proposition \ref{convergence to KdV}}\label{sec: decoupled to KdV}

We now prove that solutions to the decoupled FPU \eqref{decoupled FPU} can be approximated by those to KdV \eqref{KdV} as $h\to0$ (Proposition \ref{convergence to KdV}). To compare the two solutions, we employ the linear interpolation $l_h$ defined as in \eqref{def: linear interpolation} and the spacetime norm $S([-T,T])$ given by \eqref{mixed spacetime norm}.

\begin{proof}[Proof of Proposition \ref{convergence to KdV}]
Let $T>0$ be a sufficiently small number to be chosen later independently of $h\in(0,1]$. 
Using \eqref{decoupled FPU} and \eqref{KdV}, we write\footnote{In what follows, as mentioned in Remark \ref{rem:w_h}, we denote the solutions to KdVs \eqref{KdV} by $w_h^{\pm}$, even if they are posed on $\R$.}
\begin{align*}
l_h v_h^\pm(t)-w_h^\pm(t)
&=\Big\{ l_h S_h^\pm(t) u_{h,0}^\pm-S^\pm(t) l_h u_{h,0}^\pm \Big\} \\
&\quad\mp\frac14 \int_0^t \big( l_h S_h^\pm(t-t_1) - S^\pm(t-t_1) l_h \big)\nabla_h\big(v_h^\pm(t_1)^2\big)  dt_1 \\
&\quad\mp\frac14\int_0^t S^\pm(t-t_1)\Big\{ l_h \nabla_h  \big(v_h^\pm(t_1)^2\big) -  \partial_x\big((l_hv_h^\pm)(t_1)^2\big) \Big\} dt_1  \\
&\quad\mp\frac14 \int_0^t S^\pm(t-t_1)\partial_x\Big\{ \big( l_h v_h^\pm)(t_1)^2 -  w_h^\pm(t_1)^2  \Big\} dt_1.
\end{align*}
Then, Propositions \ref{linear approx} and \ref{Thm:linear estimates for KdV flows} enable us to obtain
\begin{equation}\label{eq:kdvlimit}
\begin{aligned}
\|l_h v_h^\pm-w_h^\pm\|_{S([-T,T])}
&\lesssim h^{\frac{2s}{5}} \|u_{h,0}\|_{H_h^s}+h^{\frac{2s}{5}} \big\|\nabla_h\big(v_h^\pm\big)^2\big\|_{L_t^1H_x^s}\\
&\quad+\Big\|l_h \nabla_h (v_h^\pm)^2-\partial_x(l_hv_h^\pm)^2 \Big\|_{L_t^1L_x^2}\\
&\quad+\Big\|\partial_x\Big\{ \big( l_h v_h^\pm)^2 -  (w_h^\pm)^2  \Big\}\Big\|_{L_t^1L_x^2}\\
&=:I_1+I_2+I_3+I_4.
\end{aligned}
\end{equation}

For $I_2$, we apply the Leibniz rule \eqref{Leibnitz rule2} and the H\"older inequality, 
\[\begin{aligned}
I_2 \le&~{} 2h^{\frac{2s}{5}} \big\| \nabla_h v_h^{\pm}(t) \cos\big(\tfrac{-i\pa_h}{2h}\big)v_h^{\pm}(t)\big\|_{L_t^1H_x^s} \\
\lesssim&~{} h^{\frac{2s}{5}}T^\frac12 \Big\| \la \pa_h\ra^s\Big\{\nabla_h v_h^{\pm}(t) \cos(\tfrac{-i\pa_h}{2h})v_h^{\pm}(t)\Big\} \Big\|_{L_t^2L_x^2}.
\end{aligned}\]
Since the operator $\cos(\frac{-i\pa_h}{2h})$ is bounded in $L_h^2$ (independent of $h$), from Lemma \ref{lem:prod} and Proposition~\ref{Prop:LWP}, we obtain
\begin{equation}\label{eq:I_2}
I_2\ls h^{\frac{2s}{5}}T^\frac12 \|v_h^{\pm}(t)\|_{X_{h,\pm}^{s,b}}\left\| v_h^{\pm}(t)\right\|_{X_{h,\pm}^{s,b}}
\ls T^\frac12 h^{\frac{2s}{5}} \|u_{h,0}^{\pm}\|_{H^s}^2.
\end{equation}
For $I_3$, Propositions \ref{Pro:reverse order} and \ref{Pro:almost distribution} immediately yield
\begin{align}\begin{aligned}\label{I3}
I_3&\leq \Big\|l_h \nabla_h (v_h^\pm)^2-\partial_xl_h(v_h^\pm)^2 \Big\|_{L_t^1L_x^2}+\Big\|\partial_x\Big\{l_h(v_h^\pm)^2-(l_hv_h^\pm)^2\Big\} \Big\|_{L_t^1L_x^2}\\
&\ls h\Big\{\big\|(v_h^\pm)^2\big\|_{L_t^1\dot H_x^2}+\big\| \big(\pa_h^+ v_h^\pm\big)^2 \big\|_{L_t^1L_x^2}\Big\}.
\end{aligned}\end{align}
For the first term in \eqref{I3}, we apply \eqref{Leibnitz rule2} twice to get
\begin{align*}
\nabla_h^2 \Big\{(v_h^\pm)^2\Big\}
&=2\nabla_h^2 v_h^\pm\cdot \cos^2(\tfrac{-ih\pa_h}{2})v_h^\pm
+2\Big\{\cos(\tfrac{-ih\pa_h}{2}) \nabla_h v_h^\pm\Big\}^2.
\end{align*}
Then, by \eqref{norm equivalence}, the H\"older inequality and Corollary~\ref{Cor:estimates}, we estimate 
\begin{align*}
\big\|  (v_h^\pm)^2\big\|_{L_t^1\dot H_x^2}&\sim \Big\| \nabla_h^2 \Big\{(v_h^\pm)^2\Big\}\Big\|_{L_t^1L_x^2}  \\
&\ls
T^{1-\frac{1}{q}}\| \nabla_h^2 v_h^\pm\|_{L_x^2L_t^\infty}  \big\| \cos^2(\tfrac{-ih\pa_h}{2})v_h^\pm  \big\|_{L_x^\infty L_t^2}\\
&\quad+T^{1- \frac1q}\big\| \nabla_h\cos(\tfrac{-ih\pa_h}{2})v_h^\pm  \big\|_{L_t^qL_x^\infty}
 \big\| \nabla_h\cos(\tfrac{-ih\pa_h}{2})v_h^\pm \big\|_{L_t^\infty L_x^2}\\
&\ls T^{1-\frac{1}{q}}\| u_{h,0}^\pm \|_{\dot{H}^1}  \| u_{h,0}^\pm \|_{H^{s}}.
\end{align*}
Similarly, we estimate the second term on the right-hand side of \eqref{I3} as 
\begin{align*}
\big\| \big(\pa_h^+ v_h^\pm\big)^2 \big\|_{L_t^1L_x^2}
&\ls T^{1-\frac1q} \| \pa_h^+  v_h^\pm \|_{L_t^qL_x^\infty}
\|  \pa_h^+  v_h^\pm \|_{L_t^\infty L_x^2} \\
&\ls  T^{1-\frac1q}  \|u_{h,0}\|_{H^s}\|u_{h,0}\|_{\dot{H}^1}.
\end{align*}
Thus, by Lemma~\ref{Lem:Sobolev bound depending on h}, we conclude that 
\begin{equation}\label{eq:I_3}
I_3 \ls h^{s} T^{\frac12}\|u_{h,0}^{\pm}\|_{H^s}^2.
\end{equation}
Before dealing with $I_4$, we first observe that a direct computation gives
\[
\sup_{x \in R} |\partial_x l_h f_h (x)| = \sup_{x_m \in h\Z} |(\partial_h^+ f_h)(x_m)|
\]
and
\[\begin{aligned}
\int_{\R} (\sup_{t} |l_h f_h (t,x)|)^2 \; dx =&~{} \sum_{x_m \in h\Z} \int_0^h (\sup_{t} |f_h(x_m) + (\partial_h^+ f_h)(x_m) \cdot x|)^2 \; dx\\
\lesssim&~{} h\sum_{x_m \in h\Z} (\sup_t |f_h(t,x_m)|)^2 + h^3 \sum_{x_m \in h\Z} (\sup_t |\partial_h^+ f_h)(t,x_m)),
\end{aligned}\]
which implies that
\[\norm{\partial_x l_h f_h}_{L_t^4L_x^{\infty}} = \norm{\partial_h^+f_h}_{L_t^4L_x^{\infty}}\]
and
\[\norm{l_h f_h}_{L_x^2L_t^{\infty}} \lesssim \norm{f_h}_{L_x^2L_t^{\infty}} + h \norm{\partial_h^+ f_h}_{L_x^2L_t^{\infty}},\]
respectively. With these observations, by the H\"older inequality and Corollary \ref{cor:Strichartz} and \ref{cor:linear estimates for KdV flows}, we obtain
$$\begin{aligned}
I_4 \leq&~{} T^{\frac{3}{4}}\Big(\|\partial_xl_h v_h^\pm\|_{L_t^4L_x^\infty}+\|\partial_xw_h^\pm\|_{L_t^4L_x^\infty}\Big)\|l_h v_h^\pm -  w_h^\pm\|_{L_t^{\infty}L_x^2}\\
&+T^{\frac{1}{2}}\Big(\|l_h v_h^\pm\|_{L_x^2L_t^\infty}+\|w_h^\pm\|_{L_x^2L_t^\infty}\Big) \big\|\partial_x( l_h v_h^\pm - w_h^\pm)\big\|_{L_x^\infty L_t^2}\\
\lesssim&~{} T^{\frac{1}{2}}\Big(\|v_h^\pm\|_{X_\pm^{s,b}} + h\|\partial_h^+v_h^\pm\|_{X_\pm^{s,b}} +\|w_h^\pm\|_{X_\pm^{s,b}}\Big)\|l_h v_h - w_h^\pm \|_{S([-T,T])}.
\end{aligned}$$
Owing to Theorem \ref{WP for KdVs} and Proposition \ref{Prop:LWP} in addition to Lemma \ref{Lem:discretization linearization inequality},  by choosing sufficiently small $0< T \ll 1$, we have
\begin{equation}\label{eq:I_4}
I_4 \le \frac12\|l_h v_h - w^\pm \|_{S([-T,T])}.
\end{equation}
Finally, going back to \eqref{eq:kdvlimit}, we employ \eqref{eq:I_2}, \eqref{eq:I_3} and \eqref{eq:I_4}, as well as Lemma \ref{Lem:discretization linearization inequality} to complete the proof.
\end{proof}

\subsection{Proof of continuum limit}
Propositions \ref{convergence to decoupled FPU} and \ref{convergence to KdV} do not immediately guarantee Theorem \ref{main theorem} owing to a  lack of commutativity between the linear interpolation and translation operators in the following sense
\[\ell_he^{\pm \frac{t}{h^2}\partial_h} u_h^{\pm} \neq e^{\pm \frac{t}{h^2}\partial_x}\ell_h u_h^{\pm}.\]
However, for every $t = h^3k \in [-T,T]$, $k \in \Z$, 
\[\ell_h(e^{\mp hk\partial_h} u_h^{\pm}) = e^{\mp hk\partial_x}\ell_h(u_h^{\pm})\]
in $L^2$ holds, owing to Lemma \ref{p_h symbol}; thus, Propositions \ref{convergence to decoupled FPU} and \ref{convergence to KdV} ensure Theorem \ref{main theorem}. To complete the proof of Theorem \ref{main theorem}, it is necessary to extend our result at $t = h^3k$ for all $t \in [-T,T]$. For given $t \in [-T,T]$, there exists $k \in \Z$ such that $t \in [h^3k, h^3(k+1))$. Since
\[\begin{aligned}
&~{}\big\|(\ell_h\tilde{r}_h)(t,x)- w_h^+(t, x-\tfrac{t}{h^2})- w_h^-(t, x+\tfrac{t}{h^2})\big\|_{L_x^2(\mathbb{R})}\\
\le&~{} \big\|(\ell_h\tilde{r}_h^+)(t,x)- w_h^+(t, x-\tfrac{t}{h^2})\big\|_{L_x^2(\mathbb{R})} + \big\|(\ell_h\tilde{r}_h^-)(t,x)-w_h^-(t, x+\tfrac{t}{h^2})\big\|_{L_x^2(\mathbb{R})},
\end{aligned}\]
we only deal with the ``$+$" term, since the other part follows analogously. A straightforward computation yields
\[\begin{aligned}
\big\|(\ell_h\tilde{r}_h^+)(t,x)- w_h^+(t, x-\tfrac{t}{h^2})\big\|_{L_x^2(\mathbb{R})} =&~{} \big\|(\ell_he^{-\frac{t}{h^2}\partial_h}u_h^+)(t,x)- e^{-\frac{t}{h^2}\partial_x}w_h^+(t,x)\big\|_{L_x^2(\mathbb{R})} \\
\le&~{}\big\|(\ell_he^{-\frac{t}{h^2}\partial_h}u_h^+)(t,x)-(\ell_he^{-hk\partial_h}u_h^+)(h^3k,x)\big\|_{L_x^2(\mathbb{R})}\\
&+ \big\|(\ell_he^{-hk\partial_h}u_h^+)(h^3k,x)- e^{-hk\partial_x}w_h^+(h^3k,x)\big\|_{L_x^2(\mathbb{R})}\\
&+ \big\|e^{-hk\partial_x}w_h^+(h^3k,x) - e^{-\frac{t}{h^2}\partial_x}w_h^+(t,x)\big\|_{L_x^2(\mathbb{R})}\\
=:&~{} I + II + III.
\end{aligned}\]
Propositions \ref{convergence to decoupled FPU} and \ref{convergence to KdV} show that $II \lesssim h^{\frac{2s}{5}}$. 

\medskip

For $I$, we further split it by
\[\big\|e^{-\frac{t}{h^2}\partial_h}u_h^+(t)-e^{-hk\partial_h}u_h^+(t)\big\|_{L_x^2(h\Z)} + \big\|e^{-hk\partial_h}u_h^+(t)-e^{-hk\partial_h}u_h^+(h^3k)\big\|_{L_x^2(h\Z)} =: I_1+I_2.\] 
Here, we use the boundedness of the linear interpolation operator. Note that 
\[\left|e^{-i\frac{t}{h^2}\xi} - e^{-ihk \xi}\right| \lesssim \frac{1}{h^2}|t-h^3k||\xi| \lesssim h|\xi|, \quad t \in [h^3k, h^3(k+1)).\]
Applying Plancherel theorem, the continuity of $e^{i\theta}$ and Lemma \ref{Lem:Sobolev bound depending on h} to $I_1$, we obtain
\[I_1 \lesssim h\|u_h^+(t)\|_{\dot{H}_x^1(h\Z)} \lesssim h^{s}\|u_h^+(t)\|_{H_x^s(h\Z)},\]
for $\frac34 < s \le 1$. For $I_2$, since
\[\begin{aligned}
u_h^+(t)=&~{} S_h^+(t-h^3k)u_h^+(h^3k) \\
&+ \frac14 \int_{h^3k}^t S_h^+(t-t_1) \nabla_h \bigg[\Big\{u_h^+(t_1)+e^{\frac{2t_1}{h^2}\partial_h}u_h^-(t_1)\Big\}^2 + h^2e^{\frac{t_1}{h^2}\partial_h}\mathcal{R}(t_1)\bigg]dt_1,
\end{aligned}\]
and it suffices to deal with 
\begin{equation}\label{eq:linear diff}
\|S_h^+(t-h^3k)u_h^+(h^3k) - u_h^+(h^3k)\|_{L_x^2(h\Z)}
\end{equation}
and the nonlinear terms. Mixed and $u_h^-$ terms and the higher-order term in the nonlinear part can be controlled by at least $h^s$, owing to Lemmas \ref{lem:bi2}, \ref{lem:bi3} and \ref{lem:AppB}. Meanwile, the $u_h^+$ quadratic term can be roughly estimated by
\[|t-h^3k|h^{-1} \|u_h^+(t)\|_{L_t^{\infty}L_x^4}^2 \lesssim h^{\frac32}\|u_h^+(t)\|_{L_t^{\infty}L_x^2}^2,\]
which itself is sufficient. Moreover, \eqref{eq:linear diff} is bounded by $h^s\|u_h^+(h^3k)\|_{H_x^s(h\Z)}$ analogously to $I_1$, owing to
\[|S_h^+(t-h^3k) -1| = |e^{-i\frac{t}{h^2}(\xi - \frac{2}{h}\sin(\frac{h\xi}{2}))}| \lesssim h\left(|\xi| + \left|\frac{2}{h}\sin\left(\frac{h\xi}{2}\right)\right|\right), \quad t \in [h^3k, h^3(k+1))\]
and Lemma \ref{Prop:norm equivalence}. An analogous argument is available for the estimate of $III$.

\appendix

\section{Failure of the linear estimate}\label{app:FLE}
%
In subsection \ref{sec:AFA}, we measured the size of linear interpolation of the FPU flows in $C_t([-T,T]:H_x^s(h\Z))$ in order to approximate the FPU flows by the Airy flows.
More precisely, the crucial estimates were that  for $0\le s\le1$,
\begin{align*}
\| l_h S_h^{\pm}(t) f_h \|_{C_t([0,T]:H_x^s(h\Z))} 
\ls \| l_h f_h \|_{H^s(h\Z)} \ls \| f_h\|_{H^s(h\Z)}.
\end{align*}
However, such uniform estimates fail if we consider instead the $X^{s,b}$ spaces associated to KdVs \eqref{KdV2} as approximation spaces, which means that even though FPU and KdVs are shown to be well-posed in $L^2$ via $X^{s,b}$, justification of approximation from FPU to KdVs via $X^{s,b}$ is nontrivial.



\begin{proposition}\label{Xsb bad}
Let $0\le s\le 1$ and $b>0$. Then, 
\begin{equation}\label{Yestimates} 
\sup_{h> 0, \ f_h\in H^s(h\Z) } \frac{\| \theta(t) l_h S_h^{\pm}(t) f_h \|_{X_{\pm}^{s,b}}}{\|f_h\|_{H^s(h\Z)}} = \infty,
\end{equation}
where $X_{\pm}^{s,b}$ is defined as in \eqref{eq:XsbKdV}.

\end{proposition}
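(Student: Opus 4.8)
The plan is to reduce everything to a Fourier-side computation and then exhibit a family of data, concentrated near the edge of the Brillouin zone, on which the Airy-type weight $\langle\tau\mp\frac{\xi^3}{24}\rangle^b$ grows while the $H^s$-norm does not. By \eqref{linear interpolated linear FPU flow} the interpolated linear FPU flow is an honest Fourier multiplier flow on $\R$, $l_h S_h^\pm(t)f_h=\tilde S_h^\pm(t)l_hf_h$, with phase $\pm\Omega_h(\xi):=\pm s_h(\xi')$, where $s_h$ is as in \eqref{eq:s_h} and $\xi'\in[-\frac\pi h,\frac\pi h)$ is the fold of $\xi$. Hence, using Lemma \ref{p_h symbol}, the space-time Fourier transform is
\[
\mathcal F_{t,x}\big[\theta(t)\,l_hS_h^\pm(t)f_h\big](\tau,\xi)=\hat\theta\big(\tau\mp\Omega_h(\xi)\big)\,\mathcal L_h(\xi)\,(\tilde{\mathcal F}_hf_h)(\xi).
\]
Inserting this into the norm \eqref{eq:XsbKdV} and substituting $\sigma=\tau\mp\Omega_h(\xi)$ decouples the $\tau$-integral and yields
\[
\big\|\theta\,l_hS_h^\pm f_h\big\|_{X_\pm^{s,b}}^2=\int_{\R}\langle\xi\rangle^{2s}\,|\mathcal L_h(\xi)|^2\,|(\tilde{\mathcal F}_hf_h)(\xi)|^2\!\left(\int_{\R}\big\langle\sigma\pm(\Omega_h(\xi)-\tfrac{\xi^3}{24})\big\rangle^{2b}|\hat\theta(\sigma)|^2\,d\sigma\right)d\xi.
\]

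Next I would record the elementary lower bound $\int\langle\sigma+D\rangle^{2b}|\hat\theta(\sigma)|^2\,d\sigma\gtrsim\langle D\rangle^{2b}$, valid for $|D|$ large: choosing $\theta\ge0$ with $\int\theta\neq0$ we have $\hat\theta(0)\neq0$, so $|\hat\theta|^2\ge c_0>0$ on some interval $|\sigma|\le\rho$, on which $\langle\sigma+D\rangle\ge\frac12\langle D\rangle$ once $|D|\ge2\rho$. This reduces the problem to producing frequencies where the \emph{phase mismatch} $\Omega_h(\xi)-\frac{\xi^3}{24}$ is large. I would restrict the $\xi$-integral to the principal zone $[\frac{\pi}{2h},\frac\pi h]$, where there is no folding ($\Omega_h=s_h$), where $\mathcal L_h(\xi)=\frac{4\sin^2(h\xi/2)}{(h\xi)^2}\gtrsim1$ by \eqref{L_x symbol calculation}, and where $\langle\xi\rangle^{2s}\sim h^{-2s}$.

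The hard part — really the whole point of the statement — is the mismatch estimate $|s_h(\xi)-\frac{\xi^3}{24}|\gtrsim h^{-3}$ for $\xi\in[\frac{\pi}{2h},\frac\pi h]$. Writing $\xi=\eta/h$ one has $s_h(\xi)-\frac{\xi^3}{24}=h^{-3}\Psi(\eta)$ with $\Psi(\eta):=\eta-2\sin(\tfrac\eta2)-\tfrac{\eta^3}{24}$; the Taylor expansion from Remark \ref{coupled remark}(iv) gives $\Psi(\eta)=-\tfrac{\eta^5}{1920}+O(\eta^7)$, which is an alternating series dominated by its first term on $(0,\pi]$, so $\Psi$ is bounded away from $0$ on the compact set $[\frac\pi2,\pi]$. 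Thus $|s_h(\xi)-\frac{\xi^3}{24}|\ge c\,h^{-3}$ there, and the Bourgain weight contributes a factor $\langle h^{-3}\rangle^{2b}\sim h^{-6b}$ — precisely the sensitivity to the characteristic curve that the $H^s$-norm cannot see. This is exactly the phenomenon announced in the introduction, where the two flows have genuinely different phases at the zone edge.

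Finally I would test with $f_h$ defined by $\mathcal F_hf_h=\mathbf 1_{[\frac{\pi}{2h},\frac\pi h]}$. The lower bounds above give
\[
\big\|\theta\,l_hS_h^\pm f_h\big\|_{X_\pm^{s,b}}^2\gtrsim\int_{\pi/(2h)}^{\pi/h}h^{-2s}\,h^{-6b}\,d\xi\sim h^{-2s-6b-1},
\]
while $\|f_h\|_{H^s(h\Z)}^2=\int_{\pi/(2h)}^{\pi/h}\langle\xi\rangle^{2s}\,d\xi\sim h^{-2s-1}$. Consequently the ratio satisfies
\[
\frac{\|\theta\,l_hS_h^\pm f_h\|_{X_\pm^{s,b}}}{\|f_h\|_{H^s(h\Z)}}\gtrsim h^{-3b}\xrightarrow[\ h\to0\ ]{}\infty,
\]
which establishes \eqref{Yestimates}. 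I expect the only delicate points to be the quantitative mismatch bound for $\Psi$ on $[\frac\pi2,\pi]$ (handled by the alternating-series control) and checking that $\mathcal L_h$ is bounded below on the principal zone (immediate from $h\xi\in[\frac\pi2,\pi]$); everything else is bookkeeping.
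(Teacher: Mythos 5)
Your proof is correct, but it isolates a different mechanism than the paper's own argument. Both proofs share the same reduction: Lemma \ref{p_h symbol} turns $\theta(t)\,l_hS_h^\pm(t)f_h$ into an explicit function on the space-time Fourier side, and an elementary lower bound of the form $\int\langle\sigma+D\rangle^{2b}|\hat\theta(\sigma)|^2\,d\sigma\gtrsim\langle D\rangle^{2b}$ converts a large phase mismatch $D$ into growth of the $X_\pm^{s,b}$-norm; both also test with data whose discrete Fourier transform lives in $\{|\xi|\ge\frac{\pi}{2h}\}$. The difference is which part of the periodized spectrum of $l_hf_h$ supplies the mismatch. You keep only the principal copy ($m=0$ in the periodization) and use the intrinsic discrepancy of the two dispersion relations, $|s_h(\xi)-\tfrac{\xi^3}{24}|=h^{-3}|\Psi(h\xi)|\gtrsim h^{-3}$ at the zone edge; your control of $\Psi(\eta)=\eta-2\sin(\tfrac{\eta}{2})-\tfrac{\eta^3}{24}$ on $[\tfrac{\pi}{2},\pi]$ is sound (even more simply: $\Psi'(\eta)=1-\cos(\tfrac{\eta}{2})-\tfrac{\eta^2}{8}\le 0$ and $\Psi(\tfrac{\pi}{2})<0$, so $|\Psi|$ is bounded below there). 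The paper does the opposite: it discards the principal copy and extracts the growth from the aliased copies $\xi+\gamma_{m,h}$, $\gamma_{m,h}=\tfrac{2\pi m}{h}$, $m\le-m_0$, where the Airy weight is enormous simply because $\tfrac{1}{24}|\xi+\gamma_{m,h}|^3\sim(|m|/h)^3$ dwarfs the temporal frequency $s_h(\xi)$, while the support condition guarantees $\sin^2(\tfrac{h\xi}{2})\ge\tfrac12$ so that the interpolation symbol $\mathcal L_h(\xi+\gamma_{m,h})\gtrsim|m|^{-2}$ does not degenerate. Both routes yield the same rate $h^{-3b}\to\infty$. Yours is arguably the more intrinsic statement: it shows the obstruction comes from the two characteristic curves being genuinely far apart at high frequencies, and it would survive even an aliasing-free (band-limited) interpolation; the paper's version shows that the aliasing created by linear interpolation is by itself already fatal. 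One cosmetic repair: the paper's $H^s(h\Z)$ consists of real-valued functions, so replace your test data by the symmetrized choice $\mathcal F_hf_h=\mathbf 1_{[\frac{\pi}{2h},\frac{\pi}{h}]}+\mathbf 1_{[-\frac{\pi}{h},-\frac{\pi}{2h}]}$; since $s_h(\xi)-\tfrac{\xi^3}{24}$ is odd, the mismatch bound holds on both intervals and the computation is unchanged.
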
   
\begin{proof}
We claim that there exist a constant $C_b > 0$ independent of $h>0$ such that
\begin{align}\label{lowbound}
\| \theta(t) l_h S_h^{\pm}(t) f_h \|_{X_{\pm}^{s,b}}^2 
  \gs&~{}C_b\frac{1}{h^{6b}}\|f_h\|_{H^s}^2. 
\end{align}
for $f_h \in H_h^s$ satisfying $\mbox{supp} \mathcal F_h(f_h) \subset \{ \xi \in \T_h : |\xi|\ge\frac{\pi}{2h} \}$. Then \eqref{Yestimates} immediately follows.
We prove only \eqref{lowbound} for the $+$ case, since the other case can be treated similarly. 
Using Lemma~\ref{p_h symbol}, we compute
\begin{align*}
&\| \theta(t) l_h S_h^{+}(t) f_h \|_{X_{+}^{s,b}}^2 \\
=&~{}\sum_{m\in\Z}\int_{\gamma_{m,h}+[-\frac{\pi}{h},\frac{\pi}{h})}\left\| \la \xi \ra^s \left\la \tau - \frac{\xi^3}{24} \right\ra^b \mathcal{F}_{t,x} \big(\theta(t) l_h S_h^{+}(t) f_h\big)(\tau,\xi) \right\|_{L_{\tau}^2}^2 d\xi \\
=&~{}\sum_{m\in\Z} \int_{-\frac{\pi}{h}}^{\frac{\pi}{h}}
\left\| \widehat \theta( \tau)\left\la \tau - \frac{1}{24}(\xi +\gamma_{m,h} )^3 + s_h^{+}(\xi) \right\ra^b  \right\|_{L_{\tau}^2}^2 
 \la \xi+\gamma_{m,h} \ra^{2s}  \mathcal{L}_h(\xi+\gamma_{m,h})^2 |\mathcal F_h(f_h)(\xi)|^2 d\xi
\end{align*}
for $\gamma_{m,h} := \frac{2m\pi}{h}$.
First, let us compute the $L_\tau^2$ norm. A direct computation gives 
\begin{align*}
\frac{1}{24}(\xi+\gamma_{m,h})^3 - s_h^{+}(\xi)
&=\frac{\pi^3}{3}\left(\frac{m}{h}\right)^3+\frac{\pi^2 \xi}{6}\left(\frac{m}{h}\right)^2+\frac{\pi\xi^2}{4}\left(\frac{m}{h}\right)+\frac{\xi^3}{24}+\frac{1}{h^2}\left(\xi-\frac2h\sin\left(\frac{h\xi}{2}\right)\right)
\end{align*}
and it is easy to verify that
\[\left|\frac{\pi^2 \xi}{6}\left(\frac{m}{h}\right)^2+\frac{\pi\xi^2}{4}\left(\frac{m}{h}\right)+\frac{\xi^3}{24}+\frac{1}{h}\left(\xi-\frac2h\sin\left(\frac{h\xi}{2}\right)\right)\right|\lesssim \frac{m^2}{h^3}, \text{ for all } \xi \in \T_h,\]
which indicates that  $\frac{\pi^3}{3}(\frac{m}{h})^3$ is the dominant part in $\frac{1}{24}(\xi+\gamma_{m,h})^3 - s_h^{+}(\xi)$. In particular, there exists $m_0 \gg 1$, independent of $h$, such that for $m \le - m_0$, 
\begin{align}\label{m}
- \frac{1}{24}(\xi+\gamma_{m,h})^3 + s_h^{+}(\xi) \gtrsim \left(\frac{|m_0|}{h}\right)^3 \gg1,
\text{ for all } \xi \in \T_h.
\end{align}
%
%
%
Using the above-mentioned observation, we have for $m\le m_0$
\begin{align*}
\left\| \widehat \theta( \tau)\left\la \tau- \frac{1}{24}(\xi + \gamma_{m,h} )^3 + s_h^+(\xi) \right\ra^b  \right \|_{L_{\tau}^2}^2 &\ge
\int_0^\infty |\widehat \theta( \tau)|^2
\left\la \tau- \frac{1}{24}(\xi + \gamma_{m,h} )^3 + s_h^+(\xi) \right\ra^{2b}  d\tau \\
&\gs \left(\frac{|m_0|}{h}\right)^{6b}\int_0^\infty |\widehat \theta( \tau)|^2
  d\tau \\
&\gs \left(\frac{|m_0|}{h}\right)^{6b},
\end{align*}
which implies that
\begin{align*}
\| \theta(t) l_h S_h^{+}(t) f_h \|_{X_{+}^{s,b}}^2 
\gtrsim&~{} \left(\frac{|m_0|}{h}\right)^{6b}\sum_{m\le -m_0}\int_{-\frac{\pi}{h}}^{\frac{\pi}{h}} 
 \la \xi+\gamma_{m,h} \ra^{2s}  \mathcal{L}_h(\xi+\gamma_{m,h})^2 |\mathcal F_h(f_h)(\xi)|^2 d\xi\\
\gs&~{}  \left(\frac{|m_0|}{h}\right)^{6b}
\int_{-\frac{\pi}{h}}^{\frac{\pi}{h}}
 \la \xi+\gamma_{m_0,h} \ra^{2s}  \left( \frac{4\sin^2\left(\frac{h\xi}{2}\right)}{h^2(\xi+\gamma_{m_0,h})^2} \right)^2  |\mathcal F_h(f_h)(\xi)|^2 d\xi. 
\end{align*}
Since 
\[ |\xi| \lesssim |\xi+\gamma_{m_0,h}| \lesssim |\gamma_{m_0,h}|  \quad \mbox{and} \quad \sin^2\left(\frac{h\xi}{2}\right) \ge \frac12\] 
for all $\xi \in \mbox{supp} \mathcal F_h(f_h)$, we conclude

\begin{align*}
\| \theta(t) l_h S_h^{+}(t) f_h \|_{X_{+}^{s,b}}^2 
 \gs&~{}\left(\frac{|m_0|}{h}\right)^{6b}
\int_{-\frac{\pi}{h}}^{\frac{\pi}{h}}
 \la \xi \ra^{2s} (h\gamma_{m_0,h})^{-4}|\mathcal F_h(f_h)(\xi)|^2 d\xi \\
  \gs&~{}\frac{|m_0|^{6b-4}}{h^{6b}}\|f_h\|_{H_h^s}^2. 
\end{align*}
\end{proof}

\section{Analysis for general nonlinearities}\label{app:GP}
This appendix is devoted to some estimates for the higher-order remainder term introduced in Section \ref{sec: outline} to complete our analysis established in Sections \ref{sec: Well-posedness} and \ref{sec: uniform bound for coupled FPU}. 
For any real number $\rho \in \R$, we write $\rho^+$ if there exists a small $0 < \epsilon \ll 1$ such that $\rho^+ = \rho + \epsilon$. Analogously, we use $\rho^-$. The main estimate dealt with in this section is as follows:
\begin{lemma}\label{lem:AppB}
Let $0 \le s \le 1$ and $0<h \le 1$ be given. Assume that
$$\|u_h^\pm\|_{X_{h,\pm}^{s,\frac{1}{2}^+}}\leq M,$$
for some constant $M > 0$. Then, for $\mathcal R$ as in \eqref{eq:R}, we have
\begin{equation}\label{eq:conclu}
\bigg\|\int_0^t S_h^\pm(t-t_1) \nabla_h e^{\pm\frac{t_1}{h^2}\partial_h}h^2\mathcal{R}(t_1) dt_1\bigg\|_{X_{h,\pm}^{s,\frac{1}{2}^+}}\lesssim h^{\min\{\frac{5}{4}-s, \frac{3}{4}+s\}^{-}}T^{\frac{3}{4}}M^3 \sup_{|r|\le Ch^\frac32M}|V^{(4)}(r)|,
\end{equation}
where the constant $C$ in supremum depends only on $\frac12^+$.
\end{lemma}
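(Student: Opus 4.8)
The plan is to control the higher-order remainder term by treating it as a cubic nonlinearity in the $X^{s,b}$-framework, mirroring the bilinear machinery already established in Section~\ref{sec: bilinear estimates} but adapted to the trilinear structure. First I would apply the inhomogeneous estimate \eqref{inhomogeneous estimate} from Lemma~\ref{lem:properties}(5), which reduces the left-hand side of \eqref{eq:conclu} to estimating
\[
\big\|\nabla_h e^{\pm\frac{t}{h^2}\partial_h} h^2 \mathcal{R}\big\|_{X_{h,\pm}^{s,-\frac12^+}}
= h^2\big\|\nabla_h e^{\pm\frac{t}{h^2}\partial_h}\big(\tfrac{V^{(4)}(h^2\tilde{r}_h^*)}{3}\tilde{r}_h^3\big)\big\|_{X_{h,\pm}^{s,-\frac12^+}},
\]
where $\tilde{r}_h = e^{-\frac{t}{h^2}\partial_h}u_h^+ + e^{\frac{t}{h^2}\partial_h}u_h^-$ by \eqref{coupled FPU' to FPU}. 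The coefficient $V^{(4)}(h^2\tilde{r}_h^*)$ is bounded pointwise by $\sup_{|r|\le Ch^{3/2}M}|V^{(4)}(r)|$, using the uniform $L^\infty$-type control on $\tilde{r}_h$ coming from $\|u_h^\pm\|_{X_{h,\pm}^{s,1/2^+}}\le M$ together with the embedding into $L_t^\infty L_x^\infty$ (at the cost of an $h^{3/2}$ factor via Bernstein/Sobolev, accounting for the radius in the supremum).

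Next I would pull out the slowly-varying coefficient by Hölder and reduce to a pure cubic estimate of the form
\[
h^2\big\|\nabla_h\big(f_h g_h k_h\big)\big\|_{X_{h,\pm}^{s,-\frac12^+}}
\lesssim h^2\,(\text{product of three }X^{s,b}\text{ norms}),
\]
where each factor is one of $u_h^\pm$ (possibly with almost-translation operators $e^{\pm\frac{2t}{h^2}\partial_h}$ attached, which are unitary and harmless on the Fourier side). The key is a trilinear analogue of Lemma~\ref{lem:bi1}: I would run the same Parseval/duality reduction, landing on a supremum of an integral of the shape
\[
\sup_{\tau,\xi}\frac{\tfrac{4}{h^2}\sin^2(\tfrac{h\xi}{2})}{\la\tau-s_h(\xi)\ra^{1-2\delta}}\iint \frac{d\xi_1\,d\xi_2}{\la\cdots\ra^{2b}\la\cdots\ra^{2b}\la\cdots\ra^{2b}},
\]
and exploit the comparability \eqref{comparable symbol} of $s_h$ with $\xi^3$ together with the elementary integral bounds \eqref{eq:integral1}--\eqref{eq:integral2} to extract smoothing. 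The Strichartz estimates for the auxiliary propagator $\tilde S_h^\pm$ noted in Remark~\ref{rem:stri} are precisely what make the cubic term tractable; I expect the cleanest route is actually to avoid a bespoke trilinear $X^{s,b}$ estimate and instead interpolate: bound two factors in Strichartz/maximal-function norms (Corollary~\ref{cor:Strichartz}) and one in the local-smoothing norm, producing the $T^{3/4}$ time-gain from Hölder in $t$ and the $h$-gains from the frequency localization. The $h^{\min\{5/4-s,\,3/4+s\}^-}$ exponent should emerge by balancing the explicit $h^2$ prefactor against the derivative loss from $\nabla_h$ (costing $h^{-1}$ at top frequency) and the $h^{3/2}$ from the $L^\infty$ bound on the coefficient, with the two regimes $s$ small versus $s$ large governed by which frequency interaction dominates.

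The main obstacle I anticipate is the trilinear frequency analysis: unlike the quadratic case, there are now two convolution variables and three resonance factors, so the resonance function $\tau - s_h(\xi_1)-s_h(\xi_2)-s_h(\xi-\xi_1-\xi_2)$ does not factor as neatly, and the stationary-phase/change-of-variables argument of Lemma~\ref{lem:bi1} does not transfer verbatim. I would circumvent this by \emph{not} proving a sharp trilinear $X^{s,b}$ bound at all, but rather by placing the cubic product directly in physical-side mixed norms: estimate $\|\nabla_h(f_hg_hk_h)\|_{L_t^1 H_x^s}$ via the fractional Leibniz rule (Lemma~\ref{lem:prod}), distributing the derivative and the $H^s$-weight onto the highest-frequency factor, then controlling that factor by the maximal function / $L_x^2 L_t^\infty$ norm and the remaining two by local-smoothing and Strichartz norms through Corollary~\ref{Cor:estimates}. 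The delicate point is verifying that $s>\tfrac34$ (implicit in the use of the maximal function estimate) suffices to close all the frequency configurations and that the worst high-high-to-high interaction still yields the claimed $h$-power; tracking the endpoint losses $\tfrac12^+$ and $\tfrac34^-$ carefully is where the bookkeeping, rather than any genuinely new idea, will be concentrated.
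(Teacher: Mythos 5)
Your opening moves match the paper: both apply the inhomogeneous estimate \eqref{inhomogeneous estimate} to reduce \eqref{eq:conclu} to a bound on $\|\nabla_h e^{\pm\frac{t}{h^2}\partial_h}h^2\mathcal{R}\|_{X_{h,\pm}^{s,b-1}}$, and both bound the coefficient $V^{(4)}(h^2\tilde r_h^*)$ in $L_{t,x}^\infty$ with radius $Ch^{3/2}M$ via the trivial $L^2\to L^\infty$ embedding. The gap is in the core step: how the cubic term is estimated in a space that actually feeds back into \eqref{inhomogeneous estimate}. Your fallback is to bound $\|\nabla_h(f_hg_hk_h)\|_{L_t^1H_x^s}$ using Lemma~\ref{lem:prod}, maximal function and local smoothing norms. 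But $L_t^1H_x^s$ only embeds into $X_{h,\pm}^{s,-\frac12-\epsilon}$ (the dual of $X^{s,\frac12+\epsilon}\hookrightarrow L_t^\infty H_x^s$); it does \emph{not} control $X_{h,\pm}^{s,b-1}$ with $b>\frac12$, i.e.\ exponent $-\frac12+\epsilon$, since the relevant $\tau$-integral $\int\la\tau-P(\xi)\ra^{2(b-1)}d\tau$ diverges (test $F(t,x)=\lambda^{-1}\theta(t/\lambda)g(x)$, $\lambda\to0$: the $L_t^1H_x^s$ norm stays bounded while the $X^{s,-\frac12+\epsilon}$ norm blows up like $\lambda^{-\epsilon}$). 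So this route yields at best a $C_tH_x^s$ bound on the Duhamel term, which is strictly weaker than the stated $X_{h,\pm}^{s,\frac12^+}$ conclusion and cannot be inserted into the $X^{s,b}$ contraction of Proposition~\ref{Prop:LWP}. A second problem: your reliance on the maximal function estimate and Corollary~\ref{Cor:estimates} forces $s>\frac34$, while the lemma is asserted for all $0\le s\le 1$.

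The paper closes precisely this hole with an ingredient absent from your proposal: it dualizes the Strichartz estimate of Corollary~\ref{cor:Strichartz} to get $\|F\|_{X_{h,\pm}^{0,-(\frac12^+)}}\lesssim \||\nabla_h|^{-(\frac14^-)}F\|_{L_t^{\frac43^-}L_x^{1^+}}$ and then \emph{interpolates with the trivial identity} $X_{h,\pm}^{0,0}=L_{t,x}^2$ to upgrade the temporal exponent from $-\frac12-\epsilon$ to $-\frac12+\epsilon$; this is what makes the re-entry into $X_{h,\pm}^{s,\frac12^+}$ legitimate. After that, no multilinear frequency analysis of any kind is needed: the operators $\nabla_h$, $\la\partial_h\ra^s$, $|\nabla_h|^{-\frac14^-}$ are paid crudely in powers of $h^{-1}$ on $[-\frac{\pi}{h},\frac{\pi}{h})$, giving $h^{(\frac54-s)^-}$ against the prefactor $h^2$; H\"older in time gives $T^{\frac34}$, H\"older in space gives $L^1\le L^2\cdot L^4\cdot L^4$, the $L^2$ factor absorbs the coefficient, and the two $L^4$ factors are handled by Sobolev plus Lemma~\ref{Lem:Sobolev bound depending on h} (costing $h^{-\frac{1-4s}{4}}$ each when $s\le\frac14$) or trivially when $s>\frac14$. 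This case split — not any high-high interaction, and not the $h^{3/2}$ coefficient bound, which enters only the radius of the supremum of $V^{(4)}$ — is where $\min\{\frac54-s,\frac34+s\}$ comes from; your exponent bookkeeping ($h^2$ against $h^{-1}$ and $h^{3/2}$) does not reproduce it. In short, both your trilinear $X^{s,b}$ program and your Leibniz-rule/maximal-function program are unnecessary detours, and without the interpolated dual Strichartz estimate the proof as you outline it cannot reach the stated conclusion.
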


\begin{remark}
As seen in the proofs of Propositions \ref{LWP of CS}, $M$ depends on the initial condition. Meanwhile, in the proofs of Propositions \ref{Prop:LWP} and \ref{convergence to decoupled FPU}, $M$ depends not only on the initial condition but also on the local existence time, especially, $T^{0^-}$. However, owing to $T^{\frac34}$, the right-hand side of \eqref{eq:conclu} can be sufficiently small by choosing a suitable time $T$ independent of $h$. 
\end{remark}

\begin{remark}\label{rem:X}
Lemma \ref{lem:AppB} indeed completes the proof of Proposition \ref{Prop:LWP}.
\end{remark}

\begin{remark}\label{rem:L2}
Together with the embedding property (Lemma \ref{lem:properties} (3)), Lemma \ref{lem:AppB} completes the proofs of Propositions \ref{LWP of CS} and \ref{convergence to decoupled FPU}.
\end{remark}

\begin{remark}\label{rem:error}
Lemma \ref{lem:AppB} ensures that the higher-order term in \eqref{coupled FPU'} is indeed the error term as $h \to 0$ in the proof of Proposition \ref{convergence to decoupled FPU}. More precisely, in a strong contrast to the quadratic error terms
\[\int_0^t S_h^\pm(t-t_1) \nabla_h \left(2u_h^\pm(t_1)(e^{\pm\frac{2t_1}{h^2}\partial_h}u_h^\mp(t_1)) + (e^{\pm\frac{2t_1}{h^2}\partial_h}u_h^\mp(t_1))^2 \right) \; dt_1\]
in the proof of Proposition \ref{convergence to decoupled FPU} (see also Lemmas \ref{lem:bi2} and \ref{lem:bi3}), Lemma \ref{lem:AppB} ensures that the higher-order term itself in \eqref{coupled FPU'} can be understood as a strong error term as $h \to 0$ in the sense that the smoothness condition on the data is not necessary. 
\end{remark}

\begin{proof}[Proof of Lemma \ref{lem:AppB}]
By assumption, we consequently have 
$$\|u_h^\pm\|_{C_tH_x^s}\lesssim M \quad \mbox{and} \quad \|\tilde{r}_h\|_{C_tH_x^s}\lesssim M.$$
By \eqref{inhomogeneous estimate}, we estimate the higher-order remainder 
$$\bigg\|\int_0^t S_h^\pm(t-t_1) \nabla_h e^{\pm\frac{t_1}{h^2}\partial_h}h^2\mathcal{R}(t_1) dt_1\bigg\|_{X_{h,\pm}^{s,\frac{1}{2}^+}}\lesssim \big\| \nabla_h e^{\pm\frac{t}{h^2}\partial_h}h^2\mathcal{R}(t)\big\|_{X_{h,\pm}^{s,-(\frac{1}{2}^-)}}.$$
Interpolating the dualization of the Strichartz estimates (Corollary \ref{cor:Strichartz}), i.e.,
$$\|u_h\|_{X_{h,\pm}^{0,-(\frac{1}{2}^+)}}\lesssim \||\nabla_h|^{-(\frac{1}{4}^-)}u_h\|_{L_t^{\frac{4}{3}^-}L_x^{1^+}},$$
with the trivial identity $\|u_h\|_{X_{h,\pm}^{0,0}}=\|u_h\|_{L_t^2L_x^2}$, we have
$$\|u_h\|_{X_{h,\pm}^{0,-(\frac{1}{2}^-)}}\lesssim \||\nabla_h|^{-(\frac{1}{4}^-)}u_h\|_{L_t^{\frac{4}{3}}L_x^{1^+}}.$$
Using this bound and the H\"older inequality, we obtain
$$\begin{aligned}
&~{}\big\| \nabla_h e^{\pm\frac{t}{h^2}\partial_h}h^2\mathcal{R}(t)\big\|_{X_{h,\pm}^{s,-(\frac{1}{2}^-)}} \\
\lesssim&~{} h^{(\frac{5}{4}-s)^-} \| e^{\pm\frac{t}{h^2}\partial_h}\mathcal{R}(t)\|_{L_t^{\frac{4}{3}}L_x^{1+}}\\
\lesssim
&~{} h^{(\frac{5}{4}-s)^{-}}\Big\|\big(e^{\pm\frac{t}{h^2}\partial_h}V^{(4)}(h^2\tilde{r}_h^*)\big)\cdot\big(e^{\pm\frac{t}{h^2}\partial_h}\tilde{r}_h\big)^3\Big\|_{L_t^{\frac{4}{3}}L_x^{1}}\\
\leq&~{} h^{(\frac{5}{4}-s)^{-}}T^{\frac{3}{4}}\big\|\big(e^{\pm\frac{t}{h^2}\partial_h}V^{(4)}(h^2\tilde{r}_h^*)\big)\cdot\big(e^{\pm\frac{t}{h^2}\partial_h}\tilde{r}_h\big)\big\|_{L_t^\infty L_x^2}\|e^{\pm\frac{t}{h^2}\partial_h}\tilde{r}_h\|_{L_t^\infty L_x^4}^2.
\end{aligned}$$
By unitarity (with the algebra in footnote 1), we remove the translation operator as follows:
\[\big\|\big(e^{\pm\frac{t}{h^2}\partial_h}V^{(4)}(h^2\tilde{r}_h^*)\big)\cdot\big(e^{\pm\frac{t}{h^2}\partial_h}\tilde{r}_h\big)\big\|_{L_t^\infty L_x^2} \lesssim \|V^{(4)}(h^2\tilde{r}_h^*)\|_{L_t^\infty L_x^\infty}\|\tilde{r}_h\|_{L_t^\infty L_x^2}.\]
By assumption, we have
$$\|h^2\tilde{r}_h^*\|_{C_tL_x^\infty}\leq \|h^2\tilde{r}_h\|_{C_tL_x^\infty}\leq h^{\frac{3}{2}}\|\tilde{r}_h\|_{C_tL_x^2}\leq h^{\frac{3}{2}}\Big\{\|u_h^+\|_{C_tL_x^2}+\|u_h^-\|_{C_tL_x^2}\Big\}\le Ch^{3/2} M.$$
Hence, it follows that 
$$ \|V^{(4)}(h^2\tilde{r}_h^*)\|_{L_t^\infty L_x^\infty}\leq \sup_{|r|\leq Ch^{3/2} M}|V^{(4)}(r)|<\infty.$$
For $\|e^{\pm\frac{t}{h^2}\partial_h}\tilde{r}_h\|_{L_t^\infty L_x^4}$, if $0\leq s\leq\frac{1}{4}$, then by the Sobolev inequality, unitarity and Lemma \ref{Lem:Sobolev bound depending on h}, 
$$\|e^{\pm\frac{t}{h^2}\partial_h}\tilde{r}_h\|_{L_x^4}\lesssim \|e^{\pm\frac{t}{h^2}\partial_h}\tilde{r}_h\|_{\dot{H}^{\frac14}} \lesssim h^{-\frac{1-4s}{4}}\|\tilde{r}_h\|_{\dot{H}_x^s}\lesssim h^{-\frac{1-4s}{4}}M.$$
Meanwhile, if $\frac{1}{4}<s\leq 1$, then 
$$\|e^{\pm\frac{t}{h^2}\partial_h}\tilde{r}_h\|_{L_x^4}\lesssim \|e^{\pm\frac{t}{h^2}\partial_h}\tilde{r}_h\|_{H_x^s}=\|\tilde{r}_h\|_{H_x^s}\lesssim M.$$
Therefore, by combining all these results, we complete the proof of \eqref{eq:conclu}.
\end{proof}

\bibliographystyle{amsplain} \bibliography{FPU}

\end{document}